\numberwithin{equation}{section}
\newtheorem{thm}{Theorem}[section]
\newtheorem{cor}[thm]{Corollary}
\newtheorem{prop}[thm]{Proposition}
\newtheorem{lem}[thm]{Lemma}
\theoremstyle{definition}
\newtheorem{defn}[thm]{Definition}
\newtheorem{question}[thm]{Question}
\theoremstyle{remark}
\newtheorem{rmk}[thm]{Remark}
\newcommand{\co}{\colon\thinspace}
\newcommand{\mb}[1]{\mathbb{#1}}
\newcommand{\Ext}{\ensuremath{{\rm Ext}}}
\newcommand{\Hom}{\ensuremath{{\rm Hom}}}
\newcommand{\colim}{\ensuremath{\mathop{\rm colim}}}
\newcommand{\holim}{\ensuremath{\mathop{\rm holim}}}
\newcommand{\overto}{\mathop\rightarrow}
\newcommand{\longoverto}{\mathop{\longrightarrow}}
\newcommand{\Map}{\ensuremath{{\rm Map}}}
\newcommand{\GL}{{\rm GL}}
\newcommand{\PSL}{{\rm PSL}}
\newcommand{\Aut}{{\rm Aut}}
\newcommand{\Spec}{{\rm Spec}}
\newcommand{\Spf}{{\rm Spf}}
\newcommand{\tmf}{{\rm tmf}}
\newcommand{\TMF}{{\rm TMF}}
\newcommand{\Tmf}{{\rm Tmf}}
\newcommand{\psth}{\psi\mbox{-}\theta}
\newcommand{\alg}{\mbox{-}alg}
\newcommand{\mell}{{\cal M}_{ell}}
\newcommand{\Mell}{\overline{\cal M}_{ell}}
\newcommand{\Mlog}{\overline{\cal M}_{log}}
\newcommand{\Wlog}{{\cal W}}
\newcommand{\MMM}{\overline{\cal M}}
\newcommand{\multmod}{{\cal M}_{\mb G_m}}
\newcommand{\mtate}{{\cal M}_{Tate}}
\newcommand{\mfg}{{\cal M}_{fg}}
\newcommand{\LSc}{{\bf LogSch}}
\newcommand{\ALS}{{\bf AffLog}}
\newcommand{\Sch}{{\bf Sch}}
\newcommand{\Et}{{\acute{E}t}}
\newcommand{\dlog}{{\rm dlog}}
\newcommand{\smsh}[1]{\ensuremath{\mathop{\wedge}_{#1}}}
\newcommand{\comp}[1]{\ensuremath{#1^\wedge}}
\newcommand{\pow}[1]{[\![{#1}]\!]}
\title{Topological modular forms with level structure}
\author{
  Michael Hill\thanks{Partially supported by NSF DMS--0906285, DARPA
    FA9550--07--1--0555, and the Sloan foundation.},
  Tyler Lawson\thanks{Partially supported by NSF DMS--1206008 and the
    Sloan foundation.}
}
\begin{document}
\maketitle
\begin{abstract}
  The cohomology theory known as $\Tmf$, for ``topological modular
  forms,'' is a universal object mapping out to elliptic cohomology
  theories, and its coefficient ring is closely connected to the
  classical ring of modular forms.  We extend this to a functorial
  family of objects corresponding to elliptic curves with level
  structure and modular forms on them.  Along the way, we produce a
  natural way to restrict to the cusps, providing multiplicative maps
  from $\Tmf$ with level structure to forms of $K$-theory.  In
  particular, this allows us to construct a connective spectrum
  $\tmf_0(3)$ consistent with properties suggested by Mahowald and
  Rezk.

  This is accomplished using the machinery of logarithmic structures.
  We construct a presheaf of locally even-periodic elliptic cohomology
  theories, equipped with highly structured multiplication, on the
  log-\'etale site of the moduli of elliptic curves.  Evaluating this
  presheaf on modular curves produces $\Tmf$ with level structure.
\end{abstract}


\section{Introduction}

The subject of topological modular forms traces its origin back to the
Witten genus.  The Witten genus is a function taking String manifolds
and producing elements of the power series ring $\mb C\pow{q}$, in a
manner preserving multiplication and bordism classes (making it a
genus of String manifolds).  It can therefore be described in terms of
a ring homomorphism from the bordism ring $MO\langle 8\rangle_*$ to
this ring of power series.  Moreover, Witten explained that this
should factor through a map taking values in a particular subring
$MF_*$: the ring of modular forms.

An algebraic perspective on modular forms is that they are universal
functions on elliptic curves.  Given a ring $R$, an elliptic curve
${\cal E}$ over $R$, and a nonvanishing invariant $1$-form $\omega$ on
${\cal E}$, a modular form $g$ assigns an invariant $g({\cal
  E},\omega) \in R$.  This is required to respect base change for maps
$R \to R'$ and be invariant under isomorphisms of elliptic curves
${\cal E} \to {\cal E}'$.  The form is of weight $k$ if it satisfies
$g({\cal E},\lambda \omega) = \lambda^{-k} g({\cal
  E},\omega)$. Modular forms can be added and multiplied, making them
into a graded ring.

Thus, given a graded ring $E_*$, we might have two pieces of data.

First, this ring may arise as the coefficient ring of a complex
orientable cohomology theory $E$.  This gives $E$ a theory of Chern
classes, and from this we construct a formal group $\mb G_E$ on $E_*$
\cite{quillen-fgl}.

Second, the ring $E_*$ may carry an elliptic curve ${\cal E}$ with an
invariant $1$-form $\omega$ in a manner appropriately compatible with
the grading.  This elliptic curve also produces a formal group
$\widehat{\cal E}$ on $E_*$.

The cohomology theory becomes an {\em elliptic cohomology theory} if
we also specify an isomorphism $\mb G_E \to \widehat{\cal E}$
\cite{ando-hopkins-strickland-witten}.  The defining properties of
modular forms automatically produce a map of graded rings $MF_* \to
E_*$.  Witten's work then suggests a topological lift: elliptic
cohomology theories should possess a map $MO\langle 8\rangle_* \to
E_*$.  We can ask if this comes from a natural map $MO\langle 8\rangle
\to E$, and further if the factorization $MO\langle 8\rangle_* \to
MF_* \to E_*$ has a natural topological lift.  The subjects of
elliptic cohomology and topological modular forms were spurred by the
these developments \cite{lnm-elliptictopology,
  landweber-ravenel-stong, hopkins-mahowald-elliptic}.

However, within this general framework there are several related
objects that have been interchangeably described by these names.

The original definitions of elliptic cohomology (${\cal E}ll$) and of
topological modular forms ($\tmf$) produced ring spectra, generating
multiplicative cohomology theories, that enjoy universal properties
for maps to certain elliptic cohomology theories.

Precisely describing this universality leads to a more powerful
description as a limit.  One would like to take a ring $E_*$ equipped
with an elliptic curve ${\cal E}$ and solve a realization problem,
functorially, to produce a spectrum representing a corresponding 
elliptic cohomology theory \cite{goerss-landweber-families}.  These
representing objects are called elliptic spectra.  After producing a
sufficiently large diagram of such elliptic spectra, the homotopy
limit is a ring spectrum of topological modular forms.

Even further, these functorial elliptic spectra satisfy a patching
property: many limit diagrams of rings equipped with elliptic curves
are translated into homotopy limit diagrams of elliptic spectra.  This
grants one the power to extend to a larger functor: a presheaf of
spectra on a moduli stack of elliptic curves. The homotopy limit
property is encoded by fibrancy in a Jardine model structure.

There are several versions of this story.  The division depends on
whether one allows only elliptic spectra corresponding to smooth
elliptic curves (represented by the Deligne-Mumford stack $\mell$),
elliptic curves with possible nodal singularities (represented by the
compactification $\Mell$), or general cubic curves (represented by an
algebraic stack ${\cal M}_{cub}$).  These are sometimes informally
given the names $\TMF$, $\Tmf$, and $\tmf$ respectively, and they
represent a progressive decrease in our ability to obtain conceptual
interpretations or construct objects.  The stack $\mell$ can be
extended to a derived stack representing derived elliptic curves
\cite{lurie-survey}; the so-called ``old'' construction of topological
modular forms due to Goerss-Hopkins-Miller, by obstruction theory,
gives the \'etale site of $\Mell$ a presheaf of elliptic spectra
\cite{mark-construct}; the less-conceptual process of taking a
connective cover of $\Tmf$ produces a single spectrum $\tmf$.  The
generalizations of $\tmf$ that exist have exceptionally interesting
properties, but are constructed in a somewhat ad-hoc manner (however,
see \cite{mathew-tmfhomology}).

There are many situations where extra functoriality for elliptic
spectra can be a great advantage \cite{mark-buildings,
  stojanoska-duality, meier-unitedelliptic}.  In particular,
considering elliptic curves equipped with extra structure, such as
choices of subgroups or torsion points, leads to a family of
generalizations of $\mell$.  The corresponding modular curves ${\cal
  M}(n)$, ${\cal M}_0(n)$, ${\cal M}_1(n)$, and more are extensively
studied from the points of view of number theory and arithmetic
geometry.  Away from primes dividing $n$, these automatically inherit
presheaves of elliptic spectra from $\TMF$.  For example, the maps ${\cal
  M}(n)[1/n] \to \mell[1/n]$ are \'etale covers with Galois group
$\GL_2(\mb Z/n)$, and so the \'etale presheaf defined on $\mell$ can be
evaluated on ${\cal M}(n)[1/n]$ to produce a spectrum $\TMF(n)$ with a
$\GL_2(\mb Z/n)$-action, and the homotopy fixed-point spectrum is
$\TMF[1/n]$.

However, the compactifications $\MMM(n)$ are not \'etale over $\Mell$.
In complex-analytic terms, if $\Mell$ is described in terms of a
coordinate $\tau$ on the upper half-plane, then the compactification
point or ``cusp'' has a coordinate $q = e^{2\pi i \tau}$; the cusps in
$\MMM(n)$ have coordinates expressed in terms of $q' = e^{2\pi i
  \tau/n}$, satisfying $q = (q')^n$.  Since the coefficient
$(q')^{n-1}$ in the expression $dq = n (q')^{n-1} dq'$ is not a unit,
the map is not an isomorphism on cotangent spaces over the cusp of
$\Mell$.  This ramification complicates the key input to the
Goerss-Hopkins-Miller obstruction theory.

The obstruction theory constructing $\Tmf$ does generalize to each
individual moduli stack, provided that if one imposes level structure
at $n$ one must first invert $n$.  This has been carried out in some
instances to construct objects $\Tmf(n)$ \cite{stojanoska-duality}.
However, needing to reconstruct $\Tmf$ once per level structure is
less than satisfying, and does not provide any functoriality across
different forms of level structure.  Moreover, it does not give an
immediate reason why one might expect a relationship between $\Tmf$
and the homotopy fixed-point object for the action of $\GL_2(\mb Z/n)$
on $\Tmf(n)$.

However, these ramified maps $\MMM(n) \to \Mell$ do possess a slightly
less restrictive form of regularity.  The cusp determines a
``logarithmic structure'' on $\Mell$ in the sense of Kato
(\S\ref{sec:loggeom}), and the various maps between moduli of
elliptic curves are log-\'etale---this roughly expresses the fact that
in the expression $\dlog(q^n) = n\cdot\dlog(q)$, the coefficient is a
unit away from primes dividing $n$.  These ramified covers form part
of a log-\'etale site enlarging the \'etale site of $\Mell$.
Moreover, for the log schemes we will be considering, the fiber
product in an appropriate category of objects with logarithmic
structure is geared so that the \v Cech nerve of the cover $\Mell(n)
\to \Mell$ is the simplicial bar construction for the action of
$\GL_2(\mb Z/n)$ on $\Mell(n)$ (see \ref{prop:cechnerve}).

These precise properties turn out to be useful from the point of view
of extending functoriality for topological modular forms.  In this
paper, we use them to establish the existence of topological modular
forms spectra for all modular curves (Theorem~\ref{thm:main}), as
follows.

For a fixed integer $N$ and a subgroup $\Gamma < \GL_2(\mb Z/N)$ we
will construct an $E_\infty$ ring spectrum $\Tmf(\Gamma)$, with $N$ a unit in
$\pi_0$.  It has three types of functoriality:
\begin{itemize}
\item inclusions $\Gamma < \Gamma'$ produce maps $\Tmf(\Gamma') \to \Tmf(\Gamma)$,
\item elements $g \in \GL_2(\mb Z/N)$ produce maps $\Tmf(\Gamma) \to
  \Tmf(g\Gamma g^{-1})$, and
\item projections $p\co \GL_2(\mb Z/NM) \to \GL_2(\mb Z/N)$
  produce natural equivalences $M^{-1} \Tmf(\Gamma) \to \Tmf(p^{-1}
  \Gamma)$.
\end{itemize}
These obey straightforward compatibility relations.  In addition, for
$K \lhd \Gamma < \GL_2(\mb Z/N)$ we find that the natural map
\[
\Tmf(\Gamma) \to \Tmf(K)^{h\Gamma/K}
\]
is an equivalence to the homotopy fixed-point spectrum.

(The reader who expects to see $\PSL_2$ rather than $\GL_2$ when
discussing modular curves should be aware that this is a difference
between the complex-analytic theory and the theory over $\mb Z$.
These modular curves may have several components when considered over
$\mb C$, and this may show up in the form of a larger ring of
constants.  Two modular curves may have the same set of points over
$\mb C$ because the group $\{\pm 1\}$ acts trivially on isomorphism
classes, but this action is nontrivial modular forms and on the
associated spectra.)

Our main theorems are consequences of our extending $\Tmf$ to a
fibrant presheaf of $E_\infty$ ring spectra on the log-\'etale site of
$\Mell$ (Theorem~\ref{thm:realmain}).  In rough:
\begin{itemize}
\item there is an assignment of $E_\infty$ ring spectra to certain
  generalized elliptic curves ${\cal E} \to X$ when the scheme
  $X$ is equipped with a compatible logarithmic structure;
\item this is functorial in certain diagrams
\[
\xymatrix{
{\cal E}' \ar[r] \ar[d] & {\cal E} \ar[d]\\
X' \ar[r] & X
}
\]
such that the map ${\cal E}' \to {\cal E} \times_X X'$
is an isomorphism of elliptic curves over $X'$;
\item this functor satisfies descent with respect to log-\'etale
  covers $\{U_\alpha \to X\}$ (or hypercovers), in the sense that the
  value on $X$ is the homotopy limit of the values on the \v Cech
  nerve; and
\item in the special case where ${\cal E} \to X$ comes from a
  Weierstrass curve over $\Spec(R)$, the associated spectrum realizes
  it by an even-periodic elliptic cohomology theory $E$.  (For more
  general elliptic curves on $\Spec(R)$, it is possible to show that
  the functor produces a weakly even-periodic object, and the formal
  group scheme $\Spf{} E^*(\mb{CP}^\infty)$ will come equipped with a
  natural isomorphism to the formal group of ${\cal E}$.)
\end{itemize}
The specific property of ${\cal E}/X$ needed is that the resulting
map $X \to \Mell$ classifying it must be log-\'etale. In particular,
any object \'etale over a modular curve satisfies this property, and
so the theorem simultaneously produces compatible presheaves of
$E_\infty$ ring spectra for all the modular curves.

The forms of $K$-theory from \cite[Appendix A]{tmforientation} play an
important role in our construction, and allow us to generalize the
main result of that paper.  A generalized elliptic curve ${\cal E} \to
X$ has a restriction to the ``cusps'' $X^c \subset X$.  The
restriction of ${\cal E}$ to $X^c$ is essentially a form of the
multiplicative group $\mb G_m$, and there is a corresponding form of
$K$-theory \cite{morava-forms}.  Our proof produces a
natural map of $E_\infty$ ring spectra from our functorial elliptic
cohomology theory built from $X$ to this form of $K$-theory built from
$X^c$ (Theorem~\ref{thm:thmcusp}).

We can then apply this to construct spectra $\tmf_1(3)$ and
$\tmf_0(3)$, connective $E_\infty$ ring spectra that realize
calculations carried out by Mahowald and Rezk
\cite{mahowald-rezk-level3}, together with a commutative diagram of
$E_\infty$ ring spectra
\[
\xymatrix{
\tmf_0(3) \ar[r] \ar[d] & ko[1/3] \ar[d] \\
\tmf_1(3) \ar[r] & ku[1/3]
}
\]
(Theorem~\ref{thm:tmfexists}). In particular, this $E_\infty$
connective version seems likely to coincide with one of their
conjectural models.  Carrying out calculations with further level
structures should be a very interesting and relatively accessible
problem.

There are several directions for further investigation and desirable
generalizations of the results of this paper.

One obvious missing component is the connection to elliptic genera.
The work of Ando, Hopkins, Rezk, and Strickland produced highly
multiplicative lifts of the sigma orientation and the
Atiyah-Bott-Shapiro orientation \cite{ando-hopkins-strickland-sigma,
  ando-hopkins-rezk-kotheory}, ultimately in the form of $E_\infty$
maps $MO\langle 8\rangle \to \Tmf$ and $MO\langle 4\rangle \to KO$.
\begin{question}
Does the Ochanine genus of Spin manifolds
\cite{ochanine-ellipticgenus} lift to an $E_\infty$ ring map $MSpin
\to \Tmf_0(2)$?
\end{question}
More generally, it would be very useful to know which objects
$\Tmf(\Gamma)$ (such as $\Tmf_1(3)$, which is complex orientable)
accept refinements of the sigma orientation to maps from $MSpin$,
$MSO$, $MU$, or others.

Rognes has recently developed a closely related concept of topological
logarithmic structures for applications in algebraic $K$-theory
\cite{rognes-logarithmic}.  The core construction in this paper is
built on a map of $E_\infty$ spaces from $\mathbb{N}$ to the
multiplicative monoid of $\Tmf$ after completion at the cusp, modeling
the logarithmic structure of $\Mell$ at the cusp.  This ultimately
imparts a topological logarithmic structure to the completion
$KO\pow{q}$.
\begin{question}
Does the topological logarithmic structure on $KO\pow{q}$ extend to a
topological logarithmic structure on $\Tmf$?
\end{question}
If so, it is natural to suspect that logarithmic obstruction theory is
a natural way to construct our presheaf.  There does not yet seem to be
an obstruction theory for maps between ring spectra with logarithmic
structures in the literature that is developed enough to carry out
this program, but this is almost entirely due to how recently
topological logarithmic structures have appeared.

Currently, $\Tmf$ is a functor defined on certain pairs of a scheme
together with an elliptic curve, with maps defined for change-of-base
and for isomorphisms of elliptic curves. However, separable isogenies
of elliptic curves are further maps that produce isomorphisms of 
formal group laws.
\begin{question}
Does the functor $\Tmf$ extend to a presheaf on the moduli object
classifying elliptic curves and separable isogenies?
\end{question}
After inverting $\ell$, this would allow the construction of two maps
$\Tmf \rightrightarrows \Tmf_0(\ell)$ classifying the two canonical
isogenous curves over $\Tmf_0(\ell)$, extend to the construction of
global versions of Behrens' $Q(\ell)$ spectra \cite{mark-buildings},
and allow an ``adelic'' formulation of the functoriality of $\Tmf$.
However, obstruction theory seems to be an inadequate tool for this
job.  For example, this would require constructing an action of $\mb
N^k$ on $N^{-1} \Tmf$, in the form of a commuting family of Adams
operations $[\ell]$ for primes $\ell$ dividing $N$.  Since this
obstruction theory is not in the category of $\Tmf$-algebras, the
relevant obstruction groups are not zero.  We hope that a consequence
of Lurie's constructive methods for associating spectra to
$p$-divisible groups (as employed in \cite{taf}) will be that the
smooth object $\TMF$ becomes functorial in separable isogenies, and
that the construction patching over the cusp in this paper will
inherit these isogeny operations from compatibility with the Adams
operations on complex $K$-theory.  We have chosen to write this paper
without appealing to Lurie's forthcoming work.

Some PEL Shimura varieties at higher heights, such as Picard
modular surfaces at height $3$, have compactifications (such as
Satake-Baily-Borel compactifications and smooth compactifications)
that generalize the compactification of the moduli of elliptic curves
\cite{faltings-chai}.
\begin{question}
Are there compactifications of other PEL Shimura varieties with
presheaves of spectra that extend the known presheaves of topological
automorphic forms on the interior?
\end{question}

Finally, the construction of the object $\tmf$ by connective cover
remains wholly unsatisfactory, and this is even more true when
considering level structure.  In an ideal world, $\tmf$ should be a
functor on a category of Weierstrass curves equipped with some form of
extra structure.  We await the enlightenment following discovery of
what exact form this structure should take.

\subsection{Outline of the method}

In order to minimize the amount of repeated effort, our proof is based
heavily on the tools used to construct topological modular forms in
\cite{mark-construct}.  The original method of Hopkins-Miller for the
construction of $\Tmf$ started with a construction away from the
supersingular locus, a construction at the supersingular locus, and an
argument in obstruction theory for patching the two together as
presheaves.  Though the cusp played no special role in their
construction, it will need to in this case. Our work follows a similar
paradigm to that of Hopkins-Miller: we start with their construction
away from the cusp, construct $\Tmf$ on the log site explicitly in a
formal neighborhood of the cusp, and then patch the two constructions
together.

In \S\ref{sec:loggeom} we give the necessary background on logarithmic
geometry that we need for this paper.  Since the objects of importance
to us will often be Deligne-Mumford stacks rather than schemes, we
need to express the theory in this generality.

In \S\ref{sec:elliptic} we have only two main goals.  The first goal
is to discuss the moduli objects in question.  Coarsely, they
parametrize log schemes equipped with an elliptic curve and some data
expressing compatibility of the logarithmic structure with the
cusps.  The second goal is to show that producing our desired
derived structure presheaf ${\cal O}$ is equivalent to defining
functorial elliptic cohomology theories on a much smaller category:
log rings equipped with suitable Weierstrass curves.  This requires
delving into some details about Grothendieck topologies.  Our approach
to this might be described as utilitarian.

In \S\ref{sec:homotopy-theory}, we assemble a number of fundamental
tools we will need from homotopy theory. The topics include elliptic
spectra, obstruction theory, rectification of diagrams where the
mapping spaces are homotopy discrete, homotopical versions of
sheafification, and background on $K$-theory and $\tmf$.

We may then begin to construct our presheaf ${\cal O}$ in
\S\ref{sec:construction}.

In \S\ref{sec:cusps}, we first use real $K$-theory to produce a
structure presheaf for log-\'etale maps to a formal
neighborhood $\mtate$ of the cusp, classifying forms of the Tate
curve. We start with Tate $K$-theory, an elliptic cohomology theory
with $\mathbb{Z}/2$-action called $K\pow{q}$ that was studied in
\cite{ando-hopkins-strickland-witten}.  We then extend it to a presheaf
for log-\'etale maps to $\mtate$ by direct construction. In this
section we also construct a natural map to forms of $K$-theory,
corresponding to evaluating at the cusps.

In \S\ref{sec:smooth}, we glue the $p$-completed structure presheaves
on the smooth locus and the cusps.  The basic construction at the
cusps starts with a map $\tmf \to KO\pow{q}$, an $E_\infty$ map
factoring the Witten genus, which is constructed in
Appendix~\ref{sec:witten}.  When we localize, it produces a map $\TMF
\to q^{-1} KO\pow{q}$, and degeneration of the Goerss-Hopkins
obstruction theory for $\Tmf$-algebras allows us to extend to a map of
presheaves.

Finally, \S\ref{sec:rational} uses an arithmetic square to glue the
$p$-complete constructions together with the rational constructions to
produce a global, integral, version.

This assembles all the pieces necessary to construct $\Tmf$
with level structure in \S\ref{sec:results}.

\subsection{Acknowledgments}

The authors would like to thank 
Matthew Ando,
Mark Behrens,
Andrew Blumberg,
Scott Carnahan,
Jordan Ellenberg,
Paul Goerss,
Mike Hopkins,
Nitu Kitchloo,
Michael Mandell,
Akhil Mathew,
Lennart Meier,
Niko Naumann,
William Messing,
Arthur Ogus,
Kyle Ormsby,
Charles Rezk,
Andrew Salch,
George Schaeffer, and
Vesna Stojanoska
for discussions related to this paper.  The anonymous referee of
\cite{tmforientation} also asked a critical question about
compatibility with $\mb Z/2$-actions, motivating our proof that
evaluation at the cusp is possible.

The ideas in this paper would not have existed without the Loen
conference ``$p$-Adic Geometry and Homotopy Theory'' introducing us to
logarithmic structures in 2009; the authors would like to thank the
participants there, as well as Clark Barwick and John Rognes for
organizing it.

This paper is written in dedication to Mark Mahowald.

\section{Logarithmic geometry}
\label{sec:loggeom}

\subsection{Logarithmic structures}

Our primary reference for logarithmic structures is
\cite{illusie-logoverview}, supplemented by \cite{kato-logarithmic,
  rognes-logarithmic, ogus-logbook}.  Since our ultimate objects of
study will be moduli of elliptic curves, we must also discuss this in
the stack case.

Throughout this paper we will assume that monoids have units, maps of
monoids are unital, and Deligne-Mumford stacks are separated.

\begin{defn}
A prelogarithmic structure on a scheme or Deligne-Mumford stack $X$
is an \'etale sheaf of commutative monoids $M_X$ on $X$, together
with a map $\alpha\co M_X \to {\cal O}_X$ of sheaves of commutative
monoids.  (Here ${\cal O}_X$ is viewed as a sheaf of monoids under
multiplication.)  We refer to $(X,M_X)$ as a prelog scheme or prelog
stack respectively.

A map of prelog schemes or prelog stacks $(Y,M_Y) \to (X,M_X)$ is a
map $Y \to X$ together with a commutative diagram
\[
\xymatrix{
M_X \ar[r] \ar[d] &
{\cal O}_X \ar[d] \\
f_* M_Y \ar[r] &
f_* {\cal O}_Y
}
\]
of sheaves of abelian monoids on $X$.
\end{defn}
\begin{defn}
A prelogarithmic structure $\alpha\co M_X \to {\cal O}_X$ is a
logarithmic structure if the map $\alpha$ restricts to an isomorphism
of sheaves $\alpha^{-1}({\cal O}_X^\times) \to {\cal O}_X^\times$.  In
this case we refer to $(X,M_X)$ as a log scheme or log stack.  A map
of log schemes or log stacks is a map of the underlying prelog
schemes or stacks.
\end{defn}
\begin{defn}
If $X$ has a prelogarithmic structure $\alpha\co M_X \to {\cal
O}_X$, the associated logarithmic structure $(M_X)^a$ is the
pushout of \'etale sheaves of commutative monoids
\[
\xymatrix{
  \alpha^{-1}({\cal O}_X^\times) \ar[r] \ar[d] & M_X \ar[d]\\
  {\cal O}_X^\times \ar[r] & (M_X)^a,
}
\]
equipped with the map $(M_X)^a \to {\cal O}_X$ determined by the
universal property.
\end{defn}

This logification functor $(X,M_X) \mapsto (X,(M_X)^a)$ is right
adjoint to the forgetful functor from log stacks to prelog stacks.

\begin{defn}
If $P$ is a commutative monoid with a map of commutative monoids $P
\to {\cal O}_X(X)$, we abuse notation by writing $P^a$ for the
logarithmic structure associated to the map from the constant sheaf
$P$ on $X$ to ${\cal O}_X$. The induced map $X \to
\Spec(\mb Z[P])$ then lifts to a map of log schemes.

In particular, if $\pi$ is a global section of ${\cal O}_X$, there is
a unique map of commutative monoids $\mb N \to {\cal O}_X(X)$ sending
the generator to $\pi$.  This is a prelog structure on $X$, and we
write $\langle \pi \rangle$ for the associated log structure
$(X,\langle \pi \rangle)$.
\end{defn}
\begin{defn}
\label{def:directimage}
Suppose $j\co U \subset X$ is an open subobject and $U$ is equipped
with a logarithmic structure $M_U$.  Let $j_\sharp M_U$ be the
pullback of the diagram of sheaves of commutative monoids on $X$:
\[
\xymatrix{
j_\sharp M_U \ar[d] \ar[r] & {\cal O}_X \ar[d] \\
j_* M_U \ar[r] & j_* {\cal O}_U
}
\]
The direct image logarithmic structure on $X$ is the associated
logarithmic structure $(j_\sharp M_U)^a$.
\end{defn}

In the particular case where $M_U$ is the trivial log structure
$(\mathcal{O}_U^\times \to \mathcal{O}_U)$ on $U$, this logarithmic
structure consists of the submonoid of ${\cal 
O}_X$ of functions whose restriction to $U$ is invertible.

\begin{defn}
\label{def:logproperties}
We say that $(X,M_X)$ is:
\begin{itemize}
\item integral if each group-completion map on stalks $M_{X,x} \to
  (M_{X,x})^{gp}$ is an injective map of monoids; 
\item quasicoherent if there exists a cover of $X$ by \'etale
  neighborhoods $U$ with maps of commutative monoids $P_U \to M_X(U)$
  such that the induced map $(P_U)^a \to M_X|_U$ is an isomorphism (we
  refer to such a pair $(U, P_U)$ as a chart for the logarithmic
  structure on $X$);
\item coherent if there exists a cover of $X$ by charts
  $(U,P_U)$ such that $P_U$ is finitely generated;
\item fine if it is integral and coherent;
\item saturated if it is integral, and if $M_X$ contains all
  sections $s$ of $(M_X)^{gp}$ which have a (positive integer)
  multiple in $M_X$.
\end{itemize}
\end{defn}

In this paper, we restrict attention to fine and saturated logarithmic
structures.  We note that $(X,M_X)$ is fine and saturated if and only
if there is a cover of $X$ by charts $(U,P_U)$ such that the monoid
$P_U$ itself is fine and saturated: $P_U$ is finitely generated, the
group completion map $P_U \to (P_U)^{gp}$ is injective, and an element
in $(P_U)^{gp}$ has a positive multiple in the image of $P_U$ if
and only if it is in $P_U$.

We write $\LSc$ for the category of fine and saturated log schemes,
with associated forgetful functor $\LSc \to \Sch$ to the category of
schemes.  This functor has a fully faithful right adjoint $\Sch \to
\LSc$, which takes a scheme $X$ and gives it the trivial log structure
$(X,{\cal O}_X^\times)$.  We will implicitly view $\Sch$ as a full
subcategory of $\LSc$ via this map.  Similarly, we have a full
subcategory $\ALS$ of affine log schemes, consisting of objects whose
underlying scheme is affine.

\subsection{Divisors with normal crossings}

We recall the definition of a normal crossings divisor
\cite[3.1.5]{sga5} and give a definition in the stack context.
\begin{defn}
\label{def:divisors}
Let $X$ be a scheme.  A strict normal crossings divisor on $X$ is a
divisor $D$ such that there exist sections $f_1,\ldots,f_r \in {\cal
  O}_X(X)$ with the properties that
\begin{enumerate}
\item $X$ is regular at all points in the support of $D$,
\item $D = \sum {\rm div}(f_i)$, and
\item for all $x$ in the support of $D$, the
  set of elements $f_i$ in the maximal ideal at $x$ form part of a
  regular sequence of parameters for the local ring ${\cal
    O}_{X,x}$.
\end{enumerate}
A normal crossings divisor on $X$ is one for which there exists an
\'etale cover by maps $U_\alpha \to X$ such that $D|_{U_\alpha}$ is
a strict normal crossings divisor for all $\alpha$.

A normal crossings divisor is smooth if we may take $r=1$ in the above
definition.
\end{defn}

\begin{defn}
Let $X$ be a Deligne-Mumford stack.  A normal crossings divisor $D$ on
$X$ is a map associating a normal crossings divisor $D_Y$ to each
scheme $Y$ \'etale over $X$, compatible with pullbacks for morphisms
over $X$.  The complement $X \setminus D$ is the open substack whose
restriction to each such $Y$ is $Y \setminus D_Y$.  The divisor $D$ is
smooth if all its restrictions $D_Y$ are smooth.
\end{defn}

\begin{rmk}
Suppose $\{U_\alpha \to X\}$ is an \'etale cover of $X$ by schemes.
The data of a normal crossings divisor (resp. smooth divisor) on $X$ is
equivalent to normal crossings divisors (resp. smooth divisors) $D_\alpha$
on $U_\alpha$ such that, on any intersection $U_\alpha \times_X
U_\beta$, we have $p_1^* D_\alpha = p_2^* D_\beta$.  In particular,
for stacks of the form $[Y /\!/ G]$ for $Y$ a scheme with $G$-action,
this is equivalent to a choice of $G$-invariant divisor on $Y$.
\end{rmk}

\begin{rmk}
\label{rmk:sectiondivisor}
Suppose ${\cal L}$ is a line bundle on $X$ with a global section $s
\in \Gamma(X,{\cal L})$ which induces an embedding ${\cal O}_X \to
{\cal L}$. Tensoring with ${\cal L}^{-1}$, we obtain an ideal sheaf
${\cal I} \subset {\cal O}_X$ which defines a compatible family of
divisors (specifically, the divisor associated to the section
$s$).
\end{rmk}

\begin{defn}
If $D$ is a normal crossings divisor on $X$, then the associated log
structure $M_D \to {\cal O}_X$ is the direct image of the trivial log
structure on $X \setminus D$.
\end{defn}

\begin{prop}
\label{prop:ncdfs}
If $D$ is a normal crossings divisor on $X$, then the associated log
stack $(X,M_D)$ is fine and saturated.
\end{prop}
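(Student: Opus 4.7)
The plan is to reduce to the strict case and then construct an explicit chart. Since fineness and saturatedness are defined via \emph{existence} of a cover by suitable charts (Definition~\ref{def:logproperties}), and since being a normal crossings divisor is itself étale-local by definition, I would pass to an étale cover $\{U_\alpha \to X\}$ on which $D|_{U_\alpha}$ becomes a strict normal crossings divisor cut out by elements $f_1,\ldots,f_r \in {\cal O}_X(U_\alpha)$ forming part of a regular system of parameters at every point of their support. This reduces the problem to producing a fine and saturated chart on each $U_\alpha$.

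For such a $U_\alpha$, possibly after further shrinking so that the $f_i$ are defined globally, the map of monoids $\mb N^r \to \Gamma(U_\alpha, {\cal O}_{U_\alpha})$ sending the $i$-th standard generator to $f_i$ is a prelog structure on $U_\alpha$. Each $f_i$ is a unit on the complement $U_\alpha \setminus D$ (its vanishing locus is contained in $D$), so this map factors through sections of the direct image log structure $M_D$. By the universal property of logification, there is an induced map of log structures $(\mb N^r)^a \to M_D$, and the chart assertion is that this is an isomorphism. The target monoid $\mb N^r$ is manifestly finitely generated, integral, and saturated, so once this isomorphism is established the desired conclusion follows.

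The main step is therefore verifying the isomorphism $(\mb N^r)^a \cong M_D$ étale-locally, and this is where the regularity hypothesis does the work. On stalks at a geometric point $x$, the strict henselization ${\cal O}_{X,x}^{sh}$ is a regular local ring, hence a UFD, and the elements $f_i$ passing through $x$ generate distinct height-one prime ideals (each ${\cal O}_{X,x}^{sh}/(f_i)$ is regular, hence a domain, since the $f_i$ extend to a regular system of parameters). The stalk of $M_D$ at $x$ consists, by Definition~\ref{def:directimage}, of those elements of ${\cal O}_{X,x}^{sh}$ whose restriction to $X \setminus D$ is invertible, i.e., of elements whose prime factorization involves only the $f_i$ meeting $x$. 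Unique factorization in a UFD then presents every such element uniquely as $u \cdot \prod f_i^{a_i}$ with $u \in ({\cal O}_{X,x}^{sh})^\times$ and $a_i \geq 0$, which is precisely the stalk of $(\mb N^r)^a$.

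The main obstacle I anticipate is the bookkeeping to make this stalk-level statement rigorous étale-locally on a Deligne-Mumford stack: one must check that the chart monoid $\mb N^r$ can be chosen consistently on a single étale neighborhood so that the induced map of sheaves of monoids is an isomorphism, not merely that stalks match. This is handled by shrinking so that only the components of $D$ meeting a fixed point remain, ensuring the $f_i$ are part of a regular sequence throughout, and then verifying that both sides of the map $(\mb N^r)^a \to M_D$ are computed by the same sheafified pushout of ${\cal O}_X^\times \leftarrow \alpha^{-1}({\cal O}_X^\times) \to \mb N^r$ locally.
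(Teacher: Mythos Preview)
Your proposal is correct and follows essentially the same approach as the paper: pass to an \'etale cover where $D$ is strict, use the $f_i$ to define a chart $\mb N^r \to {\cal O}_X(U)$, and invoke regularity (via the UFD property of the local rings) to see that every function invertible off $D$ is uniquely a unit times a monomial in the $f_i$, so that $(\mb N^r)^a \cong M_D$. The paper's proof is just a more compressed version of what you wrote.
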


\begin{proof}
By definition, for sufficiently small \'etale opens $U \to X$ we have
a sequence of sections $f_1,\ldots,f_r \in {\cal O}_X(U)$ generating
the divisor and forming part of a regular sequence of parameters for
the local rings in the support of $D$.  These determine a map $\mb N^r
\to M_D \subset {\cal O}_X(U)$ from the free commutative monoid on 
$r$ generators, which is fine and saturated.  Regularity implies that
any function invertible away from $D$ is locally and uniquely a unit
times a product of the $f_i$, and so the resulting logarithmic
structure is isomorphic to $M_D$.
\end{proof}

\subsection{Log-\'etale maps}

We recall from Definition~\ref{def:logproperties} that a chart $(U,P)$
of a log stack $(X,M_X)$ is a particular type of map $(U,P) \to
(X,M_X)$ from a prelog scheme to a log stack: an \'etale map $U \to X$
and a commutative monoid $P$ mapping to ${\cal O}_X(U)$ such that the
associated map $P^a \to M_X|_U$ is an isomorphism.  Similarly, a chart
of a map $f\co (Y,M_Y) \to (X,M_X)$ is a commutative diagram
\[
\xymatrix{
(V,Q) \ar[r] \ar[d] & (Y,M_Y) \ar[d]\\
(U,P) \ar[r] & (X,M_X)
}
\]
of prelog stacks whose horizontal arrows are charts; these are
determined by maps $V \to U$ and $P \to Q$ making certain diagrams
commute.  If both logarithmic structures are fine, any map $f$ has an
\'etale cover by charts.

\begin{defn}[{\cite[3.3]{kato-logarithmic}}]
A map $(Y,M_Y) \to (X,M_X)$ is log-\'etale if the underlying map $Y
\to X$ is locally of finite presentation, and there exists a cover
of $f$ by charts $(V,Q) \to (U,P)$ such that
\begin{enumerate}
\item the map $P \to Q$ becomes an injection $P^{gp} \to Q^{gp}$ whose
  cokernel has finite order invertible on $U$, and
\item the induced map $V \to U \times_{\Spec(\mb Z[P])} \Spec(\mb
  Z[Q])$ is \'etale.
\end{enumerate}
\end{defn}

\begin{rmk}
As with ordinary \'etale maps, these maps can also be defined in
terms of an infinitesimal lifting criterion or in terms of inducing
an isomorphism on logarithmic cotangent complexes \cite[3.3,
3.12]{kato-logarithmic}.
\end{rmk}

\begin{rmk}
In particular, any map $P \to Q$ which induces an isomorphism $P^{gp}
\to Q^{gp}$ produces log-\'etale maps $(\Spec(k[Q]), Q^a) \to
(\Spec(k[P]), P^a)$. Even in the case of fine and saturated monoids,
these can produce a wide variety of phenomena on the underlying
schemes.
\end{rmk}

The Kummer log-\'etale maps are particularly well-behaved log-\'etale
maps.
\begin{defn}[{\cite[1.6]{illusie-logoverview}}]
A map $h\co P \to Q$ of commutative monoids is of Kummer type if it is
injective, and for any $a \in Q$ there exists an $n \geq 1$ such that
$a^n$ is in the image of $h$.

A map $(Y,M_Y) \to (X,M_X)$ is Kummer log-\'etale if it is log-\'etale
and, at each point $y$ of $Y$, the induced map on stalks
\[
(M_X/{\cal O}_X^*)_{X,\overline{f(y)}} \to (M_Y/{\cal
    O}_Y^*)_{Y,\overline y}
\]
is of Kummer type.
\end{defn}

\begin{rmk}
\label{rmk:kummeretale}
In particular, if $f\co (Y,M_Y) \to (X,M_X)$ can be covered by
log-\'etale charts $(V,Q) \to (U,P)$ such that the map of monoids $P
\to Q$ is of Kummer type, the map $f$ is Kummer log-\'etale.
\end{rmk}

\begin{prop}
\label{prop:kummer}
Suppose $(X,M_X)$ is a log stack determined by a smooth divisor $D$ on
$X$.  Then any log-\'etale map $(Y,M_Y) \to (X,M_X)$ is flat and Kummer
log-\'etale, and the log structure on $Y$ is determined by a smooth
divisor on $Y$ over $D$.
\end{prop}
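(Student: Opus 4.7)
The plan is to work \'etale-locally on $X$ and reduce everything to a classification of fine saturated monoids of rank one. Since the claim is \'etale-local on $X$, I may assume $X = \Spec R$ with $D = V(\pi)$ for a section $\pi$ that forms part of a regular system of parameters at each point of $D$; then $M_X = \langle \pi \rangle$ corresponds to the chart $\mathbb{N} \to R$, $1 \mapsto \pi$. For any log-\'etale $f\co (Y,M_Y) \to (X,M_X)$ I choose, \'etale-locally on $Y$, a chart $(V,Q) \to (U,\mathbb{N})$ refining $f$, with $Q$ fine and saturated, $\mathbb{Z} \hookrightarrow Q^{gp}$ injective with finite cokernel of order invertible on $U$, and $V \to W := U\times_{\Spec\mathbb{Z}[\mathbb{N}]}\Spec\mathbb{Z}[Q]$ \'etale.

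The critical structural input is a classification of $Q$. Since $Q$ is fine and saturated and $Q^{gp}$ has rank at most one, the finitely generated saturated submonoids of $Q^{gp}/\mathrm{torsion} \cong \mathbb{Z}$ are only $\mathbb{N}$, $-\mathbb{N}$, $\{0\}$, and $\mathbb{Z}$. Using that the torsion subgroup $T$ of $Q^{gp}$ sits inside $Q$ by saturation, this forces an isomorphism $Q \cong \mathbb{N} \oplus T$ or $Q \cong \mathbb{Z} \oplus T$. In the latter case, $Q$ is a group, so the chart map lands in $\mathcal{O}_V^\times$, $\pi$ becomes a unit on $V$, and the map factors through $X \setminus D$ where both sides carry trivial log structures and the statement is clear. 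Otherwise, writing the generator of $\mathbb{N}$ as $(n,t_0) \in \mathbb{N}\oplus T$, the cokernel $(\mathbb{Z}\oplus T)/\langle (n,t_0)\rangle$ has order $n|T|$, so both $n$ and $|T|$ are invertible on $U$.

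Each of the three conclusions then follows. \textbf{(i) Kummer type:} for any $(m,s) \in Q$, $(n|T|)(m,s) = (n|T|m, 0) = |T|m\cdot(n,t_0)$ lies in the image of $\mathbb{N}$, so $P\to Q$ is of Kummer type and Remark~\ref{rmk:kummeretale} gives that $f$ is Kummer log-\'etale. \textbf{(ii) Flatness:} the ring map $\mathbb{Z}[\mathbb{N}] \to \mathbb{Z}[Q]$ is $\mathbb{Z}[x] \to \mathbb{Z}[T][y]$, $x \mapsto [t_0]y^n$, which is free on the basis $\{[t]y^i : t\in T,\ 0\leq i<n\}$; so $W\to U$ is flat and $V\to X$ is flat. \textbf{(iii) Smooth divisor:} since $T$ maps to units in $\mathcal{O}_V$, the log structure $Q^a$ equals $\langle y\rangle$, the log structure of the principal divisor $V(y)$; and the relation $[t_0]y^n = \pi$ with $[t_0]$ a unit and $n$ invertible allows $y$ to replace $\pi$ in a regular system of parameters on $W$, so that $V(y)$ is a smooth divisor on $W$, and its \'etale pullback to $V$ remains a smooth divisor lying over $D$.

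I expect the main obstacle to be step (iii): promoting $y$ to a regular parameter on $W$ at points lying over $D$, and patching the local smooth divisors $V(y)$ into a global smooth divisor on $Y$ over $D$. The first is a tame-ramification computation on regular local rings that depends essentially on $n$ being invertible on $U$ (this is precisely where the log-\'etale hypothesis is used in a nontrivial way), and the second follows from the intrinsic description of the divisor as the locus where $M_Y$ fails to be trivial, so the local choices are independent of the chart. Everything else is bookkeeping once the monoid classification is in hand.
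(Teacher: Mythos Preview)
Your proposal is correct and follows essentially the same approach as the paper: reduce \'etale-locally to a chart $\mathbb{N}\to Q$ with $Q$ fine and saturated, classify $Q$ as $\mathbb{N}\oplus T$ or $\mathbb{Z}\oplus T$ using the rank-one constraint, and read off Kummer type, flatness, and the smooth-divisor description from the explicit form of $\mathbb{Z}[Q]$ over $\mathbb{Z}[\pi]$. The only cosmetic difference is that the paper factors $V\to U$ through the \'etale intermediate $Z = U\times\Spec(\mathbb{Z}[T][1/n])$ before adjoining the $d$-th root, which makes the regularity claim in your step~(iii) (that $y$ extends to a regular system of parameters on $W$) slightly more transparent; your direct argument on $W=\mathcal{O}_U[T][y]/([t_0]y^n-\pi)$ is equivalent once one notes $\mathcal{O}_U[T]$ is \'etale over $\mathcal{O}_U$.
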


\begin{proof}
It suffices to work locally in the \'etale topology, so without loss
of generality it suffices to prove this on a coordinate chart of the
form $(U,\langle \pi \rangle) \to (X,M_X)$ for a scheme $U$, where we
view $\pi$ as a map $\mb N \to {\cal O}_X(U)$.

Consider the following data:
\begin{itemize}
\item a finitely generated commutative monoid $Q$ which is integral
  and saturated;
\item a map of monoids $\mb N \to Q$ such that, on group completion,
  the map $\mb Z \to Q^{gp}$ is an injection with cokernel of order $n
  < \infty$; and
\item an \'etale map of schemes $V \to U \times_{\Spec(\mb Z[\mb N])} 
  \Spec(\mb Z[Q][1/n])$.
\end{itemize}
In this situation, the monoid $Q$ generates the log structure $Q^a =
M_V$ on $V$.  The associated map $(V,M_V) \to (U,\langle\pi\rangle)$
is called a standard log-\'etale map, and it is Kummer log-\'etale by
definition.

By \cite[3.5]{kato-logarithmic}, any log-\'etale map $(Y,M_Y)
\to (U,\langle \pi\rangle)$ is, \'etale-locally on $Y$, a standard
log-\'etale map.

The hypotheses on $Q$ imply that $Q^{gp} \cong \mb Z \times F$ where
$|F|$ divides $n$, and the saturation property implies that either $Q
= \mb N \times F$ or $Q = \mb Z \times F$; both of these generate the
same log structure as the submonoid $\mb N \times 0$.

We obtain a factorization of the map $V \to U$ as
\[
V \to U \times_{\Spec(\mb Z[\mb N])}\Spec(\mb Z[Q][1/n]) \to U \times \Spec(\mb Z[F][1/n]) \to U.
\]
The first and last maps are \'etale, and so it suffices to show that
the middle map is flat and that the log structure is generated by a
smooth divisor.  Letting $Z = U \times \Spec(\mb Z[F][1/n])$, the
center map is of the form
\[
Z \times_{\Spec(\mb Z[\pi])} \Spec(\mb Z[(u\pi)^{1/d}]) \to Z
\]
for some unit $u \in F$ and $d$ dividing $n$.  This map is, in
particular, finite flat.  The log structure on the cover is generated
by $(u\pi)^{1/d}$.  Since $\pi$ describes a smooth divisor
on $Z$ the element $(u\pi)^{1/d}$ determines a smooth divisor on the
cover: at any point of the support of $D$, any regular sequence on $Z$
containing $\pi$ lifts to a regular sequence on the cover once we
replace $\pi$ with $(u\pi)^{1/d}$.
\end{proof}

In particular, this shows that there is a cofinal collection of
log-\'etale covers of $(U,\langle \pi\rangle)$ consisting of \'etale
covers of
\[
(Y \times_{\Spec(\mb Z[\pi])} \Spec(\mb Z[\pi^{1/n}, 1/n]), \langle
\pi^{1/n}\rangle).
\]
In \S\ref{sec:cusps} we will make use of the special case of a
power series ring.

\begin{cor}
\label{cor:logetalecusp}
Any log-\'etale map $(\mb Z\pow{q},\langle q\rangle) \to (R,M)$, where
$R$ is a formal $\mb Z\pow{q}$-algebra, has an \'etale cover by maps 
\[
(\mb Z\pow{q}, \langle q \rangle) \to 
(\mb Z\pow{q^{1/m}}, \langle q^{1/m} \rangle) \to
(S, \langle q^{1/m}\rangle),
\]
where $S$ is an \'etale formal $\mb Z\pow{q^{1/m}}$ algebra with $m$
invertible.
\end{cor}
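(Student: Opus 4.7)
The plan is to specialize Proposition~\ref{prop:kummer} and the cofinality remark immediately following its proof to $X = \Spec(\mb Z\pow q)$ with the smooth divisor $\{q=0\}$, and then to pass to the formal neighborhood of this divisor inside the cover.

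First, since $\langle q\rangle$ is the log structure of the smooth divisor cut out by $q$, Proposition~\ref{prop:kummer} applies: any log-\'etale map $(\mb Z\pow q, \langle q\rangle) \to (R, M)$ is Kummer log-\'etale and flat, with $M$ coming from a smooth divisor over $\{q = 0\}$. Invoking the structure theorem \cite[3.5]{kato-logarithmic} used in the proof of the proposition (and applying the saturation argument there to reduce the monoid $Q$ to the form $\mb N$), the map admits an \'etale cover by standard log-\'etale maps factoring as
\[
(\mb Z\pow q, \langle q\rangle) \to (T, \langle q^{1/m}\rangle) \to (S_0, \langle q^{1/m}\rangle),
\]
where $T = \mb Z\pow q \otimes_{\mb Z[q]} \mb Z[q^{1/m}, 1/m]$ and $S_0 \to T$ is \'etale in the ordinary sense. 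Any unit ambiguity of the form $(uq)^{1/m}$ produced in the proof of the proposition can be absorbed into $S_0$, since $m$ is invertible there.

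Second, I would pass to formal completions along $q = 0$. Because $R$ is a formal $\mb Z\pow q$-algebra, the composite $T \to S_0 \to R$ factors through the $q$-adic (equivalently, $q^{1/m}$-adic) completion of $S_0$. The completion of $T$ along $(q^{1/m})$ is $\mb Z\pow{q^{1/m}}[1/m]$, and the completion of $S_0$ along the preimage of this ideal is a formal \'etale $\mb Z\pow{q^{1/m}}[1/m]$-algebra $S$. Packaging the chain $\mb Z\pow{q^{1/m}} \to \mb Z\pow{q^{1/m}}[1/m] \to S$ as a single \'etale map $\mb Z\pow{q^{1/m}} \to S$ with $m$ invertible in $S$ gives the claimed factorization, and these covers are cofinal since the original standard covers were.

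The only step that is not pure bookkeeping on top of Proposition~\ref{prop:kummer} is checking that formal completion preserves the standard log-\'etale factorization in a compatible way; I expect this to be the main obstacle, but it follows from the fact that $S_0$ is \'etale and hence flat over $T$, so that completion commutes with the base change $T \to \mb Z\pow{q^{1/m}}[1/m]$ in the required sense.
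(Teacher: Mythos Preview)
Your approach matches the paper's: the corollary is stated there without a separate proof, as a direct specialization of Proposition~\ref{prop:kummer} and the ensuing cofinality remark to the formal power series setting, and your completion argument fills in precisely the step the paper leaves implicit. One small correction: the $q^{1/m}$-adic completion of $T \cong \mb Z\pow{q^{1/m}}[1/m]$ is $\mb Z[1/m]\pow{q^{1/m}}$ rather than $T$ itself, but the rest of the argument is unaffected.
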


\begin{rmk}
We note that the integer $m$ is recovered as a ramification index of
the ideal $(q)$, and that the \'etale extension $S$ of $\mb
Z\pow{q^{1/m}}$ is uniquely of the form $\bar S\pow{q^{1/m}}$, where
$\bar S$ is the \'etale extension $S/(q^{1/m})$ of $\mb Z[1/m]$.  In
other words, such an extension is determined up to isomorphism by the
ramification index and the residue extension, but isomorphisms may
differ by a map $q^{1/m} \mapsto \zeta q^{1/m}$.
\end{rmk}

\begin{rmk}
\label{rmk:ncd-kummer}
All of the conclusions of Proposition~\ref{prop:kummer}, except for
the final one, remain true if we replace smooth divisors by normal
crossings divisors.  It is not necessarily the case that a normal
crossings divisor locally lifts to a normal crossings divisor.
\end{rmk}

\subsection{Logarithmic topologies}

The Kummer log-\'etale maps give rise to a Kummer log-\'etale topology
\cite[\S2]{illusie-logoverview}.  Specifically, it is a Grothendieck
topology generated by covers which are collections $\{(U_\beta,
M_{U_\beta}) \to (X,M_X)\}$ of Kummer log-\'etale maps such that the
underlying scheme $X$ is the union of the images of $U_\beta$.
The representable functors are sheaves for this topology, and
quasicoherent sheaves on $X$ automatically extend to sheaves on the
Kummer log-\'etale site of $X$ \cite{niziol-logschemes-I}.

Proposition~\ref{prop:kummer} has the following consequence which, as
in Remark~\ref{rmk:ncd-kummer}, generalizes to the case of a normal
crossings divisor.
\begin{prop}
Suppose $(X,M_X)$ is a log stack defined by a smooth divisor.  Then
the log-\'etale and Kummer log-\'etale topologies are equivalent.
\end{prop}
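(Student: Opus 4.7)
The plan is to deduce this essentially directly from Proposition~\ref{prop:kummer}, since that proposition already packages the hard content. The Kummer log-\'etale topology is by definition a refinement of the log-\'etale topology: every Kummer log-\'etale map is log-\'etale, so every Kummer log-\'etale cover is a log-\'etale cover. What needs proof is the reverse inclusion: when $M_X$ comes from a smooth divisor, every log-\'etale cover of $(X,M_X)$ is already a Kummer log-\'etale cover.

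First I would observe that, by Proposition~\ref{prop:kummer}, any log-\'etale map $(Y,M_Y) \to (X,M_X)$ is Kummer log-\'etale, and moreover $M_Y$ is itself the log structure associated to a smooth divisor on $Y$ lying over $D$. So the underlying categories of the two sites over $(X,M_X)$ literally coincide, not merely up to some equivalence.

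Next I would handle morphisms within the site. Any log-\'etale map $(Z,M_Z) \to (Y,M_Y)$ between two objects over $(X,M_X)$ is log-\'etale over $(X,M_X)$ by composition, hence again Kummer log-\'etale by Proposition~\ref{prop:kummer}; and being Kummer log-\'etale is a property of stalks of characteristic monoids, which is independent of the chosen base. So the morphism classes in the two sites agree as well. A covering family in the log-\'etale topology is a jointly surjective collection of log-\'etale maps, and by the above each member is automatically Kummer log-\'etale, so the family is a covering in the Kummer log-\'etale topology. The reverse inclusion of covering families is immediate from the definitions.

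There is no genuine obstacle: the only substantive step is the application of Proposition~\ref{prop:kummer}, and all remaining verifications are bookkeeping about the definitions of sites and topologies. (One small point to keep in mind is that Proposition~\ref{prop:kummer} does not extend verbatim to normal crossings divisors, as recorded in Remark~\ref{rmk:ncd-kummer}, which is why the hypothesis of smoothness of $D$ cannot simply be dropped.)
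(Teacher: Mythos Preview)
Your approach is correct and is essentially the paper's: the paper simply records this proposition as a consequence of Proposition~\ref{prop:kummer}, and you have spelled out that deduction in detail.

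Two minor points. In your middle paragraph, the detour through the composite to $(X,M_X)$ is unnecessary and the phrase ``independent of the chosen base'' does not quite do the work you want; since you already showed $(Y,M_Y)$ itself carries a smooth-divisor log structure, Proposition~\ref{prop:kummer} applies directly to the log-\'etale map $(Z,M_Z)\to(Y,M_Y)$ and gives the Kummer conclusion immediately. And your final parenthetical has it backwards: Remark~\ref{rmk:ncd-kummer} says that all conclusions of Proposition~\ref{prop:kummer} \emph{except} the last one remain valid for normal crossings divisors, and the paper explicitly notes just before stating the proposition that the present result \emph{does} generalize to that case.
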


The Kummer site has the advantage that we can obtain control on
cohomology.

\begin{prop}[{\cite[3.27]{niziol-logschemes-I}}]
\label{prop:serrevanishing}
Suppose $(X,M_X)$ is an affine log scheme with a sheaf ${\cal F}$ on
its Kummer log-\'etale site which is locally quasicoherent.  Then the
cohomology of $(X,M_X)$ with coefficients in ${\cal F}$ vanishes in
positive degrees.
\end{prop}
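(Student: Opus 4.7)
The strategy is to reduce to a computable \v Cech cohomology calculation for a cofinal family of Kummer log-\'etale covers. First, by passing to a fine saturated chart and working \'etale-locally on $X$, we may assume $(X, M_X)$ is an affine log scheme with a global chart $P \to {\cal O}_X(X)$ for $P$ fine and saturated, and that ${\cal F}$ is quasicoherent in the ordinary sense. A \v Cech-to-derived-functor spectral sequence argument then shows it suffices to verify that for a cofinal system of Kummer log-\'etale covers $\{U_\alpha \to X\}$, the augmented \v Cech complex of ${\cal F}$ is exact.

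The next step is to identify such a cofinal family. Following the structural analysis underlying Proposition~\ref{prop:kummer} and Corollary~\ref{cor:logetalecusp}, any Kummer log-\'etale map is locally of the form
\[
(V, Q^a) \to (U, P^a) \to (X, M_X),
\]
where $U \to X$ is \'etale, $P \to Q$ is a Kummer inclusion of fine saturated monoids whose cokernel $Q^{gp}/P^{gp}$ has order $n$ invertible on $X$, and $V \to U \times_{\Spec \mb Z[P]} \Spec \mb Z[Q][1/n]$ is \'etale. After further refinement, one can take $Q = \tfrac{1}{n}P$, so each cover decomposes into an ordinary \'etale cover of an affine scheme together with the ``standard'' Kummer extension $\Spec \mb Z[\tfrac{1}{n}P][1/n] \to \Spec \mb Z[P][1/n]$ pulled back to $X$.

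For the \'etale part, classical Serre vanishing for quasicoherent sheaves on affine schemes yields \v Cech-acyclicity. For the Kummer part, the standard extension is finite and faithfully flat, and is Galois with group $G = \Hom(\tfrac{1}{n}P^{gp}/P^{gp}, \mu_n)$ (after a further \'etale base change if needed to split $\mu_n$), of order dividing a power of $n$ and hence invertible on $X$. The \v Cech complex of the quasicoherent sheaf with respect to this Galois cover is then identified with the bar complex computing group cohomology of $G$ with coefficients in the corresponding module, which vanishes in positive degrees because $|G|$ is a unit.

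The main obstacle is the technical bookkeeping required for the reduction: one must verify that the standard log-\'etale covers above really form a cofinal system in the Kummer log-\'etale topology, and that the local quasicoherence of ${\cal F}$ is preserved under pullback along these covers so that classical Serre vanishing applies on each stage of the \v Cech complex. Once these identifications are in place, combining ordinary Serre vanishing with the group-cohomological vanishing for the standard Kummer Galois extensions yields the proposition.
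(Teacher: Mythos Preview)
The paper does not prove this proposition; it is quoted as a black box from Nizio\l\ \cite[3.27]{niziol-logschemes-I}, so there is no in-paper argument to compare your attempt against.

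That said, your sketch is essentially the argument Nizio\l\ gives. The two key inputs are exactly the ones you isolate: (i) any Kummer log-\'etale cover of an affine fs log scheme with chart $P$ can be refined by a cover that factors as an ordinary \'etale cover followed by a standard Kummer cover of the shape $X \times_{\Spec \mb Z[P]} \Spec \mb Z[\tfrac{1}{n}P][1/n] \to X$; and (ii) for such a standard cover the \v Cech complex of a quasicoherent sheaf is identified with the bar complex for a finite group of order dividing a power of $n$, hence invertible on $X$, so its cohomology vanishes in positive degrees. Two places where your write-up is a bit loose: the opening ``work \'etale-locally on $X$ to get a global chart'' is not really a preliminary reduction but part of building the cofinal system of covers, since strict \'etale maps already live in the Kummer site; and the fiber products in the \v Cech nerve must be taken in the category of fs log schemes, not underlying schemes, for the identification with the $G$-bar construction to go through (this is the same phenomenon as in Proposition~\ref{prop:cechnerve}, and is where saturatedness earns its keep). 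With those two points tightened the outline is correct and matches the cited source.
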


\begin{rmk}
Quasicoherence is not a local property in the log-\'etale topology.
Log-\'etale maps are very often not flat, which means that log-\'etale
covers might not satisfy common types of descent.
\end{rmk}

From a standard hypercover argument relating derived functor
cohomology to coherent cohomology, we obtain the following.

\begin{cor}
\label{cor:quasicoherentcohomology}
Suppose $(X,M_X)$ is a log stack with a quasicoherent sheaf ${\cal F}$
on $X$.  Then the natural map from Zariski cohomology of $X$ to the
Kummer log-\'etale cohomology of $(X,M_X)$ with coefficients in ${\cal
  F}$ is an isomorphism.
\end{cor}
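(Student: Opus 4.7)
The plan is to use a Čech/Leray-style descent argument comparing the two sites. Let $\epsilon \co (X,M_X)_{\mathrm{Kum}} \to X_{\mathrm{Zar}}$ denote the evident morphism of sites (every Zariski open of $X$ is given its restricted log structure and viewed as a Kummer log-étale neighborhood of $(X,M_X)$). The quasicoherent sheaf ${\cal F}$ on $X$ pulls back to a sheaf on the Kummer log-étale site, and we want to show the pullback map on cohomology is an isomorphism.

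The key step is to show that the higher direct images $R^q\epsilon_* {\cal F}$ vanish for $q > 0$. By definition these are the Zariski sheaves associated to the presheaves $U \mapsto H^q((U,M_X|_U)_{\mathrm{Kum}},{\cal F}|_U)$, so it suffices to exhibit a basis of Zariski opens on which the Kummer log-étale cohomology of the restriction of ${\cal F}$ vanishes in positive degrees. When $X$ is a scheme, take $U$ affine: then ${\cal F}|_U$ is quasicoherent, hence locally quasicoherent on the Kummer log-étale site of $U$ (using that quasicoherent sheaves automatically extend to Kummer log-étale sheaves as indicated in the text), and Proposition~\ref{prop:serrevanishing} gives the vanishing. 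For $X$ a Deligne--Mumford stack, one replaces this by the analogous argument over an étale atlas by affine schemes, invoking the fact that a Deligne--Mumford stack has a basis consisting of affine étale opens and that pullback of quasicoherent sheaves along such opens remains quasicoherent.

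Once $R^q\epsilon_*{\cal F} = 0$ for $q > 0$, the Leray spectral sequence
\[
E_2^{p,q} = H^p(X_{\mathrm{Zar}}, R^q\epsilon_*{\cal F}) \Longrightarrow H^{p+q}((X,M_X)_{\mathrm{Kum}}, \epsilon^*{\cal F})
\]
collapses to the edge map, and the identification $\epsilon_*{\cal F} = {\cal F}$ (immediate from the fact that an honest Zariski open is its own terminal Kummer neighborhood) completes the argument. In the stack setting one can alternatively run the Čech-to-derived-functor argument directly on the cosimplicial object associated to an affine étale hypercover of $X$: on each level the higher Kummer log-étale cohomology vanishes by Proposition~\ref{prop:serrevanishing}, so both Zariski and Kummer log-étale derived functor cohomology are computed by the same cosimplicial $R$-module of sections of ${\cal F}$.

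The only genuine subtlety is the stack case: one must check that ``quasicoherent'' behaves well enough under the forgetful map from the Kummer log-étale site of an atlas down to $X$ so that the local vanishing of Proposition~\ref{prop:serrevanishing} transfers to the stacky setting. This is routine given the definitions in Section~\ref{sec:loggeom}, since all the relevant sites are built by étale descent from the scheme case, but it is where care is needed; everything else is formal Leray/Čech manipulation.
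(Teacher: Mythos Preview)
Your proposal is correct and takes essentially the same approach as the paper: the paper states only that the corollary follows ``from a standard hypercover argument relating derived functor cohomology to coherent cohomology,'' and what you have written is precisely an unpacking of that standard argument, using Proposition~\ref{prop:serrevanishing} for the local vanishing input. Your Leray and \v Cech formulations are two equivalent ways to phrase the same descent, and the caveat you flag about the stack case is exactly the routine verification the paper leaves implicit.
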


\subsection{Log-\'etale covering maps}

The following is analogous to the definition of a covering space.

\begin{defn}[cf. {\cite[3.1]{illusie-logoverview}}]
A map $(Y,M_Y) \to (X,M_X)$ of log stacks is a Kummer log-\'etale
covering map if there is a cover of $(X,M_X)$ by log schemes $(X_\beta,
M_\beta)$, in the Kummer log-\'etale topology, such that each pullback of
$(Y,M_Y)$ in the category of fine and saturated log stacks is
isomorphic to a finite union of copies of $X_\beta$.

Suppose a finite group $G$ acts on $(Y,M_Y)$ with quotient $(X,M_X)$.
The map $(Y,M_Y) \to (X,M_X)$ is a (Kummer log-\'etale) Galois
covering map if it is a Kummer log-\'etale covering map $(Y,M_Y) \to
(X,M_X)$, and the shearing map
\[
G \times (Y,M_Y) \to (Y,M_Y) \times_{(X,M_X)} (Y,M_Y),
\]
expressed by $(g,y) \mapsto (gy,y)$, is an isomorphism. Here the
source is the log scheme $\coprod_{g \in G} (Y,M_Y)$.
\end{defn}

In the case of a normal crossings divisor, we have the following.
\begin{prop}
\label{prop:coveringequiv}
Suppose $(X,M_X)$ is a log stack associated to a normal crossings
divisor $D$, and that $X$ is smooth away from $D$.  Then there is an
equivalence of categories between Kummer log-\'etale covering maps
$(Y,M_Y) \to (X,M_X)$ and \'etale covering maps of $X \setminus D$
which are tamely ramified over $D$, given by
\[
(Y, M_Y) \mapsto Y \times_X (X \setminus D).
\]
The inverse sends an \'etale cover $Y' \to X \setminus D$ to the
normalization of $X$ in $Y'$.

This induces an equivalence between Galois covers with group $G$ and
Kummer log-\'etale Galois covers of $Y \setminus D$ with group $G$
which are tamely ramified over $D$.
\end{prop}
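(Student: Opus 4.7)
The plan is to reduce to an étale-local question on $X$ and then to invoke Abhyankar's lemma in its Grothendieck--Murre form for tamely ramified covers. Working étale-locally, the generalization of Proposition~\ref{prop:kummer} to normal crossings divisors noted in Remark~\ref{rmk:ncd-kummer} lets me assume $X = \Spec(R)$ with $R$ regular, $D = \sum_{i=1}^r \divi{t_i}$ for part of a regular sequence of parameters, and the log structure given by the chart $\mb N^r \to R$ sending the $i$th generator to $t_i$. Every log-étale map into such an $(X,M_X)$ is automatically Kummer log-étale and, after refining, admits a standard chart obtained from a Kummer injection $\mb N^r \hookrightarrow Q$ of fine saturated monoids with cokernel of finite order invertible on $X$ followed by an étale base change.

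For the forward functor $(Y,M_Y) \mapsto Y \times_X (X\setminus D)$: since $X \setminus D$ has trivial log structure, the restricted map has trivial log structure on both sides, and the log-étale hypothesis then forces it to be étale on underlying schemes. Using saturation, the Kummer extension $\mb N^r \hookrightarrow Q$ may be reduced locally to a diagonal one with entries $n_1,\ldots,n_r$ dividing the order of the cokernel, so that $(Y,M_Y) \to (X,M_X)$ is, up to an étale map, of the form
\[
\Spec\bigl(R[u_1,\ldots,u_r]/(u_i^{n_i} - t_i)\bigr) \to \Spec(R),
\]
with each $n_i$ invertible on $X$. The restriction to $X \setminus D$ is therefore finite étale and tamely ramified over each component of $D$.

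For the inverse functor, suppose $Y' \to X \setminus D$ is finite étale and tamely ramified over $D$. Abhyankar's lemma says that étale-locally on $X$ the cover $Y'$ is the restriction of a standard cover of the form displayed above to $X \setminus D$. The normalization $Y$ of $X$ in $Y'$ therefore agrees locally with such a standard cover, so $Y$ is regular, the preimage $D_Y$ of $D$ is a normal crossings divisor on $Y$, and the associated log structure $M_{D_Y}$ makes $(Y,M_{D_Y}) \to (X,M_X)$ a Kummer log-étale covering map. The two functors are mutually inverse: restricting a normalization back to $X \setminus D$ manifestly recovers $Y'$, while a Kummer log-étale covering map produced by the forward functor is, in its explicit local form, already normal and finite over $X$ and hence agrees with the normalization of $X$ in its own generic étale open.

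For the Galois statement, the equivalence is natural in $X$ and hence transports $G$-actions in both directions. The shearing map for $(Y,M_Y)$ restricts over $X \setminus D$ to the ordinary shearing map of the associated scheme cover, and conversely can be checked étale-locally in the explicit standard charts, where its formation is compatible with adjoining roots of the $t_i$. The key obstacle I anticipate is the local structure step: reducing an arbitrary Kummer injection $\mb N^r \hookrightarrow Q$ to a diagonal form after an étale change, and verifying that the normalization of $X$ in a tame étale cover inherits a normal crossings divisor; both rest on Abhyankar's lemma together with purity of the branch locus.
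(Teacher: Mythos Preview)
Your proposal is correct and follows essentially the same approach as the paper: reduce \'etale-locally on $X$, invoke the Abhyankar-type correspondence between Kummer log-\'etale covers and tamely ramified \'etale covers, and then glue by \'etale descent. The paper's proof is simply a citation of \cite[7.3(b), 7.6]{illusie-logoverview}, which package exactly the local structure result you spell out (reduction of a Kummer chart $\mb N^r \hookrightarrow Q$ to a diagonal root extraction, and identification of the normalization with the log cover via Abhyankar's lemma); your write-up unpacks the content of those references rather than citing them.
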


\begin{proof}
This combines results from \cite[7.3(b), 7.6]{illusie-logoverview}.
On any log scheme which is an \'etale open of $(X,M_X)$, the given
maps establish an equivalence of categories.  As \'etale covers and
log-\'etale covers satisfy \'etale descent, and tame ramification is
an \'etale-local property, the result follows.
\end{proof}

\section{The elliptic moduli}
\label{sec:elliptic}

\subsection{The logarithmic moduli of elliptic curves}

Let $\Mell$ be the compactified moduli of generalized elliptic curves,
which is a Deligne-Mumford stack.  The stack $\Mell$ contains a cusp
divisor classifying elliptic curves with nodal singularities, and the
complement is an open substack $\mell$ classifying smooth elliptic
curves.

\begin{defn}
The log stack $\Mlog$ is the stack $\Mell$ equipped with the direct
image log structure from $\mell$ (\ref{def:directimage}).
\end{defn}

We recall that the category $\Sch/\Mell$ is a stack in the fpqc
topology on $\Sch$: maps $X \to \Mell$ classify generalized elliptic
curves on $X$.  In particular, $\Sch/\Mell$ is a category whose the
objects are pairs $(X,{\cal E})$ consisting of a scheme $X$ and a
generalized elliptic curve ${\cal E} \to X$, and the morphisms are
pullback diagrams
\[
\xymatrix{
{\cal E}' \ar[r] \ar[d] & {\cal E} \ar[d] \\
X' \ar[r] & X
}
\]
respecting the chosen section of the elliptic curve. The functor which
forgets ${\cal E}$ makes this into a category fibered in groupoids
over $\Sch$.

We denote by $\omega / \Mell$ the sheaf of invariant differentials on
the universal elliptic curve; global sections of $\omega^k$ are
modular forms of weight $k$. The group $\Gamma(\Mell, \omega^{12})$ is
freely generated over $\mb Z$ by the modular forms $c_4^3$ and
$\Delta$. These do not vanish simultaneously on $\Mell$ and so we can
make the following definition.

\begin{defn}
The $j$-invariant
\[
j\co \Mell \to \mb P^1
\]
is the map given in homogeneous coordinates by
\[
{\cal E} \mapsto [c_4^3({\cal E}):\Delta({\cal E})].
\]
\end{defn}

We first need a coordinate chart on $\Mell$ near the cusps.

\begin{lem}
\label{lem:cuspcoords}
The generalized elliptic curve
\[
y^2 + xy = x^3 - \tfrac{36}{j-1728}x - \tfrac{1}{j-1728}
\]
defines an \'etale map ${\cal C}\co \mb P^1 \setminus \{0,1728\} \to
\Mell$, covering the complement of the $j$-invariants $0$ and $1728$.
\end{lem}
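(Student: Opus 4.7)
My plan is to first compute the standard Weierstrass invariants of the equation to show that it defines a generalized elliptic curve on $\mb P^1 \setminus \{0, 1728\}$ whose $j$-invariant is the coordinate $j$, and then to derive étaleness from the resulting identity $j \circ \mathcal{C} = \mathrm{id}$ together with a tangent-space comparison.

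First I would compute the invariants. With $a_1 = 1$, $a_2 = a_3 = 0$, $a_4 = -36/(j-1728)$, $a_6 = -1/(j-1728)$, the standard formulas give
\[
c_4 = \frac{j}{j-1728}, \qquad c_6 = -\frac{j}{j-1728}, \qquad \Delta = \frac{j^2}{(j-1728)^3},
\]
so $c_4^3/\Delta = j$, confirming that the curve has $j$-invariant equal to the coordinate. The coefficients are regular on $\mb P^1 \setminus \{1728\}$ (at $\infty$ this is checked via the local coordinate $1/j$), and $(c_4, \Delta)$ never simultaneously vanishes on $\mb P^1 \setminus \{0, 1728\}$: at $j = \infty$ one has $(c_4, \Delta) = (1, 0)$ and the fiber is the nodal cubic $y^2 + xy = x^3$, while elsewhere $\Delta \neq 0$ and the fiber is a smooth elliptic curve. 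This produces the desired map $\mathcal{C}$ to $\Mell$ with $j \circ \mathcal{C} = \mathrm{id}$.

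For étaleness, both source and target are smooth of relative dimension one over $\Spec \mb Z$, so it suffices to verify non-constancy (hence flatness) together with an isomorphism on Zariski tangent spaces at every geometric point. Non-constancy is automatic from $j \circ \mathcal{C} = \mathrm{id}$; differentiating this identity yields $dj \circ d\mathcal{C} = \mathrm{id}$, so it remains to show $dj$ is an isomorphism on tangent spaces at every point of $\mathcal{C}(\mb P^1 \setminus \{0, 1728\})$. This is the familiar fact that the $j$-invariant map is unramified away from the loci where the classifying elliptic curve gains extra automorphisms, namely $j=0$ (stabilizer $\mb Z/6$) and $j=1728$ (stabilizer $\mb Z/4$); at the cusp $j=\infty$ the stabilizer reverts to the generic $\{\pm 1\}$ and the Tate parametrization $j \sim 1/q$ shows that $dj$ is still an isomorphism. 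On the complement of $\{0, 1728\}$ the residual $\{\pm 1\}$-stabilizer makes $\Mell$ a $B(\mb Z/2)$-gerbe over its coarse moduli; the section $\mathcal{C}$ from a scheme automatically neutralizes this gerbe, so a tangent-space isomorphism suffices to conclude étaleness as a map of Deligne--Mumford stacks.

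The main technical point is the verification at the cusp $j = \infty$: one must confirm that the family really specializes to a bona fide generalized elliptic curve (the nodal cubic obtained as the limit of the displayed equation) and that $dj$ remains an isomorphism there despite $j$ having a pole. Once this is handled, the explicit invariant computation and the generic-$\{\pm 1\}$ gerbe argument on the complement of $\{0, 1728\}$ complete the proof.
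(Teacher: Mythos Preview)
Your argument is correct, but it takes a different route from the paper's. After verifying $j\circ\mathcal{C}=\mathrm{id}$ and that the family is a generalized elliptic curve, you argue differentially: differentiate the identity, then check that $dj$ is unramified over the complement of $\{0,1728\}$ by invoking the automorphism stratification. The paper instead proves \'etaleness by computing a fiber product directly. Citing \cite[8.4.3]{katz-mazur} for the fact that the automorphism scheme is constant $\mb Z/2$ on the preimage $U\subset\Mell$ of $\mb P^1\setminus\{0,1728\}$, it observes that for any $\mathcal{E}\colon X\to U$, the pullback $X\times_U(\mb P^1\setminus\{0,1728\})$ is the Isom-scheme $\mathrm{Iso}_X(\mathcal{E},\mathcal{C}(j(\mathcal{E})))$, a principal $\mb Z/2$-torsor over $X$ and hence \'etale; this establishes \'etaleness of $\mathcal{C}$ and yields the explicit splitting $U\cong B\mb Z/2\times(\mb P^1\setminus\{0,1728\})$ as a byproduct.

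Both approaches rest on the same underlying input (automorphisms over $U$ are exactly $\{\pm 1\}$) but package it differently. Your differential method is the standard one for morphisms between smooth objects, though it needs smoothness of $\Mell$ over $\mb Z$ as an input and some care about tangent spaces of stacks. The paper's torsor argument avoids any appeal to smoothness of $\Mell$ and delivers the gerbe trivialisation for free. One small redundancy in your write-up: having asserted that both source and target are smooth of relative dimension one, the relation $dj\circ d\mathcal{C}=\mathrm{id}$ already forces $d\mathcal{C}$ to be an isomorphism of one-dimensional tangent spaces, so the separate verification that $dj$ is unramified is not actually needed.
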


\begin{proof}
This curve has the prescribed $j$-invariant $j$
\cite[III.1.4.c]{silverman-aec}, and on $\Spec(\mb Z[j^{-1}])$ it has
only nodal singularities except when $(1728j^{-1}-1) = 0$.  This curve
then describes a map from $\Spec(\mb Z[j^{-1}, (1728j^{-1} -
1)^{-1}])$ to $\Mell$ producing generalized curves of all
$j$-invariants other than $0$ and $1728$.

Let $U \subset \Mell$ be the preimage of $\mb P^1 \setminus \{0,
1728\})$ under $j$. By \cite[8.4.3]{katz-mazur}, there are no
automorphisms of generalized elliptic curves classified by maps to $U$
other than the identity and the negation automorphism; the
automorphism group of the universal elliptic curve on $U$ is the
constant group $\mb Z/2$. As a result, for any elliptic curve ${\cal
  E}\co X \to U$, the fiber product $X \times_U (\mb P^1 \setminus
\{0,1728\})$ classifying isomorphisms between ${\cal E}$ and the
pullback of ${\cal C}$ is a principal $\mb Z/2$-torsor ${\rm
  Iso}_{X}({\cal E}, {\cal C}(j({\cal E}))$. In particular, it is
\'etale over $X$. The map classifying ${\cal C}$ is therefore
\'etale and there is an equivalence of stacks
\[
U \cong B\mb Z/2 \times (\mb P^1 \setminus \{0,1728\}).\qedhere
\]
\end{proof}

As in Remark~\ref{rmk:sectiondivisor}, the section $\Delta$ of
$\omega^{12}$ determines a divisor: the cusp of $\Mell$.
\begin{cor}
\label{cor:smcd}
The cusp divisor of $\Mell$ is a smooth divisor
(\ref{def:divisors}).
\end{cor}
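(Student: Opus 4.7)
The plan is to verify the $r=1$ conditions of Definition~\ref{def:divisors} étale-locally, using the chart supplied by Lemma~\ref{lem:cuspcoords}. Since smoothness of a normal-crossings divisor on a Deligne--Mumford stack is tested on an étale cover by schemes, it suffices to produce one étale map to $\Mell$ whose image contains every cusp point and to verify the condition after pullback. The cusp divisor on $\Mell$ is $\divi{\Delta}$ by Remark~\ref{rmk:sectiondivisor}, and the cusps have $j$-invariant $\infty$; in particular every cusp lies in the open substack where $j \neq 0, 1728$, so the étale map $\mathcal{C}\co \mb P^1 \setminus \{0, 1728\} \to \Mell$ of Lemma~\ref{lem:cuspcoords} already suffices.

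On this chart, trivializing $\omega^{12}$ by the twelfth power of the invariant differential $dx/(2y+x)$ identifies $\mathcal{C}^*\Delta$ with the Weierstrass discriminant of the equation in Lemma~\ref{lem:cuspcoords}. A direct computation of the standard $b_i$, $c_i$ quantities of this equation yields
\[
\mathcal{C}^*\Delta \;=\; \frac{j^2}{(j-1728)^3}.
\]
This is a unit on the affine locus $j \neq 0, 1728$, so all zeros of $\mathcal{C}^*\Delta$ are concentrated at $j = \infty$. Passing to the coordinate $u = 1/j$ at infinity (together with the appropriate change of Weierstrass model, under which the curve acquires a nodal fiber at $u = 0$ and the trivialization of $\omega^{12}$ is readjusted accordingly), one checks that $\mathcal{C}^*\Delta$ has vanishing order exactly $1$ at $u = 0$; the same fact can be read off cheaply from the classical $q$-expansion $\Delta = q \prod (1-q^n)^{24}$ together with the étaleness of $\mathcal{C}$ over the cusp.

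Since $\mb P^1 \setminus \{0,1728\}$ is regular of dimension one, the uniformizer $u$ is on its own a regular sequence of parameters at $u = 0$, so $\mathcal{C}^*\divi{\Delta} = \divi{u}$ satisfies all three conditions of Definition~\ref{def:divisors} with $r = 1$, yielding a smooth strict normal-crossings divisor on the chart and hence smoothness of the cusp divisor on $\Mell$. The only step that really demands attention is the change of coordinates at $j = \infty$, where the Weierstrass equation from Lemma~\ref{lem:cuspcoords} degenerates to a nodal cubic; once one has computed (by rewriting in $u = 1/j$, or by invoking the $q$-expansion of $\Delta$) that $\mathcal{C}^*\Delta$ acquires only a simple zero at the cusp, the rest of the argument is immediate.
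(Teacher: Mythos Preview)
Your approach is the paper's: use the chart $\mathcal{C}$ of Lemma~\ref{lem:cuspcoords} and identify the pullback of the cusp divisor there as $\divi{j^{-1}}$. The paper asserts this in one line; you supply the explicit discriminant computation, which is fine, and your formula $\mathcal{C}^*\Delta = j^2/(j-1728)^3$ is correct.

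Two small corrections. First, no change of Weierstrass model is needed at $j=\infty$: writing $u=j^{-1}$, the coefficients $-36/(j-1728)=-36u/(1-1728u)$ and $-1/(j-1728)=-u/(1-1728u)$ are already regular on $\Spec(\mb Z[u,(1728u-1)^{-1}])$, the equation specializes directly to the nodal cubic $y^2+xy=x^3$ at $u=0$, and the standard trivialization of $\omega$ extends unchanged. Your formula becomes $\Delta = u/(1-1728u)^3$, visibly a unit times $u$, so the vanishing order is immediate without any readjustment.

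Second, the scheme $\mb P^1\setminus\{0,1728\}$ is over $\Spec\mb Z$ and hence two-dimensional, not one-dimensional; at a closed point $(p,u=0)$ of the cusp locus the regular system of parameters is $(p,u)$, and $u$ alone is only \emph{part} of it. This is exactly what condition~(3) of Definition~\ref{def:divisors} asks for, so your conclusion survives, but the justification ``regular of dimension one, hence $u$ is on its own a regular sequence of parameters'' should be rephrased accordingly.
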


\begin{proof}
In the coordinates of Lemma~\ref{lem:cuspcoords}, the cusp divisor is
the smooth divisor described by $j^{-1}$ on an open subset of
$\Spec(\mb Z[j^{-1}])$.
\end{proof}

Proposition~\ref{prop:ncdfs} then has the following consequence.
\begin{prop}
The log stack $\Mlog$ is fine and saturated.
\end{prop}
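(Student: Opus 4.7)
The plan is to invoke the two preceding results in combination. By Corollary \ref{cor:smcd} the cusp divisor of $\Mell$ is smooth, and in particular it is a normal crossings divisor in the sense of Definition \ref{def:divisors}. So the first step is simply to identify the direct image log structure defining $\Mlog$ with the log structure $M_D$ associated to the cusp divisor $D$.

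This identification is essentially definitional: the associated log structure $M_D$ of a normal crossings divisor was defined as the direct image of the trivial log structure on $X \setminus D$, and the complement of the cusp divisor in $\Mell$ is exactly $\mell$. So the log stack defined as $\Mell$ with its direct image log structure from $\mell$ agrees on the nose with $(\Mell, M_D)$ for $D$ the cusp divisor. One should remark that the stack context causes no trouble here, since normal crossings divisors and their associated log structures on a Deligne--Mumford stack were defined via compatible data on an \'etale cover by schemes, and Corollary \ref{cor:smcd} provides this data through the coordinate chart of Lemma \ref{lem:cuspcoords}.

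Having made this identification, the proposition follows immediately from Proposition \ref{prop:ncdfs}, which states precisely that the log stack associated to a normal crossings divisor is fine and saturated. There is no real obstacle; the content of the proposition is entirely absorbed by the prior work identifying the cusp as a smooth divisor and by the general fact about normal crossings log structures. If anything, the only subtlety worth pausing on is ensuring that the stacky and \'etale-local nature of both definitions line up, which is handled by the explicit chart $(U, \langle j^{-1}\rangle)$ extracted from Lemma \ref{lem:cuspcoords}, where the finite and saturated monoid $\mathbb{N}$ produced in the proof of Proposition \ref{prop:ncdfs} witnesses fineness and saturation locally.
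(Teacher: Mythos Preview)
Your proposal is correct and matches the paper's approach exactly: the paper presents this proposition simply as an immediate consequence of Proposition~\ref{prop:ncdfs}, having just established in Corollary~\ref{cor:smcd} that the cusp divisor is smooth. Your write-up spells out the identification of the direct image log structure with $M_D$ more explicitly than the paper does, but the argument is the same.
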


\begin{defn}
The log scheme $\mb P^1_{\log}$ is the log structure on $\mb P^1$
defined by the divisor $(\infty)$.
\end{defn}

Explicitly, the monoid sheaf $M \subset {\cal O}_{\mb P^1}$
consists of functions whose restriction to $\mb P^1\setminus\{\infty\}
= \Spec(\mb Z[j])$ is invertible.

\begin{rmk}
For any integer $z$, viewed as an integral point $z \in \mb A^1$,
there is an isomorphism of log schemes
\[
\mb P^1_{log} \setminus \{z\} \cong (\Spec(\mb
Z[(j-z)^{-1}]), \langle(j-z)^{-1}\rangle).
\]
\end{rmk}

\begin{prop}
The log stack $\Mlog$ is the pullback
\[
\xymatrix{
\Mlog \ar[r] \ar[d]_{j_{log}} & \Mell \ar[d]^j\\
\mb P^1_{log} \ar[r] & \mb P^1.
}
\]
In particular, a log scheme over $\Mlog$ consists of a log scheme
$(X,M_X)$, a generalized elliptic curve ${\cal E}$ on $X$, and a lift
of the $j$-invariant $j({\cal E})\co X \to \mb P^1$ to a map of log
schemes $j_{log}({\cal E})\co (X,M_X) \to \mb P^1_{log}$.
\end{prop}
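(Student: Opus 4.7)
The plan is to verify that the log structure on $\Mlog$ (direct image of the trivial log structure from $\mell$) coincides with the log structure pulled back from $\mb P^1_{log}$ along $j\co \Mell \to \mb P^1$. Since the property of being fine and saturated is preserved by pullback and both structures are fine and saturated (the second by Corollary~\ref{cor:smcd} and Proposition~\ref{prop:ncdfs}), identifying the two as log structures on $\Mell$ immediately yields the pullback square. The last sentence is then formal: maps into the pullback correspond to maps to each factor that agree after projection to $\mb P^1$, and a map $X \to \Mell$ is precisely a generalized elliptic curve.

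First, I would note that the preimage $j^{-1}(\infty) \subset \Mell$ is exactly the cusp, since nodal curves are characterized by $\Delta = 0$ and thus $j = c_4^3/\Delta \to \infty$. Thus $j$ restricts to a map $\mell \to \mb P^1 \setminus \{\infty\}$, where the log structure on the target is trivial, so the pulled-back log structure is trivial on $\mell$. Both log structures therefore agree on the open substack $\mell$, and the content is concentrated at the cusp.

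To compare near the cusp, I would work étale locally using Lemma~\ref{lem:cuspcoords}: the Weierstrass curve ${\cal C}$ gives an \'etale cover of the locus $j \ne 0, 1728$ in $\Mell$ by (an open of) $\mb P^1$, with the cusp corresponding to $j = \infty$, i.e.\ to the vanishing of the coordinate $j^{-1}$. Under this chart the cusp divisor is $\divi{j^{-1}}$, so by Proposition~\ref{prop:ncdfs} the log structure $M_{\Mlog}$ is locally generated, as a fine saturated log structure, by the single element $j^{-1}$. On the other hand, the log structure on $\mb P^1_{log}$ is locally generated near $\infty$ by the coordinate function vanishing to first order at $\infty$, which under $j$ pulls back to $j^{-1}$. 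Hence the pullback log structure is also generated by $j^{-1}$, and the canonical map $j^* M_{\mb P^1_{log}} \to M_{\Mlog}$ is an isomorphism on this chart. Since the isomorphism is canonical it descends from the étale cover to $\Mell$, and combined with the previous paragraph this gives the identification globally.

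The main obstacle is handling the two missing loci $j = 0, 1728$, where the chart of Lemma~\ref{lem:cuspcoords} does not apply. Both values are finite, so away from the cusp the pulled-back log structure is still trivial, matching $M_{\Mlog}$; one just needs to confirm that the verification in the étale chart extends over these two points. I would handle this either by choosing auxiliary étale charts near $j = 0$ and $j = 1728$ (using Weierstrass families with controlled $j$-invariant away from $\infty$), or more cleanly by observing that the question is local on $\Mell$ in the \'etale topology and that the complement of the cusp is already covered by $\mell$ where both log structures are trivial, so only the étale neighborhood of the cusp (which is entirely contained in the chart of Lemma~\ref{lem:cuspcoords}) needs attention.
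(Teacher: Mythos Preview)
Your proposal is correct and follows essentially the same approach as the paper: reduce to the observation that both log structures are trivial on $\mell$, then use the coordinate chart of Lemma~\ref{lem:cuspcoords} near the cusp to see that each log structure is $\langle j^{-1}\rangle$, making the square a coordinate chart of $\mb P^1_{log}$. Your discussion of the loci $j=0,1728$ is more careful than the paper's, but as you note these lie in $\mell$ where the comparison is already settled.
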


\begin{proof}
As the logarithmic structure is the trivial logarithmic structure on
the complement of the cusps, it suffices to use the coordinate chart
of Lemma~\ref{lem:cuspcoords} and show that there is a pullback
diagram of fine and saturated log schemes
\[
\xymatrix{
(\Spec(\mb Z[j^{-1}, (1728j^{-1} - 1)^{-1}]), \langle j^{-1} \rangle)
\ar[r] \ar[d] & 
\Spec(\mb Z[j^{-1}, (1728j^{-1} - 1)^{-1}]) \ar[d] \\
\mb P^1_{\log} \ar[r] &
\mb P^1.
}
\]
However, this is precisely the inclusion of a coordinate chart on the
log scheme $\mb P^1_{log}$.
\end{proof}

\subsection{Weierstrass curves}
\label{sec:weierstrass-curves}

We first recall that there is a ring 
\[
A = \mb Z[a_1,a_2,a_3,a_4,a_6]
\]
parametrizing Weierstrass curves of the form
\[
y^2 + a_1xy + a_3 y = x^3 + a_2 x^2 + a_4 x + a_6.
\]
This ring $A$ contains modular quantities $c_4$, $c_6$, and $\Delta$
\cite[III.1]{silverman-aec}, and the complement of the common
vanishing locus of $c_4$ and $\Delta$ is an open subscheme
\begin{equation}
  \label{eq:openweierstrass}
\Spec(A)^o = \Spec(c_4^{-1} A) \cup \Spec(\Delta^{-1} A)
\end{equation}
parametrizing generalized elliptic curves in Weierstrass form.    In
particular, there is a universal generalized elliptic curve ${\cal E}
\to \Spec(A)^o$.  This determines a map from $\Spec(A)^o$ to the
Deligne-Mumford stack $\Mell$ which is a smooth cover.

Similarly, there is a universal scheme parametrizing pairs of
isomorphic Weierstrass curves.  Let
\[
  \Gamma = A[r,s,t,\lambda^{\pm 1}]
\]
be the ring parametrizing the change-of-coordinates $y \mapsto
\lambda^3 y + r x + s$, $x \mapsto \lambda^2 x + t$ on Weierstrass
curves.  The pair $(A,\Gamma)$ forms a Hopf algebroid with an
invariant ideal $(c_4,\Delta)$.  Defining 
\[
\Spec(\Gamma)^o = \Spec(c_4^{-1} \Gamma) \cup \Spec(\Delta^{-1} \Gamma),
\]
we obtain a groupoid $(\Spec(A)^o, \Spec(\Gamma)^o)$ in schemes that
maps naturally to $\Mell$.

As $\Spec(\Gamma)^o$ parametrizes pairs of generalized elliptic curves
in Weierstrass form with a chosen isomorphism between them, we have an
equivalence to the pullback
\[
\Spec(\Gamma)^o \longoverto^\sim \Spec(A)^o \times_{\Mell} \Spec(A)^o.
\]

For Weierstrass curves, we note that the identity $j^{-1} = \Delta
c_4^{-3}$ and the constraint that the units of $M_X$ map
isomorphically to ${\cal O}_X^\times$ imply that a factorization
$(X,M_X) \to \mb P^1_{\log} \to \mb P^1$ of the $j$-invariant is
equivalent to a lift of the element $\Delta \in {\cal O}_X(c_4^{-1}
X)$ to a section of $M_X(c_4^{-1}X)$.  We will casually refer to this
as a lift of the elliptic discriminant to $M_X$.

The divisor defined by the vanishing of $\Delta$ determines a log
structure on $\Spec(A)^o$.  The fiber product $\Spec(A)^o \times_{\mb
P^1} \mb P^1_{log}$ is, in fact, the log scheme
\[
U_{log} = (\Spec(A)^o, \langle \Delta \rangle).
\]
This classifies the universal Weierstrass curve in log schemes.

The fiber product of $(\Spec(A)^o,\langle \Delta \rangle)$ with itself
over $\Mlog$ is then, by invariance of $\Delta$ up to unit,
\[
R_{log} = (\Spec(\Gamma)^o, \langle \Delta \rangle).
\]
The pair $(U_{log}, R_{log})$ form a smooth groupoid object in $\LSc$
that parametrizes the groupoid of Weierstrass curves with lifts of the
log structure.

\begin{prop}
The natural map $(U_{log},R_{log}) \to \Mlog$ of groupoids induces an
equivalence of stacks in the log-\'etale or Kummer log-\'etale
topologies.
\end{prop}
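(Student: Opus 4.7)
The plan is to verify the two conditions that make the map of groupoids induce an equivalence of stacks: (a) that $U_{log} \to \Mlog$ is a log-\'etale-locally surjective cover, and (b) that the natural map $R_{log} \to U_{log} \times_{\Mlog} U_{log}$ is an isomorphism in the category $\LSc$. Since by Corollary~\ref{cor:smcd} the cusp divisor is smooth, the preceding proposition says the log-\'etale and Kummer log-\'etale topologies on $\Mlog$ coincide, so it suffices to work in either one.

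For (a), I would first observe that the log structure on $U_{log}$ is strictly the pullback of the log structure on $\Mlog$: the smooth cusp divisor on $\Mell$ pulls back along the (underlying) smooth cover $\Spec(A)^o \to \Mell$ to the divisor cut out by $\Delta$, because within $\Spec(A)^o$ the nodal Weierstrass curves are exactly those with $\Delta = 0$. Thus $U_{log} \to \Mlog$ is a strict map whose underlying scheme map is smooth and surjective (it is the universal Weierstrass chart, realizing any generalized elliptic curve \'etale-locally). By Grothendieck's theorem any smooth surjection admits \'etale-local sections, and strict \'etale maps are log-\'etale (indeed Kummer log-\'etale). Consequently any log scheme $(X,M_X) \to \Mlog$ can be refined by a log-\'etale cover on which the classifying map lifts to $U_{log}$.

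For (b), the underlying scheme map is an isomorphism by the explicit description $\Spec(\Gamma)^o \cong \Spec(A)^o \times_{\Mell} \Spec(A)^o$ recorded in \S\ref{sec:weierstrass-curves}. To compare log structures, note that the fiber product $U_{log} \times_{\Mlog} U_{log}$ in $\LSc$ has underlying scheme $\Spec(\Gamma)^o$ with the log structure obtained from either projection by strict base change (the map is strict, so base change preserves strictness). Under the change-of-coordinates $(r,s,t,\lambda)$ one has $\Delta' = \lambda^{12}\Delta$ with $\lambda$ a unit, so the two pulled-back log structures agree, and each is the log structure $\langle \Delta \rangle$ used in defining $R_{log}$. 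Fineness and saturation are preserved since both factors are fine and saturated (Proposition~\ref{prop:ncdfs}) and strict base change respects these.

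The main obstacle is really the bookkeeping step in (a): checking that a strict, underlying-smooth surjection of log stacks gives rise to a cover in the log-\'etale topology in a way that lets the groupoid $(U_{log},R_{log})$ present $\Mlog$. This is not automatic because smoothness is weaker than log-\'etaleness, but the combination of strictness of the map and the existence of \'etale local sections of smooth morphisms suffices: once a strict \'etale refinement is produced, it is automatically log-\'etale, and the descent datum on $\Mlog$ in the log-\'etale topology can be pulled back through this refinement. Combined with (b), the universal property of stackification yields the claimed equivalence.
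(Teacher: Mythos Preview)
Your proof is correct and follows the same two-step structure as the paper's: full faithfulness (your (b)) and local essential surjectivity (your (a)). The paper's argument is terser and, for (a), uses the slightly more elementary observation that any generalized elliptic curve is \emph{Zariski}-locally isomorphic to a Weierstrass curve, so one need not invoke Grothendieck's theorem on \'etale-local sections of smooth surjections; your route through smooth surjections is valid but a bit heavier than necessary. Your explicit verification in (b) that the two pulled-back log structures on $\Spec(\Gamma)^o$ agree via $\Delta' = \lambda^{12}\Delta$ makes precise what the paper leaves implicit in the phrase ``fully faithful.''
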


\begin{proof}
The induced map of groupoids is fully faithful. In addition, any map
$X \to \Mlog$ can be covered by maps which lift to $U_{log}$: in fact,
any elliptic curve on a scheme is isomorphic to a Weierstrass curve
locally in the Zariski topology.
\end{proof}

We remark that $\Mlog$ is merely a prestack, rather than a stack, in
the topology on $\LSc$ because elliptic curves do not obviously
satisfy descent for log-\'etale covers.

\begin{prop}
\label{prop:logdiagonal}
The map $U_{log} \to \Mlog$ is representable and a smooth cover.
\end{prop}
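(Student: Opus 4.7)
The plan is to reduce the statement to the classical fact that the underlying morphism $\Spec(A)^o \to \Mell$ is representable, smooth, and surjective, by showing that the map $U_{log} \to \Mlog$ is strict (i.e., the log structure on the source is the pullback of the log structure on the target). Once strictness is established, all three properties (representability, smoothness, covering) transfer from the underlying classical morphism without extra work, since fiber products in fine and saturated log stacks along a strict morphism are computed as the ordinary fiber product of underlying stacks equipped with the pullback log structure.

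First I would verify strictness. By Corollary~\ref{cor:smcd} the cusp divisor on $\Mell$ is smooth, and the log structure on $\Mlog$ consists of those sections of ${\cal O}_{\Mell}$ whose restriction to $\mell$ is invertible. Pulling back along ${\Spec}(A)^o \to \Mell$, the cusp divisor becomes the vanishing locus of $\Delta$ (note that on $\Spec(\Delta^{-1}A)$ there is nothing to impose, and on $\Spec(c_4^{-1} A)$ the element $\Delta$ cuts out exactly the nodal locus). Since $\langle\Delta\rangle$ is by definition the log structure associated to this smooth divisor, and because on a smooth divisor the associated log structure is recovered as functions invertible off it (as noted following Definition~\ref{def:directimage} and in the proof of Proposition~\ref{prop:ncdfs}), the two log structures agree. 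Hence $U_{log} \to \Mlog$ is strict.

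Next, for representability, I consider an arbitrary test object $(T,M_T) \in \LSc$ together with a map $(T,M_T) \to \Mlog$, classifying a generalized elliptic curve ${\cal E}/T$ equipped with a factorization $j_{log}({\cal E})\co (T,M_T) \to \mb P^1_{log}$ of the $j$-invariant. The fiber product $U_{log} \times_{\Mlog}(T,M_T)$ in fine and saturated log stacks is then formed. Because $U_{log} \to \Mlog$ is strict, no nontrivial pushout or saturation of monoids is needed: the underlying stack is $\Spec(A)^o \times_{\Mell} T$, which is a scheme by the classical representability of $\Spec(A)^o \to \Mell$, and the log structure is just the pullback of $M_T$. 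This exhibits the fiber product as a log scheme over $(T,M_T)$, giving representability.

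Finally, for smoothness and the covering property, I would invoke that the underlying classical morphism $\Spec(A)^o \to \Mell$ is a smooth cover of Deligne--Mumford stacks, because any generalized elliptic curve is Zariski-locally in Weierstrass form and such presentations form a smooth atlas. Combined with the strictness verified above, each base-changed morphism of underlying schemes is smooth and surjective, and smoothness of a strict morphism of log schemes is detected on underlying schemes. Thus $U_{log} \to \Mlog$ is a smooth cover in the log sense. The only genuinely log-geometric input is the strictness check in the first step; everything else is then a direct transfer from the classical situation.
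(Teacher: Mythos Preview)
Your proof is correct. Both you and the paper reduce to the classical fact that $\Spec(A)^o \to \Mell$ is representable and a smooth cover, but the mechanism differs slightly. The paper exploits the pullback description established just beforehand, namely $\Mlog \cong \Mell \times_{\mb P^1} \mb P^1_{log}$ and $U_{log} \cong \Spec(A)^o \times_{\mb P^1} \mb P^1_{log}$, and simply cancels the common $\mb P^1_{log}$ factor: for any test object $(X,M_X) \to \Mlog$ one gets
\[
(X,M_X) \times_{\Mlog} U_{log} \cong (X,M_X) \times_{\Mell} \Spec(A)^o,
\]
which is representable and smooth over $(X,M_X)$ because $\Spec(A)^o \to \Mell$ is. You instead verify directly that $U_{log} \to \Mlog$ is strict (via the divisor description of both log structures) and then invoke the general principle that fiber products along strict maps are computed on underlying schemes with pullback log structure. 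The paper's cancellation is, of course, exactly an instance of your strictness argument---recognizing $U_{log}$ as $\Spec(A)^o \times_{\Mell} \Mlog$ is precisely the statement that the log structure on $U_{log}$ is pulled back from $\Mlog$---so the two are really different packagings of the same observation. Your route is more self-contained and makes the log-geometric content explicit; the paper's is a one-line consequence of the earlier pullback proposition.
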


\begin{proof}
Given $(X,M_X) \to \Mlog \cong \Mell \times_{\mb P^1} \mb P^1_{log}$, the
pullback is
\[
(X,M_X) \times_{\Mell \times_{\mb P^1} \mb P^1_{log}} (\Spec(A)^o
\times_{\mb P^1} \mb P^1_{log}) \cong (X,M_X) \times_{\Mell} \Spec(A)^o.
\]
In particular, this follows from the fact that the map $\Spec(A)^o \to
\Mell$ is representable and smooth.
\end{proof}

As a result, within the Grothendieck topology on $\LSc$, $(U_{log},
R_{log})$ gives a presentation of the same stack as $\Mlog$.

\begin{rmk}
\label{rmk:etalecover}
This smooth cover can be refined to a Kummer log-\'etale cover.  Away
from the cusps, for example, there are {\em schemes} ${\cal
M}_1(4)[1/2]$ and ${\cal M}_1(3)[1/3]$.  These parametrize,
respectively, smooth elliptic curves with a chosen $4$-torsion point
away from the prime $2$, and smooth elliptic curves with a chosen
$3$-torsion point away from the prime $3$ \cite{katz-mazur}.
The coordinate chart of Lemma~\ref{lem:cuspcoords} gives an \'etale
cover over the cusps.
\end{rmk}

The following result will be useful in understanding the cohomology
rings of objects which are honestly \'etale over $\Mell$ (compare
\cite[construction of Diagram 9.2]{mark-construct}).

\begin{prop}
\label{prop:rationaletale}
If a map $\Spec(R) \to \Mell$ classifying an elliptic curve ${\cal
  E}$ is \'etale, then the map of graded rings
\[
\bigoplus_{k \in \mb Z} H^0(\Mell, \omega^k) \otimes \mb Q \to
\bigoplus_{k \in \mb Z} H^0(\Spec(R), \omega^k) \otimes \mb Q
\]
is \'etale.
\end{prop}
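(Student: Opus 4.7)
The plan is to use a quotient-stack presentation of $\Mell_{\mb Q}$ to reduce the claim to the stability of étale morphisms under base change. Over $\mb Q$, any Weierstrass curve can, using the isomorphisms in the groupoid $\Spec\Gamma$ of \S\ref{sec:weierstrass-curves}, be normalized to the form $y^2 = x^3 - 27 c_4 x - 54 c_6$; the remaining isomorphisms among such curves are the rescalings $x \mapsto \lambda^2 x$, $y \mapsto \lambda^3 y$, acting as $(c_4, c_6) \mapsto (\lambda^4 c_4, \lambda^6 c_6)$. The origin $(0,0)$ produces a cuspidal rather than nodal singularity and so must be removed, yielding a presentation
\[
\Mell_{\mb Q} \;\cong\; [V/\mb G_m], \qquad V := \Spec(\mb Q[c_4, c_6]) \setminus \{(0,0)\},
\]
with $\mb G_m$-weights $(4, 6)$. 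The stabilizers $\mu_2, \mu_4, \mu_6$ at generic points, at $j = 1728$, and at $j = 0$ match the automorphism groups of the corresponding generalized elliptic curves. Under this presentation, $\omega$ is the line bundle associated to the weight-one character of $\mb G_m$, and $V \to \Mell_{\mb Q}$ is the corresponding $\mb G_m$-torsor.

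With this in hand, form the pullback $T := \Spec(R)_{\mb Q} \times_{\Mell_{\mb Q}} V$, which is a $\mb G_m$-torsor over the affine scheme $\Spec(R)_{\mb Q}$. Such torsors are affine, and the weight decomposition identifies
\[
{\cal O}(T) \;=\; \bigoplus_{k \in \mb Z} H^0(\Spec R, \omega^k) \otimes \mb Q.
\]
Since the complement of $V$ in $\mb A^2_{\mb Q}$ has codimension two in a smooth scheme, Hartogs gives ${\cal O}(V) = \mb Q[c_4, c_6]$; taking weights identifies this graded ring with $\bigoplus_k H^0(\Mell, \omega^k) \otimes \mb Q$. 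The map $T \to V$ is étale as a base change of the étale map $\Spec(R)_{\mb Q} \to \Mell_{\mb Q}$, and the open immersion $V \hookrightarrow \mb A^2_{\mb Q}$ is étale, so the composition $T \to \mb A^2_{\mb Q}$ is an étale map of affine schemes. On rings this is precisely the étale map $\mb Q[c_4, c_6] \to \bigoplus_k H^0(\Spec R, \omega^k) \otimes \mb Q$ asserted in the statement.

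The principal obstacle is establishing the quotient-stack presentation of $\Mell_{\mb Q}$; once that is in hand, the argument is formal, combining stability of étale maps under base change and composition with the Hartogs identification of global sections. The presentation itself can be established by a direct analysis of the Weierstrass groupoid $(\Spec A, \Spec\Gamma)$ after rationalization, using the standard normalization of Weierstrass equations over $\mb Q$-algebras.
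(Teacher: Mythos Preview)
Your proposal is correct and takes essentially the same approach as the paper: both form the fiber product $T = \Spec(R)_{\mb Q} \times_{\Mell_{\mb Q}} V$ with $V = \Spec(\mb Q[c_4,c_6]) \setminus \{0\}$, identify it as the $\mb G_m$-torsor of invariant $1$-forms on ${\cal E}$ so that ${\cal O}(T) = \bigoplus_k H^0(\Spec R,\omega^k)\otimes\mb Q$, and conclude by composing the base-changed \'etale map $T \to V$ with the open immersion $V \hookrightarrow \mb A^2_{\mb Q}$. The only difference is packaging: you invoke the quotient presentation $[V/\mb G_m]$ and Hartogs explicitly, while the paper argues directly that $Y \to \Spec(R)$ classifies nowhere-vanishing $1$-forms via the uniqueness of the short Weierstrass normalization away from $2$ and $3$.
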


\begin{proof}
Let $\Spec(\mb Q[c_4,c_6])^o$ denote the open complement of the ideal
defined by $(c_4,\Delta)$ in $\Spec(\mb Q[c_4,c_6])$. We can form the
pullback in the diagram
\[
\xymatrix{
Y \ar@{.>}[r] \ar@{.>}[d] &
\Spec(\mb Q[c_4,c_6])^o \ar[r] \ar[d]^{\cal D} &
\Spec(\mb Q[c_4,c_6]) \\
\Spec(R) \ar[r]_{\cal E} &
\Mell.
}
\]
Here the center vertical map classifies the curve ${\cal D}$ given by
the Weierstrass equation $y^2 = x^3 - \tfrac{c_4}{48}
x - \tfrac{c_6}{864}$, which induces the isomorphism $\bigoplus H^0(\Mell,
\omega^k)\otimes \mb Q \to \mb Q[c_4,c_6]$. As the map classifying
${\cal E}$ is \'etale, so is the composite upper map $Y \to \Spec(\mb
Q[c_4,c_6])$. Moreover, this map is equivariant for the action of $\mb
G_m$ by $c_4 \mapsto \lambda^4 c_4, c_6 \mapsto \lambda^6 c_6$.

However, the pullback $Y$ is the universal object over $\Spec(R)$
which is both rational and where the curve ${\cal E}/R$ has a chosen
isomorphism to ${\cal D}$. As ${\cal D}$ has a chosen invariant
$1$-form, so does the associated elliptic curve on $Y$. However, away
from the primes $2$ and $3$ any curve ${\cal E}$ with a chosen
nowhere-vanishing $1$-form has a unique isomorphism to a curve $y^2 =
x^3 + px + q$ preserving the $1$-form.

Therefore, $Y \to \Spec(R)$ is the object classifying choices of
nowhere vanishing $1$-form on ${\cal E}/(R\otimes \mb Q)$, which is
$\Spec$ of the ring $\bigoplus_{k \in \mb Z} H^0(R,\omega^k)\otimes
\mb Q$. As a result, the map of $\mb G_m$-equivariant (i.e. graded)
rings $\mb Q[c_4,c_6] \to \bigoplus_{k \in \mb Z} H^0(R,\omega^k)
\otimes \mb Q$ is \'etale.
\end{proof}

\subsection{Log-\'etale objects over $\Mlog$}
\label{sec:log-etale-objects}

Proposition~\ref{prop:logdiagonal} allows us to study log-\'etale
objects over $\Mlog$.  As the log structure is determined by a smooth
divisor by Corollary~\ref{cor:smcd}, the log-\'etale covers determine
an equivalent topology to the Kummer log-\'etale topology.  It
suffices to check that a map $(X,M_X) \to \Mlog$ is log-\'etale on the
cover defined by the moduli of Weierstrass
curves~(\ref{eq:openweierstrass}).  (Alternatively, one can check that
the restriction to $\Mell$ is \'etale, and check that it is
log-\'etale on the chart of Lemma~\ref{lem:cuspcoords}.)

For any log stack $(X,M_X)$ over $\Mlog$ classifying an elliptic curve
${\cal E}/X$, form the pullback
\[
\xymatrix{
(X,M_X)\times_{\Mlog} U_{log}  \ar[r]^-p \ar[d] & U_{log} \ar[d]\\
(X,M_X) \ar[r] & \Mlog.
}
\]
This fiber product is universal among log stacks $(Y,M_Y)$ equipped
with a map $f\co (Y,M_Y) \to (X,M_X)$ and an isomorphism of $f^*({\cal
E})$ with a Weierstrass curve.  The object $(X,M_X)$ is (Kummer)
log-\'etale over $\Mlog$ if and only if the map $p$ of log stacks is
(Kummer) log-\'etale.

In particular, if the elliptic curve on $X$ is smooth, then $\Delta$
is invertible and there are no restrictions on the logarithmic
structure.  For such a map to make $(X,M_X)$ log-\'etale over $\Mlog$,
the logarithmic structure on $X$ must be trivial.

\begin{defn}
The small (Kummer) log-\'etale site of $\Mlog$ is the category of log
schemes $(X,M_X)$ equipped with a (Kummer) log-\'etale map $(X,M_X) \to
\Mlog$, with maps being the (Kummer) log-\'etale maps over $\Mlog$.
\end{defn}
The classical \'etale site of $\Mell$ has a fully faithful embedding
into the small log-\'etale site of $\Mlog$, and so the latter is
strictly an enlargement.

The affine examples of log stacks are {\em log rings}, determined by
a ring $R$ and an appropriate \'etale sheaf of commutative monoids $M$
on $\Spec(R)$.  We now give some details about the data needed on  
a map $f\co A \to R$, classifying a Weierstrass curve over $\Spec(R)$, 
to get a map $\Spec(R,M) \to \Mlog$, and when this map is log-\'etale.

The map $f$ determines a generalized elliptic curve when the ideal
$(f(c_4),f(\Delta))$ is the unit ideal of $R$.  As in the previous
section, a lift of this to a map of log schemes is a lift of
$\Delta/c_4^3$ to a section of $M$ over $\Spec(c_4^{-1} R)$; the
fact that $M$ is a logarithmic structure implies that this is
equivalent to a lift of $\Delta$ to a section $\tilde \Delta$ of $M$
over $\Spec(R)$.

We then have a composite pullback diagram
\[
\xymatrix{
\Spec(R \otimes_A \Gamma, (\eta_L)^* M) \ar[r] \ar[d] &
R_{log} \ar[r] \ar[d] & U_{log} \ar[d] \\
\Spec(R,M) \ar[r] & U_{log} \ar[r] & \Mlog.
}
\]
We find that the composite of the lower maps is log-\'etale if and
only if the resulting map
\[
(A,\langle\Delta\rangle) \to (R \otimes_A \Gamma, (\eta_L)^* M),
\]
induced by the right unit of $\Gamma$, is log-\'etale.  Here
$(\eta_L)^* M$ is the pullback logarithmic structure.

In particular, an object $\Spec(R,M)$ can be log-\'etale over $\Mlog$
only if $R$ is Noetherian.  In this case, the Artin-Rees lemma
gives us the following.
\begin{prop}
\label{prop:artinrees}
For a log ring $\Spec(R,M)$ log-\'etale over $\Mlog$ and a
finitely-generated $R$-module $N$, the diagram
\[
\xymatrix{
N \ar[r] \ar[d] & \comp{N}_\Delta \ar[d] \\
\Delta^{-1} N \ar[r] & \Delta^{-1} \comp{N}_\Delta
}
\]
is both cartesian and cocartesian in $R$-modules.
\end{prop}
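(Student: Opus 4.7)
The statement that the square is both cartesian and cocartesian is equivalent to the exactness of the Mayer--Vietoris sequence
\[
0 \to N \to \Delta^{-1} N \oplus \comp{N}_\Delta \to \Delta^{-1} \comp{N}_\Delta \to 0,
\]
in which the left map is the diagonal and the right map is the difference. This is the standard fracture square for completion of $N$ along $(\Delta)$, and my plan is to verify its exactness by invoking the two Artin--Rees consequences: that completion at $\Delta$ is exact on finitely generated $R$-modules, and that the natural map $R/\Delta^k R \to \comp{R}_\Delta/\Delta^k\comp{R}_\Delta$ is an isomorphism for every $k$.

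The key preliminary fact is that $\Delta$ is a non-zero-divisor on $R$. Since the cusp divisor on $\Mell$ is smooth (Corollary~\ref{cor:smcd}), Proposition~\ref{prop:kummer} implies that the log structure on $(R,M)$ is \'etale-locally of the form $\langle \pi\rangle$ for some regular parameter $\pi$, and the log-\'etale condition forces $\Delta = u\pi^d$ for a unit $u$ and some integer $d \geq 1$. In particular $\Delta$ is \'etale-locally a unit times a power of a regular element, hence a non-zero-divisor on $R$.

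I would then verify the sequence directly for $N = R$. Injectivity of $R \to \Delta^{-1} R \oplus \comp{R}_\Delta$ follows from $R \to \Delta^{-1} R$ being injective, which is immediate once $\Delta$ is a non-zero-divisor. For middle exactness, any element of $\comp{R}_\Delta[\Delta^{-1}]$ lying simultaneously in $\Delta^{-1} R$ and $\comp{R}_\Delta$ can be written as $r/\Delta^k$ with $r \in R \cap \Delta^k \comp{R}_\Delta$; the Artin--Rees identification $R/\Delta^k \cong \comp{R}_\Delta/\Delta^k$ forces $r \in \Delta^k R$, so the element already lies in $R$. Surjectivity onto $\comp{R}_\Delta[\Delta^{-1}]$ uses the same identification to decompose any $y/\Delta^k$ as $r/\Delta^k + y'$ with $r \in R$ and $y' \in \comp{R}_\Delta$, realizing it as a difference of elements of $\Delta^{-1} R$ and $\comp{R}_\Delta$.

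To extend to an arbitrary finitely generated $N$, I would use that Artin--Rees provides $\comp{N}_\Delta \cong N \otimes_R \comp{R}_\Delta$ together with flatness of $\comp{R}_\Delta$ over $R$ on the category of finitely generated modules, while localization is always flat. Tensoring the four-term exact sequence for $R$ with $N$ then preserves exactness and produces the desired sequence for $N$. The main obstacle, and really the only non-formal step, is the verification that $\Delta$ is a non-zero-divisor on $R$; this is what distinguishes the log-\'etale situation from a generic ideal completion, and once it is in place the fracture square is routine Artin--Rees machinery.
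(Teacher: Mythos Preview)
Your proof is correct and supplies considerably more detail than the paper, which simply records (just before the proposition) that $R$ is Noetherian---since log-\'etale maps are locally of finite presentation over the finite-type stack $\Mell$---and then invokes the Artin--Rees lemma without further comment. Your route through the non-zero-divisor property of $\Delta$ is sound: it does follow from Proposition~\ref{prop:kummer} and Corollary~\ref{cor:smcd} as you say, and it makes the verification for $N=R$ pleasantly explicit. The passage to general $N$ by tensoring is also fine, since $\Delta^{-1}\comp{R}_\Delta$ is flat over $R$ (localization of a flat module) and $\comp{R}_\Delta\otimes_R N\cong\comp{N}_\Delta$ by Artin--Rees.

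One small correction to your framing: the bicartesian fracture square in fact holds for \emph{any} element $\Delta$ of a Noetherian ring and any finitely generated module $N$, with no regularity hypothesis on $\Delta$. So the non-zero-divisor observation, while true here, is a convenience that streamlines your explicit check rather than ``the only non-formal step.'' What the log-\'etale hypothesis genuinely buys you is Noetherianness of $R$; once that is in hand the square is a standard commutative-algebra fact, which is why the paper is content to name Artin--Rees and move on. Your argument exploits an additional regularity property that happens to be available in this setting but is not strictly required.
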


In particular, taking $N = R$ tells us that an object of the
log-\'etale site has its underlying scheme completely determined by an
object $\Delta^{-1} X \to \mell$ \'etale over the moduli of smooth
elliptic curves, an object $(\comp{X}_\Delta,M_X)$ over the completion
at the cusp, and a patching map $\Delta^{-1} \comp{X}_\Delta \to
\Delta^{-1} X$ over $\mell$.

\subsection{The Tate curve}
\label{sec:tate-curve}

For references for the following material, we refer the reader to
\cite[VII]{deligne-rapoport} or \cite[\S2.3]{ando-ellipticpowerops}.

The Tate curve $T$ is a generalized elliptic curve over $\mb Z\pow{q}$
defined by the formula
\[
y^2 + xy = x^3 + a_4(q)x + a_6(q),
\]
where
\begin{alignat*}{2}
a_4(q) &= -5 \sum_{n=1}^{\infty} \frac{n^3 q^n}{1 - q^n}, &\hspace{2pc}
a_6(q) &= -\frac{1}{12} \sum_{n=1}^{\infty} \frac{(5n^3 + 7n^5) q^n}{1 - q^n}.
\end{alignat*}
The $j$-invariant of this curve is
\begin{equation}
  \label{eq:jinvariant}
j(q) = q^{-1} + 744 + 196884q + \ldots
\end{equation}
At $q=0$, the Tate curve is a curve of genus one with a nodal
singularity, whose smooth locus is the group scheme $\mb G_m$.

The Tate curve possesses a chosen isomorphism of formal groups
$\widehat T \cong \widehat{\mb G}_m$ over $\mb Z\pow{q}$ and a
canonical nowhere-vanishing invariant differential.  In addition,
there are compatible diagrams of group schemes
\[\xymatrix{
\mu_n \ar[r] \ar[d] &
T[n] \ar[d] \\
\widehat{\mb G}_m \ar[r] &
T
}\]
as $n$ varies.

For any $n \in \mb N$, define $\psi^n(q) = q^n$.  This map has a lift
$\psi^n_T\co T \to T$ making the following diagram commute:
\[\xymatrix{
T \ar[r]^{\psi^n_T} \ar[d] & T \ar[d] \\
\Spec(\mb Z\pow{q}) \ar[r]_{\psi^n} &
\Spec(\mb Z\pow{q})
}\]
The resulting map $T \to (\psi^n)^* T$ is an isogeny with $\mu_n$
mapping isomorphically to the kernel.

\subsection{The Tate moduli}
\label{sec:tate-moduli}

The Tate curve is classified by a map $\Spec(\mb Z\pow{q}) \to \Mell$.
It also has a $\mb Z/2$-action by negation.

Equation~(\ref{eq:jinvariant}) allows us to express $j^{-1}$ and $q$
as power series in each other, so the coefficient $q$ is uniquely
determined by $j$.  Both $j$ and $(j-1728)$ are the inverses of
elements in $\mb Z\pow{q}$, and as in Lemma~\ref{lem:cuspcoords} this
means that there are no isomorphisms between generalized elliptic
curves parametrized by the Tate curve other than the identity and
negation.  In particular, the automorphism scheme $\underbar{Aut}(T)$
is the constant group scheme $\mb Z/2$.

This implies that we have an identification of fiber products
\[
\Spf(\mb Z\pow{q}) \times_{\Mell} \Spf(\mb Z\pow{q}) \cong \Spf(\mb
Z\pow{q}) \times \mb Z/2.
\]
In particular, the substack of $\Mell$ parametrizing generalized
elliptic curves locally isomorphic to the Tate curve is isomorphic to
the quotient stack
\[
\mtate = [\Spf(\mb Z\pow{q}) /\!/ \mb Z/2 ].
\]
The following shows that this identification is compatible with the
logarithmic structure.

\begin{prop}
Log maps to $\mtate$ are the same as log maps to $\Spf(\mb
Z\pow{q},\langle q\rangle)$, together with a choice of principal
$\mb Z/2$-torsor.
\end{prop}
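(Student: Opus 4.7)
The plan is to reduce the statement to showing that $\mtate$ coincides, as a fine and saturated log stack, with the product $\Spf(\mb Z\pow{q}, \langle q\rangle) \times B\mb Z/2$; the stated decomposition of maps then follows immediately from the universal property of this product together with the fact that $B\mb Z/2$ carries the trivial log structure (so that a $\mb Z/2$-torsor in log stacks on $(X,M_X)$ is the same data as an ordinary principal $\mb Z/2$-torsor on $X$).

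First, I would argue that the $\mb Z/2$-action defining the quotient $\mtate = [\Spf(\mb Z\pow{q})/\!/\mb Z/2]$ is trivial on the underlying formal scheme. The action arises from negation on the universal Tate curve, but negation is an automorphism of that curve \emph{over} the base $\Spf(\mb Z\pow{q})$, and in particular fixes the parameter $q$. Equivalently, in the fiber product identification $\Spf(\mb Z\pow{q}) \times_{\Mell} \Spf(\mb Z\pow{q}) \cong \Spf(\mb Z\pow{q}) \times \mb Z/2$ recalled just above, both projections to $\Spf(\mb Z\pow{q})$ agree, which is precisely the assertion that the action is trivial. Hence the quotient stack splits as $\Spf(\mb Z\pow{q}) \times B\mb Z/2$, with the first factor carrying the Tate curve and the second classifying an isomorphism to it up to the $\mb Z/2$ of automorphisms.

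Second, I would identify the logarithmic structure that $\mtate$ inherits from $\Mlog$. By Corollary~\ref{cor:smcd}, the cusp of $\Mell$ is a smooth divisor, and in the coordinates of Lemma~\ref{lem:cuspcoords} it is cut out by $j^{-1}$. Using the expansion~(\ref{eq:jinvariant}), one sees $j(q) = q^{-1}(1 + 744 q + 196884 q^2 + \cdots)$, so $j^{-1}$ and $q$ generate the same ideal in $\mb Z\pow{q}$, differing by a unit. Consequently the pullback of the cusp log structure on $\Mlog$ to $\Spf(\mb Z\pow{q})$ is $\langle q \rangle$. Since the $\mb Z/2$-action on $\Spf(\mb Z\pow{q})$ is trivial, $\langle q \rangle$ is tautologically $\mb Z/2$-invariant, so descends to give the identification of log stacks $\mtate \cong \Spf(\mb Z\pow{q}, \langle q \rangle) \times B\mb Z/2$. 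Unwinding the universal property of this product then yields the claim.

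The step requiring the most care is the verification that the pullback of the cusp log structure really is $\langle q \rangle$, rather than some larger structure arising from the stackiness at the cusp; this is precisely where smoothness of the cusp divisor (so that the log structure is freely generated by a single defining equation) combines with the fact that the leading coefficient of $j(q)^{-1}$ in $q$ is a unit. Everything else is essentially formal from the triviality of the $\mb Z/2$-action and the standard description of maps into a classifying stack.
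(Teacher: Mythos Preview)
Your proposal is correct and follows essentially the same approach as the paper. The paper's proof is terser because the identification $\mtate \cong \Spf(\mb Z\pow{q}) \times B\mb Z/2$ as ordinary (formal) stacks is treated as already established in the paragraph preceding the proposition, so the proof only needs to handle the logarithmic structure via the observation that $j^{-1}$ is a unit times $q$; your argument re-derives the product splitting from the triviality of the $\mb Z/2$-action and then makes the same $j^{-1}$-versus-$q$ comparison, arriving at the same conclusion by the same mechanism.
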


\begin{proof}
Our identification of $\mtate$ has already shown that maps $X  \to
\mtate$ are equivalent to maps $j^{-1}\co X \to \Spf(\mb Z\pow{q})$,
together with a principal $\mb Z/2$-torsor $Y = X \times_{\mtate}
\Spf(\mb Z\pow{q})$.

As the $j$-invariant of the Tate curve has the expression from
Equation~(\ref{eq:jinvariant}), it is a unit times $q$.  Therefore,
the logarithmic structure $\langle j^{-1}\rangle$ on $\Spf(\mb
Z\pow{q})$ is the isomorphic to the logarithmic structure $\langle q
\rangle$.
\end{proof}

If $(X,M_X)$ is a log scheme with a map $X \to \mtate$, an extension to
a log map is equivalent to a choice of lift of $q$ to an element
$\tilde q \in M_X(X)$.

Let $\multmod = [\Spec(\mb Z) /\!/ \mb Z/2]$ be the substack of
$\mtate$ defined by $q=0$.

\begin{defn}
For a log scheme $(X,M_X)$ log-\'etale over $\Mlog$ carrying the
generalized elliptic curve ${\cal E}$, the cusp divisor lifts to a
smooth divisor $X^c \subset X$ with a map $X^c \to \multmod$.

The associated form of the multiplicative group scheme is the smooth
locus of the restriction ${\cal E}|_{X^c}$, classified by the
resulting map $X^c \to \multmod$.
\end{defn}

We note that the cusp subscheme is functorial, and that there is a
natural diagram
\[
\xymatrix{
X^c \ar[r] \ar[d] & X \ar[d] \\
\multmod \ar[r] & \Mlog.
}
\]
If $(X,M_X)$ is log-\'etale over $\Mlog$, then $X^c$ is \'etale over
over $\multmod$.

\subsection{Modular curves}
\label{sec:modular}

In this section we will describe why the modular curves, for various
forms of level structure, give a natural tower of log-\'etale maps to
$\Mlog$.

In the following, let $\widehat{\mb Z}$ be the profinite completion of
the integers, let $G = \GL_2(\widehat{\mb Z})$, and for $N \geq 1$ let
$p_N$ be the surjection $G \twoheadrightarrow \GL_2(\mb Z/N)$.

\begin{defn}
\label{def:levelstruct}
The category $\cal{L}$ of level structures is defined as follows.  The
objects are pairs $(N,\Gamma)$ of a positive integer $N$ and a
subgroup $\Gamma < \GL_2(\mb Z/N)$, with
\[
\Hom_{\cal{L}}((N,\Gamma), (N',\Gamma')) = 
\begin{cases}
\Hom_G(G/(p_N^{-1} \Gamma), G/(p_{N'}^{-1} \Gamma')) &\text{if }N'
| N\\
\emptyset &\text{otherwise,}
\end{cases}
\]
\end{defn}
In particular, morphisms in $\cal{L}$ are generated by morphisms
of the following three types:
\begin{enumerate}
\item inclusions $\Gamma < \Gamma'$ for subgroups of $\GL_2(\mb Z/N)$,
\item conjugation maps $\Gamma \to g\Gamma g^{-1}$ for elements
  $g \in \GL_2(\mb Z/N) / \Gamma$, and
\item changes-of-level $(NM,p^{-1} \Gamma) \to (N,\Gamma)$, where
  $p\co \GL_2(\mb Z/NM) \to \GL_2(\mb Z/N)$ is the projection.
\end{enumerate}

\begin{prop}
\label{prop:modulartower}
For any pair $(N,\Gamma)$ in $\cal{L}$, there is a Deligne-Mumford
stack denoted $\MMM(\Gamma)$, parametrizing elliptic curves with level
$\Gamma$ structure over $\mb Z[1/N]$.  Moreover, this is functorial in
the sense that there is a weak 2-functor from $\cal{L}$ to the
2-category of Deligne-Mumford stacks.
\end{prop}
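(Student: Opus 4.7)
The plan is to construct a single ``full level $N$'' compactified stack $\MMM(N)$ for each $N$, equipped with a $\GL_2(\mb Z/N)$-action over $\mb Z[1/N]$, and then obtain $\MMM(\Gamma) := [\MMM(N)/\Gamma]$ as a finite quotient; the promised 2-functor then decomposes into the three classes of generating morphisms in $\cal L$.

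For the construction of $\MMM(N)$, I would begin with the classical smooth moduli $\cal M(N) \to \mell[1/N]$ of elliptic curves equipped with an isomorphism $(\mb Z/N)^2 \cong \cal E[N]$, which is a finite \'etale $\GL_2(\mb Z/N)$-Galois cover (Katz--Mazur). The analysis of cusps via the change of coordinate $q \mapsto q^{1/N}$ described in the introduction shows that the extension over the cusp of $\Mell[1/N]$ is tamely ramified, with ramification index dividing $N$. Proposition~\ref{prop:coveringequiv} then produces a unique extension to a Kummer log-\'etale $\GL_2(\mb Z/N)$-Galois cover of $\Mlog[1/N]$, whose underlying Deligne-Mumford stack is the normalization of $\Mell[1/N]$ in $\cal M(N)$; this is $\MMM(N)$. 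For any subgroup $\Gamma < \GL_2(\mb Z/N)$ the quotient $[\MMM(N)/\Gamma]$ is Deligne-Mumford, since $\Gamma$ is finite and $\MMM(N)$ is.

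Next I would build the maps for the three generators of $\cal L$. An inclusion $\Gamma \hookrightarrow \Gamma'$ induces a natural quotient morphism $[\MMM(N)/\Gamma] \to [\MMM(N)/\Gamma']$. For $g \in \GL_2(\mb Z/N)$, left multiplication by $g$ on $\MMM(N)$ intertwines the $\Gamma$- and $(g\Gamma g^{-1})$-actions and descends to moduli. For a projection $p \co \GL_2(\mb Z/NM) \twoheadrightarrow \GL_2(\mb Z/N)$, restricting a level-$NM$ structure to the $N$-torsion gives a $\GL_2(\mb Z/NM)$-equivariant morphism $\cal M(NM) \to \cal M(N)[1/M]$, which extends by normalization to $\MMM(NM) \to \MMM(N)[1/M]$ and descends to $\MMM(p^{-1}\Gamma) \to \MMM(\Gamma)$.

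The main obstacle is the weak 2-functoriality itself: checking that these three classes of maps compose compatibly, and that they realize all $G$-equivariant morphisms $G/p_N^{-1}\Gamma \to G/p_{N'}^{-1}\Gamma'$ coherently. The central point is that $p_{NM}^{-1}(p^{-1}\Gamma) = p_N^{-1}\Gamma$ inside $G$, so the change-of-level morphism must be an equivalence $\MMM(p^{-1}\Gamma) \simeq \MMM(\Gamma)[1/M]$; this holds because both are tamely ramified Kummer log-\'etale covers of $\Mlog[1/NM]$ whose restrictions to the smooth locus coincide (each being the quotient of $\cal M(NM)[1/M]$ by $p^{-1}\Gamma$), so Proposition~\ref{prop:coveringequiv} identifies them uniquely. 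Once this equivalence is in hand, the assignment $(N,\Gamma) \mapsto \MMM(\Gamma)$ descends to a 2-functor on the quotient category indexed by open subgroups $H = p_N^{-1}\Gamma < G$, where the change-of-level morphisms become identities; the remaining coherence is the standard weak 2-functoriality of quotient stacks by finite group actions, using that conjugation and inclusion commute up to the canonical 2-cell coming from the groupoid presentation $(\MMM(N)\times\Gamma \rightrightarrows \MMM(N))$.
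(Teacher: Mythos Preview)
Your proposal is correct but takes a different route from the paper. The paper's proof is a single sentence: it cites Deligne--Rapoport, \S IV.3, for the existence and functoriality of the compactified modular curves $\MMM(\Gamma)$, treating the proposition as a known result imported wholesale from the literature. Your approach instead reconstructs these objects from within the paper's own log-geometric framework, building $\MMM(N)$ via Proposition~\ref{prop:coveringequiv} as the normalization of $\Mell[1/N]$ in ${\cal M}(N)$ and then passing to finite quotient stacks. This is essentially Deligne--Rapoport's construction rephrased in log language, and indeed the paper itself deploys precisely this normalization-plus-Proposition~\ref{prop:coveringequiv} argument in the \emph{next} proposition (Proposition~\ref{prop:cechnerve}) to establish the log-\'etale covering properties of the maps between modular curves; your version simply front-loads that work. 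What you gain is a self-contained construction; what the citation buys is the moduli interpretation (``parametrizing elliptic curves with level $\Gamma$ structure''), which your quotient-stack definition $[\MMM(N)/\Gamma]$ does not directly verify---you would still owe a check that this stack represents the intended moduli functor over the cusps, not just over the smooth locus.
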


\begin{proof}
The existence of a functorial family of smooth Deligne-Mumford stacks
$\MMM(\Gamma)$ in $\Sch$ parametrizing generalized elliptic curves
with level $\Gamma$ structure, away from the primes dividing the
level, is from \cite[{\S}IV.3]{deligne-rapoport}.
\end{proof}

These have natural open substacks ${\cal M}(\Gamma)$ parametrizing
smooth elliptic curves, and there is an associated direct image log
structure (\ref{def:directimage}).  Specifically, the submonoid of
${\cal O}_X$ of functions invertible away from the cusps defines a
logarithmic structure on $\MMM(\Gamma)$, natural in $(N,\Gamma)$.

\begin{prop}
\label{prop:cechnerve}
The functor of Proposition~\ref{prop:modulartower} extends to log
stacks.  For a fixed $N$, any map $(N,\Gamma) \to (N,\Gamma')$ in
${\cal L}$ induces a log-\'etale covering map $\MMM(\Gamma') \to
\MMM(\Gamma)$.  If $K \lhd \Gamma < \GL_2(\mb Z/N)$, then the \v Cech
nerve of the associated covering map in log stacks is the simplicial
bar construction for the action of $\Gamma/K$ on $\MMM(K)$.
\end{prop}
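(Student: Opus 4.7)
My plan is to establish three things: (i) that the cusp divisors on the modular curves assemble into a compatible family of logarithmic structures so the functor extends to $\LSc$; (ii) that any morphism at fixed level $N$ is a Kummer log-\'etale covering map via tame ramification at the cusps; and (iii) that the \v Cech nerve of a normal-subgroup cover reduces to the bar construction by identifying the iterated log fiber product with $(\Gamma/K)^k \times \MMM(K)$.

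For (i), each $\MMM(\Gamma)$ is smooth over $\mb Z[1/N]$ with cusp divisor $D_\Gamma = \MMM(\Gamma) \setminus {\cal M}(\Gamma)$, which is a smooth divisor by the local Tate curve analysis (cf.\ Corollary~\ref{cor:smcd} and \S\ref{sec:tate-moduli}). The three generating types of morphisms in ${\cal L}$ all descend from forgetful operations on level structures of the underlying generalized elliptic curve, which do not alter the locus where that curve is singular; hence they preserve the cusp divisors and so induce maps on the associated direct image log structures (Definition~\ref{def:directimage}). For (ii) it suffices to treat inclusion and conjugation type morphisms, since these generate morphisms at fixed $N$. Restricted to the smooth locus, ${\cal M}(\Gamma) \to {\cal M}(\Gamma')$ is \'etale because the full level-$N$ cover ${\cal M}(N)[1/N] \to \mell[1/N]$ is a $\GL_2(\mb Z/N)$-torsor \cite{katz-mazur}, and intermediate quotients of torsors by finite groups of order invertible on the base are \'etale. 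At the cusps, the local picture is governed by the ramification $q = (q')^n$ from the introduction, with $n \mid N$ and hence invertible; this is a tamely ramified \'etale cover along a smooth divisor, and Proposition~\ref{prop:coveringequiv} promotes it to a Kummer log-\'etale covering map of log stacks.

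For (iii), $\Gamma/K$ acts on $\MMM(K)$ by permuting the level-$K$ refinements of a given level-$\Gamma$ structure, and over the smooth locus ${\cal M}(K) \to {\cal M}(\Gamma)$ is a $\Gamma/K$-torsor, so the shearing map $\Gamma/K \times {\cal M}(K) \to {\cal M}(K) \times_{{\cal M}(\Gamma)} {\cal M}(K)$ is an isomorphism there. Combined with tame ramification at the cusps, Proposition~\ref{prop:coveringequiv} identifies $\MMM(K) \to \MMM(\Gamma)$ with a Kummer log-\'etale Galois cover of log stacks with group $\Gamma/K$; by the very definition of a Galois covering map, the shearing isomorphism then extends to the full log cover. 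An easy induction on simplicial degree identifies the degree-$k$ piece of the \v Cech nerve with $(\Gamma/K)^k \times \MMM(K)$, with the expected bar-construction structure maps. The main obstacle is exactly this last step: the naive scheme-theoretic fiber product $\MMM(K) \times^{\mathrm{sch}}_{\MMM(\Gamma)} \MMM(K)$ is non-reduced at the cusps due to ramification, and only the passage to the fine and saturated log category — via the normalization description underlying Proposition~\ref{prop:coveringequiv} — produces the clean splitting into $|\Gamma/K|$ many copies of $\MMM(K)$.
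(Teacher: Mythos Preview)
Your proposal is correct and follows essentially the same route as the paper: both arguments use the normalization/tame-ramification description of $\MMM(\Gamma)$ over $\Mell[1/N]$ together with Proposition~\ref{prop:coveringequiv} to produce the Kummer log-\'etale covering maps, and both reduce the \v Cech-nerve identification to the shearing map being an isomorphism (checked on the smooth locus and extended via Proposition~\ref{prop:coveringequiv}). The paper phrases the last step as applying Proposition~\ref{prop:coveringequiv} directly to the shearing map itself rather than first certifying the cover as Galois, but this is a cosmetic difference.
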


\begin{proof}
The objects $\MMM(\Gamma)$ are the normalizations of $\Mell[1/N]$ in
$\mell(\Gamma)[1/N]$ \cite[IV.3.10]{deligne-rapoport} and have tame
ramification over the cusps.  Proposition~\ref{prop:coveringequiv}
establishes that this data is equivalent to a system of Kummer
log-\'etale covering maps of $\Mlog$.  Moreover,
Corollary~\ref{cor:smcd} and Proposition~\ref{prop:kummer} together
show that any such cover has logarithmic structure determined by a
smooth divisor.

In particular, $\MMM(\Gamma')$ is also the normalization of
$\MMM(\Gamma)$ in $\mell(\Gamma')[1/N]$, and the map $\MMM(\Gamma')
\to \MMM(\Gamma)$ is also a log-\'etale covering map.

We now consider the inclusion of a normal subgroup.  The action of
$\Gamma/K$ on $\MMM(K)$ over $\MMM(\Gamma)$ gives rise to a map from
the simplicial bar construction $(\Gamma/K)^{\bullet} \times Y$ to the
\v Cech nerve; to show that it is an isomorphism, it suffices by
induction to show in degree $2$ that the shearing map
\[
\Gamma/K \times \MMM(K) \to \MMM(K) \times_{\MMM(\Gamma)} \MMM(K)
\]
is an isomorphism.  However, this is a log-\'etale map covering map
which is an isomorphism over $\mell$, and so this is true by
Proposition~\ref{prop:coveringequiv}.
\end{proof}

On any \'etale open $U \to \MMM(\Gamma)$, the logarithmic structure is
defined by the cusp divisor $U^c$. As in the previous section, the
logarithmic structure determines a natural cusp substack
$\MMM(\Gamma)^c$ which is \'etale over $\multmod$.

\subsection{Grothendieck sites}
\label{sec:wlog}

For convenience, we consider the following category of elliptic
curves.  Recall that $(U_{log},R_{log})$
(\S\ref{sec:weierstrass-curves}) forms a groupoid in log schemes,
parametrizing generalized elliptic curves in Weierstrass form with
compatible logarithmic structures.

\begin{defn}
\label{def:weier}
The full subcategory $\Wlog \subset \ALS/\Mlog$ is defined as follows. 
The objects of $\Wlog$ are fine and saturated log schemes of the form
$\Spec(R,M)$, equipped with a generalized elliptic curve ${\cal E}$ in
Weierstrass form and a lift of the elliptic discriminant to $\tilde
\Delta \in M(\Spec(R))$, such that the associated map to $\Mlog$ is
log-\'etale (\S\ref{sec:log-etale-objects}).  Maps are maps
over $\Mlog$. 
\end{defn}

Equivalently, the objects of $\Wlog$ are affine log schemes
$\Spec(R,M) \to U_{log}$ such that the composite $\Spec(R,M) \to
U_{log} \to \Mlog$ is log-\'etale.

The category $\Wlog$, while it is not closed under limits in the small
\'etale site of $\Mlog$, still inherits a Grothendieck topology.
\begin{prop}
\label{prop:siteequivalence}
The inclusion from $\Wlog$ to the small log-\'etale
site of $\Mlog$ is an equivalence of Grothendieck sites.
\end{prop}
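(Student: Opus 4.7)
The plan is to invoke the standard comparison lemma for Grothendieck sites: a fully faithful embedding $D \hookrightarrow C$ between sites induces an equivalence of topoi (the usual meaning of an equivalence of sites) provided every object of $C$ admits a covering family drawn from $D$ and the topology $C$ induces on $D$ agrees with the given topology on $D$. Fullness and faithfulness are built into the definition of $\Wlog$ as a full subcategory of $\ALS/\Mlog$, so the content is density plus topology compatibility.

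For density, I would proceed as follows. Let $(X,M_X)\to\Mlog$ be log-\'etale. Choose a Zariski cover of $X$ by affine opens $\Spec(R_i)$, equipped with the restricted log structures $M_i$; these remain fine and saturated, and the composites $\Spec(R_i,M_i)\to\Mlog$ remain log-\'etale since being log-\'etale is Zariski-local on source and target. The map to $\Mlog$ encodes a generalized elliptic curve ${\cal E}/X$ together with a lift of the $j$-invariant through $\mathbb{P}^1_{\log}$, equivalently a lift of $\Delta$ to $M_X$ on $\{c_4\neq 0\}$ (as spelled out in \S\ref{sec:weierstrass-curves}). Using the classical fact that any generalized elliptic curve is Zariski-locally isomorphic to a curve in Weierstrass form on its base, I refine the Zariski cover further so that each ${\cal E}|_{\Spec(R_i)}$ is in Weierstrass form. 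The Weierstrass coordinates together with the pulled-back lift of $\Delta$ then produce a factorization $\Spec(R_i,M_i)\to U_{log}$, so each $\Spec(R_i,M_i)$ lies in $\Wlog$ and the family covers $(X,M_X)$ in the log-\'etale (in fact Zariski) topology.

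For topology compatibility, note that by definition a family in $\Wlog$ is declared to be a cover precisely when it is a log-\'etale cover in the ambient site, so the two topologies agree tautologically; stability of the restricted topology under refinement inside $\Wlog$ follows by applying the density construction to each member of a cover. I do not expect a serious obstacle, as the argument is essentially bookkeeping on top of two inputs already available: the smooth surjection $U_{log}\to\Mlog$ of Proposition~\ref{prop:logdiagonal}, and the Zariski-local existence of Weierstrass form. The mildest subtlety is checking that the lift of $\Delta$ restricts correctly under a Zariski open immersion of log schemes, but this is automatic because a factorization of $j^{-1}$ through $\mathbb{P}^1_{\log}$ pulls back along any map of log schemes.
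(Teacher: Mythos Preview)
Your proposal is correct and takes essentially the same approach as the paper: invoke the comparison lemma (the paper cites \cite[C.2.2.3]{johnstone-elephant-v2}) and verify the density hypothesis by covering an arbitrary log-\'etale object by affines on which the elliptic curve is in Weierstrass form. Your write-up is more explicit about the bookkeeping (full faithfulness, the lift of $\Delta$, topology compatibility), but the substance is identical.
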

\begin{proof}
  It suffices, by \cite[C.2.2.3]{johnstone-elephant-v2} to show that
  any object Kummer log-\'etale over $\Mlog$ has an \'etale cover by
  objects isomorphic to those from $\Wlog$.  However, this merely
  expresses the fact that a log scheme can be covered by affine
  charts, and that elliptic curves on affine schemes are locally
  isomorphic to elliptic curves in Weierstrass form.
\end{proof}

As our goal is to construct a presheaf ${\cal O}$ on the small
log-\'etale site of $\Mlog$ satisfying homotopy descent, it will
ultimately be the case that we can equivalently carry out a
construction on $\Wlog$ (see \S \ref{sec:sheafification}).

\section{Homotopy theory}
\label{sec:homotopy-theory}

In this section we will discuss several important tools. We will
assume that the reader is familiar with some more fundamental topics
in homotopy theory: model categories, homotopy limits, smash products,
and the derived smash product. (Appendix A of \cite{lurie-htt}
provides a very convenient reference for much of the material we will
be using on combinatorial model categories.)

Most of the material in this chapter is the work of other authors, but
is compiled here for convenience.

\subsection{Elliptic cohomology theories}
\label{sec:lifting}

We being by recalling the following (see
\cite{ando-hopkins-strickland-witten,lurie-survey,mark-construct}).

\begin{defn}
A homotopy-commutative ring spectrum $E$ is weakly even-periodic if
$\pi_n E$ is zero for $n$ odd, and if the tensor product $\pi_p E
\otimes_{\pi_0 E} \pi_q E \to \pi_{p+q}E$ is an isomorphism for $p$,
$q$ even. It is even-periodic if $\pi_2 E \cong \pi_0 E$, or
equivalently if $\pi_2 E$ contains a unit.
\end{defn}

\begin{prop}
If $E$ is a weakly even-periodic spectrum, then
$\Spf(E^0(\mb{CP}^\infty))$ is a smooth, 1-dimensional formal group
$\mb G_E$ over the ring $\pi_0 E$. There is a natural identification of
$\pi_{2t} E$ with the tensor power $\omega^{\otimes t}$ of sheaf of
invariant differentials of $\mb G_E$.
\end{prop}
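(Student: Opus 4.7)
The plan is to run the Atiyah-Hirzebruch spectral sequence for $\mathbb{CP}^\infty$ (and its square), exploit weak even-periodicity to force collapse in even columns, and then read off the desired structure on the skeletal filtration. I would split this into three steps.

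First, I would compute $E^0(\mathbb{CP}^n)$. Since $H^*(\mathbb{CP}^n;\mathbb{Z})$ is concentrated in even degrees and $\pi_{\mathrm{odd}} E = 0$, the AHSS $E_2^{p,q}=H^p(\mathbb{CP}^n;\pi_{-q}E)\Rightarrow E^{p+q}(\mathbb{CP}^n)$ has all nontrivial entries in even total degree, so every differential (which shifts total degree by $1$) vanishes. This gives a finite filtration of $E^0(\mathbb{CP}^n)$ with associated graded $\bigoplus_{k=0}^n \pi_{2k}E$. Passing to the $\lim$ over $n$ — noting the $\lim^1$ terms vanish because the tower of finitely generated free $\pi_0 E$-modules is Mittag-Leffler — yields $E^0(\mathbb{CP}^\infty) \cong \lim_n E^0(\mathbb{CP}^n)$, complete with respect to the skeletal filtration $F^{k}=\ker(E^0(\mathbb{CP}^\infty)\to E^0(\mathbb{CP}^{k-1}))$, whose graded pieces are $F^k/F^{k+1}\cong \pi_{2k}E$. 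In particular $F^1 = I$, the augmentation ideal at the basepoint, and $I/I^2\cong \tilde{E}^0(\mathbb{CP}^1)\cong \pi_2 E$.

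Second, I would identify the ring structure and recognize $\Spf(E^0(\mathbb{CP}^\infty))$ as a smooth, $1$-dimensional formal scheme. Weak even-periodicity says the cup product $\pi_2 E\otimes_{\pi_0 E}\pi_2 E\to \pi_4 E$ is an isomorphism, and by induction $(\pi_2 E)^{\otimes k}\to \pi_{2k}E$ is an isomorphism. This shows that $\omega:=\pi_2 E = I/I^2$ is an invertible $\pi_0 E$-module (because the pairing $\pi_2 E\otimes \pi_{-2}E\to \pi_0 E$ has an inverse, namely the unit), and that $F^k=I^k$; the completion of $\mathrm{Sym}_{\pi_0 E}(\omega)$ with respect to the augmentation ideal maps to $E^0(\mathbb{CP}^\infty)$ inducing isomorphisms on associated graded, so (after Zariski-localizing on $\Spec\pi_0 E$ to trivialize $\omega$, picking a generator $x$) we get an isomorphism $E^0(\mathbb{CP}^\infty)\cong \pi_0 E\pow{x}$. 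This identifies the formal scheme as smooth of relative dimension one with cotangent sheaf $\omega$.

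Third, I would upgrade this formal scheme to a formal group using the $H$-space map $\mu\co \mathbb{CP}^\infty\times\mathbb{CP}^\infty\to\mathbb{CP}^\infty$ (tensor product of line bundles), basepoint (unit), and inverse map. The only non-formal ingredient is a Künneth isomorphism $E^0(\mathbb{CP}^\infty\times\mathbb{CP}^\infty)\cong E^0(\mathbb{CP}^\infty)\hat\otimes_{\pi_0 E} E^0(\mathbb{CP}^\infty)$. I would obtain this by applying the AHSS argument of the first step to $\mathbb{CP}^n\times\mathbb{CP}^m$: the cohomology is again concentrated in even degree, the SS collapses, and passing to $\lim$ gives the completed tensor product because all the terms are projective and the tower satisfies Mittag-Leffler. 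Then $\mu^*$ endows $E^0(\mathbb{CP}^\infty)$ with a cocommutative, coassociative coproduct with counit and antipode, making $\Spf(E^0(\mathbb{CP}^\infty))$ a smooth one-dimensional formal group over $\pi_0 E$. Its cotangent sheaf at the identity is $\omega\cong\pi_2 E$, and tensoring up via weak even-periodicity gives the natural identification $\omega^{\otimes t}\cong (\pi_2 E)^{\otimes t}\cong \pi_{2t}E$.

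The main obstacle is the Künneth step: verifying that $\mu^*$ lands in the completed tensor product and is suitably coassociative requires carefully matching the skeletal filtration on $E^0(\mathbb{CP}^\infty\times\mathbb{CP}^\infty)$ with the product filtration. Everything else is a formal consequence of the AHSS collapsing and the multiplicative identities supplied by weak even-periodicity.
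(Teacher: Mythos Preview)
The paper does not prove this proposition; it is stated as standard background, with references to \cite{ando-hopkins-strickland-witten,lurie-survey,mark-construct} at the top of \S\ref{sec:lifting}. Your argument is the standard one and is correct.

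One small imprecision worth noting: in the genuinely \emph{weakly} even-periodic case, $\pi_{2k}E$ is an invertible (hence projective rank-one) $\pi_0 E$-module, not necessarily free, so $E^0(\mb{CP}^n)$ is finitely generated projective rather than free. This does not affect your Mittag--Leffler claim, since the restriction maps $E^0(\mb{CP}^{n+1})\to E^0(\mb{CP}^n)$ are surjective anyway (from the cofiber sequence $\mb{CP}^n\to\mb{CP}^{n+1}\to S^{2n+2}$ and $\pi_{\mathrm{odd}}E=0$), and projectivity is all you need for the K\"unneth step. With that adjustment, everything goes through.
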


\begin{defn}
An elliptic spectrum consists of a weakly even-periodic spectrum $E$,
an elliptic curve ${\cal E}$ over $\pi_0 E$, and an isomorphism
$\alpha\co \mb G_E \to \widehat{\cal E}$ between the formal group of
the complex orientable theory and the formal group of ${\cal E}$ over
$\pi_0 E$.  We say that this elliptic spectrum realizes the elliptic
curve ${\cal E}$.

A map of elliptic spectra is a multiplicative map $E \to E'$ together
with a compatible isomorphism ${\cal E}' \to {\cal E} \otimes_{\pi_0
  E} \pi_0 E'$ of elliptic curves which respects the isomorphisms of
formal groups.  We will say that a diagram of elliptic spectra
realizes the corresponding diagram of elliptic curves.
\end{defn}

\begin{rmk}
Consider the case where ${\cal E}$ is given the structure of a
Weierstrass curve.  It carries a coordinate $-x/y$ near the unit which
trivializes the sheaf $\omega$ of invariant differentials, and hence a
choice of Weierstrass representation determines a canonical
identification $\pi_* E \cong \pi_0 E[u^{\pm 1}]$.  (This also gives
its formal group a standard lift to a formal group law classified by a
map from the Lazard ring to $\pi_0 E$, and $E$ has a corresponding
standard orientation.)
\end{rmk}

We will find the following result convenient in showing that many
objects defined by pullback naturally remain elliptic
spectra (compare \cite[3.9]{level3}).

\begin{lem}
\label{lem:orientationpullback}
Suppose that we have a homotopy pullback diagram
\[
\xymatrix{
R \ar[r] \ar[d] & S \ar[d] \\
S' \ar[r] & T
}
\]
of maps of homotopy commutative ring spectra, all of which are weakly
even-periodic and complex orientable.  Suppose $\pi_0 R$ carries an
elliptic curve ${\cal E}$, and that the subdiagram $S \to T \leftarrow
S'$ is a diagram of elliptic spectra realizing ${\cal E}$.  Then there
is a unique way to give $R$ the structure of an elliptic spectrum
realizing ${\cal E}$ so that the square commutes.
\end{lem}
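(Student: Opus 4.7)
The plan is a Mayer--Vietoris argument that reduces everything to a pullback of coordinate rings. Since the square is a homotopy pullback of spectra, there is a long exact Mayer--Vietoris sequence
\[
\cdots \to \pi_n R \to \pi_n S \oplus \pi_n S' \to \pi_n T \to \pi_{n-1} R \to \cdots.
\]
All four spectra are weakly even-periodic, so odd homotopy groups vanish and the sequence breaks into short exact sequences; in particular $\pi_0 R \cong \pi_0 S \times_{\pi_0 T} \pi_0 S'$. The function spectrum functor $F(\mb{CP}^\infty_+, -)$ preserves homotopy pullbacks, and each of $R,S,S',T$ is complex orientable, so $E^n(\mb{CP}^\infty) = 0$ for $n$ odd. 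The same argument then yields
\[
R^0(\mb{CP}^\infty) \cong S^0(\mb{CP}^\infty) \times_{T^0(\mb{CP}^\infty)} (S')^0(\mb{CP}^\infty).
\]

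Taking $\Spf$, and using that a formal power series ring over a pullback of rings is itself the pullback of the corresponding power series rings, this identifies $\mb G_R$ with a formal scheme whose base changes along $\pi_0 R \to \pi_0 S$, $\pi_0 R \to \pi_0 S'$, and $\pi_0 R \to \pi_0 T$ are canonically $\mb G_S$, $\mb G_{S'}$, and $\mb G_T$. The same pullback reasoning applied to the coordinate ring $\pi_0 R\pow{t}$ of $\widehat{\cal E}$ shows that $\widehat{\cal E}$ is likewise assembled from its three base changes $\widehat{\cal E}|_{\pi_0 S}$, $\widehat{\cal E}|_{\pi_0 S'}$, and $\widehat{\cal E}|_{\pi_0 T}$ with the evident compatibilities.

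Because $S \to T \leftarrow S'$ realizes ${\cal E}$, we have isomorphisms $\alpha_S\co \mb G_S \to \widehat{\cal E}|_{\pi_0 S}$ and $\alpha_{S'}\co \mb G_{S'} \to \widehat{\cal E}|_{\pi_0 S'}$ whose base changes to $\pi_0 T$ coincide. By the pullback descriptions of both coordinate rings, these glue to a unique map of formal schemes $\alpha_R\co \mb G_R \to \widehat{\cal E}$ over $\pi_0 R$. It is an isomorphism because $\alpha_S^{-1}$ and $\alpha_{S'}^{-1}$ glue to a two-sided inverse, and it respects the formal group structures because those structures on $\mb G_R$ and $\widehat{\cal E}$ are themselves uniquely determined by their base changes to $\pi_0 S$ and $\pi_0 S'$. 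The pair $({\cal E}, \alpha_R)$ then furnishes $R$ with the unique elliptic spectrum structure realizing ${\cal E}$ that is compatible with the given structures on $S$ and $S'$.

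The main obstacle is the opening Mayer--Vietoris step: verifying that the homotopy pullback of spectra genuinely induces pullback squares on $\pi_0$ and on $\pi_0 F(\mb{CP}^\infty_+, -)$. This is where weak even-periodicity and complex orientability do the essential work, forcing all connecting homomorphisms into odd degrees where the relevant groups vanish. Once this is in hand, the remainder is formal descent for coordinate rings of formal groups along the ring pullback $\pi_0 R \cong \pi_0 S \times_{\pi_0 T} \pi_0 S'$, and both existence and uniqueness of $\alpha_R$ fall out of the corresponding universal property.
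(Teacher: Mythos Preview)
Your argument is correct and follows essentially the same route as the paper's proof: both use the vanishing of odd homotopy groups to deduce that $\pi_n$ (and hence the coordinate rings of the formal groups) sit in pullback squares, and then glue the given isomorphisms $\alpha_S$, $\alpha_{S'}$ along $\pi_0 T$ to obtain the unique compatible $\alpha_R$. Your use of $F(\mb{CP}^\infty_+,-)$ to transport the homotopy pullback is a clean way to phrase what the paper does by directly invoking bicartesian squares on $\pi_n$ and on the coordinate rings; the content is the same.
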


\begin{proof}
The diagram of elliptic spectra gives us a commutative diagram of
formal groups
\begin{equation}
  \label{eq:pullbackrows}
\xymatrix{
\mb G_R \otimes \pi_0 S \ar[d]^\sim &
\mb G_R \otimes \pi_0 T \ar[l] \ar[r] \ar[d]^\sim &
\mb G_R \otimes \pi_0 S' \ar[d]^\sim &\\
\widehat{\cal E} \otimes \pi_0 S &
\widehat{\cal E} \otimes \pi_0 T \ar[r] \ar[l] &
\widehat{\cal E} \otimes \pi_0 S'. 
}
\end{equation}
Here the tensor products are taken over $\pi_0 R$.  The hypotheses
imply that, for any $n$, applying $\pi_n$ to the homotopy pullback
diagram gives a bicartesian square.  Therefore, taking pullbacks along
rows of the coordinate rings from Equation~(\ref{eq:pullbackrows})
gives us the formal groups $\mb G_R$ and $\widehat{\cal E}$, and so
there is a unique isomorphism $\mb G_R \to \widehat{\cal E}$
compatible with the given isomorphisms.  In addition, the resulting
pullback diagram for $\omega^{\otimes n}$ shows that this isomorphism
gives $R$ the structure of an elliptic spectrum.
\end{proof}

\begin{rmk}
In this paper, we will often be working with strictly
even-periodic ring spectra rather than the weak version.  This
simplifies discussion of localization and completion with respect to
elements in the ring of modular forms.
\end{rmk}

\subsection{Rectification of diagrams}

In this section we will describe how to convert homotopy coherent
diagrams into strict diagrams. Throughout it we will fix a simplicial
model category ${\cal M}$ which is combinatorial (locally presentable
and cofibrantly generated).

\begin{defn}
  For a simplicial category $I$, let $\pi_0 I$ be the category with
  the same objects and with $\Hom_{\pi_0 I}(i,i') = \pi_0
  \Hom_I(i,i')$.
\end{defn}

\begin{defn}
  For a small simplicial category $I$ and a simplicial category ${\cal
    C}$, let ${\cal C}^I$ denote the category of simplicial functors
  $I \to {\cal C}$ and natural transformations.
\end{defn}

\begin{thm}[{\cite[A.3.3.2]{lurie-htt}}]
  If $I$ is a small simplicial category, there is a projective model
  structure on ${\cal M}^I$, where a natural transformation $f
  \to g$ is a weak equivalence or fibration if and only if $f(i) \to
  g(i)$ has the corresponding property for each object $i \in I$.
\end{thm}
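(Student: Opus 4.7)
The plan is to apply J.\ Smith's recognition theorem for combinatorial model structures on locally presentable categories. For each object $i$ of $I$, the evaluation functor $\mathrm{ev}_i\co {\cal M}^I \to {\cal M}$ admits a simplicial left adjoint $F_i\co {\cal M} \to {\cal M}^I$ given by the enriched Kan extension formula $F_i(A)(j) = \Hom_I(i,j) \otimes A$, using the tensoring of ${\cal M}$ over simplicial sets. The model structure will be cofibrantly generated, with generators and trivial generators given by $\mathbf{I} = \{F_i(u) : i \in I,\, u \in I_{\cal M}\}$ and $\mathbf{J} = \{F_i(v) : i \in I,\, v \in J_{\cal M}\}$, where $(I_{\cal M}, J_{\cal M})$ are chosen generating sets for ${\cal M}$. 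By adjunction, a map $f$ has the right lifting property against $\mathbf{I}$ (respectively $\mathbf{J}$) if and only if each $f(i)$ is a trivial fibration (respectively a fibration) in ${\cal M}$, so the candidate class of fibrations is automatically the class of levelwise fibrations.

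First I would check that ${\cal M}^I$ is itself locally presentable. Since $I$ is a small simplicial category and ${\cal M}$ is combinatorial, the category of simplicial functors $I \to {\cal M}$ is equivalent to the category of algebras for an accessible monad on the product category $\prod_{i} {\cal M}$, hence is locally presentable. Next I would verify the two nontrivial hypotheses of Smith's theorem for the candidate weak equivalences $W$, namely the levelwise weak equivalences. The $2$-out-of-$3$ property and closure under retracts hold levelwise; accessibility of $W$ follows because $W = \bigcap_{i} \mathrm{ev}_i^{-1}(W_{\cal M})$, a finite intersection of preimages of an accessible subcategory under accessible functors, and hence is itself an accessible subcategory of ${\cal M}^I$.

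The remaining content is the inclusion $\mathbf{J}\text{-cof} \subset W$. Since levelwise weak equivalences are closed under cobase change along levelwise cofibrations and under transfinite composition (these closure properties being inherited levelwise from ${\cal M}$), it suffices to check that each generator $F_i(v)$ with $v \in J_{\cal M}$ is itself a levelwise trivial cofibration. Evaluated at $j$, this is the map $\Hom_I(i,j) \otimes v$; the SM7 axiom for the simplicial model category ${\cal M}$, applied to the (automatically cofibrant) simplicial set $\Hom_I(i,j)$ and the trivial cofibration $v$, shows that this is a trivial cofibration. The inclusion $\mathbf{I}\text{-inj} \subset W$ is immediate from the adjunction characterization, since a levelwise trivial fibration is in particular a levelwise weak equivalence.

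The main obstacle, and the place where the combinatorial hypothesis on ${\cal M}$ is essential, is the accessibility of $W$. Once this is secured, Smith's theorem packages the remaining ingredients—the generating sets, the adjunction description of $\mathbf{I}\text{-inj}$ and $\mathbf{J}\text{-inj}$, and the containment $\mathbf{J}\text{-cof} \subset W \cap \mathbf{I}\text{-cof}$—into a cofibrantly generated model structure on ${\cal M}^I$ whose fibrations and weak equivalences are detected objectwise, as required.
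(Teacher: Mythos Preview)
The paper does not give its own proof of this statement; it simply cites Lurie's \emph{Higher Topos Theory}, Proposition~A.3.3.2. Your sketch is essentially the argument Lurie gives there: build generating sets $\mathbf{I}$ and $\mathbf{J}$ by pushing forward along the enriched left adjoints $F_i$ to evaluation, and verify the hypotheses of the Smith recognition principle (Lurie's A.2.6.13). The use of SM7 to see that $\Hom_I(i,j)\otimes v$ is a trivial cofibration, and the levelwise closure argument for $\mathbf{J}\text{-cof}\subset W$, are the standard steps and are correct as stated.

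There is one slip worth flagging. You write that $W=\bigcap_i \mathrm{ev}_i^{-1}(W_{\cal M})$ is a \emph{finite} intersection; but $I$ is only assumed small, not finite. The conclusion is still correct: each $\mathrm{ev}_i$ is accessible, each preimage $\mathrm{ev}_i^{-1}(W_{\cal M})$ is an accessible, accessibly embedded subcategory of the arrow category, and a \emph{small} intersection of such subcategories is again accessible by the limit theorem for accessible categories (Makkai--Par\'e, or Ad\'amek--Rosick\'y). So the fix is just to replace ``finite'' by ``small'' and invoke that theorem explicitly. With that correction, your outline matches the cited proof.
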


\begin{defn}
  A map $f\co I \to J$ of simplicial categories is a
  Dwyer-Kan equivalence if
  \begin{itemize}
  \item $\pi_0 f\co \pi_0 I \to \pi_0 J$ is essentially surjective,
    and
  \item for any $i, i' \in I$, the function $\Map_I(i,i') \to \Map_J(fi,
    fi')$ is a weak equivalence of simplicial sets.
  \end{itemize}
\end{defn}

\begin{thm}[{\cite[A.3.3.8, A.3.3.9]{lurie-htt}}]
\label{thm:rectification}
  For a combinatorial simplicial model category ${\cal M}$ and a
  simplicial functor $f\co I \to J$ between small simplicial
  categories, the restriction functor $f^*\co {\cal M}^J \to
  {\cal M}^I$ is a right Quillen functor between the projective
  model structures. The left adjoint, denoted by $f_!$, is given by
  left Kan extension.

  If $f$ is a Dwyer-Kan equivalence, then the Quillen adjunction
  $(f_!, f^*)$ is a Quillen equivalence.
\end{thm}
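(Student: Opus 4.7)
The first assertion is essentially formal. In the projective structure on $\mathcal M^I$ and $\mathcal M^J$, weak equivalences and fibrations are defined objectwise, so the restriction functor $f^*$ preserves both and is right Quillen. The left adjoint $f_!$ exists as a simplicially enriched left Kan extension; since $\mathcal M$ is cocomplete and simplicially tensored, one can take as a definition the coend
\[
(f_! X)(j) = \int^{i \in I} \Map_J(f(i), j) \otimes X(i).
\]

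For the Quillen equivalence, the plan is to verify that the derived unit $\eta\co X \to f^* R f_! X$ is an objectwise weak equivalence for every cofibrant $X \in \mathcal M^I$; since $f^*$ reflects all weak equivalences (they are checked objectwise), this is sufficient. The projective structure is cofibrantly generated by morphisms of the form $F_I^i \otimes (A \to B)$, where $F_I^i = \Map_I(i,-)$ is the simplicial functor corepresented by $i$ and $A \to B$ ranges over the generating (acyclic) cofibrations of $\mathcal M$. A direct coend computation gives $f_!(F_I^i \otimes A) \cong F_J^{f(i)} \otimes A$, so evaluated at an object $i' \in I$ the unit becomes
\[
\Map_I(i, i') \otimes A \to \Map_J(f(i), f(i')) \otimes A,
\]
which is a weak equivalence whenever $A$ is cofibrant, by the Dwyer-Kan hypothesis combined with the pushout-product (SM7) axiom for the simplicial model structure on $\mathcal M$.

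The main obstacle is to propagate this from the generating cells to all cofibrant objects. Every cofibrant $X$ is a retract of a transfinite composition of pushouts along generating cofibrations. Since $f_!$ is a left adjoint it commutes with these colimits, and since colimits in the projective structure are computed objectwise, $f^*$ also commutes with them. One then argues by transfinite induction on the cellular filtration: left properness of $\mathcal M$ (hence of $\mathcal M^I$ and $\mathcal M^J$ projectively) ensures by a cube-lemma argument that pushouts of projective cofibrations preserve the property that the unit is an objectwise weak equivalence, and the combinatoriality of $\mathcal M$ ensures closure of the relevant weak equivalences under transfinite composites of cofibrations. Carrying out this model-categorical bookkeeping, and reducing retracts to the composites they sit inside, is where the real work lies; the rest is formal from the enriched Yoneda lemma and the Dwyer-Kan condition.
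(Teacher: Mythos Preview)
The paper does not prove this theorem; it is quoted directly from \cite[A.3.3.8, A.3.3.9]{lurie-htt}, so there is no in-paper argument to compare against. Your sketch of the first assertion (that $f^*$ is right Quillen with left adjoint given by the enriched left Kan extension) is correct and standard.

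For the Quillen equivalence there is a genuine gap. You assert that ``$f^*$ reflects all weak equivalences (they are checked objectwise),'' but this is false for a general Dwyer--Kan equivalence: essential surjectivity of $\pi_0 f$ does not imply that $f$ hits every object of $J$. A morphism $g\colon Y\to Y'$ in $\mathcal M^J$ can fail to be a weak equivalence at some $j\in J$ outside the image of $f$ while $f^*g$ is nonetheless an objectwise equivalence. Your cell-induction argument does correctly show that the (derived) unit $X\to f^* f_! X$ is an equivalence for cofibrant $X$; the computation on representables uses precisely the fully-faithful half of the DK hypothesis. But the essential-surjectivity half is never used. To finish you must also verify the derived counit $f_! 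Q f^* Y \to Y$ is a weak equivalence for fibrant $Y$, and this is where essential surjectivity enters; it is not a formality, and typically requires either a separate argument for the counit or a reduction (as in Lurie) to special cases such as inclusions of full subcategories meeting every equivalence class.

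A smaller point: you invoke left properness of $\mathcal M$, which is neither among the hypotheses nor needed. The gluing lemma for pushouts along cofibrations between cofibrant objects already suffices for the cell induction, since projectively cofibrant diagrams are objectwise cofibrant and $f_!$ preserves cofibrancy.
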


\begin{defn}
  A simplicial category is homotopically discrete if the natural
  functor $I \to \pi_0 I$ is a Dwyer-Kan equivalence: all the
  simplicial sets $\Hom_I(i,i')$ are weakly equivalent to discrete
  sets.
\end{defn}

\begin{cor}
\label{cor:discreterectification}
  Suppose that $I$ is a small simplicial category which is
  homotopically discrete. Then any simplicial functor $g\co I \to {\cal M}$
  is naturally weakly equivalent to a functor $\pi_0 I \to {\cal M}$,
  and any two such are naturally weakly equivalent.
\end{cor}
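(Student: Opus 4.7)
The result is essentially a formal consequence of Theorem~\ref{thm:rectification} applied to the canonical functor $f\co I \to \pi_0 I$. Since $I$ is homotopically discrete, by definition $f$ is a Dwyer-Kan equivalence, so Theorem~\ref{thm:rectification} gives a Quillen equivalence
\[
f_! \co {\cal M}^I \rightleftarrows {\cal M}^{\pi_0 I} \co f^*
\]
between the projective model structures. In particular, this induces an equivalence of homotopy categories $\mathrm{Ho}({\cal M}^I) \simeq \mathrm{Ho}({\cal M}^{\pi_0 I})$.

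For existence, given $g\co I \to {\cal M}$, I would first take a cofibrant replacement $g^c \to g$ in the projective model structure on ${\cal M}^I$, which is an objectwise weak equivalence. Applying $f_!$ gives $f_! g^c \in {\cal M}^{\pi_0 I}$, and taking a fibrant replacement yields $h\co \pi_0 I \to {\cal M}$ together with a weak equivalence $f_! g^c \to h$. Because $(f_!, f^*)$ is a Quillen equivalence and $g^c$ is cofibrant with $h$ fibrant, the adjoint map $g^c \to f^* h$ is a weak equivalence in ${\cal M}^I$. Composing with the weak equivalence $g^c \to g$ (inverted in the homotopy category) produces the required natural weak equivalence between $g$ and the strict functor $f^* h$ coming from $\pi_0 I$.

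For uniqueness, suppose $h_1, h_2 \co \pi_0 I \to {\cal M}$ both restrict along $f$ to functors naturally weakly equivalent to $g$. Then $f^* h_1$ and $f^* h_2$ represent the same isomorphism class in $\mathrm{Ho}({\cal M}^I)$. Since $f^*$ preserves all objectwise weak equivalences (as restriction is evaluation on objects of $I$), its derived functor $\mathbb{R} f^* = f^*$ is an equivalence of homotopy categories by the Quillen equivalence. Being an equivalence, it reflects isomorphisms, so $h_1$ and $h_2$ are isomorphic in $\mathrm{Ho}({\cal M}^{\pi_0 I})$; that is, there is a zig-zag of natural weak equivalences between them.

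Honestly, this corollary has no real obstacle: once one believes Theorem~\ref{thm:rectification}, which encapsulates all the genuine homotopical content, the corollary is just an application of the definition of Quillen equivalence together with the observation that $f^*$ preserves all weak equivalences. The only mild subtlety is keeping track of cofibrant and fibrant replacements in ${\cal M}^I$ and ${\cal M}^{\pi_0 I}$ so that the adjunction formulas produce honest weak equivalences, not merely homotopy-category isomorphisms.
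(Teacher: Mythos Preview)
Your proof is correct and follows essentially the same approach as the paper: both deduce the result from the Quillen equivalence of Theorem~\ref{thm:rectification} applied to the Dwyer--Kan equivalence $I \to \pi_0 I$. The paper's version is marginally slicker in that it writes the zigzag as $g \leftarrow g_{cof} \to \pi^* \pi_! g_{cof}$ without fibrantly replacing $\pi_! g_{cof}$, exploiting (as you yourself observe) that $\pi^*$ preserves all objectwise weak equivalences; but this is a cosmetic simplification, not a different idea.
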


\begin{proof}
This follows from the fact that the map ${\cal M}^{\pi_0 I}
\to {\cal M}^I$ induces an isomorphism on homotopy
categories. If we wish, we can be slightly more explicit: writing
$\pi$ for the projection $I \to \pi_0 I$, we have a diagram of
natural weak equivalences
\[
g \leftarrow g_{cof} \to \pi^* \pi_! g_{cof},
\]
where $g_{cof}$ is a cofibrant replacement of $g$ in ${\cal M}^I$.
\end{proof}

\begin{rmk}
Our references are to recent work, but this result and those like it
have a long history. Some particularly relevant ones include the
following, non-exhaustive catalogue. Rectification for coherent
diagrams appeared in work of Vogt and Dwyer-Kan-Smith; Cordier-Porter
gave a proof for homotopy classes of diagrams in categories possessing
sufficient limits; Segal developed the specific mechanism of
using $\pi^* \pi_!$ in diagrams of spaces; Devinatz-Hopkins gave a
proof in the case of $E_\infty$ ring spectra; and Elmendorf-Mandell
developed this particular adjunction in the multicategory of symmetric
spectra.
\end{rmk}


\subsection{Rings and ring maps}

In order to make our constructions concrete, we will need to fix a
particular model for the theory of $E_\infty$ ring spectra. One
convenient place to do so is the category of symmetric spectra
\cite{hovey-shipley-smith-symmetric}.

We recall that the category of symmetric spectra has a symmetric
monoidal structure $\wedge$ whose unit is the sphere spectrum $\mb S$,
as well as a symmetric monoidal functor $\Sigma^\infty$ from based
simplicial sets to symmetric spectra.

For a symmetric spectrum $X$, we take our definition of the $n$'th
homotopy group $\pi_n(X)$ to be the abelian group $[S^n,X]$ of maps in
the homotopy category of symmetric spectra. In particular,
equivalences of symmetric spectra are the same as $\pi_*$-isomorphisms
with this definition.

\begin{defn}
The category of commutative (symmetric) ring spectra is the category
of commutative monoids under $\wedge$.

If $R$ is a commutative ring spectrum, the category of commutative
$R$-algebras is the category of commutative ring spectra under $R$,
and the category of $R$-modules is the category of symmetric spectra
with a (left) action of $R$, with its induced symmetric monoidal
structure $\smsh{R}$.
\end{defn}

The homotopy groups $\pi_* R$ of a commutative ring spectrum form a
graded-commutative ring.

\begin{defn}
For $M$ a commutative monoid, the monoid algebra $\mb
S[M]$ is the commutative ring spectrum $\Sigma^\infty M_+$. Similarly,
for $M'$ a based monoid (i.e. a monoid with zero element), the based
monoid algebra $\mb S[M']$ is the commutative ring spectrum
$\Sigma^\infty M'$.
\end{defn}
\begin{rmk}
The ring $\pi_*(\mb S[M])$ is the monoid algebra $(\pi_* \mb S)[M]$.
\end{rmk}

\begin{defn}
Let $\mb P$ be the monad taking a symmetric spectrum $X$ to the free
commutative ring spectrum on $X$:
\[
\mb P(X) = \bigvee_{n \geq 0} X^{\wedge n}/\Sigma_n.
\]
The category of commutative ring spectra is equivalent to the category
of $\mb P$-algebras.

Similarly, if $R$ is a commutative ring spectrum, we write $\mb P_R(X)$
for the free commutative $R$-algebra on an $R$-module $X$.
\end{defn}

\begin{rmk}
\label{rmk:rationalpolynomials}
Upon rationalization, the ring $\pi_* \mb P_R(X)$ is the free $\pi_*
R_{\mb Q}$-algebra on the module $\pi_* X_{\mb Q}$ (assuming that $X$
is cofibrant as an $R$-module).
\end{rmk}

\begin{defn}
For commutative ring spectra $S$ and $T$, let $\Map_{comm}(S,T)
\subset \Map(S,T)$ be the simplicial set of commutative ring maps $S
\to T$. If both $S$ and $T$ are commutative $R$-algebras for some
fixed $R$, let $\Map_{comm/R}(S,T) \subset \Map_{comm}(S,T)$ be the
simplicial set of commutative $R$-algebra maps.
\end{defn}

There is a positive flat stable model structure on symmetric spectra
(called the positive $S$-model structure in \cite{shipley-convenient})
so that the following result holds.

\begin{thm}
\label{thm:simplicialmodel}
There exists a combinatorial simplicial model structure on commutative
ring spectra such that the forgetful functor to symmetric spectra is a
right Quillen functor, with adjoint $\mb P$. (The same result holds
for commutative $R$-algebras, $R$-modules, and $\mb P_R$.)
\end{thm}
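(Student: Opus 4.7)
My plan is to obtain the model structure by transfer (lifting) along the free-forgetful adjunction $\mb P \dashv U$ from the positive flat stable model structure on symmetric spectra, and then separately verify that the SM7 simplicial axiom holds.

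The general recipe (e.g., Kan's transfer theorem, or its combinatorial refinement in Lurie~\cite{lurie-htt} A.3.3 or Schwede-Shipley) defines a map of commutative ring spectra to be a weak equivalence or fibration iff its underlying map of symmetric spectra is one. To apply this one needs: (i) the category of commutative $\mb S$-algebras is locally presentable (immediate since it is the category of algebras for an accessible monad on a locally presentable category); (ii) the domains of the generating (trivial) cofibrations $\mb P(I)$ and $\mb P(J)$ are small enough (automatic for combinatorial inputs); and (iii) the crucial condition that every relative $\mb P(J)$-cell complex is a weak equivalence in symmetric spectra. Conditions (i) and (ii) are bookkeeping; the real content is (iii).

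The main obstacle is exactly condition (iii), and it is the reason for choosing the \emph{positive flat stable} model structure. Concretely, when one attaches a $\mb P(J)$-cell along a map $\mb P(K) \to \mb P(L)$ built from a trivial cofibration $K \to L$, one filters the pushout $A \wedge_{\mb P(K)} \mb P(L)$ by the standard symmetric-power filtration whose subquotients are of the form $A \wedge (L/K)^{\wedge n}/\Sigma_n$. To conclude these are acyclic one needs: (a) $(L/K)^{\wedge n}/\Sigma_n$ to be a model for the homotopy orbits, i.e.\ the $\Sigma_n$-action on $(L/K)^{\wedge n}$ must be sufficiently free; and (b) the relevant smash products must preserve weak equivalences. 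Both properties hold precisely because in the positive flat stable structure the generating cofibrations start in positive simplicial degree (eliminating fixed points for $\Sigma_n$ on smash powers) and are flat in the sense that smashing with them preserves stable equivalences; this is the content of Shipley's analysis in~\cite{shipley-convenient}. Once this key input is in hand, (iii) reduces to checking that the filtration quotients are acyclic, which follows by induction.

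To install the simplicial structure, note that commutative ring spectra inherit a natural cotensoring $R^K = \Map(\Sigma^\infty K_+, R)$ from symmetric spectra, and a tensoring $R \otimes K$ constructed as a pushout in commutative ring spectra (equivalently $\mb P(\Sigma^\infty K_+) \wedge_{\mb P(\mb S)} R$ with an appropriate cofibrant replacement of the unit map). With these, SM7 follows from SM7 in symmetric spectra together with the compatibility of $\mb P$ with cofibrations of cofibrant objects, again using the flatness properties above.

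Finally, the statements for $R$-modules and commutative $R$-algebras are entirely parallel: for $R$-modules one transfers along $R \wedge (-) \dashv U$ and flatness of cofibrant $R$-modules yields (iii); for commutative $R$-algebras one transfers along $\mb P_R \dashv U$, using that $\mb P_R(X) = R \wedge_{\mb P(\mb S)} \mb P(X)$ inherits all the requisite freeness of the $\Sigma_n$-actions from the absolute case. I expect no new difficulties relative to the absolute case, since positivity and flatness pass to $R$-modules.
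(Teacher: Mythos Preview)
Your approach is essentially correct and matches the content of the references the paper invokes. The paper does not give an independent proof of this theorem; it records in a remark that existence of the model structure is (3.2) of Shipley~\cite{shipley-convenient}, that local presentability is on the second page of that reference, and then explains separately how to verify the simplicial axiom. So your sketch of the transfer argument---with the positive flat structure ensuring that $\Sigma_n$ acts freely enough on smash powers so that the filtration quotients of a $\mb P(J)$-cell attachment are acyclic---is exactly the argument being cited, just spelled out in more detail than the paper does.

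There is one point where the paper's treatment is cleaner than yours. For SM7 the paper (following~\cite{white-commutativemonoids}) uses the \emph{pullback} formulation: for a cofibration $K \to L$ of simplicial sets and a fibration $X \to Y$ of commutative ring spectra, one must check that $X^L \to X^K \times_{Y^K} Y^L$ is a fibration, acyclic if either input is. Since both fibrations and cotensors by simplicial sets are \emph{created} in symmetric spectra, this reduces immediately to SM7 for symmetric spectra, verified levelwise. Your route through the tensoring is unnecessary for this, and your description of the tensor $R \otimes K$ in commutative ring spectra is not right: it is not $\mb P(\Sigma^\infty K_+) \wedge_{\mb P(\mb S)} R$ but rather the categorical tensor, which for a finite discrete $K$ is the iterated coproduct $R^{\wedge |K|}$ in commutative ring spectra and in general a colimit of such. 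Working on the cotensor side avoids this issue entirely and makes the verification a one-liner.
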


\begin{rmk}
The existence of the model structure is (3.2) in the referenced paper,
and the statement that these model categories are locally presentable
is from the second page. The cited reference does not directly show
that the categories of symmetric spectra or commutative ring spectra
are simplicial model categories. However, as described in
\cite{white-commutativemonoids}, it is straightforward to verify that
these model categories are simplicial: in all these categories,
fibrations and cotensors with a simplicial set are created in
symmetric spectra, and so the pullback formulation of axiom SM7 is
inherited from a levelwise verification in symmetric spectra.
\end{rmk}

In combination with Corollary~\ref{cor:discreterectification}, this
combinatorial model structure allows us to refine diagrams of
commutative ring spectra with simple mapping spaces to strict
diagrams. To proceed further in this direction, we will require some
results from obstruction theory.

There have been several versions of obstruction theory developed for
the classification of objects and maps in commutative ring
spectra. Most of these produce obstructions in groups derived from
the cotangent complex. As a result, when the cotangent complex
vanishes we obtain strong uniqueness results.

\begin{prop}
\label{prop:etalerings}
Suppose $R$ is a commutative ring spectrum and $\pi_* R \to A_*$ is a
map of evenly-graded commutative rings which is \'etale. Then there
exists an $R$-algebra $S$ equipped with an isomorphism $\pi_* S \cong
A_*$ of $\pi_* R$-algebras, and for any such $S$ the natural
transformation
\[
\pi_* \co \Map_{comm/R}(S, -) \to \Hom_{gr.comm/\pi_* R}(A_*,-),
\]
from the mapping space to the discrete set, is a natural weak
equivalence.
\end{prop}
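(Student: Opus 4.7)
The plan is to apply a Goerss--Hopkins style obstruction theory for commutative $R$-algebras, and to exploit the fact that when $\pi_* R \to A_*$ is étale, the relevant topological André--Quillen ($\TAQ$) cohomology groups all vanish. The obstruction machinery will simultaneously produce $S$ and identify the mapping space out of $S$ as controlled by $\TAQ$, so the vanishing statement finishes both halves of the proposition at once.

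For existence, I would realize $A_*$ by a Postnikov-type tower of commutative $R$-algebras $S_n$ in which $\pi_* S_n \to A_*$ is an isomorphism on an expanding range of degrees. At each stage, the obstruction to constructing $S_{n+1}$ and the indeterminacy in the choice of lift lie in bigraded groups of the form $\TAQ^{s,t}(A_*/\pi_* R;\,A_*)$. Étaleness of $\pi_* R \to A_*$ makes the relative algebraic cotangent complex $L_{A_*/\pi_* R}$ quasi-isomorphic to zero; via the standard comparison between topological and algebraic André--Quillen cohomology (in the relevant weight), this forces the corresponding topological obstruction groups to vanish as well. Hence the tower can be built inductively, and its homotopy limit is the desired $S$.

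For the mapping space statement, fix such an $S$ and any commutative $R$-algebra $T$, and filter $T$ by its Postnikov tower as an $R$-module; this induces a tower of fibrations whose limit is $\Map_{comm/R}(S,T)$. The fibers between consecutive stages are identified via $\TAQ$ with spaces whose homotopy groups are $\TAQ^*(A_*/\pi_* R;\,\pi_n T)$ for various $n$, and all of these vanish by étaleness. So each component of $\Map_{comm/R}(S,T)$ is contractible. To identify $\pi_0$ with $\Hom_{gr.comm/\pi_* R}(A_*,\pi_* T)$, surjectivity follows by applying the existence half to a ring homomorphism $\phi\co A_* \to \pi_* T$ (realizing $\phi$ as a map of $R$-algebras using that $\pi_* T$ carries the relevant étale datum), and injectivity follows because two realizations of the same $\phi$ differ by a self-equivalence of $S$ over $R$ that is the identity on $\pi_*$, hence null by the same vanishing.

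The principal technical obstacle is setting up the relative obstruction theory cleanly over a general base $R$ (not just the sphere), in the combinatorial simplicial model structure of Theorem~\ref{thm:simplicialmodel}, and verifying that the Hurewicz-type comparison from algebraic to topological $\TAQ$ applies in this evenly-graded, weight-zero setting. Once this is in place, the vanishing of the algebraic cotangent complex for an étale extension translates directly into the vanishing of every obstruction group in sight, and the proposition follows.
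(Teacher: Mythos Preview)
Your approach is correct in spirit and shares the same core idea as the paper---obstruction theory plus the vanishing of Andr\'e--Quillen cohomology for \'etale extensions---but the implementations differ in a way worth noting.

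The paper invokes the Goerss--Hopkins obstruction theory in the category of $R$-modules with auxiliary homology theory $E = R$. This produces obstructions to existence in the \emph{algebraic} Andr\'e--Quillen groups $Der^{s+2}_{\pi_* R}(A_*, \Omega^s A_*)$ and, for any realization $S$, a Bousfield--Kan spectral sequence
\[
E_2^{s,t} = \begin{cases}
\Hom_{gr.comm/\pi_* R}(A_*, \pi_* T) & (s,t) = (0,0)\\
Der^s_{\pi_* R}(A_*, \Omega^t \pi_* T) & s > 0
\end{cases}
\]
converging to $\pi_{t-s}\Map_{comm/R}(S,T)$. The vanishing of the higher $Der$ groups is then obtained by identifying them with $\Gamma$-cohomology (Basterra--Richter) and invoking the Robinson--Whitehouse vanishing theorem for \'etale extensions. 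Both existence and the mapping-space statement drop out of the spectral sequence at once; in particular, $\pi_0$ is identified with the algebraic $\Hom$ set directly at $E_2$, with no separate surjectivity/injectivity argument required.

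Your Postnikov-tower approach is closer to the later ``cotangent complex'' style, and is also valid, but it trades one piece of machinery for another: instead of citing Goerss--Hopkins, you need the comparison between topological and algebraic $\TAQ$ in the relevant range, and you have to be precise about which flavor of $\TAQ$ governs each tower. The paper's route sidesteps this because the Goerss--Hopkins $E_2$-term is algebraic from the outset. One small exposition point: when you argue surjectivity on $\pi_0$ by ``applying the existence half to $\phi$'', note that the existence half builds an algebra, not a map; what you actually want is that the obstructions to lifting a given $\phi\co A_* \to \pi_* T$ to a map $S \to T$ lie in the same vanishing groups, which your mapping-space tower already gives you.
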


\begin{proof}
Our proof roughly follows
\cite[\S2.2]{baker-richter-algebraicgalois}. The Goerss-Hopkins
obstruction theory in the category of $R$-modules
\cite{goerss-hopkins-summary}, using the auxiliary homology theory $E 
= R$, gives obstructions to existence of such a commutative
$R$-algebra $S$ in the Andr\'e-Quillen cohomology groups
\[
Der^{s+2}_{\pi_* R}(A_*, \Omega^s A_*),
\]
and for any such $S$ there is a Bousfield-Kan spectral sequence
converging to $\pi_{t-s} \Map_{comm/R}(S,T)$ with
\[
E_2^{s,t} = \begin{cases}
  Hom_{gr.comm/\pi_* R}(A_*,\pi_* T) & (s,t) = (0,0)\\
  Der^s_{\pi_* R}(A_*, \Omega^t \pi_* T) & s > 0.
\end{cases}
\]
For \'etale $\pi_* R$-algebras the Andr\'e-Quillen cohomology
groups $Der^s_{\pi_* R}(A_*,-)$ vanish identically for $s > 0$, for
example by identifying them with $\Gamma$-cohomology
\cite{basterra-richter-gammacohomology} and applying vanishing results
of Robinson-Whitehouse \cite{robinson-whitehouse-gammacohomology}.
\end{proof}

This result is sometimes referred to as ``topological invariance of
the \'etale site.'' (A related version with a more restricted notion
of \'etale maps appears in
\cite[7.5.0.6,7.5.4.6]{lurie-higheralgebra}.) Combining it with
Corollary~\ref{cor:discreterectification}, we find the following.
\begin{cor}
\label{cor:etalespheres}
There exists a functor from \'etale extensions of $\mb Z$ to
commutative ring spectra, sending a ring $A$ to a ring spectrum $\mb S
\otimes A$ such that $\pi_* (\mb S \otimes A) \cong \pi_* \mb S
\otimes A$.
\end{cor}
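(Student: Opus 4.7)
The approach is twofold: first build the individual ring spectra $\mb S \otimes A$ using the obstruction theory of Proposition~\ref{prop:etalerings}, then assemble them into a strict functor via the rectification machinery of Corollary~\ref{cor:discreterectification}.

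For each étale extension $\mb Z \to A$, base change yields an étale map $\pi_* \mb S \to \pi_* \mb S \otimes A$ of graded commutative rings, so Proposition~\ref{prop:etalerings} applied with $R = \mb S$ produces a commutative $\mb S$-algebra $T_A$ equipped with an isomorphism $\pi_* T_A \cong \pi_* \mb S \otimes A$ of $\pi_* \mb S$-algebras. Choose such a $T_A$ once and for all for each $A$, and let $\mathcal{C}$ be the simplicial full subcategory of the combinatorial simplicial model category $\mathcal{M}$ of commutative ring spectra (Theorem~\ref{thm:simplicialmodel}) spanned by these objects. The second clause of Proposition~\ref{prop:etalerings} supplies natural weak equivalences
\[
\Map_{\mathcal{C}}(T_A, T_B) \simeq \Hom_{gr.comm/\pi_* \mb S}(\pi_* \mb S \otimes A, \pi_* \mb S \otimes B) \cong \Hom_{\mb Z}(A, B),
\]
so that $\mathcal{C}$ is homotopically discrete and its homotopy category $\pi_0 \mathcal{C}$ is canonically identified with the ordinary category of étale extensions of $\mb Z$.

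Now apply Corollary~\ref{cor:discreterectification} to the simplicial inclusion $g\co \mathcal{C} \hookrightarrow \mathcal{M}$. This produces a simplicial functor naturally weakly equivalent to $g$ that factors through the projection $\pi\co \mathcal{C} \to \pi_0 \mathcal{C}$, and so descends to a strict functor $\pi_0 \mathcal{C} \to \mathcal{M}$. Precomposition with the identification of $\pi_0 \mathcal{C}$ with étale extensions of $\mb Z$ gives the desired functor $A \mapsto \mb S \otimes A$, whose value on $A$ is weakly equivalent to $T_A$ and hence has homotopy groups $\pi_* \mb S \otimes A$ as required.

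The delicate point is the simultaneous use of both conclusions of Proposition~\ref{prop:etalerings}: existence of the $T_A$ handles the objects, while the degeneration of the Goerss--Hopkins obstruction spectral sequence (yielding discrete mapping spaces) is exactly what is needed to invoke Corollary~\ref{cor:discreterectification} and upgrade a family of objects unique-up-to-homotopy into a strict functor out of the \emph{ordinary} category of étale extensions. Without this homotopical discreteness one would only obtain a functor to the homotopy category of commutative ring spectra.
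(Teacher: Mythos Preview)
Your proposal is correct and follows exactly the approach the paper intends: the corollary is stated immediately after Proposition~\ref{prop:etalerings} with the one-line justification that it follows by combining that proposition with Corollary~\ref{cor:discreterectification}, and you have simply unpacked what that combination means. The only minor technical wrinkle is that $\mathcal{C}$ must be small for Corollary~\ref{cor:discreterectification} to apply, so one should first pass to a skeleton of the category of \'etale extensions of $\mb Z$ (which is essentially small); this is routine and does not affect the argument.
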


We recall that the Eilenberg-Mac Lane functor $H$, from graded abelian
groups to modules over $H\mb Z$, extends to a functor
from graded-commutative $\mb Z$-algebras to commutative $H\mb
Z$-algebras (corresponding to the underlying differential graded
algebra with trivial differential).

\begin{defn}
A commutative ring spectrum $R$ is formal if it is equivalent, in the
homotopy category of commutative ring spectra, to $H\pi_* R$.

More generally, a functor from a category $I$ to the category of
commutative ring spectra is formal if it is weakly equivalent to
a functor factoring through $H$.
\end{defn}

\begin{prop}
\label{prop:etaleformality}
Suppose $R$ is an evenly-graded commutative ring spectrum which is
formal, and let $\Et/R$ be the category of evenly-graded
commutative $R$-algebras $T$ such that $\pi_* R \to \pi_* T$ is 
\'etale. Then, for any small category $I$ with a functor $f\co I \to
\Et/R$, $f$ is equivalent to the functor $H \pi_* f$.
\end{prop}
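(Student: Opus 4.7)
The plan is to deduce this from the two preceding results: the mapping-space computation of Proposition~\ref{prop:etalerings} and the rectification result of Corollary~\ref{cor:discreterectification}. Proposition~\ref{prop:etalerings} tells us that for any $T, T' \in \Et/R$, the natural map
\[
\Map_{comm/R}(T, T') \to \Hom_{gr.comm/\pi_* R}(\pi_* T, \pi_* T')
\]
is a weak equivalence onto a discrete set. Consequently, the full simplicial subcategory of commutative $R$-algebras on the objects of $\Et/R$ is homotopically discrete, with $\pi_0 \Et/R$ the ordinary category of evenly-graded \'etale $\pi_* R$-algebras.

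The formality hypothesis lets us compare $f$ with $H\pi_* f$. Fix an equivalence $R \simeq H\pi_* R$ of commutative ring spectra; this identifies commutative $H\pi_* R$-algebras with commutative $R$-algebras up to equivalence. For each $T \in \Et/R$, the Eilenberg-Mac Lane spectrum $H\pi_* T$ is an evenly-graded commutative $H\pi_* R$-algebra realizing the same \'etale extension $\pi_* R \to \pi_* T$, and so under this identification $H\pi_* f$ becomes a simplicial functor $I \to \Et/R$. Both $f$ and $H\pi_* f$ then descend to the same functor $I \to \pi_0 \Et/R$, namely $i \mapsto (\pi_* R \to \pi_* f(i))$.

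Applying Corollary~\ref{cor:discreterectification} to the homotopically discrete simplicial category $\Et/R$ yields that any two simplicial functors $I \to \Et/R$ inducing the same functor to $\pi_0 \Et/R$ are naturally weakly equivalent; in particular $f \simeq H\pi_* f$.

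The main obstacle I expect is the identification of $H\pi_* f$ as a genuine simplicial functor into the same simplicial model category in which $f$ lives, so that Corollary~\ref{cor:discreterectification} applies. This requires care in transporting the $H\pi_* R$-algebra structure on $H\pi_* f(i)$ to an $R$-algebra structure via the formality equivalence and checking that the construction is functorial in $i$, which one arranges by rectifying the zigzag $R \simeq H\pi_* R$ to a single weak equivalence of cofibrant-fibrant commutative ring spectra and then applying restriction of scalars. Once this packaging is in place, the statement follows formally.
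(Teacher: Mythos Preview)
Your proof is correct and follows the paper's approach: Proposition~\ref{prop:etalerings} gives homotopical discreteness of the relevant subcategory, and rectification then identifies $f$ with $H\pi_* f$. One small point to tighten: Corollary~\ref{cor:discreterectification} as stated is about functors \emph{out of} a homotopically discrete category, so your last step implicitly applies it to the inclusion of a small full subcategory $J' \subset \Et/R$ containing both images (viewed as a functor $J' \to \mathcal{M}$) and then precomposes with $f$ and $H\pi_* f$---which is precisely how the paper organizes the argument, via the Dwyer--Kan equivalence $H\colon \pi_0 J \to J'$.
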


\begin{proof}
Since $H\pi_* R$ is equivalent to $R$, without loss of generality we
may replace both $f\co I \to \Et/R$ and $H\pi_*f\co \pi_0 I \to \Et/R$
with equivalent functors taking values in cofibrant-fibrant
commutative $R$-algebras.

Let $J$ (resp. $J'$) be the full subcategory of $\Et/R$ whose object
set is the image of $f$ (resp. the union of the images of $f$ and
$H\pi_* f$). The composite map
\[
\xymatrix{
\pi_0 J \ar[r]^{H} &
J' \ar@{>>}[r] &
\pi_0 J'
}
\]
is an equivalence of categories, and by
Proposition~\ref{prop:etalerings} the right-hand projection is a
Dwyer-Kan equivalence. Therefore, $H$ is also a Dwyer-Kan equivalence. 

Applying Theorem~\ref{cor:discreterectification} to $H\co \pi_0 J
\to J'$, we find that the identity functor $J' \to \Et/R$ is
equivalent to a functor factoring through $H$. The result follows by
precomposition with $f$.
\end{proof}

\begin{rmk}
We might think of this as a consequence of the fact that the functor
$\pi_* \co \Et/R \to \Et/\pi_* R$ is a Dwyer-Kan equivalence (although
the categories in question are not small categories). In particular, we
could apply the previous result to the inclusion of a full subcategory
of $\Et/R$ which maps to a skeleton of $\Et/\pi_* R$, and then use a
homotopical Kan extension to construct a homotopy inverse functor to
$\pi_*$ on all of $\Et/R$.
\end{rmk}

\subsection{Homotopy Sheafification}
\label{sec:sheafification}

In the following we describe how to construct homotopical
sheafifications that we will require. Most of the methods of this
section are taken directly, or with slight modification, from
\cite[\S2]{mark-construct}.

In this chapter, let ${\cal C}$ be a small Grothendieck site. Recall
that we have a positive flat stable model structure on symmetric
spectra from Theorem~\ref{thm:simplicialmodel}.
\begin{defn}
A map $X \to Y$ of presheaves of symmetric spectra on ${\cal C}$ is
\begin{itemize}
\item a level cofibration if, for any $U \in {\cal C}$, the map $X(U) \to
  Y(U)$ is a cofibration,
\item a level equivalence if, for any $U \in {\cal C}$, the map $X(U)
  \to Y(U)$ is an equivalence, and
\item a local equivalence if the map of presheaves of homotopy groups
  $\pi_* X(U) \to \pi_* Y(U)$ induces an isomorphism on the associated
  sheaves $\underline{\pi}_* X \to \underline{\pi}_* Y$.
\end{itemize}
\end{defn}

\begin{prop}[{\cite[A.3.3.2, A.3.3.6]{lurie-htt}}]
The level cofibrations and level equivalences define combinatorial
simplicial model structures (the injective model structures) on the
categories of presheaves of symmetric spectra and commutative ring
spectra. Under these, the forgetful functor from presheaves of
commutative ring spectra to presheaves of symmetric spectra is a right
Quillen functor whose left adjoint is $\mb P$.
\end{prop}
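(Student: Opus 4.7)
The plan is to appeal to Lurie's existence theorems for injective model structures on functor categories \cite[A.3.3.2, A.3.3.6]{lurie-htt}, after first recording that both the source (symmetric spectra with the positive flat stable model structure) and the target (commutative ring spectra, equivalently $\mb P$-algebras) have already been established as combinatorial simplicial model categories in Theorem~\ref{thm:simplicialmodel}. A small Grothendieck site ${\cal C}$ gives a small (discrete) indexing category, so the hypotheses of the cited results apply directly.

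First I would construct the injective model structure on presheaves of symmetric spectra. Local presentability of symmetric spectra, combined with the general machinery, produces a combinatorial simplicial model structure whose cofibrations and weak equivalences are detected objectwise, giving exactly the level cofibrations and level equivalences of the statement. The same argument, applied to commutative ring spectra in place of symmetric spectra, produces the injective model structure on presheaves of commutative ring spectra, since commutative ring spectra form a combinatorial simplicial model category by Theorem~\ref{thm:simplicialmodel}.

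Next I would verify the Quillen adjunction. The levelwise adjunction $(\mb P, U)$ between commutative ring spectra and symmetric spectra extends levelwise to an adjunction on presheaf categories, with $\mb P$ applied presheafwise as the left adjoint and the forgetful functor $U$ applied presheafwise as the right adjoint. In the injective model structures on both sides, acyclic cofibrations are precisely the level acyclic cofibrations, and (since weak equivalences and cofibrations are both level concepts) cofibrations are level cofibrations. Because $\mb P$ is left Quillen levelwise (as the forgetful functor preserves fibrations and acyclic fibrations in Theorem~\ref{thm:simplicialmodel}), it preserves these properties levelwise, hence sends presheaf cofibrations and acyclic cofibrations to such. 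Equivalently, $U$ preserves level (trivial) fibrations, which in the injective structure on both sides contain the fibrations and acyclic fibrations respectively characterized by right lifting properties against level (acyclic) cofibrations. This gives the Quillen adjunction.

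The only step requiring any genuine input is verifying that the combinatoriality and simplicial hypotheses of \cite[A.3.3.2, A.3.3.6]{lurie-htt} really apply to commutative ring spectra and not just to symmetric spectra; but this is exactly what Theorem~\ref{thm:simplicialmodel} (together with the accompanying remark on simplicial enrichment) provides. Everything else is a direct citation or a formal consequence of level-testability of cofibrations and weak equivalences in the injective structure.
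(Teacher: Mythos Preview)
Your proposal is correct and matches the paper's approach, which is simply to cite Lurie without further proof; you have supplied the routine verification that Theorem~\ref{thm:simplicialmodel} provides the combinatorial simplicial hypotheses needed to invoke \cite[A.3.3.2, A.3.3.6]{lurie-htt}, and that the levelwise left Quillen property of $\mb P$ passes to the injective structures. One small point: the sentence beginning ``Equivalently, $U$ preserves level (trivial) fibrations\ldots'' does not actually establish that $U$ preserves \emph{injective} fibrations (containment of injective fibrations in level fibrations goes the wrong way for that argument), but this is harmless since your preceding argument that $\mb P$ preserves level cofibrations and level acyclic cofibrations already shows $\mb P$ is left Quillen, whence $U$ is right Quillen by adjunction---you can simply delete that sentence.
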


As these model structures are combinatorial, we may then localize with
respect to the local equivalences.
\begin{prop}[{\cite[A.3.7.3]{lurie-htt}}]
The level cofibrations and local equivalences define combinatorial
simplicial model structures on the categories of presheaves of
symmetric spectra and commutative ring spectra. Under these, the
forgetful functor from presheaves of commutative ring spectra to
presheaves of symmetric spectra is a right Quillen functor whose left
adjoint is $\mb P$.
\end{prop}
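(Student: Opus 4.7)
The plan is to apply Lurie's theorem on left Bousfield localization of combinatorial simplicial model categories (the cited A.3.7.3) to each of the injective model structures produced by the previous proposition. That theorem requires (i) combinatoriality, which we already have, (ii) left properness, and (iii) that the class of maps to be inverted be the class of $S$-local equivalences for a small set $S$.

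For presheaves of symmetric spectra, left properness of the injective model structure follows because its cofibrations are levelwise flat cofibrations, which are monomorphisms, and pushout along such maps preserves stable equivalences levelwise. The harder step is exhibiting a set $S$ of maps whose $S$-local equivalences are exactly the local equivalences. Since ${\cal C}$ is small, one takes $S$ to consist, for each object $U \in {\cal C}$ and each covering sieve $R \hookrightarrow \Hom_{{\cal C}}(-,U)$, of the maps $\Sigma^\infty_+(\hocolim R) \wedge S^{-n} \to \Sigma^\infty_+ U \wedge S^{-n}$, as $n$ ranges over $\mb Z$ and over a generating set of representables. The standard identification of sheafification with localization at \v Cech nerves of covers shows that the $S$-local equivalences on $\underline{\pi}_*$-sheaves coincide with the local equivalences as defined. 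Applying A.3.7.3 produces the claimed localized combinatorial simplicial model structure.

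For presheaves of commutative ring spectra, apply A.3.7.3 to the image set $\mb P(S)$ in the injective model structure on presheaves of commutative ring spectra. Left properness here follows from the corresponding property for the positive flat stable model structure on commutative ring spectra, verified levelwise. The Quillen adjunction then descends: cofibrations are unchanged by Bousfield localization, and by general nonsense, if $F \dashv G$ is a Quillen adjunction between left proper combinatorial model categories and one localizes the target at a set $T$ of maps, then the adjunction remains Quillen after localizing the source at any set containing $F$-preimages of a generating set of $T$-local trivial cofibrations. Here we have chosen the localization of the ring-spectrum side to be generated precisely by $\mb P(S)$, making this compatibility automatic.

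The main subtlety worth checking is that a map of presheaves of commutative ring spectra is a $\mb P(S)$-local equivalence in the localized injective structure if and only if its underlying map of presheaves of symmetric spectra is an $S$-local equivalence; this is the assertion that the notion of ``local equivalence'' on ring spectra agrees with detection on underlying spectra. This follows because a fibrant object for the localized structure is a presheaf of commutative ring spectra whose underlying presheaf of symmetric spectra is fibrant in the localized injective structure on symmetric spectra, and fibrancy in that structure is exactly the condition that the spectrum-valued presheaf has $\underline{\pi}_*$-sheaves computed by its sections. Granting this identification, the Quillen adjunction of the statement is immediate, completing the invocation of A.3.7.3.
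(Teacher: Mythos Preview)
The paper itself offers no proof here: the proposition is stated as a direct citation of Lurie's localization theorem, with only the preceding sentence ``As these model structures are combinatorial, we may then localize with respect to the local equivalences'' as justification. Your proposal therefore supplies considerably more detail than the paper does, and the overall outline---verify left properness, exhibit a small set $S$ generating the local equivalences on spectra, then localize the ring side at $\mb P(S)$ and check the adjunction descends---is the right strategy.

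There is, however, a genuine gap in your final paragraph. You correctly observe (via the adjunction $\Map_{comm}(\mb P(s),T)\simeq\Map(s,UT)$) that a commutative ring presheaf $T$ is $\mb P(S)$-local if and only if its underlying spectrum presheaf is $S$-local. But you then conclude that a map $f$ of ring presheaves is a $\mb P(S)$-local equivalence iff $Uf$ is an $S$-local equivalence, citing only this identification of fibrant objects. That inference does not go through: the test for $f$ being a $\mb P(S)$-local equivalence involves the spaces $\Map_{comm}(f,T)$, not $\Map(Uf,UT)$, and these are entirely different mapping spaces. Knowing which $T$ are local tells you nothing directly about which $f$ are local equivalences.

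What is actually needed is an argument that the $\mb P(S)$-local fibrant replacement $R\to \widehat R$ of a ring presheaf has $U\widehat R$ equivalent to the $S$-local fibrant replacement of $UR$. One clean way to see this in the case at hand is that $S$-local fibrant replacement for presheaves of spectra can be computed via homotopy limits over hypercovers (as the paper itself uses in Proposition~\ref{prop:fibrantdescent} and the surrounding discussion), and homotopy limits preserve commutative ring structure; hence the spectrum-level localization lifts to ring spectra. With this in hand, $f$ is a $\mb P(S)$-local equivalence iff $\widehat f$ is a level equivalence iff $U\widehat f$ is a level equivalence iff $Uf$ is an $S$-local equivalence, which is the identification you want.
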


\begin{rmk}
For a commutative ring spectrum $R$, similar results hold in the
categories of presheaves of $R$-modules and $R$-algebras.
\end{rmk}

\begin{rmk}
\label{rmk:jardinerelation}
Under the ordinary, non-positive stable model structure on symmetric
spectra where cofibrations are simply monomorphisms, the existence of
this model structure is due to Jardine
\cite{jardine-presheavesspectra}. It is almost certain that the proof
given there, which is axiomatic in flavour, goes through
verbatim. Given that, we could instead recover this adjunction from
the ``pointwise'' symmetric monoidal structure on presheaves by work
of White \cite{white-commutativemonoids}.  The identity functor is the
left adjoint in a Quillen equivalence between the positive model
category of presheaves and Jardine's ordinary model category of
presheaves.
\end{rmk}

\begin{prop}
\label{prop:fibrantdescent}
Suppose ${\cal F}$ is a fibrant presheaf of symmetric spectra on
${\cal C}$. Then ${\cal F}$ satisfies homotopy descent with respect to
hypercovers: for any hypercover $U_\bullet \to X$, the map
\[
{\cal F}(X) \to \holim {\cal F}(U_\bullet)
\]
is an equivalence.

In this situation, there is also a hypercohomology spectral sequence
\[
H^s(X, \underline \pi_t({\cal F}))
\Rightarrow \pi_{t-s} {\cal F}(X)
\]
calculating the values of $\pi_* {\cal F}(X)$.
\end{prop}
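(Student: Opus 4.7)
The plan is to reduce both claims to the defining property of local fibrancy, following the classical argument of Jardine and Brown-Gersten. For the descent statement, I would first show that for any hypercover $U_\bullet \to X$ in $\mathcal{C}$, the augmented simplicial presheaf $U_\bullet \to X$ induces a local equivalence after applying $\Sigma^\infty_+$ levelwise and taking $\hocolim_{\Delta^{\mathrm{op}}}$; on sheaves of homotopy groups this reduces to the classical fact that the augmented simplicial sheaf of a hypercover is locally contractible, combined with the Hurewicz-style statement that $\Sigma^\infty_+$ of a contractible sheaf of sets has vanishing stable sheaves of homotopy groups. Since $\mathcal{F}$ is fibrant in the local model structure of the preceding proposition, the derived mapping spectrum out of a cofibrant presheaf sends local equivalences to equivalences. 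Identifying $R\Map(\Sigma^\infty_+ X, \mathcal{F})$ with $\mathcal{F}(X)$ (since $\mathcal{F}$ is also levelwise fibrant) and $R\Map(\hocolim_\Delta \Sigma^\infty_+ U_\bullet, \mathcal{F})$ with $\holim_\Delta \mathcal{F}(U_\bullet)$, this yields the descent equivalence.

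For the spectral sequence, I would apply the Bousfield-Kan spectral sequence for the cosimplicial spectrum $[n]\mapsto \mathcal{F}(U_n)$, where $U_\bullet \to X$ is any hypercover. By the descent statement just proved, this spectral sequence abuts to $\pi_{t-s}\mathcal{F}(X)$, and its $E_2$-page is the cohomology of the cosimplicial abelian group $[n]\mapsto \pi_t\mathcal{F}(U_n)$. The identification of this with $H^s(X,\underline{\pi}_t\mathcal{F})$ then follows from Verdier's theorem computing sheaf cohomology via a colimit over hypercovers: since the statement is independent of the choice of hypercover, one may pass to a cofinal system of hypercovers built from covers on which $\underline{\pi}_t\mathcal{F}$ is suitably acyclic, reducing the $E_2$-term to a \v Cech complex that computes derived functor cohomology.

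The main obstacle is convergence. The Bousfield-Kan spectral sequence is in general only conditionally convergent, and strong convergence requires some control over the cohomological dimension of the site or the vanishing of derived limits. In the applications in this paper this is not a problem, since by Proposition~\ref{prop:serrevanishing} and Corollary~\ref{cor:quasicoherentcohomology} the relevant quasicoherent cohomology vanishes in large degrees, so one may truncate $\mathcal{F}$ via its Postnikov tower, establish the spectral sequence for each truncation, and pass to the limit. The secondary subtlety is to verify that the local model structure on presheaves of symmetric spectra used here really does detect hypercover descent — this is where Remark~\ref{rmk:jardinerelation}, identifying the positive local model structure with Jardine's classical model structure, lets us import the standard hypercover results wholesale.
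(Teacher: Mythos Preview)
Your argument is correct, but the paper takes a shorter route that avoids building descent and the spectral sequence from scratch. The paper simply constructs a local equivalence ${\cal F} \to {\cal F}_{Jf}$ to a Jardine-fibrant replacement (using Remark~\ref{rmk:jardinerelation}); since both ${\cal F}$ and ${\cal F}_{Jf}$ are fibrant in the localized model structure, this map is a weak equivalence between fibrant objects and hence already a weak equivalence in the \emph{injective} model structure, i.e.\ a level equivalence. Hypercover descent and the hypercohomology spectral sequence are then inherited verbatim from the Jardine-fibrant object ${\cal F}_{Jf}$, for which they are standard.

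Your approach proves the same thing from first principles: you show directly that the hypercover-to-representable map is a local equivalence, deduce descent from fibrancy, and assemble the spectral sequence via Bousfield--Kan plus Verdier's identification of the $E_2$-term. This is more self-contained and makes the mechanism transparent, at the cost of re-deriving facts the paper is content to cite. Your discussion of convergence is also more careful than the paper's, which simply asserts the spectral sequence; in the paper's applications the relevant cohomology is bounded (Corollary~\ref{cor:quasicoherentcohomology}), so the issue does not arise, but you are right to flag it.
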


\begin{proof}
We may construct a weak equivalence ${\cal F} \to {\cal F}_{Jf}$ to
a Jardine fibrant presheaf of symmetric spectra, which is also fibrant
in our model structure by Remark~\ref{rmk:jardinerelation}. The map
${\cal F} \to {\cal F}_{Jf}$ is an equivalence between fibrant
objects, and hence an equivalence in the injective model structure.

Therefore, ${\cal F} \to {\cal F}_{Jf}$ is a level equivalence. We
therefore recover the homotopy descent property and the stated
spectral sequence from the fact that these hold for the presheaf
${\cal F}_{Jf}$.
\end{proof}

\begin{prop}
Suppose ${\cal F}$ is a presheaf of symmetric spectra on
${\cal C}$ which satisfies homotopy descent with respect to
hypercovers. Then any fibrant replacement ${\cal F} \to {\cal F}_f$ is
also a level equivalence.
\end{prop}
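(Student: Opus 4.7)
The approach is to show that ${\cal F}$ carries essentially the same hypercohomology spectral sequence as ${\cal F}_f$, and to conclude that the map induces an isomorphism on homotopy groups at every object of ${\cal C}$.

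First, I would apply Proposition~\ref{prop:fibrantdescent} to ${\cal F}_f$, which is fibrant by construction, obtaining hyperdescent together with a hypercohomology spectral sequence
\[
E_2^{s,t} = H^s(X, \underline{\pi}_t {\cal F}_f) \Rightarrow \pi_{t-s} {\cal F}_f(X).
\]
The construction of this spectral sequence proceeds by evaluating on a hypercover $U_\bullet \to X$, taking the Bousfield-Kan spectral sequence of the cosimplicial object ${\cal F}_f(U_\bullet)$, and in a colimit over a cofinal system of hypercovers identifying the $E_2$ page with sheaf cohomology of $\underline{\pi}_t {\cal F}_f$. This construction only uses hyperdescent; since ${\cal F}$ also satisfies hyperdescent by hypothesis, the same recipe produces a spectral sequence $H^s(X, \underline{\pi}_t {\cal F}) \Rightarrow \pi_{t-s} {\cal F}(X)$.

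Next, since ${\cal F} \to {\cal F}_f$ is a weak equivalence in the local model structure, it is by definition a local equivalence, and so induces an isomorphism $\underline{\pi}_* {\cal F} \to \underline{\pi}_* {\cal F}_f$ of sheafified homotopy groups. The induced map of spectral sequences is therefore an isomorphism on $E_2$, and by strong convergence on both sides it is an isomorphism on abutments for each $X \in {\cal C}$. Hence ${\cal F}(X) \to {\cal F}_f(X)$ is a $\pi_*$-isomorphism for every $X$, which is exactly the statement that the map is a level equivalence.

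The step I expect to require the most care is the identification of the Bousfield-Kan $E_2$ term with sheaf cohomology for a presheaf not assumed to be fibrant, together with strong convergence. The standard route invokes a Verdier-style hypercovering theorem, passing to a cofinal system of hypercovers to match the Čech-type groups $\pi^s \pi_t {\cal F}(U_\bullet)$ with $H^s(X, \underline{\pi}_t {\cal F})$. This argument is insensitive to the distinction between the positive and Jardine model structures (which share their weak equivalences by Remark~\ref{rmk:jardinerelation}), and is already implicit in the proof of Proposition~\ref{prop:fibrantdescent}.
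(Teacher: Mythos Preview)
Your approach is genuinely different from the paper's, and the step you yourself flag as delicate is where the real content lies.

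The paper's proof does not use a spectral-sequence comparison at all. Instead it argues: since any two fibrant replacements are level equivalent to each other, it suffices to exhibit \emph{one} fibrant replacement that is a level equivalence. Take an injective (non-local) fibrant replacement ${\cal F} \to {\cal F}_{If}$; this is a level equivalence by construction. Now ${\cal F}_{If}$ is injective fibrant and, being level equivalent to ${\cal F}$, still satisfies hyperdescent. The theorem of Dugger--Hollander--Isaksen says precisely that an injective-fibrant presheaf satisfying hyperdescent is already Jardine fibrant, hence fibrant in the local model structure. So ${\cal F}_{If}$ is itself a local fibrant replacement, and we are done.

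Your route via matching hypercohomology spectral sequences is plausible, but the appeal to Proposition~\ref{prop:fibrantdescent} is misplaced: that proof does \emph{not} build the spectral sequence directly from hyperdescent. It uses fibrancy to pass, via a level equivalence, to a Jardine-fibrant object and then quotes the known spectral sequence there. So the construction you need for a non-fibrant ${\cal F}$ is not actually supplied. Producing it via a colimit over hypercovers and Verdier's theorem runs into the convergence problem you anticipate: each Bousfield--Kan spectral sequence converges only conditionally, and the passage to a filtered colimit of such spectral sequences (with constant abutment $\pi_*{\cal F}(X)$) does not obviously preserve convergence. Making this rigorous is essentially the same work that Dugger--Hollander--Isaksen do; the paper's proof simply cites that work rather than redoing it.
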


\begin{proof}
This property is independent of the choice of fibrant replacement, so
it suffices to show that it holds for one.  We take a level
equivalence ${\cal F} \to {\cal F}_{If}$ which is a fibrant
replacement in the ordinary injective model structure. In particular,
${\cal F}_{If}$ is injective fibrant in the Jardine model structure
and satisfies homotopy descent for hypercovers by comparison with
${\cal F}$. By work of Dugger-Hollander-Isaksen, ${\cal F}_{If}$ is
Jardine fibrant in the local model structure
\cite{dugger-hollander-isaksen}, and therefore also a fibrant
replacement for ${\cal F}$ in our local model structure.
\end{proof}

\begin{prop}
\label{prop:changeofsite}
Suppose $i\co {\cal C} \to {\cal D}$ is a continuous functor between
Grothendieck sites. Then the restriction functor $i^*$ on 
presheaves of commutative ring spectra is a left Quillen functor under
the local model structure, and is a Quillen equivalence if $i$ is both
fully faithful and an equivalence of sites.
\end{prop}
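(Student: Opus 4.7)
The plan is to verify the Quillen adjunction conditions directly, then obtain the Quillen equivalence by checking that the derived counit and derived unit are weak equivalences under the fully faithful and equivalence-of-sites hypotheses.

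First, I would take the right Kan extension $i_*$ of presheaves along $i$ as the right adjoint to $i^*$; this exists levelwise on presheaves of commutative ring spectra. Cofibrations in the local model structure are level cofibrations, and since $i^*$ is just evaluation at objects in the image of $i$, it preserves level cofibrations on the nose. For preservation of local weak equivalences, continuity of $i$ is exactly the condition that $i^*$ carries sheaves to sheaves and commutes with sheafification of presheaves of abelian groups. Applying this to the presheaves of homotopy groups, a map of presheaves of spectra inducing an isomorphism on $\underline{\pi}_*$ over $\mathcal D$ restricts to one inducing an isomorphism on $\underline{\pi}_*$ over $\mathcal C$. Hence $i^*$ is a left Quillen functor; the same argument passes to presheaves of commutative ring spectra because the local model structure there is transferred from symmetric spectra along the free functor $\mathbb{P}$, with which $i^*$ commutes.

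Now assume $i$ is fully faithful and an equivalence of sites. Fully faithfulness implies that the counit $i^* i_* \Rightarrow \mathrm{id}$ is a natural isomorphism of presheaves (the standard fact that right Kan extension along a fully faithful functor is a right inverse to restriction). Therefore the derived counit $\mathbb L i^* \, \mathbb R i_* \Rightarrow \mathrm{id}$ is an isomorphism on the homotopy category. For the derived unit $X \to \mathbb R i_* i^* X$ applied to a cofibrant $X$ on $\mathcal D$, I would verify that the induced map on sheafified homotopy groups corresponds to the unit of the equivalence of sheaf topoi $\mathrm{Sh}(\mathcal D) \simeq \mathrm{Sh}(\mathcal C)$ applied to $\underline{\pi}_n X$, which is an isomorphism by hypothesis.

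The main obstacle is the derived unit: since $i_*$ is only right Quillen, computing $\mathbb R i_* i^* X$ requires a fibrant replacement of $i^* X$, and I need to control its sheafified homotopy groups. I would handle this via Proposition~\ref{prop:fibrantdescent}, whose hypercohomology spectral sequence identifies $\pi_{t-s}$ of the value of a fibrant replacement with sheaf cohomology of the sheafified homotopy group presheaves over the appropriate slice site. Under the equivalence of sheaf topoi induced by $i$, the cohomology of $\underline{\pi}_t(i^* X)$ on $\mathcal C$ agrees with that of $\underline{\pi}_t X$ on $\mathcal D$, yielding the required local weak equivalence and hence the Quillen equivalence.
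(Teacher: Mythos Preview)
Your argument for the left Quillen part matches the paper's exactly: $i^*$ preserves level cofibrations on the nose, and continuity (so that sheafification commutes with $i^*$) forces $i^*$ to preserve local weak equivalences on homotopy-group presheaves.

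The difference is in the Quillen-equivalence half. You verify the derived counit via full faithfulness (just as the paper does), but then you attack the derived \emph{unit} directly, invoking fibrant replacement of $i^*X$, the hypercohomology spectral sequence of Proposition~\ref{prop:fibrantdescent}, and a comparison of sheaf cohomology through the equivalence of topoi. This is a valid line in principle, but the step where you pass from ``cohomology of $\underline{\pi}_t(i^*X)$ on $\mathcal C$ agrees with cohomology of $\underline{\pi}_t X$ on $\mathcal D$'' to ``the unit $X\to \mathbb R i_* i^*X$ is a local weak equivalence on $\mathcal D$'' is not written out: you would still need to identify the sheafified homotopy groups of $i_*$ applied to the fibrant replacement at \emph{arbitrary} objects of $\mathcal D$, not just those in the image of $i$, and that requires an extra descent argument through the equivalence of sites.

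The paper bypasses all of this with one extra observation you did not make: when $i$ is an equivalence of sites, the sheafification functors on $\mathcal C$ and $\mathcal D$ correspond, so the map $\underline{\pi}_*X \to \underline{\pi}_*Y$ is an isomorphism if and only if $\underline{\pi}_*(i^*X)\to\underline{\pi}_*(i^*Y)$ is. In other words, $i^*$ \emph{reflects} local weak equivalences, not merely preserves them. Combined with the derived-counit equivalence you already have, this is exactly the standard criterion (left adjoint reflects weak equivalences between cofibrants plus derived counit is an equivalence) for a Quillen equivalence, so no spectral sequence or analysis of $\mathbb R i_*$ is needed. Your route would eventually close, but the paper's is one line where yours is a page.
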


\begin{proof}
The functor $i^*$ clearly preserves cofibrations, and possesses a
right adjoint $i_*$ (given by right Kan extension). Moreover,
sheafification commutes with $i^*$, and so the restriction of a local
equivalence is a local equivalence.

If $i$ is an equivalence of sites, then the map of sheaves
$\underline{\pi}_* X \to \underline{\pi}_* Y$ is isomorphic to the map
of sheaves $\underline{\pi}_* (i^*X) \to \underline{\pi}_* (i^*Y)$,
and so $i^*$ preserves and reflects weak equivalences. 

If $i$ is also fully faithful, then we have
\[
(i_* X)(i(U)) = \lim_{V \to i(U)} X(V) \cong X(i(U))
\]
and so the map $i^* i_* X \to X$ is always an isomorphism; in
particular, the (derived) counit of the adjunction is always an
equivalence.
\end{proof}

\begin{prop}
\label{prop:sheafifyme}
Suppose ${\cal D}$ is the Kummer log-\'etale site of a log stack
$(X,M_X)$, and $i\co {\cal C} \to {\cal D}$ is a fully faithful
equivalence of sites where the objects of ${\cal C}$ are
affine. Suppose ${\cal F}$ is a presheaf of symmetric spectra on
${\cal C}$ such that the presheaves $\pi_k {\cal F}$ are the
restrictions of quasicoherent sheaves to ${\cal C}$. Then any fibrant
replacement ${\cal F} \to {\cal F}_f$ is a level equivalence.
\end{prop}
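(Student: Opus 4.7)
The plan is to reduce the claim to the previous proposition, which says that a presheaf satisfying homotopy descent with respect to hypercovers has fibrant replacements that are level equivalences. So the goal becomes verifying that $\mathcal{F}$ itself already satisfies homotopy descent, which will be accomplished by running the hypercohomology spectral sequence and using Serre vanishing on affine log schemes.

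Concretely, first I would take any fibrant replacement ${\cal F} \to {\cal F}_f$ and apply Proposition~\ref{prop:fibrantdescent} to conclude that for every $U$ in $\cal{C}$ there is a hypercohomology spectral sequence
\[
E_2^{s,t} = H^s(U, \underline{\pi}_t {\cal F}) \Rightarrow \pi_{t-s}{\cal F}_f(U).
\]
Here $\underline{\pi}_t {\cal F}$ is the sheafification of the homotopy presheaf on $\cal{C}$. By Proposition~\ref{prop:changeofsite}, the fully faithful equivalence $i$ matches fibrant replacements and sheafifications between $\cal{C}$ and $\cal{D}$, so $\underline{\pi}_t{\cal F}$ can be identified with the pullback along $i$ of the corresponding sheaf on the Kummer log-\'etale site $\cal{D}$. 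By hypothesis, on $\cal{D}$ that sheaf is the restriction of a quasicoherent sheaf from $(X,M_X)$; in particular the presheaf $\pi_t{\cal F}$ on $\cal{C}$ is already a sheaf, so $\pi_t{\cal F}(U) \cong H^0(U,\underline{\pi}_t{\cal F})$ for every $U$ in $\cal{C}$.

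Now the key input is Proposition~\ref{prop:serrevanishing}: on an affine log scheme, Kummer log-\'etale cohomology with coefficients in a (locally) quasicoherent sheaf vanishes in positive degrees. Since every object of $\cal{C}$ is affine, this forces $E_2^{s,t} = 0$ for $s > 0$, the spectral sequence degenerates onto the $s=0$ line, and the edge map gives $\pi_t {\cal F}(U) \xrightarrow{\sim} \pi_t {\cal F}_f(U)$ for every $U$ in $\cal{C}$. This is precisely the statement that ${\cal F} \to {\cal F}_f$ is a level equivalence.

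The main subtlety, and the only step that is not formal, is identifying the $E_2$-term with quasicoherent cohomology computed on $(X,M_X)$ so that Proposition~\ref{prop:serrevanishing} actually applies. This uses both the sheaf-preserving properties of Proposition~\ref{prop:changeofsite} and the fact (used in the paper's discussion of the Kummer log-\'etale topology) that quasicoherent sheaves automatically extend to sheaves in this topology, so the sheafification of $\pi_t{\cal F}$ on $\cal{C}$ coincides with the given quasicoherent extension. Once that identification is in hand, Serre vanishing and the spectral sequence argument complete the proof.
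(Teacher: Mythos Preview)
Your argument is correct and is essentially the same as the paper's: take a fibrant replacement, run the hypercohomology spectral sequence of Proposition~\ref{prop:fibrantdescent} for ${\cal F}_f$, identify the $E_2$-term with quasicoherent cohomology on affines, and invoke vanishing to collapse onto the $s=0$ line. The paper cites Corollary~\ref{cor:quasicoherentcohomology} (comparison of Zariski and Kummer log-\'etale cohomology) rather than Proposition~\ref{prop:serrevanishing} directly, but on an affine these amount to the same thing; and the paper suppresses the site-change justification you spell out via Proposition~\ref{prop:changeofsite}.

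One small expository point: your opening sentence says you will reduce to the proposition that descent implies level-equivalent fibrant replacement, but what you actually do (and what the paper does) is the direct spectral-sequence computation for ${\cal F}_f$, not a verification that ${\cal F}$ itself satisfies hypercover descent followed by a citation. The substance is unaffected, but you may want to rephrase the first paragraph to match the argument you give.
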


\begin{proof}
Let ${\cal F} \to {\cal F}_f$ be a fibrant replacement.  The sheaf
$\underline \pi_k ({\cal F}_f)$ is the quasicoherent sheaf $\pi_k
{\cal F}$ on ${\cal C}$. By Corollary~\ref{cor:quasicoherentcohomology},
if $(U,M_U)$ an object of ${\cal C}$, the hypercohomology spectral
sequence of Proposition~\ref{prop:fibrantdescent} calculating the
homotopy groups of ${\cal F}_f(U,M_U)$ degenerates to an isomorphism
\[
\pi_k {\cal F}(U,M_U) \overto^\sim \pi_k {\cal F}_f(U,M_U),
\]
as desired.
\end{proof}

\begin{cor}
\label{cor:cohomologysseq}
Suppose ${\cal D}$ is the Kummer log-\'etale site of a log stack
$(X,M_X)$, and $i\co {\cal C} \to {\cal D}$ is a fully faithful
equivalence of sites where the objects of ${\cal C}$ are
affine. Suppose ${\cal F}$ is a presheaf of commutative ring spectra
on ${\cal D}$ such that the presheaves $\pi_k {\cal F}$ are the
restrictions of quasicoherent sheaves to ${\cal C}$. Then there exists
a fibrant presheaf ${\cal G}$ of commutative ring spectra on ${\cal
  D}$ whose restriction $i^* {\cal G}$ to ${\cal C}$ is equivalent to
${\cal F}$.

For a log stack $(Y,M_Y)$ log-\'etale over $(X,M_X)$, there is a
spectral sequence
\[
H^s_{Zar}((Y,M_Y), \pi_t {\cal F}) \Rightarrow \pi_{t-s} {\cal
  G}(Y,M_Y).
\]
\end{cor}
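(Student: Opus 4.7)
The plan is to assemble the corollary from Propositions~\ref{prop:changeofsite}, \ref{prop:sheafifyme}, and \ref{prop:fibrantdescent}, together with Corollary~\ref{cor:quasicoherentcohomology}. I will read the hypothesis as saying that ${\cal F}$ is a presheaf on ${\cal C}$ matching the setup of Proposition~\ref{prop:sheafifyme}; if one wishes instead to start with a presheaf on ${\cal D}$, the argument below applies after first restricting along $i$.

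First I would construct ${\cal G}$. Take a fibrant replacement ${\cal F} \to {\cal F}_f$ in the local model structure on presheaves of commutative ring spectra on ${\cal C}$. By Proposition~\ref{prop:sheafifyme} this map is actually a level equivalence, so $\pi_k {\cal F}_f(U) \cong \pi_k {\cal F}(U)$ for every object $U \in {\cal C}$. Since Proposition~\ref{prop:changeofsite} tells us that $i^*$ is left Quillen for the local model structure, its right adjoint $i_*$ (the right Kan extension) preserves fibrant objects, so the presheaf ${\cal G} := i_* {\cal F}_f$ is locally fibrant on ${\cal D}$. The proof of Proposition~\ref{prop:changeofsite} also shows that the counit $i^* i_* \to \mathrm{id}$ is an isomorphism and that $i^*$ preserves weak equivalences, so $i^* {\cal G} = {\cal F}_f$ is equivalent to ${\cal F}$, as required.

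For the spectral sequence, I would apply Proposition~\ref{prop:fibrantdescent} to the fibrant presheaf ${\cal G}$ on the Kummer log-\'etale site of $(X, M_X)$, obtaining, for each log-\'etale $(Y, M_Y) \to (X, M_X)$, a hypercohomology spectral sequence
\[
H^s((Y, M_Y), \underline{\pi}_t {\cal G}) \Rightarrow \pi_{t-s} {\cal G}(Y, M_Y).
\]
The sheafification $\underline{\pi}_t {\cal G}$ is determined by its restriction to the equivalent subsite ${\cal C}$, where it coincides with $\pi_t {\cal F}_f = \pi_t {\cal F}$, which by hypothesis is already the restriction of a quasicoherent sheaf. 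Corollary~\ref{cor:quasicoherentcohomology} then identifies its Kummer log-\'etale cohomology with Zariski cohomology, yielding the claimed spectral sequence.

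The main obstacle is bookkeeping rather than mathematical substance: one must verify that Propositions~\ref{prop:sheafifyme} and \ref{prop:changeofsite}, both phrased with symmetric spectra in mind, transfer cleanly to the category of commutative ring spectra. This reduces to the fact that fibrations and weak equivalences in the model structure on commutative ring spectra are detected on the underlying symmetric spectra, so no Goerss--Hopkins obstruction-theoretic input is needed.
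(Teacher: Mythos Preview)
Your proof is correct and follows essentially the same route as the paper: fibrantly replace ${\cal F}$ (a level equivalence by Proposition~\ref{prop:sheafifyme}), push forward via $i_*$ to obtain ${\cal G}$, use the Quillen equivalence of Proposition~\ref{prop:changeofsite} to see $i^*{\cal G}\simeq{\cal F}$, and then invoke Corollary~\ref{cor:quasicoherentcohomology} to identify the $E_2$-page of the hypercohomology spectral sequence with Zariski cohomology. Your version is simply more explicit in naming the intermediate ingredients (Propositions~\ref{prop:sheafifyme} and~\ref{prop:fibrantdescent}) that the paper's two-sentence argument leaves implicit.
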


\begin{proof}
By Proposition~\ref{prop:siteequivalence}, the map $i^*$ is the left
adjoint in a Quillen equivalence. Let ${\cal F} \to {\cal F}_f$ be a
fibrant replacement, which is also a level equivalence. Letting ${\cal
  G} = i_* {\cal F}_f$, the adjoint map $i^* {\cal G} \to {\cal F}_f$
is an equivalence.

Corollary~\ref{cor:quasicoherentcohomology} then allows us to
interpret the resulting hypercohomology spectral sequence as Zariski
cohomology.
\end{proof}

By applying Proposition~\ref{prop:cechnerve}, we can get a homotopy
fixed-point description of the values of such a presheaf on modular
curves.
\begin{prop}
\label{prop:galoissseq}
Let ${\cal G}$ be a fibrant presheaf of symmetric spectra on $\Mell$.
If $K \lhd \Gamma < \GL_2(\mb Z/N)$, then the natural map
\[
{\cal G}(\MMM(\Gamma)) \to {\cal G}(\MMM(K))^{h\Gamma/K}
\]
is an equivalence.  In particular, there is a group cohomology
spectral sequence
\[
H^s(\Gamma/K, \pi_t {\cal G}(\MMM(K)) \Rightarrow \pi_{t-s} {\cal
  G}(\MMM(\Gamma)).
\]
\end{prop}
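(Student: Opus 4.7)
The plan is to combine the descent property of the fibrant presheaf $\mathcal{G}$ with the identification of the \v{C}ech nerve provided by Proposition~\ref{prop:cechnerve}. First I would invoke Proposition~\ref{prop:fibrantdescent} applied to the Kummer log-\'etale covering map $\MMM(K) \to \MMM(\Gamma)$ (the fact that this is a covering map in the Kummer log-\'etale site is precisely what was established in Proposition~\ref{prop:cechnerve} together with Proposition~\ref{prop:coveringequiv}). This yields a weak equivalence
\[
\mathcal{G}(\MMM(\Gamma)) \xrightarrow{\sim} \holim_{\Delta} \mathcal{G}(U_\bullet),
\]
where $U_\bullet$ is the \v{C}ech nerve of this covering map.

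Next I would identify the cosimplicial spectrum $\mathcal{G}(U_\bullet)$ explicitly. By Proposition~\ref{prop:cechnerve}, $U_\bullet$ is isomorphic to the simplicial bar construction for the action of $\Gamma/K$ on $\MMM(K)$, so $U_n \cong \coprod_{(\Gamma/K)^n} \MMM(K)$ as a log stack. Since $\mathcal{G}$ is a presheaf of spectra, it carries this coproduct to the corresponding product, giving
\[
\mathcal{G}(U_n) \cong \prod_{(\Gamma/K)^n} \mathcal{G}(\MMM(K)),
\]
with cosimplicial structure maps induced by the bar construction's multiplication, unit, and projection maps in $\Gamma/K$. This is, by definition, the standard cosimplicial model whose totalization computes the homotopy fixed-point spectrum $\mathcal{G}(\MMM(K))^{h\Gamma/K}$. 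Composing the two identifications yields the claimed equivalence.

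Finally, the spectral sequence is obtained from the Bousfield--Kan spectral sequence of the cosimplicial spectrum $\mathcal{G}(U_\bullet)$. The $E_2$ term is the cohomology of the normalized cochain complex of the cosimplicial abelian group $\pi_t \mathcal{G}(U_\bullet) = \prod_{(\Gamma/K)^\bullet} \pi_t \mathcal{G}(\MMM(K))$, which is by construction the standard bar complex computing the group cohomology $H^s(\Gamma/K, \pi_t \mathcal{G}(\MMM(K)))$, converging to $\pi_{t-s}$ of the totalization. The main subtlety, rather than any real obstacle, is just bookkeeping: verifying that the isomorphism of \v{C}ech nerve and bar construction from Proposition~\ref{prop:cechnerve} is compatible with the cosimplicial structure maps so that the spectral sequence genuinely matches the group-cohomology one; this is routine once one spells out the face and degeneracy maps on both sides.
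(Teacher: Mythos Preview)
Your proposal is correct and follows exactly the route the paper intends: the paper states this proposition immediately after remarking ``By applying Proposition~\ref{prop:cechnerve}, we can get a homotopy fixed-point description\ldots'' and gives no further proof, so your argument---descent for the fibrant presheaf (Proposition~\ref{prop:fibrantdescent}) applied to the covering map $\MMM(K)\to\MMM(\Gamma)$, identification of the \v{C}ech nerve with the bar construction via Proposition~\ref{prop:cechnerve}, and the resulting Bousfield--Kan spectral sequence---spells out precisely what the authors leave implicit.
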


\begin{rmk}
We could, if desired, take the further step of turning presheaves
into sheaves, using a further Quillen equivalence to a Joyal model
structure on sheaves. This makes comparisons between equivalent
Grothendieck sites more direct. However, for the purposes of this
paper the homotopy sheaf property is sufficient.
\end{rmk}

\subsection{$K$-theory and $Tmf$}

We will require some details about the construction of forms of
$K$-theory from \cite[Appendix~A]{tmforientation}. Let $K$ be the
periodic complex $K$-theory spectrum, equipped with its action of
the cyclic group $C_2$ by conjugation.

We recall that, if $X$ is a spectrum with $C_2$-action, the homotopy
fixed-point spectrum $X^{hC_2}$ is the homotopy limit of the
corresponding diagram.

\begin{prop}[{\cite[Appendix~A]{tmforientation}}]
There is a fibrant presheaf ${\cal O}^{mult}$ of locally weakly
even-periodic commutative ring spectra on the \'etale site of
$\multmod$. In particular, given an \'etale map $\Spec(R) \to \Mell$
classifying a group scheme $\mb G$ over $R$ locally isomorphic to $\mb
G_m$, ${\cal O}^{mult}(R)$ is a weakly even-periodic spectrum
realizing $\widehat{\mb G}$.
\end{prop}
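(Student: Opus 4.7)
The plan is to build $\mathcal{O}^{mult}$ using complex $K$-theory with its $C_2$-action, exploiting the presentation $\multmod = [\Spec(\mathbb{Z})/\!/\mathbb{Z}/2]$. Since the $C_2$-action on $\Spec(\mathbb{Z})$ is trivial, the étale site of $\multmod$ is equivalent to the category of $C_2$-equivariant étale $\mathbb{Z}$-algebras, and $\mathbb{Z}/2$ acts on the universal group scheme $\mathbb{G}_m$ over $\Spec(\mathbb{Z})$ by inversion. Complex $K$-theory $K$ has $\pi_0 K = \mathbb{Z}$, its formal group is $\widehat{\mathbb{G}}_m$, and its conjugation $C_2$-action realizes the inversion, so $K$ is already an elliptic-style spectrum realizing the universal object on the cover.

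First I would construct the presheaf on the cover $\Spec(\mathbb{Z}) \to \multmod$. By Proposition~\ref{prop:etalerings}, any étale extension $\pi_0 K = \mathbb{Z} \to A$ lifts to a commutative $K$-algebra $K_A$ with $\pi_*(K_A) = A \otimes \pi_* K$, and the mapping spaces $\Map_{comm/K}(K_A, K_B)$ are homotopy discrete. By Corollary~\ref{cor:discreterectification} (rectification) together with Proposition~\ref{prop:etaleformality} (étale formality over the formal ring $K$), the assignment $A \mapsto K_A$ can be promoted to a strict functor from the 1-category of étale $\mathbb{Z}$-algebras into commutative $K$-algebra spectra. Lemma~\ref{lem:orientationpullback}, and the fact that the formal group base-changes correctly, guarantee each $K_A$ is an even-periodic elliptic spectrum realizing $\widehat{\mathbb{G}}_m \otimes A$.

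Next I would incorporate the $C_2$-equivariance. Because the $C_2$-action on $\mathbb{Z}$ is trivial, an étale cover of $\multmod$ is an étale $\mathbb{Z}$-algebra $A$ equipped with a $C_2$-action compatible with inversion on $\widehat{\mathbb{G}}_m$. Strict functoriality of $A \mapsto K_A$ then upgrades the conjugation action on $K$ to a strict $C_2$-action on $K_A$. I would define
\[
\mathcal{O}^{mult}(\Spec(R)/\multmod) = (K_{\tilde R})^{hC_2},
\]
where $\tilde R \to R$ is the étale $C_2$-cover obtained by pullback along $\Spec(\mathbb{Z}) \to \multmod$. The descent spectral sequence for the homotopy fixed points collapses, since $\pi_* K_{\tilde R}$ is a permutation module concentrated in even degrees, giving weak even-periodicity and the correct identification with $\widehat{\mathbb{G}}$ via Lemma~\ref{lem:orientationpullback}.

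Finally I would sheafify: the presheaf obtained above has quasicoherent even homotopy groups on the small étale site of $\multmod$, so Corollary~\ref{cor:cohomologysseq} produces a fibrant replacement $\mathcal{O}^{mult}$ of commutative ring spectra whose underlying presheaf is level-equivalent to the one constructed. The main obstacle is the equivariant rectification step: producing a strictly $C_2$-equivariant commutative $K$-algebra structure on $K_{\tilde R}$ compatible in $A$ requires that the whole homotopy-discrete mapping space argument be carried out in the $C_2$-equivariant category, or equivalently that the functor $A \mapsto K_A$ be refined to a functor out of the category of étale $C_2$-equivariant $\mathbb{Z}$-algebras; after that, verifying homotopy descent with respect to Galois covers (using Proposition~\ref{prop:galoissseq}) is essentially formal.
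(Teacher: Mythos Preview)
Your approach is essentially correct and close to what the paper describes, but there is one structural difference worth noting, and it bears precisely on what you call ``the main obstacle.''

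The paper does not prove this proposition here; it is cited from \cite[Appendix~A]{tmforientation}. However, the proposition immediately following it spells out the construction: for an \'etale map $\Spec(S)\to\multmod$ classified by a $C_2$-Galois extension $T/S$, one sets
\[
\mathcal{O}^{mult}(S,\alpha)\simeq\bigl(K\wedge^{\mb L}(\mb S\otimes T)\bigr)^{hC_2},
\]
with the \emph{diagonal} $C_2$-action on the smash product. Here $\mb S\otimes T$ is the \'etale sphere of Corollary~\ref{cor:etalespheres}, which already carries a strict $C_2$-action coming from the Galois action on $T$, and $K$ carries its conjugation action. The point is that by writing the realization as a smash product of two objects each of which has a strict $C_2$-action, the equivariance is automatic: no equivariant rectification is needed. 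Your route builds $K_T$ as an \'etale $K$-algebra and then has to impose the $C_2$-action after the fact, which is exactly the step you flag as delicate. The paper's formulation sidesteps it entirely.

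Two minor points. First, your appeal to Proposition~\ref{prop:etaleformality} is misplaced: that result requires the base ring spectrum to be formal, and there is no reason to invoke formality of $K$ here. Proposition~\ref{prop:etalerings} alone gives you the homotopy-discrete mapping spaces, and Corollary~\ref{cor:discreterectification} then rectifies. Second, your claim that the homotopy fixed-point spectral sequence collapses because $\pi_* K_{\tilde R}$ is ``a permutation module'' is the right idea but imprecise; the paper's version of this is the remark that $C_2$ acts on $K_*\otimes T$ with no higher cohomology, which follows because $T$ is $C_2$-Galois over $S$ (hence projective as an $S[C_2]$-module), so each $K_{2n}\otimes T$ is cohomologically trivial.
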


\begin{rmk}
The values of ${\cal O}^{mult}$ on affine schemes are forms of $K$-theory
\cite{morava-forms}: the analogues of elliptic cohomology theories
where forms of $\mb G_m$ replace elliptic curves.
\end{rmk}

We will require specific knowledge about how this presheaf is
constructed on affine opens of $\multmod$.

\begin{prop}
Let $Spec(S) \to \multmod$ be an \'etale map, corresponding to an
element $\alpha \in H^1_{et}(\Spec(S), C_2)$ that classifies a
$C_2$-Galois extension $T/S$. Then $S$ and $T$ are \'etale over $\mb
Z$, and there is a natural zigzag equivalence of commutative ring
spectra 
\[
{\cal O}^{mult}(S,\alpha) \simeq (K \wedge^{\mb L} (\mb S \otimes T))^{hC_2}.
\]
Here $\mb S \otimes T$ is the spectrum of Corollary~\ref{cor:etalespheres},
and $C_2$ has the diagonal action on $K \wedge^{\mb L} (\mb S \otimes T)$.
\end{prop}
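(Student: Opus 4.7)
The plan is to prove this in three steps: establish étaleness over $\mb Z$, apply Galois descent along $\Spec(T) \to \Spec(S)$, and then identify ${\cal O}^{mult}$ on the trivialized pullback torsor.

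First, the canonical projection $\pi\co \Spec(\mb Z) \to \multmod$ is the universal $C_2$-Galois étale cover. Its pullback $\Spec(T) = \Spec(S) \times_{\multmod} \Spec(\mb Z)$ is therefore étale over $\Spec(\mb Z)$, so $T/\mb Z$ is étale; since $\pi$ is faithfully flat étale and $\Spec(S) \to \multmod$ is étale, faithfully flat descent gives $S/\mb Z$ étale as well. Moreover, the composite $\Spec(T) \to \Spec(S) \to \multmod$ factors through $\pi$, trivializing the pullback torsor $\pi^*\alpha$, and the map $\Spec(T) \to \Spec(S)$ with its $C_2$-Galois action is a $C_2$-Galois cover in the étale site of $\multmod$. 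The isomorphism $T \otimes_S T \cong T \times T$ identifies its \v Cech nerve with the simplicial bar construction $\Spec(T) \times C_2^\bullet$. Applying the fibrant presheaf ${\cal O}^{mult}$ and invoking Proposition~\ref{prop:fibrantdescent},
\[
{\cal O}^{mult}(S, \alpha) \simeq \holim_{\Delta} {\cal O}^{mult}(\Spec(T) \times C_2^\bullet) \simeq {\cal O}^{mult}(T, \pi^*\alpha)^{hC_2}.
\]

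Second, I would identify ${\cal O}^{mult}(T,\pi^*\alpha)$ explicitly. By the construction of ${\cal O}^{mult}$, the universal cover $\Spec(\mb Z) \to \multmod$ evaluates to the periodic $K$-theory spectrum $K$, with the residual $C_2$ acting by complex conjugation (corresponding to inversion on $\widehat{\mb G}_m$). Since $T/\mb Z$ is étale and $K$ is even-periodic, Proposition~\ref{prop:etalerings} produces a unique commutative $K$-algebra realizing $\pi_* K \otimes_{\mb Z} T$, which may be written as $K \wedge^{\mb L} (\mb S \otimes T)$ using the étale extension of spheres from Corollary~\ref{cor:etalespheres}; this identification is natural in $T$.

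The main obstacle will be verifying that the $C_2$ action on $K \wedge^{\mb L} (\mb S \otimes T)$ produced by descent is the diagonal action, rather than one factor only. I would argue this by functoriality. The Galois $C_2$-action on $\Spec(T)$ over $\Spec(S)$ is the restriction of the universal $C_2$-action on the torsor $\pi$, so it sits coherently over both $\Spec(\mb Z)$ (via inversion on $\mb G_m$) and $\Spec(T)$ (by Galois). Under ${\cal O}^{mult}$ restricted to $\Spec(\mb Z) \to \multmod$, the base contributes the conjugation action on $K$; under the functor $\mb S \otimes (-)$, which by Proposition~\ref{prop:etaleformality} is strictly functorial on the $C_2$-action groupoid of étale $\mb Z$-algebras, the fibre contributes the Galois action on $\mb S \otimes T$. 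Taking the derived smash product combines these coherently into the diagonal action, and the naturality of every step in the étale site of $\multmod$ promotes the equivalence to the desired natural zigzag of commutative ring spectra.
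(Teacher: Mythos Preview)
The paper does not actually supply a proof of this proposition: it is stated as a recollection of how the presheaf ${\cal O}^{mult}$ is explicitly built in \cite[Appendix~A]{tmforientation}, and the text moves directly from the statement to the subsequent remark. In other words, in the paper the formula $(K \wedge^{\mb L} (\mb S \otimes T))^{hC_2}$ is essentially the \emph{definition} of ${\cal O}^{mult}(S,\alpha)$ on affine opens, not a theorem derived from prior properties.

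Your argument takes a genuinely different, and in some ways more satisfying, route: you accept only the abstract existence statement (fibrancy, local weak even-periodicity, the value $K$ on the universal cover $\Spec(\mb Z) \to \multmod$) and then \emph{derive} the explicit formula via Galois descent along $\Spec(T) \to \Spec(S)$. The descent step and the identification ${\cal O}^{mult}(T) \simeq K \wedge^{\mb L}(\mb S \otimes T)$ via Proposition~\ref{prop:etalerings} are correct, and your analysis of the $C_2$-action is the right one: the deck transformation on $\Spec(T) = \Spec(S) \times_{\multmod} \Spec(\mb Z)$ covers the nontrivial automorphism of $\Spec(\mb Z)$ over $\multmod$, so under ${\cal O}^{mult}$ it induces conjugation on the $K$-factor together with the Galois automorphism of $T$, i.e.\ the diagonal action. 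One small point worth tightening: to invoke uniqueness in Proposition~\ref{prop:etalerings} you should first observe explicitly that ${\cal O}^{mult}(T)$ is a commutative $K$-algebra (via functoriality along $\Spec(T) \to \Spec(\mb Z)$) whose homotopy is $\pi_* K \otimes_{\mb Z} T$, which follows from the ``realizes $\widehat{\mb G}$'' clause of the existence proposition. With that made explicit, your proof stands on its own and has the advantage of showing that the formula is forced by the abstract properties rather than being an input.
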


\begin{rmk}
The group $C_2$ acts on the coefficient ring $K_* \otimes T$ with no
higher cohomology. Therefore, we have
\[
\pi_* {\cal O}^{mult}(S,\alpha) = (K_* \otimes T)^{C_2},
\]
and rationalization commutes with the homotopy fixed point
construction.
\end{rmk}

The global section object ${\cal O}^{mult}(\multmod)$ is the periodic real
$K$-theory spectrum $KO \simeq K^{hC_2}$.

We now must recall some of the main theorems about topological modular
forms.

\begin{thm}[{\cite{mark-construct}}]
There exists a fibrant presheaf of elliptic commutative ring spectra
${\cal O}^{et}$ on the small \'etale site of $\Mell$. In
particular, given an \'etale map $\Spec(R) \to \Mell$ classifying an
elliptic curve ${\cal E}$ over $R$, ${\cal O}^{et}(R)$ is an elliptic
spectrum realizing ${\cal E}$.
\end{thm}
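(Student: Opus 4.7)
The plan is to follow the Goerss-Hopkins-Miller paradigm, which splits $\Mell$ into two regions: the ordinary locus where the Landweber exact functor theorem produces elliptic homology theories, and formal neighborhoods of the supersingular points where Lubin-Tate / Morava $E$-theory applies. On each region one produces $E_\infty$ elliptic spectra via obstruction theory, and then one glues them via a pullback in the homotopy category of presheaves.

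First, I would work locally. For an \'etale chart $\Spec(R) \to \mell$ such that the associated formal group is Landweber exact, the universal elliptic curve produces a weakly even-periodic complex-orientable homology theory with $\pi_0 = R$. The next task is to enhance this to a commutative ring spectrum. Goerss-Hopkins obstruction theory, as in the proof of Proposition~\ref{prop:etalerings}, provides obstructions to existence and uniqueness of such $E_\infty$ lifts in the Andr\'e-Quillen (equivalently $\Gamma$-) cohomology groups $\text{Der}^{s+2}_{\pi_* R}(\pi_* E(R), \Omega^s \pi_* E(R))$; for \'etale extensions these groups vanish in positive degrees, producing a strict functor from \'etale charts on $\mell$ to commutative ring spectra. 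The key input is controlling the moduli of multiplications uniformly enough to get functoriality and descent, not just pointwise existence.

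For the supersingular locus one cannot use Landweber exactness directly. Instead, at each supersingular point of height two, complete along that point; the resulting formal group is a height-two Lubin-Tate formal group, and the Hopkins-Miller/Goerss-Hopkins theorem supplies an $E_\infty$ Morava $E$-theory together with an action of the automorphism group of the underlying formal group. This produces the value of the presheaf on formal neighborhoods of each supersingular point, with the required functoriality coming from naturality of the Goerss-Hopkins construction with respect to isomorphisms of formal groups.

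Finally, the local constructions must be assembled into a global fibrant presheaf on the small \'etale site. For an \'etale map $\Spec(R)\to\Mell$, choose an \'etale hypercover on which each vertex is either Landweber-exact on $\mell$ or is a formal neighborhood of a supersingular point, and take the homotopy limit; Lemma~\ref{lem:orientationpullback} ensures that the homotopy pullbacks inherit elliptic spectrum structure realizing the given elliptic curve. To pass from this presheaf-up-to-homotopy to a genuinely fibrant presheaf of commutative ring spectra, apply the local injective model structure machinery of \S\ref{sec:sheafification}: replace fibrantly and use the vanishing of higher quasicoherent cohomology on affine \'etale opens (analogous to Proposition~\ref{prop:sheafifyme}) to verify that the fibrant replacement is a level equivalence. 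The main obstacle throughout is the obstruction theory step--arranging the $E_\infty$ structures coherently across the site so that the gluing along the boundary between the ordinary and supersingular regions is rigid enough to descend; once this rigidity is in place, the descent property and functoriality follow formally.
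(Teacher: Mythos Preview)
This theorem is not proved in the present paper; it is quoted as a result of Goerss--Hopkins--Miller via the cited reference \cite{mark-construct}. Your sketch is therefore being compared not against a proof in this paper but against the construction in that reference, and while the broad architecture (ordinary locus vs.\ supersingular locus, glue) is correct, there is a genuine error in how you invoke obstruction theory.

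Your appeal to Proposition~\ref{prop:etalerings} is misplaced. That proposition takes as input an existing $E_\infty$ ring spectrum $R$ and an \'etale extension $\pi_*R \to A_*$, and produces an $E_\infty$ $R$-algebra realizing $A_*$; the obstruction groups $\mathrm{Der}^s_{\pi_*R}(A_*,-)$ live over a base ring spectrum already in hand. In the construction of ${\cal O}^{et}$ there is no such base: one must build the $E_\infty$ structures from scratch on the Landweber theories, and the relevant obstruction groups are Andr\'e--Quillen cohomology in categories of comodules (equivalently, of $\psi$--$\theta$-algebras in the $K(1)$-local setting), not in the \'etale sense you describe. The vanishing that makes the construction go through is not ``\'etale implies trivial cotangent complex'' but rather the formal smoothness of the relevant moduli rings as $\theta$-algebras (at the ordinary locus) and the Lubin--Tate deformation rings (at the supersingular locus); compare Proposition~\ref{prop:obstructiontheory} in this paper, which is much closer to the actual mechanism.

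Relatedly, the gluing is not done by choosing a hypercover whose vertices are ``Landweber-exact or formal supersingular neighborhoods'' and taking a homotopy limit. The actual construction works one prime at a time via chromatic fracture: build the $K(2)$-local presheaf (Lubin--Tate at supersingular points), build the $K(1)$-local presheaf (via $\theta$-algebra obstruction theory), produce a comparison map on the overlap, and take the pullback; then assemble over all primes and rationally via an arithmetic square. Your hypercover picture would require the values on the ``vertices'' to already form a coherent diagram of $E_\infty$ rings before you could take the limit, which is exactly the problem the chromatic approach is designed to solve.
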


\begin{defn}
Let $\Tmf = \Gamma(\Mell, {\cal O}^{et})$, and let $\tmf$ be
its connective cover. The values of ${\cal O}^{et}$ naturally take values
in commutative $\tmf$-algebras.
\end{defn}

\begin{defn}
Let ${\cal O}^{smooth}$ be the restriction of ${\cal O}^{et}$ to
a presheaf of commutative $\tmf$-algebras on the small \'etale site of
$\mell$, the open substack parametrizing smooth elliptic curves.
\end{defn}

\begin{prop}
\label{prop:k0tmf}
The rationalized functors ${\cal O}^{et}_{\mb Q}$ and ${\cal
  O}^{smooth}_{\mb Q}$ are formal.
\end{prop}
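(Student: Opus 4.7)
The plan is to reduce formality of the entire presheaf to formality of the single ring spectrum $\Tmf_{\mb Q}$ and then invoke Proposition~\ref{prop:etaleformality}.

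First, I would establish that the ring spectrum $\Tmf_{\mb Q}$ is formal. Its homotopy ring $\pi_* \Tmf \otimes \mb Q$ is freely generated by $c_4$ and $c_6$ as a graded-commutative $\mb Q$-algebra, i.e., a polynomial ring on classes in even degrees (equivalently, the ring $\bigoplus_k H^0(\Mell, \omega^k)\otimes \mb Q$ computed in the proof of Proposition~\ref{prop:rationaletale}). Such an evenly-graded polynomial ring $A_*$ is smooth over $\mb Q$, so the Andr\'e-Quillen/$\Gamma$-cohomology groups $Der^s_{\mb Q}(A_*, \Omega^t A_*)$ vanish for $s>0$. The Goerss-Hopkins obstruction theory (as already cited in Proposition~\ref{prop:etalerings}) then produces a unique commutative ring spectrum lift of $A_*$ over $H\mb Q$, which must coincide with both $\Tmf_{\mb Q}$ and $H\pi_*\Tmf_{\mb Q}$, giving the desired formality.

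Next I would observe that, by Proposition~\ref{prop:rationaletale}, for any \'etale map $\Spec(R) \to \Mell$ classifying an elliptic curve ${\cal E}$, the induced map of graded $\mb Q$-algebras
\[
\pi_*\Tmf_{\mb Q} \to \bigoplus_{k\in\mb Z} H^0(\Spec(R),\omega^k)\otimes \mb Q \cong \pi_*{\cal O}^{et}(R)_{\mb Q}
\]
is \'etale. Hence ${\cal O}^{et}_{\mb Q}$, viewed as a functor from the small \'etale site of $\Mell$ to commutative $\Tmf_{\mb Q}$-algebras, factors through the full subcategory $\Et/\Tmf_{\mb Q}$ of evenly-graded commutative $\Tmf_{\mb Q}$-algebras which are \'etale on homotopy. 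Applying Proposition~\ref{prop:etaleformality} with this base ring, ${\cal O}^{et}_{\mb Q}$ is naturally equivalent to $H\pi_*{\cal O}^{et}_{\mb Q}$, proving formality. Formality of ${\cal O}^{smooth}_{\mb Q}$ follows immediately by restriction along $\mell \hookrightarrow \Mell$, since the restriction of a functor factoring through $H$ still factors through $H$.

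The main obstacle is really the first step, formality of $\Tmf_{\mb Q}$ itself; everything else is a formal consequence of the machinery already developed in \S\ref{sec:homotopy-theory}. Once one has a clean input that $\pi_*\Tmf \otimes \mb Q$ is polynomial (hence smooth) over $\mb Q$, the combination of Goerss-Hopkins obstruction theory and topological invariance of the \'etale site does all of the rest of the work.
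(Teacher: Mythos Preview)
There is a genuine gap in the first step: you work over $\Tmf_{\mb Q}$ and assert that $\pi_*\Tmf_{\mb Q}\cong \mb Q[c_4,c_6]$, but this is false. The global sections spectrum $\Tmf$ has nontrivial homotopy in negative degrees coming from $H^1(\Mell,\omega^{\otimes t})$ for $t\ll 0$ (Serre duality on the compactified moduli; equivalently, Anderson self-duality of $\Tmf$). In particular $\pi_{-21}\Tmf_{\mb Q}\cong\mb Q$, so $\pi_*\Tmf_{\mb Q}$ is neither polynomial nor even concentrated in even degrees. Consequently your smoothness/Goerss--Hopkins argument for formality of $\Tmf_{\mb Q}$ does not apply, and the evenness hypothesis of Proposition~\ref{prop:etaleformality} is not satisfied with $\Tmf_{\mb Q}$ as base.

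The fix is exactly what the paper does: replace $\Tmf_{\mb Q}$ by the connective cover $\tmf_{\mb Q}$, which genuinely has $\pi_*\tmf_{\mb Q}=\mb Q[c_4,c_6]$, and recall that ${\cal O}^{et}$ is already set up to take values in commutative $\tmf$-algebras. With this correction your outline coincides with the paper's proof: Proposition~\ref{prop:rationaletale} gives \'etaleness of $\pi_*\tmf_{\mb Q}\to\pi_*{\cal O}^{et}(R)_{\mb Q}$, and Proposition~\ref{prop:etaleformality} finishes. The paper also proves formality of $\tmf_{\mb Q}$ more directly than via obstruction theory, by exhibiting a zigzag $H\pi_*\tmf \leftarrow \mb P(S^8\vee S^{12}) \to \tmf$ and observing (Remark~\ref{rmk:rationalpolynomials}) that both maps become equivalences after rationalization.
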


\begin{proof}
By Proposition~\ref{prop:rationaletale}, the functor ${\cal
  O}^{et}_{\mb Q}$ takes values in the category of commutative
$\tmf_{\mb Q}$-algebras such that map $\pi_* \tmf_{\mb Q} \to \pi_*
{\cal O}^{et}_{\mb Q}(R)$ is
\'etale. Proposition~\ref{prop:etaleformality} then implies the
desired result once we have shown that $\tmf_{\mb Q}$ is formal.

The ring $\pi_* \tmf_{\mb Q}$ is a polynomial algebra on the
classes $c_4$ and $c_6$. By the universal property of the free
algebra, up to equivalence we can construct a diagram
\[
H\pi_* \tmf \leftarrow \mb P(S^8 \vee S^{12}) \to
\tmf,
\]
and by Remark~\ref{rmk:rationalpolynomials} the rationalized diagram
\[
H\pi_* \tmf_{\mb Q} \leftarrow \mb P(S^8 \vee S^{12})_{\mb Q} \to
\tmf_{\mb Q}
\]
consists of equivalences of commutative ring spectra.
\end{proof}

The spectrum $\tmf$ is explicitly constructed so that the following
theorem holds.

\begin{prop}
\label{prop:k1tmf}
For any prime $p$, the $p$-adic $K$-theory of $\tmf$ is Katz's ring
$V$ of $p$-complete generalized modular functions \cite{katz-higher},
parametrizing isomorphism classes of elliptic curves ${\cal E}$ over
$p$-complete rings with a chosen isomorphism $\widehat{\mb G}_m \to
\widehat{{\cal E}}$.
\end{prop}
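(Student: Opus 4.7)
The plan is to compute $K^\vee_* \tmf = \pi_*(K \wedge \tmf)^\wedge_p$ by running the descent spectral sequence for the presheaf $K \wedge {\cal O}^{smooth}$ on the \'etale site of $\mell$, and matching the $E_2^{0,*}$-term to Katz's moduli description.

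The key local computation is that for any complex-orientable, weakly even-periodic ring spectrum $E$ with formal group $\mb G_E$ over $\pi_0 E$, the ring $\pi_0(K \wedge E)^\wedge_p$ represents the functor sending a $p$-complete ring $S$ to the set of pairs consisting of a map $\pi_0 E \to S$ together with an isomorphism $\widehat{\mb G}_m \overto^\sim \mb G_E \otimes S$; this is a standard consequence of the coordinate-change nature of $K$-homology for complex-oriented rings after $p$-completion. Applied to the elliptic spectrum ${\cal O}^{smooth}(\Spec R)$ realizing ${\cal E}/R$, and sheafified over the affine \'etale site of $\mell$ (using Proposition~\ref{prop:etalerings} to upgrade local presentations to natural ones), one obtains a presheaf of commutative rings whose global sections, by direct unwinding of Katz's definition in \cite{katz-higher}, are precisely the ring $V$ of $p$-complete generalized modular functions: a function on this moduli is exactly a natural assignment $({\cal E}/R^\wedge_p, \phi\co\widehat{\mb G}_m \overto^\sim \widehat{\cal E}) \mapsto r \in R^\wedge_p$.

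Next I would apply the descent spectral sequence of Proposition~\ref{prop:fibrantdescent} to $(K \wedge {\cal O}^{smooth})^\wedge_p$. The sheaf $\underline\pi_0$ is the quasicoherent sheaf on $\mell^\wedge_p$ whose sections over $\Spec R$ are the ring just described, classifying trivializations $\widehat{\mb G}_m \cong \widehat{\cal E}$; Serre vanishing for quasicoherent cohomology on affines (Proposition~\ref{prop:serrevanishing}) together with Corollary~\ref{cor:quasicoherentcohomology} forces higher cohomology to vanish locally, and a hypercover argument over a finite affine \'etale cover of $\mell$ (for instance that of Remark~\ref{rmk:etalecover}) extends this vanishing globally, so the spectral sequence collapses at $E_2^{0,*}$. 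To pass from $\TMF = \Gamma(\mell, {\cal O}^{smooth})$ to the connective cover $\tmf$ one observes that $K^\vee_*$ is insensitive to the connective-cover modification: the fiber of $\tmf \to \TMF$ contributes nothing after smashing with $K$ and $p$-completing, since its homotopy groups are located in a range that is annihilated by the $p$-complete $K$-theoretic Adams spectral sequence.

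The main obstacle is getting the descent spectral sequence to degenerate while controlling the interaction of $p$-completion with sheafification: it is not automatic that $p$-completion commutes with passage to global sections over a non-affine stack. This is handled by choosing a finite affine \'etale cover of $\mell$ so that the $p$-completion on each affine chart is Artin--Rees controlled and the \v Cech resolution is of bounded length, ensuring that the \v Cech-to-derived-functor comparison for $(K \wedge {\cal O}^{smooth})^\wedge_p$ remains valid.
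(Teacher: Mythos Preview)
The paper does not prove this proposition. The sentence immediately preceding it reads ``The spectrum $\tmf$ is explicitly constructed so that the following theorem holds,'' and the result is imported from \cite{mark-construct}. In that construction, the $K(1)$-local piece of $\tmf$ is assembled directly via Goerss--Hopkins obstruction theory with $V$ prescribed as the target $p$-adic $K$-homology from the outset (compare Proposition~\ref{prop:tmfpushout} and its role in Appendix~\ref{sec:witten}). The identification $K^\vee_0\,\tmf \cong V$ is therefore an input to the construction of $\tmf$, not something one deduces afterward from an independently given spectrum.

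Your proposed argument also has a genuine gap. Descent over $\mell$ computes, at best, $K^\vee_* \TMF$; you then assert that one passes to $\tmf$ by taking a connective cover and that the fiber of $\tmf \to \TMF$ is $K(1)$-acyclic. But $\tmf$ is the connective cover of $\Tmf = \Gamma(\Mell,{\cal O}^{et})$, not of $\TMF$; the map $\Tmf \to \TMF$ is restriction to the open smooth locus and amounts to inverting $\Delta$. This is \emph{not} a $K(1)$-local equivalence: by Proposition~\ref{prop:tatektheory} there is a $\psth$-algebra map $V \to K^\vee_0(KO\pow{q})$ classified by the Tate curve, and $\Delta$ lands in $q\cdot(\text{unit})$, which is a nonunit. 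Hence $\Delta$ is not invertible in $V$, whereas any computation carried out entirely over $\mell$ will produce a ring in which $\Delta$ is invertible. The cusp information that distinguishes $V$ from its localization is precisely what your sheaf on $\mell$ cannot see, and no connective-cover step will restore it.
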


The following assembles applications of Goerss-Hopkins obstruction
theory from \cite[\S7]{mark-construct}.

\begin{prop}
\label{prop:obstructiontheory}
  Let $\Spf(R)$ be a formal scheme with ideal of definition 
  $(p)$, and let $\Spf(R) \to \comp{(\Mell)}_p$ be an \'etale formal
  affine open classifying an elliptic curve ${\cal E}/R$ with no
  supersingular fibers.  Then there is a lift of this data to a
  $K(1)$-local elliptic commutative $\tmf$-algebra $E$
  realizing ${\cal E}$.

  Given a second torsion-free $p$-complete ring $R'$ with a map
  $\Spf(R') \to \comp{(\Mell)}_p$ classifying an elliptic curve ${\cal
    E}'$ with no supersingular fibers, realized by an elliptic
  commutative $\tmf$-algebra $E'$, the space $\Map_{comm/\tmf}(E, E')$
  of $K(1)$-local commutative $\tmf$-algebra maps is homotopically
  discrete and equivalent to the set $\Hom_{\Mell}(\Spf(R'), \Spf(R))$
  of isomorphism classes of pullback diagrams
\[
\xymatrix{
{\cal E}' \ar[r] \ar[d] &
{\cal E} \ar[d] \\
\Spf(R') \ar[r] &
\Spf(R).
}
\]
\end{prop}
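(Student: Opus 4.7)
The plan is to apply Goerss--Hopkins obstruction theory in the category of $K(1)$-local commutative $\tmf$-algebras and reduce both existence and the mapping space calculation to the vanishing of André--Quillen cohomology of an ind-étale extension of $V = K^\vee_* \tmf$, in direct parallel with the proof of Proposition~\ref{prop:etalerings}. The key preliminary step is to identify the $K^\vee_*$-algebra that the desired $E$ must carry. By Proposition~\ref{prop:k1tmf}, $V$ classifies pairs consisting of an elliptic curve over a $p$-complete ring together with a chosen isomorphism $\widehat{\mb G}_m \to \widehat{\cal E}$. Since ${\cal E}/R$ has no supersingular fibers, $\widehat{\cal E}$ has constant height one, and the functor sending an $R$-algebra $S$ to the set of isomorphisms $\widehat{\mb G}_m \to \widehat{\cal E} \otimes_R S$ is represented by a pro-étale $\mb Z_p^\times$-torsor $\Spf(R^{\mathrm{triv}}) \to \Spf(R)$. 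The universal property of $V$ promotes this to a map $\Spf(R^{\mathrm{triv}}) \to \Spf(V)$, and étaleness of $\Spf(R) \to \comp{(\Mell)}_p$ combined with the torsor structure makes $V \to R^{\mathrm{triv}}$ ind-étale, so that $A_* := R^{\mathrm{triv}}[u^{\pm 1}]$ is ind-étale over $V[u^{\pm 1}]$.

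For existence, I would apply the $K(1)$-local Goerss--Hopkins machinery: the obstructions to realizing an evenly-graded $V$-algebra $A_*$ as $K^\vee_* E$ lie in André--Quillen cohomology groups $\mathrm{Der}^{s+2}_V(A_*, \Omega^s A_*)$. These vanish by the ind-étale hypothesis, exactly as in Proposition~\ref{prop:etalerings}, producing a $K(1)$-local commutative $\tmf$-algebra $E^{\mathrm{triv}}$ with $K^\vee_* E^{\mathrm{triv}} \cong A_*$. Taking $\mb Z_p^\times$-homotopy fixed points of $E^{\mathrm{triv}}$ descends the rigidification and yields the desired elliptic $\tmf$-algebra $E$ realizing ${\cal E}/R$, with its formal group matching $\widehat{\cal E}$ by construction of the trivialization.

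For the mapping space, the corresponding Bousfield--Kan spectral sequence takes the form
\[
E_2^{s,t} = \mathrm{Der}^s_V\bigl(K^\vee_* E,\, \Omega^t K^\vee_* E'\bigr) \;\Rightarrow\; \pi_{t-s}\Map_{comm/\tmf}(E, E'),
\]
with the $(0,0)$ entry replaced by $\Hom_V(K^\vee_* E, K^\vee_* E')$. The same vanishing kills all $s > 0$ contributions, so the mapping space is homotopically discrete and its set of components is the set of $V$-algebra maps $K^\vee_* E \to K^\vee_* E'$. Such a map amounts to a morphism between the corresponding rigidified moduli objects, and after descending by $\mb Z_p^\times$-equivariance it becomes precisely an isomorphism class of pullback square of elliptic curves over $\comp{(\Mell)}_p$. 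The main obstacle is the identification of $K^\vee_* E$ with the Igusa-cover ring and the verification that $V \to R^{\mathrm{triv}}$ is ind-étale; this is the only non-formal input, resting on Katz's moduli interpretation of $V$ and on the no-supersingular-fibers hypothesis, after which the obstruction theory and the pullback description are automatic.
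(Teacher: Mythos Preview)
Your overall strategy---Goerss--Hopkins obstruction theory in the $K(1)$-local setting over $V$, with vanishing coming from the (ind-)\'etaleness of the passage to the trivialized moduli---is the paper's approach. However, there is a genuine gap in how you handle the algebraic target category.

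In $K(1)$-local obstruction theory, the $E_2$-term is not computed in plain $V$-algebras but in $\psth$-algebras over $V$: maps must respect both the Adams operations of $\mb Z_p^\times$ and the power operation $\theta$ (equivalently, the lift of Frobenius $\psi^p$). Thus the $(0,0)$ entry is $\Hom_{\psth\alg/V}(W,W')$, not $\Hom_V(W,W')$, and the Andr\'e--Quillen groups live in that richer category. Your appeal to ``exactly as in Proposition~\ref{prop:etalerings}'' is therefore too fast: that proposition works in plain $R$-algebras with homology theory $R$, where no $\theta$ appears. The paper's proof supplies precisely the missing step: using \'etaleness of $V \to W$ and torsion-freeness of $W'$, it argues via unique liftability along $W' \twoheadrightarrow W'/p$ that any $V$-algebra map $W \to W'$ automatically commutes with $\psi^p$, so that $\Hom_{\psth\alg/V}(W,W') = \Hom_{comm/V}(W,W')^{\mb Z_p^\times}$. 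Without this argument you have not identified the components of the mapping space, and you have not justified the vanishing of the higher obstruction groups in the correct category.

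A secondary issue concerns your existence argument: you realize a spectrum $E^{\mathrm{triv}}$ and then propose to take $\mb Z_p^\times$-homotopy fixed points. But the obstruction theory already produces $E$ directly: one realizes the $\psth$-algebra $W = R^{\mathrm{triv}}$ as $K^\vee_* E$ for a $K(1)$-local $\tmf$-algebra $E$, with the $\mb Z_p^\times$-action encoded in the $\psth$-structure on $W$ rather than as an external action on a spectrum to be descended. Your route would require separately constructing a coherent $\mb Z_p^\times$-action on $E^{\mathrm{triv}}$ through commutative ring maps and then checking that the homotopy fixed-point spectral sequence degenerates to give $\pi_0 = R$; neither step is addressed.
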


\begin{proof}
The necessary details for these results are proven within\cite[\S 7,
Step 2]{mark-construct} (just after Remark~7.15). There it is shown
that the \'etale condition implies that the higher Goerss-Hopkins
obstruction groups for $\tmf$-algebras all vanish in this setting. As
a result, for such an ${\cal E}/R$ there always exists an elliptic
commutative $\tmf$-algebra $E$ realizing ${\cal E}$. Moreover, the
space $\Map_{comm/\tmf}(E, E')$ is homotopically discrete: more
specifically, it is homotopy equivalent to a particular set of
algebraic maps that we will now describe.

Given any torsion-free $p$-complete ring $R$ with a map
$\Spf(R) \to \comp{(\Mell)}_p$ classifying an elliptic curve ${\cal
E}$ with no supersingular fibers, realized by an elliptic cohomology
theory $E$, there is a $p$-adically complete ring $W =
\pi_0 \comp{(K \wedge E)}_p$ which is the universal $R$-algebra where
${\cal E}$ comes equipped with a choice of isomorphism $\widehat{\mb
  G}_m \to \widehat{E}$. As $V$ is the universal $p$-complete ring
possessing such an elliptic curve, there is a natural map $\phi\co V
\to W$. The ring $W$ comes equipped with natural operators: an action
of $\mb Z_p^\times$ by Adams operations, and with a ring homomorphism
$\psi^p\co W \to W$ such that $\psi^p(x) = x^p + p \theta(x)$.

Given elliptic commutative $\tmf$-algebras as in the problem, the
Goerss-Hopkins obstruction theory shows that $p$-completed $K$-theory
gives a homotopy equivalence
\[
\Map_{comm/\tmf}(E,E') \simeq \Hom_{\psth\alg/V}(W,W')
\]
from the space of maps of $p$-complete $\tmf$-algebras $E \to E'$
to the discrete set of maps $W \to W'$ on $p$-adic $K$-theory which
are maps of $V$-algebras that respect the actions of $\mb Z_p^\times$
and $\theta$.

We will now explain how these imply the statement above.

The map $\Spf(W) \to \Spf(R)$ classifying isomorphisms $\widehat{\mb
  G}_m \to \widehat{E}$ is a principal torsor for $\Aut(\widehat{\mb
  G}_m)$, and hence a principal $\mb Z_p^\times$-torsor. If $R$ is
\'etale over $\Mell$, the map $V \to W$ is also \'etale. As a result,
for any $U$ which is $p$-adically complete there are always unique
lifts in diagrams of the following form:
\[
\xymatrix{
V \ar[rr]^{f} \ar[d]_\phi &&
U \ar@{>>}[d] \\
W \ar[rr]_-{\bar g} \ar@{.>}[urr]_{g}&&
U/(p)
}
\]
We will apply this twice.

First, by taking $f = \phi \psi^p$ and $\bar g = Frob$ we find that
there is a unique extension of $\psi^p$ (and $\theta$) to
$W$.

Second, if we start with one such diagram where $U$ has a lift
$\psi^p$ of Frobenius such that $f \psi^p = \psi^p f$, then we may
construct a new diagram by replacing $f$ with $\psi^p f$ and $\bar g$
with $Frob \cdot \bar g$. Then both $g \psi^p$ and $\psi^p g$ are
lifts making the diagram commute, so they are equal.

As a result, for such algebras $W$ and $W'$ a map of $V$-algebras
automatically commutes with $\theta$, and so the set
$\Hom_{\psth\alg/V}(W,W')$ is isomorphic to the set
$\Hom_{comm/V}(W,W')^{\mb Z_p^\times}$ of $\mb Z_p^\times$-equivariant
$V$-algebra maps.

The universal property of $V$ implies that the set of $V$-algebra maps
$W \to W'$ is the same as the set of ring maps $W \to W'$ together
with a chosen isomorphism ${\cal E} \otimes_R W' \to {\cal E}'
\otimes_{R'} W'$ respecting the isomorphisms of the formal parts with
$\widehat{\mb G}_m$. The universal properties of $W$ and $W'$,
parametrizing isomorphisms of the formal groups of ${\cal E}$ and
${\cal E'}$ with $\widehat{\mb G}_m$, together imply that a map is $\mb
Z_p^\times$-equivariant if and only if it descends to a map $R \to
R'$, together with an isomorphism ${\cal E} \otimes_R R' \to {\cal
  E}'$ over $R'$ expressing the desired pullback.
\end{proof}

\section{Construction of the presheaf ${\cal O}$}
\label{sec:construction}

\subsection{Construction at the cusp} 
\label{sec:cusps}

Corollary~\ref{cor:logetalecusp} will allow us to define
our presheaf ${\cal O}$ in a formal neighborhood of the cusp by
restricting our attention to a subcategory of log schemes over
$\mtate$ (\S\ref{sec:tate-moduli}).

\begin{defn}
The category ${\cal C}$ has, as objects, pairs $(m,S)$ of a
nonnegative integer $m$ and a connected, \'etale $\mb
Z[1/m]\pow{q^{1/m}}$-algebra $S$, complete and separated in the
topology generated by the ideal $(q)$.  We equip $S$ with the
logarithmic structure $\langle q^{1/m} \rangle$.  The maps in ${\cal
  C}$ are defined so that the forgetful functor to formal schemes over
$\mtate$, sending $(m,S)$ to $\Spec(S,\langle q^{1/m} \rangle)$, is
fully faithful.

\end{defn}

We will often leave $m$ implicit when describing objects of ${\cal
  C}$.  It can either be recovered as the ramification index of the ideal
$(q)$ or from the logarithmic structure.

\begin{defn}
There is a functor from ${\cal C}$ to affine \'etale opens of
$\multmod$, sending $(m,S)$ to the quotient $\overline S = S /
(q^{1/m})$ by the defining ideal.
\end{defn}

By Proposition~\ref{cor:logetalecusp}, the category ${\cal C}$
inherits a Grothendieck topology equivalent to the category of formal
schemes log-\'etale over $\mtate$.  To define the sheaves we are
interested in, it then suffices to define them on ${\cal C}$ by
Corollary~\ref{cor:cohomologysseq}.  We will abuse notation by
simply writing $S$ for the log ring $(S,\langle q^{1/m}\rangle)$
equipped with a form of the Tate curve and a lift of the discriminant.

\begin{defn}
We define functors from ${\cal C}$ to commutative monoids by
\[
\mu_m(S) = \{\zeta \in S^\times\, |\, \zeta^m = 1 \}
\]
and
\[
A(m,S) = \mu_m(S) \times \left(\tfrac{1}{m} \cdot \mb N\right).
\]
We write a generic element in $A(m,S)$ as $\zeta q^{k/m}$, where
$\zeta \in \mu_m(S)$ and $k \in \mb N$.
\end{defn}

\begin{rmk}
The natural map $\mu_m(S) \to \mu_m(\overline S)$ is an isomorphism.
\end{rmk}

\begin{rmk}
We note that a map $(S,\langle q^{1/m}\rangle) \to (S', \langle
q^{1/dm} \rangle)$ over $\mtate$ is equivalent to a map $\bar S \to
\bar S'$ over $\multmod$, together with an extension in the diagram
\[
\xymatrix{
\mu_m(S) \times q^{\mb N} \ar[r] \ar[d] & \mu_{dm}(S') \times q^{\mb N}\ar[d] \\
A(m,S) \ar@{.>}[r] & A(dm,S').
}
\]
This is because the natural map of monoids $A(m,S) \to S$ generates the
logarithmic structure on $S$, and we must have $(f(q^{1/m}))^m =
(q^{1/dm})^{dm}$. Once the image of $q^{1/m}$ is defined, the
extension to the rest of the \'etale $\mb Z[1/m]\pow{q^{1/m}}$-algebra
$S$ is forced.
\end{rmk}

\begin{prop}
\label{prop:reduced-tate-functor}
Up to equivalence, there is a natural map
\[
\mb S[\mu_m(S)] \to {\cal O}^{mult}(\overline S)
\]
of presheaves of commutative ring spectra on ${\cal C}$. On $\pi_0$,
this is a realization of the composite map $\mb Z[\mu_m(S)] \to
S \to \overline S$.
\end{prop}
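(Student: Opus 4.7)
The plan is to construct the map for each fixed $(m,S)$ using the $K$-theoretic presentation of ${\cal O}^{mult}$, and then to assemble these pointwise constructions into a natural transformation using the rectification machinery of Corollary~\ref{cor:discreterectification}. By the adjunction between the functors $\mb S[-]$ and $\Omega^\infty$, a map of commutative ring spectra $\mb S[\mu_m(S)] \to {\cal O}^{mult}(\overline S)$ is equivalent to an $E_\infty$-monoid map $\mu_m(S) \to \Omega^\infty {\cal O}^{mult}(\overline S)$, and this is the form in which I would construct it.

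\medskip

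For fixed $(m,S)$, recall that ${\cal O}^{mult}(\overline S) \simeq (K \wedge^{\mb L}(\mb S \otimes T))^{hC_2}$, where $T/\overline S$ is the associated $C_2$-Galois extension. The natural inclusion $\mu_m(S) \cong \mu_m(\overline S) \hookrightarrow \overline S^\times = (T^\times)^{C_2}$ presents roots of unity as $C_2$-fixed units in $T$. I would first construct a $C_2$-equivariant map $\mb S[\mu_m(S)] \to \mb S \otimes T$ realizing the algebraic inclusion $\mb Z[\mu_m(S)] \hookrightarrow T$ on $\pi_0$; such a map exists because $m$ is invertible in $T$, so that $\mb Z[\mu_m(S)][1/m]$ is an \'etale $\mb Z[1/m]$-algebra, and Corollary~\ref{cor:etalespheres} together with Proposition~\ref{prop:etalerings} give an essentially unique realization of this algebraic map at the level of commutative ring spectra. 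Composing with the unit $\mb S \otimes T \to K \wedge^{\mb L}(\mb S \otimes T)$ and passing to homotopy $C_2$-fixed points (equivariance is automatic since the image sits in the $C_2$-fixed subring) produces the desired map to ${\cal O}^{mult}(\overline S)$. The identification on $\pi_0$ is then immediate: the constructed composite $\mb Z[\mu_m(S)] \to T^{C_2} = \overline S$ agrees with the prescribed composite $\mb Z[\mu_m(S)] \to S \to \overline S$.

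\medskip

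For naturality in $(m,S)$, the uniqueness portion of Proposition~\ref{prop:etalerings} ensures that the mapping spaces involved are homotopically discrete, so that the pointwise construction assembles into a homotopy-coherent family of maps of presheaves on ${\cal C}$. Applying Corollary~\ref{cor:discreterectification} then rectifies this homotopy-coherent diagram to a strict natural transformation. The main technical obstacle is verifying that the hypotheses of Proposition~\ref{prop:etalerings} are genuinely uniform across ${\cal C}$, i.e.\ that the relevant mapping spaces of $E_\infty$-algebra maps out of (a localization of) $\mb S[\mu_m(S)]$ are homotopically discrete for every object, so that rectification yields a bona fide natural transformation rather than merely a pointwise correspondence compatible up to homotopy.
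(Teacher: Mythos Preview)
Your proposal is correct and arrives at the same conclusion via the same endgame (discreteness of mapping spaces plus Corollary~\ref{cor:discreterectification}), but you take a longer route to get there. The paper never unpacks the presentation ${\cal O}^{mult}(\overline S) \simeq (K \wedge^{\mb L}(\mb S \otimes T))^{hC_2}$ at all. Instead it observes that since $m$ is invertible in $\pi_* {\cal O}^{mult}(\overline S)$, one has
\[
\Map_{comm}(\mb S[\mu_m(S)], {\cal O}^{mult}(\overline S)) \simeq \Map_{comm}(\mb S[\mu_m(S)][1/m], {\cal O}^{mult}(\overline S)),
\]
and then applies Proposition~\ref{prop:etalerings} directly with $R = \mb S$ and $A_* = \pi_* \mb S[\mu_m(S)][1/m]$ to conclude this mapping space is discrete, equivalent to $\Hom(\mu_m(S), \overline S^\times)$. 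Your detour through $\mb S \otimes T$, the $C_2$-equivariance check, and passage to homotopy fixed points is unnecessary: the only property of the target you actually need is that $m$ acts invertibly on its homotopy groups, and this is immediate from $\overline S$ being a $\mb Z[1/m]$-algebra. The paper's argument is cleaner because it treats ${\cal O}^{mult}(\overline S)$ as a black box, whereas yours requires checking that the pointwise $C_2$-equivariant structure is coherent enough to pass to fixed points functorially---a step you flag as a ``main technical obstacle'' but which simply evaporates in the direct approach.
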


\begin{proof}
As $m$ acts invertibly on $\pi_* {\cal O}^{mult}(\overline S)$, we
have an equivalence of derived spaces of commutative ring maps
\[
\Map_{comm}(\mb S[\mu_m(S)], {\cal O}^{mult}(\overline S))
\simeq \Map_{comm}(\mb S[\mu_m(S)][1/m], {\cal O}^{mult}(\overline S)).
\]
On homotopy groups, the map $\pi_* \mb S \to \pi_* \mb
S[\mu_m(S)][1/m]$ is always an \'etale map because $\mu_m(S)$ has
order invertible in $\mb Z[1/m]$. By
Proposition~\ref{prop:etalerings}, the derived space of maps
$\Sigma^\infty_+ A^\times(S) \to {\cal O}^{mult}(\overline S)$ is
homotopically discrete and equivalent to the space of maps
$\mu_m(S) \to \pi_0 ({\cal O}^{mult}(\overline S))^\times
\cong \overline S^\times$.

By Corollary~\ref{cor:discreterectification}, we can therefore replace
these presheaves $\mb S[\mu_m(S)]$ and ${\cal O}^{mult}(\overline S)$
with equivalent ones possessing a natural transformation as
desired.
\end{proof}

\begin{thm}
\label{thm:tate-functor}
There exists a presheaf ${\cal O}^{Tate}$ of elliptic commutative ring
spectra on ${\cal C}$, together with a natural homotopy pushout
diagram of commutative ring spectra
\[
\xymatrix{
\mb S[A(m,S)] \ar[r] \ar[d] & {\cal O}^{Tate}(m,S) \ar[d] \\
\mb S[\mu_m(S)] \ar[r] & {\cal O}^{mult}(\overline S).
}
\]
These satisfy the following properties:
\begin{itemize}
\item ${\cal O}^{Tate}(m,S)$ realizes the form of the Tate curve on $S$;
\item the map $A(m,S) \to \pi_0 {\cal O}^{Tate}(m,S) \cong S$ is
  the map inducing the logarithmic structure on $S$; and
\item the map $\mb S[A(m,S)] \to \mb S[\mu_m(S)]$ is induced by the
  projection $A(m,S) \to \mu_m(S)_+$ sending all non-units to the
  basepoint.
\end{itemize}
\end{thm}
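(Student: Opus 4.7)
The plan is to define ${\cal O}^{Tate}(m,S)$ as the derived completion at $q^{1/m}$ of an explicit pushout built from ${\cal O}^{mult}(\overline S)$, and then to verify each of the listed properties. Concretely, I combine the natural ring map $\mb S[\mu_m(S)] \to {\cal O}^{mult}(\overline S)$ of Proposition~\ref{prop:reduced-tate-functor} with the ring map $\mb S[\mu_m(S)] \to \mb S[A(m,S)]$ induced by the monoid inclusion $\mu_m(S) \hookrightarrow A(m,S)$, $\zeta \mapsto (\zeta,0)$, and form the derived pushout of commutative ring spectra
\[
P(m,S) := {\cal O}^{mult}(\overline S) \wedge^{\mb L}_{\mb S[\mu_m(S)]} \mb S[A(m,S)].
\]
I then set ${\cal O}^{Tate}(m,S)$ to be the derived $(q^{1/m})$-adic completion of $P(m,S)$.

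Next I would read off the three listed properties. On $\pi_0$ the pushout formula gives $\pi_0 P(m,S) = \overline S[q^{1/m}]$, whose $(q^{1/m})$-adic completion is $\overline S\pow{q^{1/m}}$; this coincides with $S$ by uniqueness of \'etale lifts, since $S$ is an \'etale $\mb Z[1/m]\pow{q^{1/m}}$-algebra with reduction $\overline S$. The induced map $A(m,S) \to S$ sends $q^{1/m}$ to its namesake and $\mu_m(S)$ into $S^\times$, which is precisely the map generating the logarithmic structure on $S$. Because $P(m,S)$ is a commutative ${\cal O}^{mult}(\overline S)$-algebra it is weakly even-periodic, and its formal group is the pullback to $S$ of the form of $\widehat{\mb G}_m$ over $\overline S$; via the canonical isomorphism $\widehat T \cong \widehat{\mb G}_m$ for the Tate curve $T$ over $\mb Z\pow q$ base-changed to $S$, this gives ${\cal O}^{Tate}(m,S)$ the structure of an elliptic spectrum realizing the Tate curve.

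For the pushout square I exploit the decomposition $A(m,S) = \mu_m(S) \times \tfrac{1}{m}\mb N$, so $\mb S[A(m,S)] \simeq \mb S[\mu_m(S)] \wedge \mb S[\tfrac{1}{m}\mb N]$, and the map $\mb S[A(m,S)] \to \mb S[\mu_m(S)]$ described in the last bullet becomes the smash of the identity on $\mb S[\mu_m(S)]$ with the augmentation $\mb S[\tfrac{1}{m}\mb N] \to \mb S$ sending $q^{1/m} \mapsto 0$. Consequently
\[
{\cal O}^{Tate}(m,S) \wedge^{\mb L}_{\mb S[A(m,S)]} \mb S[\mu_m(S)] \simeq {\cal O}^{Tate}(m,S) \wedge^{\mb L}_{\mb S[\tfrac{1}{m}\mb N]} \mb S,
\]
and because ``killing $q^{1/m}$'' is insensitive to the $(q^{1/m})$-adic completion, this agrees with $P(m,S) \wedge^{\mb L}_{\mb S[\tfrac{1}{m}\mb N]} \mb S$. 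Unwinding the iterated pushout and using that the composite $\mb S[\mu_m(S)] \hookrightarrow \mb S[A(m,S)] \to \mb S[\mu_m(S)]$ is the identity (since $\zeta \mapsto (\zeta,0) \mapsto \zeta$), the result collapses to ${\cal O}^{mult}(\overline S)$, as required.

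The main obstacle will be rigidifying the above into a strict presheaf of commutative ring spectra on ${\cal C}$. All the inputs and operations are naturally functorial at the level of the homotopy category, but ensuring the map of Proposition~\ref{prop:reduced-tate-functor} and the pushout square are strictly compatible under every morphism in ${\cal C}$---in particular the morphisms $(m,S) \to (dm, S')$ which involve the monoid inclusions $\tfrac{1}{m}\mb N \hookrightarrow \tfrac{1}{dm}\mb N$ and the associated compatibility between $(q^{1/m})$-adic and $(q^{1/dm})$-adic completions---requires care. Following the pattern of Proposition~\ref{prop:reduced-tate-functor}, I would argue that the mapping spaces of commutative ring maps between the relevant pieces are homotopically discrete by Proposition~\ref{prop:etalerings}, and then invoke Corollary~\ref{cor:discreterectification} to rectify the homotopy-coherent construction to a strict functor.
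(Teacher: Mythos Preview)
Your construction is essentially the paper's: the paper defines ${\cal O}^{Tate}(m,S)$ as $\holim_r \bigl[{\cal O}^{mult}(\overline S) \smsh{\mb S[\mu_m(S)]}^{\mb L} \mb S[A(m,S)/q^r]\bigr]$, which is exactly your $(q^{1/m})$-adic completion of $P(m,S)$, and verifies the pushout square via the cofiber sequence $\mb S[A(m,S)] \xrightarrow{\,q^{1/m}\,} \mb S[A(m,S)] \to \mb S[\mu_m(S)]$, which is the same content as your augmentation argument.

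Your final paragraph, however, both manufactures a difficulty and proposes an incorrect fix for it. Once Proposition~\ref{prop:reduced-tate-functor} has been invoked to obtain a \emph{strict} natural transformation $\mb S[\mu_m(S)] \to {\cal O}^{mult}(\overline S)$, the remaining ingredients---the functors $(m,S) \mapsto \mb S[A(m,S)]$, functorial cofibrant replacement, derived smash product, and the homotopy limit over $r$---are already strictly functorial in $(m,S)$, so no further rectification is required; this is why the paper's proof says nothing about it. And your proposed appeal to Proposition~\ref{prop:etalerings} would not work in any case: $\pi_* \mb S[A(m,S)]$ is a polynomial extension of $\pi_* \mb S[\mu_m(S)]$, not an \'etale one, so mapping spaces out of $\mb S[A(m,S)]$ are not homotopically discrete.
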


\begin{proof}
For any $r > 0$, write $A(m,S)/q^r$ for the quotient based monoid
sending $\zeta q^{k/m}$ to the zero element for $k/m \geq r$.

We define our presheaf as a homotopy limit of (derived) smash
products:
\[
{\cal O}^{Tate}(S) = \holim_r \left[{\cal O}^{mult}(\overline S)
\smsh{\mb S[\mu_m(S)]}^{\mb L} \mb S[A(m,S)/q^r]\right]
\]
On homotopy groups, we have
\[
\pi_* \left[{\cal O}^{mult}(\overline S) \smsh{\mb S[\mu_m(S)]}^{\mb L} \mb
S[A(m,S)/q^r]\right] \cong \pi_* {\cal O}^{mult}(\overline S)[q^{1/m}]/(q^r),
\]
and taking limits gives an isomorphism
\[
\pi_* {\cal O}^{Tate}(S) \cong \pi_* {\cal O}^{mult}(\overline S)\pow{q^{1/m}}.
\]
Moreover, because ${\cal O}^{Tate}(S)$ is an ${\cal
  O}^{mult}(\overline S)$-algebra, the formal group of ${\cal
  O}^{Tate}(S)$ carries a canonical isomorphism to the extension of
the form of the multiplicative formal group over $\overline S$.  We
may then compose this with the isomorphism between this formal group
and the formal group of the form of the Tate curve over $S$, making
this into a presheaf of elliptic spectra realizing the Tate curve.

The multiplication-by-$q^{1/m}$ map $A(m,S) \to A(m,S)$ induces a cofiber
sequence of $\mb S[A(m,S)]$-modules
\[
\mb S[A(m,S)] \overto^{q^{1/m}} \mb S[A(m,S)] \to
\mb S[\mu_m(S)],
\]
where the right-hand map (but not the identification of the fiber) is
natural in $(m,S)$.  This remains a cofiber sequence after completing with
respect to the powers of $q$, and so the homotopy pushout diagram
follows by associativity of the derived smash product.
\end{proof}

By construction, the functor ${\cal O}^{Tate}$ naturally takes values in the
category of algebras over
\[
KO\pow{q} = \holim_r KO \smsh{} \{1,q,q^2,\cdots,q^{r-1}\}_+.
\]
We may then apply Theorem~\ref{thm:wittenhalf} from the appendix and
Corollary~\ref{cor:cohomologysseq} to conclude the following.
\begin{cor}
\label{cor:tate-tmf-algebra}
The presheaf ${\cal O}^{Tate}$, up to equivalence, extends to a
presheaf ${\cal O}^{Tate}$ of elliptic commutative $\tmf$-algebras on
the small \'etale site of $\mtate$.
\end{cor}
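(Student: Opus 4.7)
The plan is to combine the Witten-factorization map $\tmf \to KO\pow{q}$ from Theorem~\ref{thm:wittenhalf} with the sheafification machinery of Corollary~\ref{cor:cohomologysseq}, applied to ${\cal C}$ viewed as a spine for the log-\'etale site of $\mtate$ via Corollary~\ref{cor:logetalecusp}.

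First I would upgrade ${\cal O}^{Tate}$ from a presheaf of commutative ring spectra on ${\cal C}$ to a presheaf of commutative $\tmf$-algebras. By construction in Theorem~\ref{thm:tate-functor}, each ${\cal O}^{Tate}(m,S)$ is naturally a $KO\pow{q}$-algebra; composing this structure with the $E_\infty$ map $\tmf \to KO\pow{q}$ supplied by Theorem~\ref{thm:wittenhalf} turns it into a commutative $\tmf$-algebra, functorially in ${\cal C}$.

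Next I would verify the quasicoherence hypothesis of Corollary~\ref{cor:cohomologysseq}. The computation
\[
\pi_* {\cal O}^{Tate}(m,S) \cong \pi_* {\cal O}^{mult}(\overline S)\pow{q^{1/m}}
\]
from the proof of Theorem~\ref{thm:tate-functor}, combined with the fact that $\pi_* {\cal O}^{mult}$ is quasicoherent on the \'etale site of $\multmod$, identifies the homotopy group presheaves of ${\cal O}^{Tate}$ with restrictions to ${\cal C}$ of quasicoherent sheaves on $\mtate$ (namely the tensor powers $\omega^{\otimes t}$ of the sheaf of invariant differentials of the Tate curve, which are free over $\pi_0$).

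Having done this, Corollary~\ref{cor:cohomologysseq} produces a fibrant presheaf of commutative $\tmf$-algebras on the Kummer log-\'etale site of $\mtate$ (equivalent to the log-\'etale site by Corollary~\ref{cor:smcd} together with Proposition~\ref{prop:kummer}) whose restriction to ${\cal C}$ recovers ${\cal O}^{Tate}$. To confirm that the extended presheaf is elliptic, I would apply Lemma~\ref{lem:orientationpullback} pointwise: each value on the enlarged site is a homotopy limit of values on a log-\'etale hypercover of objects in ${\cal C}$, and the formal group isomorphism with the pulled-back Tate curve is uniquely determined by the pullback diagram, propagating through the homotopy limit. The main technical obstacle is the bookkeeping of quasicoherence as the ramification index $m$ varies: the coefficient rings in ${\cal C}$ sit over a filtered system of ramified covers of $\mb Z\pow{q}$ rather than over a fixed base, so one must phrase quasicoherence of the homotopy groups intrinsically on the log-\'etale site (as $\omega^{\otimes t}$ pulled back through the structure map) before invoking Corollary~\ref{cor:cohomologysseq}. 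Once that identification is in hand, the sheafification machinery and Lemma~\ref{lem:orientationpullback} together complete the argument.
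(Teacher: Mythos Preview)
Your proposal is correct and follows essentially the same approach as the paper: the paper's entire argument is the single sentence preceding the corollary, which says to combine the $KO\pow{q}$-algebra structure already built into Theorem~\ref{thm:tate-functor} with the map $\tmf \to KO\pow{q}$ of Theorem~\ref{thm:wittenhalf}, and then apply Corollary~\ref{cor:cohomologysseq}. You have supplied more detail than the paper does---in particular the explicit identification of the homotopy presheaves with $\omega^{\otimes t}$ for the quasicoherence hypothesis, and the use of Lemma~\ref{lem:orientationpullback} to propagate the elliptic structure through the sheafification---but the skeleton is identical.
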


\subsection{Patching in the $p$-complete smooth portion}
\label{sec:smooth}
In this section we describe the $p$-completion of our desired
structure presheaf. We will abuse notation by referring to the composite
\[
(R,M) \mapsto {\cal O}^{Tate}(\comp{R}_\Delta,M)
\]
as a presheaf ${\cal O}^{Tate}$ of elliptic commutative
$\tmf$-algebras on $\Wlog$, the category of affine \'etale opens of
$\Mlog$ classifying Weierstrass curves (\ref{def:weier}).

\begin{prop}
For objects $\Spec(R,M) \to \Mlog$ of $\Wlog$, the natural map
\[
  \comp{(\Delta^{-1} R)}_p \to \comp{(\Delta^{-1}
    \comp{R}_\Delta)}_p
\]
over $\Mlog$ lifts, up to equivalence, to a map of presheaves of
$p$-complete elliptic commutative $\tmf$-algebras on $\Wlog$:
\[
  \comp{{\cal O}^{smooth} (\Delta^{-1} R)}_p \to \comp{({\Delta^{-1} \cal O}^{Tate}(R,M))}_p
\]
\end{prop}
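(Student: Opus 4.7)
I would construct the map objectwise using Goerss--Hopkins obstruction theory in the form packaged by Proposition~\ref{prop:obstructiontheory}, and then assemble the objectwise choices into a map of presheaves using the rectification Corollary~\ref{cor:discreterectification}.

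The key structural observation is that the target $\comp{(\Delta^{-1}{\cal O}^{Tate}(R,M))}_p$ is $K(1)$-local: by Theorem~\ref{thm:tate-functor} its formal group is the multiplicative formal group extended from ${\cal O}^{mult}(\overline R)$, so it has height one everywhere and the underlying elliptic curve has no supersingular fibers. The source $\comp{{\cal O}^{smooth}(\Delta^{-1}R)}_p$ realizes the smooth elliptic curve over $\comp{(\Delta^{-1}R)}_p$ and may well have supersingular fibers; however, any commutative $\tmf$-algebra map into the $K(1)$-local target factors uniquely (up to homotopy) through the $K(1)$-localization of the source, which corresponds to restriction to the ordinary locus and therefore \emph{does} realize an elliptic curve without supersingular fibers.

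With both sides in the setting of Proposition~\ref{prop:obstructiontheory}, the relevant mapping space is homotopically discrete and identified with the set of algebraic pullback squares of elliptic curves along the canonical ring map $\comp{(\Delta^{-1}R)}_p \to \comp{(\Delta^{-1}\comp{R}_\Delta)}_p$. Such a pullback square is built into our data: the definition of ${\cal O}^{Tate}$ in Theorem~\ref{thm:tate-functor} identifies the pullback of the Weierstrass curve on $R$ to $\comp{R}_\Delta$ with a form of the Tate curve, and this identification persists after inverting $\Delta$ and completing at $p$. This selects, for each $(R,M)\in\Wlog$, a distinguished class $\varphi_{(R,M)}$ in $\pi_0$ of the mapping space.

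The pullback squares $\varphi_{(R,M)}$ are strictly functorial in maps of $\Wlog$, since the ring maps, the Weierstrass curves, and the identifications with the Tate curve pull back on the nose. We thereby obtain a functor from $\Wlog$ to $\pi_0$ of the relevant homotopically discrete mapping spaces. Combining Corollary~\ref{cor:discreterectification} with the simplicial model structure on presheaves of commutative $\tmf$-algebras from Section~\ref{sec:sheafification} then rectifies this pointwise data into a strict natural transformation, which is the required map of presheaves. The main obstacle is the bookkeeping needed to apply Corollary~\ref{cor:discreterectification}: one must be careful that the (implicit) $K(1)$-localizations and the algebraic pullback squares organize into an honest strict functor to $\pi_0$ of the mapping spaces, rather than merely a homotopy-coherent diagram, so that the rectification applies cleanly.
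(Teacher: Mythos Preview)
Your approach is essentially the same as the paper's: both reduce to a situation where Proposition~\ref{prop:obstructiontheory} gives homotopically discrete mapping spaces, and then rectify via Corollary~\ref{cor:discreterectification}. The only difference is in how the passage to the ordinary locus is implemented. Rather than invoking $K(1)$-localization of the source, the paper observes that since nodal curves are never supersingular, a lift $v_1$ of the Hasse invariant becomes a unit in $\comp{R}_{(\Delta,p)}$, giving an algebraic factorization
\[
\comp{(\Delta^{-1}R)}_p \to \comp{((v_1\Delta)^{-1}R)}_p \to \comp{(\Delta^{-1}\comp{R}_\Delta)}_p,
\]
and then applies the presheaf ${\cal O}^{smooth}$ to the middle ring to obtain a new source that visibly satisfies the hypotheses of Proposition~\ref{prop:obstructiontheory}. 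This has the minor advantage of sidestepping the need to identify the $K(1)$-localization of $\comp{{\cal O}^{smooth}(\Delta^{-1}R)}_p$ with a value of the presheaf ${\cal O}^{smooth}$, and the first map in the factorization is already a strict map of presheaves coming from functoriality of ${\cal O}^{smooth}$, so only the second map needs to be rectified.
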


\begin{proof}
Let $v_1 \in R$ be a lift of the Hasse invariant, which vanishes on
supersingular curves; it maps to a unit in $\comp{R}_{(\Delta,p)}$ because
nodal elliptic curves are never supersingular. Therefore, we have a
factorization
\[
\comp{(\Delta^{-1} R)}_p \to \comp{((v_1 \Delta)^{-1} R)}_p \to 
 \comp{(\Delta^{-1} \comp{R}_\Delta)}_p,
\]
and so this is equivalent to producing a natural map
\[
\comp{{\cal O}^{smooth}((v_1 \Delta)^{-1} R)}_p \to \comp{(\Delta^{-1}
  {\cal O}^{Tate}(R,M))}_p.
\]
The rings $\comp{((v_1\Delta)^{-1} R)}_p$ satisfy the criteria for
Proposition~\ref{prop:obstructiontheory}, so the spaces of algebra
maps $\comp{{\cal O}^{smooth} ((v_1\Delta)^{-1} R)}_p \to
\comp{({\Delta^{-1} \cal O}^{Tate}(R',M'))}_p$ are homotopically
discrete and equivalent to the set of maps $(R,M) \to (R',M')$ over
$\Mlog$. By Corollary~\ref{cor:discreterectification}, up to
equivalence we may construct a genuinely commutative diagram as
desired.
\end{proof}

\begin{defn}
\label{def:pcomplete-sheaf}
  The presheaf $\comp{{\cal O}}_{p}$, of $p$-complete
  elliptic commutative $\tmf$-algebras on $\Wlog$, sends $\Spec(R,M)
  \to \Mlog$ to the homotopy pullback in the following diagram:
\[
\xymatrix{
\comp{{\cal O}}_p(R,M) \ar@{.>}[d] \ar@{.>}[r] &
\comp{{\cal O}^{Tate}(R,M)}_p \ar[d] \\
\comp{{\cal O}^{smooth}(\Delta^{-1} R)}_p \ar[r] & 
\comp{({\Delta^{-1} \cal O}^{Tate}(R,M))}_p
}
\]
\end{defn}

We have the following.
\begin{prop}
For any affine log scheme $(R,M) \in \Wlog$ carrying the generalized
elliptic curve ${\cal E}$, the spectrum $\comp{{\cal O}}_p(R)$ is an
elliptic cohomology theory associated to the elliptic curve ${\cal E}$
over $\comp{R}_p$.
\end{prop}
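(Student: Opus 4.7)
The plan is to apply Lemma~\ref{lem:orientationpullback} to the defining homotopy pullback square of Definition~\ref{def:pcomplete-sheaf}. The three non-trivial corners are $\comp{{\cal O}^{smooth}(\Delta^{-1}R)}_p$, $\comp{{\cal O}^{Tate}(R,M)}_p$, and $\comp{({\Delta^{-1}{\cal O}^{Tate}(R,M)})}_p$. By construction of ${\cal O}^{smooth}$ and by Theorem~\ref{thm:tate-functor}, each is an elliptic spectrum realizing the restriction of ${\cal E}$ to the corresponding open (before $p$-completion); each is weakly even-periodic, and each is complex orientable via the Weierstrass coordinate $-x/y$ on ${\cal E}$. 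Since $R$ is Noetherian (being log-\'etale over $\Mlog$) and the coefficient modules $\omega^{\otimes k}$ are finitely generated, these properties all survive $p$-completion.

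To invoke Lemma~\ref{lem:orientationpullback} I must first verify that the pullback $\comp{{\cal O}}_p(R)$ is itself weakly even-periodic and complex orientable. For this, consider the Mayer--Vietoris long exact sequence
\[
\cdots \to \pi_n \comp{{\cal O}}_p(R) \to \pi_n \comp{{\cal O}^{Tate}(R,M)}_p \oplus \pi_n \comp{{\cal O}^{smooth}(\Delta^{-1}R)}_p \to \pi_n \comp{({\Delta^{-1}{\cal O}^{Tate}(R,M)})}_p \to \cdots
\]
associated to the homotopy pullback. Since each corner is concentrated in even degrees and the right-hand map is surjective in even degrees (it is the localization $\omega^{\otimes k}\otimes\comp{\comp{R}_\Delta}_p \to \omega^{\otimes k}\otimes\comp{(\Delta^{-1}\comp{R}_\Delta)}_p$, which is onto), the odd homotopy groups of the pullback vanish and the even ones are identified with the algebraic pullback of the three even homotopy groups of the corners. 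Each of these algebraic pullbacks is, after factoring out $\omega^{\otimes k}$, exactly the diagram of $R$-modules appearing in Proposition~\ref{prop:artinrees} applied to the finitely generated module $\omega^{\otimes k}$, then $p$-completed. Proposition~\ref{prop:artinrees} therefore yields an isomorphism
\[
\pi_{2k} \comp{{\cal O}}_p(R) \;\cong\; \omega^{\otimes k} \otimes_R \comp{R}_p.
\]
In particular $\pi_0 \comp{{\cal O}}_p(R) \cong \comp{R}_p$, $\omega$ is an invertible module, and the pairing $\pi_{2p}\otimes\pi_{2q}\to\pi_{2p+2q}$ is an isomorphism, giving weak even-periodicity. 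Complex orientability then follows because the Weierstrass coordinate on ${\cal E}$ furnishes mutually compatible orientations on the three corners, which assemble to an orientation on the pullback.

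With all four vertices of the square now known to be weakly even-periodic and complex orientable ring spectra, and with the three non-pullback corners carrying the structure of elliptic spectra realizing ${\cal E}$ over $\comp{(\Delta^{-1}R)}_p$, $\comp{R}_p$, and their common overlap, Lemma~\ref{lem:orientationpullback} produces a unique structure of elliptic spectrum on $\comp{{\cal O}}_p(R)$ realizing ${\cal E}$ over $\comp{R}_p$, which is the desired conclusion. The one step requiring care is the commutation of $p$-completion with the Artin--Rees pullback of Proposition~\ref{prop:artinrees}; this is the main ``obstacle,'' but it is mild: because $R$ is Noetherian and the modules in question are finitely generated, derived and classical $p$-completion coincide and commute with finite limits of such modules, so the pullback identification of homotopy groups survives $p$-completion as required.
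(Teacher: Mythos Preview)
Your approach is essentially the same as the paper's: apply the Mayer--Vietoris sequence to the defining pullback square, use Proposition~\ref{prop:artinrees} on $\omega^{\otimes n}$ to identify the homotopy groups, argue that $p$-completion preserves the bicartesian square, and finish with Lemma~\ref{lem:orientationpullback}.

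There is one genuine slip. You claim that the map $B\oplus C\to D$ in the Mayer--Vietoris sequence is surjective because ``it is the localization $\omega^{\otimes k}\otimes\comp{\comp{R}_\Delta}_p \to \omega^{\otimes k}\otimes\comp{(\Delta^{-1}\comp{R}_\Delta)}_p$, which is onto.'' That localization map is \emph{not} onto (think of $\mb Z_p\pow{q}\hookrightarrow \mb Z_p(\!(q)\!)$). Surjectivity of the combined map $B\oplus C\to D$ instead comes from the \emph{cocartesian} half of Proposition~\ref{prop:artinrees}: the Artin--Rees square is bicartesian, so $0\to A\to B\oplus C\to D\to 0$ is short exact before completion, and this short exact sequence survives $p$-completion. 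The paper phrases this preservation as ``the terms consist of torsion-free groups''; your alternative justification via derived $p$-completion of finitely generated modules over a Noetherian ring is also fine, but you should invoke it for the cocartesian direction as well, not just the cartesian one.
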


\begin{proof}
We consider the following diagram:
\begin{equation}
  \label{eq:bigpullback}
\xymatrix{
\comp{(\omega^{\otimes n})}_p \ar@{.>}[d] \ar@{.>}[r] &
\comp{(\omega^{\otimes n})}_{(p,\Delta)} \ar[d]
\\
\comp{(\Delta^{-1} (\omega^{\otimes n}))}_p \ar[r] &
\comp{(\Delta^{-1} \comp{(\omega^{\otimes n})}_\Delta)}_p
}
\end{equation}
Before $p$-adic completion, this square is bicartesian by
Proposition~\ref{prop:artinrees}. As the terms consist of torsion-free
groups, this property is preserved by $p$-adic completion.

Taking homotopy groups in Definition~\ref{def:pcomplete-sheaf}, the
Mayer-Vietoris sequence for pullbacks shows that the odd homotopy
groups of the pullback are zero and that the even homotopy groups are
the tensor powers of $\omega$, as desired. By
Lemma~\ref{lem:orientationpullback}, the result is an elliptic
commutative ring spectrum.
\end{proof}

\subsection{Patching in the rational part}
\label{sec:rational}  

In this section, we will use an arithmetic square to construct ${\cal
  O}_{\mb Q}$ and ${\cal O}$.

For an object $(R,M) \in \Wlog$, we have a well-defined value of ${\cal
  O}_{\mb Q}$ on the restriction $\Delta^{-1} R$ to the smooth locus.

\begin{prop}
Up to equivalence, there exists a map
\[
{\cal O}^{smooth}(\Delta^{-1} R)_{\mb Q} \to \Delta^{-1} {\cal
  O}^{Tate}(R,M)_{\mb Q}
\]
of presheaves of elliptic commutative $\tmf$-algebras on
$\Wlog$ that fits into the following commutative diagram:
\[
\xymatrix{
{\cal O}^{smooth}(\Delta^{-1} R)_{\mb Q} \ar@{.>}[r] \ar[d] &
\Delta^{-1} {\cal O}^{Tate}(R,M)_{\mb Q} \ar[d] \\
(\prod_p \comp{{\cal O}^{smooth}(\Delta^{-1} R)}_p)_{\mb Q} \ar[r] &
\Delta^{-1} (\prod_p \comp{{\cal O}^{Tate}(R,M)}_p)_{\mb Q}.
}
\]
\end{prop}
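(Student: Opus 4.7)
The plan is to reduce the construction to a purely algebraic one by combining the rational formality results of Section~\ref{sec:homotopy-theory} with the fact that all relevant functors land in rationally \'etale commutative $\tmf_{\mb Q}$-algebras. First, for an object $(R,M)\in\Wlog$ classifying a generalized elliptic curve $\mathcal{E}$, the inversion of $\Delta$ on the completion $\comp{R}_\Delta$ classifies a smooth elliptic curve, and the map from $\Delta^{-1}R$ to $\Delta^{-1}\comp{R}_\Delta$ classifies the inclusion of its smooth part (as in Proposition~\ref{prop:artinrees}). Thus both $\Spec(\Delta^{-1}R)$ and $\Spec(\Delta^{-1}\comp{R}_\Delta)$ map \'etalely to $\mell$, and by Proposition~\ref{prop:rationaletale} both rings of modular forms are \'etale over $\pi_*\tmf_{\mb Q}$. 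Consequently, by Proposition~\ref{prop:k0tmf}, both $\mathcal{O}^{smooth}(\Delta^{-1}R)_{\mb Q}$ and $\Delta^{-1}\mathcal{O}^{Tate}(R,M)_{\mb Q}$ are objects of the full subcategory $\Et/\tmf_{\mb Q}$ of rationally \'etale commutative $\tmf_{\mb Q}$-algebras.

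Next, I would invoke Proposition~\ref{prop:etaleformality} applied to the functor on $\Wlog$ sending $(R,M)$ to the pair $(\mathcal{O}^{smooth}(\Delta^{-1}R)_{\mb Q},\ \Delta^{-1}\mathcal{O}^{Tate}(R,M)_{\mb Q})$, viewed as a functor to $\Et/\tmf_{\mb Q}\times \Et/\tmf_{\mb Q}$. Since both coordinates are formal, the entire diagram is equivalent to its algebraic counterpart $H\pi_*$. Constructing the desired natural transformation is therefore reduced to constructing a natural transformation of graded-commutative $\pi_*\tmf_{\mb Q}$-algebras
\[
\bigoplus_k H^0(\Delta^{-1}R,\omega^{\otimes k})\otimes\mb Q \longrightarrow \bigoplus_k H^0(\Delta^{-1}\comp{R}_\Delta,\omega^{\otimes k})\otimes\mb Q,
\]
which is simply the ring map induced by the flat base change $\Delta^{-1}R\to\Delta^{-1}\comp{R}_\Delta$; it respects $\omega^{\otimes k}$ because the elliptic curve on the target is the pullback of the one on the source. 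Rectifying this algebraic map via Corollary~\ref{cor:discreterectification} produces the required presheaf-level map.

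For the compatibility with the square involving $\prod_p\comp{(\,\cdot\,)}_p$, the key point is that mapping spaces between the spectra appearing in the square are homotopically discrete: on the rational corners this is \ref{prop:etalerings} via Proposition~\ref{prop:rationaletale}, while on the $p$-complete corners it is Proposition~\ref{prop:obstructiontheory}. In a homotopically discrete setting, commutativity of the square reduces to an equality on $\pi_0$ (in fact, on all even homotopy groups, i.e.\ on modular forms), and both composites induce the same map of graded rings, namely the one given by the natural inclusion of modular forms on the smooth locus into modular forms on the completion. I would therefore assemble the four functors and the four already-constructed edges into a single diagram in presheaves of commutative $\tmf$-algebras whose mapping spaces are homotopically discrete, and apply Corollary~\ref{cor:discreterectification} once to the whole diagram to produce a strictly commuting model containing the desired map.

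The main obstacle is ensuring that the rectification of the rational map is genuinely compatible with the previously constructed $p$-complete map rather than merely existing in isolation. I would handle this by performing a single simultaneous rectification on the pushout category indexing the entire arithmetic square (with morphisms to $\Delta^{-1}(\prod_p\comp{\mathcal{O}^{Tate}(R,M)}_p)_{\mb Q}$ as the terminal vertex), so that Proposition~\ref{prop:etaleformality} and the discreteness of the $p$-complete mapping spaces together force a single choice; then the dotted arrow is extracted from this rectified diagram.
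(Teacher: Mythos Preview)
Your argument has a genuine gap. You claim that $\Spec(\Delta^{-1}\comp{R}_\Delta)$ maps \'etalely to $\mell$, and hence that $\Delta^{-1}\mathcal{O}^{Tate}(R,M)_{\mb Q}$ lies in $\Et/\tmf_{\mb Q}$; this is false. The ring $\comp{R}_\Delta$ is the $\Delta$-adic completion of $R$, and the map $R\to\comp{R}_\Delta$ is not of finite type, so $\Delta^{-1}\comp{R}_\Delta\to\mell$ is not \'etale. Concretely, $\pi_*\big(\Delta^{-1}\mathcal{O}^{Tate}(R,M)_{\mb Q}\big)$ is a Laurent-series ring of the shape $(\pi_*\mathcal{O}^{mult}(\overline S)_{\mb Q})\laur{q^{1/m}}$, which is certainly not an \'etale $\mb Q[c_4,c_6]$-algebra. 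Consequently Proposition~\ref{prop:rationaletale} does not apply to the target, and your appeal to the formality result Proposition~\ref{prop:etaleformality} (which requires the objects to lie in $\Et/\tmf_{\mb Q}$) breaks down.

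The paper sidesteps this entirely by using only the \emph{source}. Since $\mathcal{O}^{smooth}(\Delta^{-1}R)_{\mb Q}$ is \'etale over $\tmf_{\mb Q}$, Proposition~\ref{prop:etalerings} says that the derived mapping space out of it into \emph{any} commutative $\tmf_{\mb Q}$-algebra $T$ is homotopically discrete and computed by $\Hom_{\pi_*\tmf_{\mb Q}}(\pi_*\mathcal{O}^{smooth}(\Delta^{-1}R)_{\mb Q},\pi_*T)$. That alone produces the dotted arrow and forces the square to commute (both composites into the bottom-right corner induce the same graded-ring map), and then Corollary~\ref{cor:discreterectification} rectifies this over $\Wlog$. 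Your invocation of Proposition~\ref{prop:obstructiontheory} for the lower corners is also off target: those corners are rationalized, so $K(1)$-local obstruction theory is not the relevant tool; the discreteness you need again comes solely from the \'etaleness of the source via Proposition~\ref{prop:etalerings}.
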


\begin{proof}
By Proposition~\ref{prop:rationaletale}, the functor ${\cal
  O}^{smooth}_{\mb Q}$ takes values in the category of commutative
$\tmf_{\mb Q}$-algebras such that map $\pi_* \tmf_{\mb Q} \to \pi_*
{\cal O}^{smooth}_{\mb Q}(R)$ is
\'etale. Proposition~\ref{prop:etalerings} then implies that the space
of maps of elliptic commutative $\tmf$-algebras out of ${\cal
O}^{smooth}(\Delta^{-1} R)_{\mb Q}$ is always homotopically discrete 
and equivalent to the set of algebraic maps. By
Corollary~\ref{cor:discreterectification}, we may construct the
desired lift up to equivalence.
\end{proof}

\begin{defn}
The presheaf ${\cal O}_{\mb Q}$ of rational elliptic commutative
$\tmf$-algebras on $\Wlog$ sends $\Spec(R,M) \to \Mlog$ to the homotopy
pullback in the diagram
\[
\xymatrix{
{\cal O}_{\mb Q}(R,M) \ar[r] \ar[d] &
{\cal O}^{Tate}(R,M)_{\mb Q} \ar[d] \\
{\cal O}^{smooth}(\Delta^{-1} R)_{\mb Q} \ar[r] &
\Delta^{-1}{\cal O}^{Tate}(R,M)_{\mb Q}.
}
\]
\end{defn}

\begin{prop}
For any affine log scheme $(R,M) \in \Wlog$ carrying the generalized
elliptic curve ${\cal E}$, the spectrum ${\cal O}_{\mb Q}(R,M)$ is an
elliptic spectrum associated to the elliptic curve ${\cal E}$
over $R_{\mb Q}$, and maps in $\Wlog$ become maps of elliptic spectra.

Up to equivalence, there is a map
\[
{\cal O}_{\mb Q}(R,M) \to \left(\prod_p \comp{{\cal O}}_p(R,M)\right)_{\mb Q}
\]
of presheaves of elliptic commutative $\tmf$-algebras.
\end{prop}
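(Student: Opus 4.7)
The plan is to mirror the $p$-complete construction of Definition~\ref{def:pcomplete-sheaf}. For the first claim, rationalizing the square~\eqref{eq:bigpullback} in place of $p$-completing keeps it bicartesian: Proposition~\ref{prop:artinrees} gives that the uncompleted square of $\omega^{\otimes n}$ is already bicartesian, and rationalization is exact. The Mayer-Vietoris sequence applied to the homotopy pullback defining ${\cal O}_{\mb Q}(R,M)$ then forces $\pi_{\rm odd} = 0$ and $\pi_{2n} \cong \omega^{\otimes n} \otimes R_{\mb Q}$. The three non-pullback corners are compatible complex orientable weakly even-periodic elliptic spectra realizing ${\cal E}$ over the rationalizations of $R$, $\comp{R}_\Delta$, and $\Delta^{-1}\comp{R}_\Delta$ respectively, so Lemma~\ref{lem:orientationpullback} uniquely upgrades ${\cal O}_{\mb Q}(R,M)$ to an elliptic spectrum realizing ${\cal E}/R_{\mb Q}$ compatibly with the defining square. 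Functoriality in $\Wlog$ is automatic from naturality of each corner and of homotopy pullback.

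For the second claim, the strategy is to write down the obvious arithmetic-fracture comparison corner-by-corner and then take homotopy pullbacks. Concretely, for each of the three non-pullback corners there is a natural map $X_{\mb Q} \to \bigl(\prod_p \comp{X}_p\bigr)_{\mb Q}$, and these assemble into a candidate map from the defining diagram of ${\cal O}_{\mb Q}(R,M)$ to that of $\bigl(\prod_p \comp{{\cal O}}_p(R,M)\bigr)_{\mb Q}$; passing to homotopy pullbacks would then produce the desired map of presheaves.

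The main obstacle is rigidifying this candidate to a \emph{strict} map of diagrams of presheaves, rather than a pointwise coherent one. I would handle this exactly as in the preceding proposition: by Proposition~\ref{prop:rationaletale} combined with Proposition~\ref{prop:etalerings}, the mapping spaces of rational elliptic commutative $\tmf_{\mb Q}$-algebras out of each of ${\cal O}^{smooth}(\Delta^{-1} R)_{\mb Q}$, ${\cal O}^{Tate}(R,M)_{\mb Q}$, and $\Delta^{-1}{\cal O}^{Tate}(R,M)_{\mb Q}$ are homotopically discrete and agree with the algebraic $\Hom$-sets at the level of $\pi_*$. The corner-wise comparisons commute with the structure maps on $\pi_*$ (both sides realize the evident ring-theoretic arithmetic square of modular forms), so Corollary~\ref{cor:discreterectification} rectifies the coherent candidate to a strict natural transformation of diagrams of elliptic commutative $\tmf_{\mb Q}$-algebras on $\Wlog$. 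Passing to homotopy pullback yields the required map of presheaves of elliptic commutative $\tmf$-algebras.
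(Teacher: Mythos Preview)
Your argument for the first claim is correct and matches the paper's: rationalize the Artin--Rees square for $\omega^{\otimes n}$, apply Mayer--Vietoris, and invoke Lemma~\ref{lem:orientationpullback}.

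For the second claim there is a gap. You assert that Proposition~\ref{prop:rationaletale} together with Proposition~\ref{prop:etalerings} makes the mapping spaces out of ${\cal O}^{Tate}(R,M)_{\mb Q}$ and $\Delta^{-1}{\cal O}^{Tate}(R,M)_{\mb Q}$ homotopically discrete. But Proposition~\ref{prop:rationaletale} is stated for a map $\Spec(R)\to\Mell$ which is \emph{\'etale}, and the formal completion $\comp{R}_\Delta$ is not \'etale over $\Mell$: it is a power-series ring, hence not of finite presentation, and the extension $\pi_*\tmf_{\mb Q}\to\pi_*{\cal O}^{Tate}(R,M)_{\mb Q}$ is not \'etale. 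So the cited propositions do not furnish the discreteness you need at the Tate corners, and your rectification step is unjustified.

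The paper avoids this entirely. The vertical maps at the Tate corners are the universal arithmetic comparisons $X_{\mb Q}\to\bigl(\prod_p\comp{X}_p\bigr)_{\mb Q}$, which are strictly natural in $X$; hence the square involving ${\cal O}^{Tate}(R,M)_{\mb Q}\to\Delta^{-1}{\cal O}^{Tate}(R,M)_{\mb Q}$ and its $p$-completed counterpart commutes on the nose. The only square whose commutativity requires work is the one involving ${\cal O}^{smooth}(\Delta^{-1}R)_{\mb Q}\to\Delta^{-1}{\cal O}^{Tate}(R,M)_{\mb Q}$, and that is precisely what the \emph{preceding} proposition established---using discreteness only for the smooth corner, where Proposition~\ref{prop:rationaletale} genuinely applies. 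With that in hand one has a strictly commuting $2\times 3$ diagram, and taking homotopy pullbacks along the rows produces the desired map directly. No further rectification is needed.
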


\begin{proof}
For any $n$, we have a bicartesian square
\[
\xymatrix{
\omega^{\otimes n}_{\mb Q} \ar[r] \ar[d] &
(\comp{(\omega^{\otimes n})}_{\Delta})_{\mb Q} \ar[d]\\
\Delta^{-1} \omega^{\otimes n}_{\mb Q} \ar[r] &
\Delta^{-1}(\comp{(\omega^{\otimes n})}_{\Delta})_{\mb Q}\\
}
\]
which is obtained by rationalizing the corresponding bicartesian
square for the finitely generated module $\omega^{\otimes
  n}$.  Therefore, the homotopy groups of ${\cal O}_{\mb Q}(R,M)$ are
the modules $\omega_{\mb Q}^{\otimes n}$, and by
Lemma~\ref{lem:orientationpullback} this is an elliptic spectrum
realizing ${\cal E}$. 

To construct the given map, we note that we now have a natural diagram
of commutative $\tmf$-algebras:
\[
\xymatrix{
{\cal O}^{Tate}(R,M)_{\mb Q} \ar[r] \ar[d] &
\Delta^{-1}{\cal O}^{Tate}(R,M)_{\mb Q} \ar[d] &
{\cal O}_{\mb Q}(\Delta^{-1} R) \ar[l] \ar[d] \\
(\prod_p \comp{{\cal O}^{Tate}(R,M)}_p)_{\mb Q} \ar[r] &
\Delta^{-1} (\prod_p \comp{{\cal O}^{Tate}(R,M)}_p)_{\mb Q} &
(\prod_p \comp{{\cal O}}_p(\Delta^{-1}R))_{\mb Q} \ar[l]
}
\]
Taking homotopy pullbacks in rows gives the desired natural map.
\end{proof}

\begin{defn}
The presheaf ${\cal O}$ of elliptic commutative $\tmf$-algebras on
$\Wlog$ sends $\Spec(R,M)$ to the homotopy pullback in the diagram
\[
\xymatrix{
{\cal O}(R,M) \ar[r] \ar[d] &
\prod_p\comp{{\cal O}}_p(R,M) \ar[d] \\
{\cal O}_{\mb Q}(R,M) \ar[r] &
(\prod_p\comp{{\cal O}}_p(R,M))_{\mb Q}.
}
\]
\end{defn}

As a consequence of Lemma~\ref{lem:orientationpullback}, we find that
${\cal O}(R,M)$ is a functorial elliptic spectrum realizing the
elliptic curve ${\cal E}$ on $R$.

The natural map ${\cal O}^{Tate} \to {\cal O}^{mult}$ of
Proposition~\ref{prop:reduced-tate-functor} now allows us to evaluate
at the cusps.
\begin{prop}
\label{prop:eval-at-cusp}
There is a map
\[
{\cal O}(R,M) \to {\cal O}^{mult}(\overline R)
\]
of presheaves on $\Wlog$, where the range is the form of $K$-theory
associated to the cusp subscheme of $\Spec(R)$.
\end{prop}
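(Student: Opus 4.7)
The plan is to assemble the desired map via the arithmetic square defining ${\cal O}(R,M)$, using the natural map ${\cal O}^{Tate}(R,M) \to {\cal O}^{mult}(\overline R)$ built into Theorem~\ref{thm:tate-functor} as the key input. Since ${\cal O}^{mult}(\overline R)$ is a commutative ring spectrum, it is the homotopy pullback of its rationalization and product of $p$-completions, so it suffices to produce compatible natural transformations out of the rational and $p$-complete parts of ${\cal O}(R,M)$.

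First, observe that the homotopy pushout square of Theorem~\ref{thm:tate-functor} gives a canonical natural map
\[
\pi^{Tate}\co {\cal O}^{Tate}(R,M) \longoverto {\cal O}^{mult}(\overline R)
\]
of presheaves on $\Wlog$, obtained as the right-hand vertical in that square. Concretely, this map is induced by the projection of monoids $A(m,S) \to \mu_m(S)_+$ killing non-units, realizing the evaluation ``at $q^{1/m}=0$.'' Second, from the defining homotopy pullback square of $\comp{{\cal O}}_p(R,M)$ in Definition~\ref{def:pcomplete-sheaf}, there is a structure projection $\comp{{\cal O}}_p(R,M) \to \comp{{\cal O}^{Tate}(R,M)}_p$; composing with the $p$-completion of $\pi^{Tate}$ yields a map $\comp{{\cal O}}_p(R,M) \to \comp{{\cal O}^{mult}(\overline R)}_p$. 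Similarly, the pullback defining ${\cal O}_{\mb Q}(R,M)$ provides ${\cal O}_{\mb Q}(R,M) \to {\cal O}^{Tate}(R,M)_{\mb Q} \to {\cal O}^{mult}(\overline R)_{\mb Q}$.

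Next I would check compatibility: the two composite maps
\[
{\cal O}(R,M) \longoverto {\cal O}_{\mb Q}(R,M) \longoverto {\cal O}^{mult}(\overline R)_{\mb Q}
\]
and
\[
{\cal O}(R,M) \longoverto \prod_p \comp{{\cal O}}_p(R,M) \longoverto \Big(\prod_p \comp{{\cal O}^{mult}(\overline R)}_p\Big)_{\mb Q}
\]
agree (naturally in $(R,M)$) as maps into the rationalized profinite completion of ${\cal O}^{mult}(\overline R)$. This is because both are constructed by applying $\pi^{Tate}$ to the common intermediate ${\cal O}^{Tate}(R,M)$: the relevant coherence is already packaged into the pullback squares defining ${\cal O}_{\mb Q}$, $\comp{{\cal O}}_p$, and ${\cal O}$. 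Taking homotopy pullbacks then produces the map ${\cal O}(R,M) \to {\cal O}^{mult}(\overline R)$ as the induced map into the arithmetic square pullback for ${\cal O}^{mult}(\overline R)$.

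The main obstacle is not conceptual but the usual rigidification issue: the diagrams above must commute strictly (or at least as diagrams of presheaves in the model-categorical sense), rather than merely up to unspecified homotopy. To handle this, I would invoke Proposition~\ref{prop:etalerings} together with Corollary~\ref{cor:discreterectification}: the mapping spaces of commutative ring maps into the étale presheaves ${\cal O}^{mult}(\overline R)$, ${\cal O}^{mult}(\overline R)_{\mb Q}$, and $\comp{{\cal O}^{mult}(\overline R)}_p$ are homotopically discrete (and likewise for their assembly), so all the homotopy-commutative squares in the above argument can be rectified to strictly commutative ones, yielding a genuine map of presheaves of commutative $\tmf$-algebras, as required.
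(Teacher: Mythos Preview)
Your approach is essentially the same as the paper's: the paper offers only the one-line justification that the natural map ${\cal O}^{Tate} \to {\cal O}^{mult}$ from the pushout in Theorem~\ref{thm:tate-functor} ``allows us to evaluate at the cusps,'' and your first two paragraphs spell out exactly how this propagates through the arithmetic square defining ${\cal O}$.

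There is one genuine misstep, however, in your final paragraph. Proposition~\ref{prop:etalerings} controls mapping spaces \emph{out of} an \'etale $R$-algebra, not maps \emph{into} one; it says nothing about $\Map_{comm}({\cal O}(R,M), {\cal O}^{mult}(\overline R))$ being homotopically discrete, and in general there is no reason to expect that. So the rectification argument you sketch does not work as stated.

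Fortunately, no rectification is actually required. The map $\pi^{Tate}\co {\cal O}^{Tate} \to {\cal O}^{mult}$ is already a strict natural transformation of presheaves by construction (it is the right-hand vertical in the strict homotopy pushout of Theorem~\ref{thm:tate-functor}), and the pullback squares defining $\comp{{\cal O}}_p$, ${\cal O}_{\mb Q}$, and ${\cal O}$ are built from strict diagrams. In particular, the comparison map ${\cal O}_{\mb Q} \to (\prod_p \comp{{\cal O}}_p)_{\mb Q}$ is itself produced in the paper as a map of homotopy pullbacks of rows in a strict $2 \times 3$ diagram whose ${\cal O}^{Tate}$-columns are compatible by construction. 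Thus the two routes around your arithmetic square already commute strictly after postcomposing with $\pi^{Tate}$, and the induced map ${\cal O}(R,M) \to {\cal O}^{mult}(\overline R)$ is a strict map of presheaves without any further appeal to discreteness of mapping spaces. You should simply drop the last paragraph and replace it with this observation.
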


By Proposition~\ref{prop:sheafifyme}, we may automatically extend
${\cal O}$ and ${\cal O}^{mult}$, as well as the map between them, to
presheaves of elliptic commutative $\tmf$-algebras on the log-\'etale
site of $\Mlog$ that satisfy homotopy descent.

\begin{thm}
\label{thm:realmain}
There exists a realization of the universal elliptic curve on the
small log-\'etale site of $\Mlog$ by a presheaf ${\cal O}$ of elliptic
commutative $\tmf$-algebras, satisfying homotopy descent for
hypercovers, together with a map ${\cal O} \to {\cal   O}^{mult}$
realizing evaluation at the cusp.
\end{thm}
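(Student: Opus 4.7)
The plan is to assemble the pieces already built on $\Wlog$, verify that their homotopy-group presheaves are restrictions of quasicoherent sheaves on $\Mlog$, and then appeal to the sheafification machinery of \S\ref{sec:sheafification} to push the construction out to the full log-\'etale site while preserving the map to ${\cal O}^{mult}$.

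First, I would record that the presheaves ${\cal O}$ and ${\cal O}^{mult}$ (and the map between them from Proposition~\ref{prop:eval-at-cusp}) already live on $\Wlog$, and that by Proposition~\ref{prop:siteequivalence} the inclusion $\Wlog \hookrightarrow {\cal D}$ into the small Kummer log-\'etale site of $\Mlog$ is a fully faithful equivalence of Grothendieck sites. For an object $\Spec(R,M) \in \Wlog$ classifying ${\cal E}/R$, the computation in the previous sections identifies $\pi_{2t} {\cal O}(R,M)$ with $\omega^{\otimes t}(R,M)$ and $\pi_{2t+1} {\cal O}(R,M) = 0$; since $\omega$ is a quasicoherent sheaf on $\Mell$, its restriction to $\Wlog$ via the forgetful map to $\Mell$ is the restriction of a quasicoherent sheaf, so the presheaves $\pi_k {\cal O}$ satisfy the hypotheses of Proposition~\ref{prop:sheafifyme}. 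The same is true for ${\cal O}^{mult}$ (restricted through the cusp functor $(R,M) \mapsto \overline R$), whose homotopy groups are quasicoherent sheaves pulled back from $\multmod$.

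Next, I would apply Corollary~\ref{cor:cohomologysseq} to the presheaf ${\cal O}$ of commutative $\tmf$-algebras on $\Wlog$ to obtain a fibrant presheaf ${\cal G}$ on ${\cal D}$ with $i^*{\cal G} \simeq {\cal O}$. By Proposition~\ref{prop:fibrantdescent}, ${\cal G}$ satisfies homotopy descent for hypercovers, and by Proposition~\ref{prop:sheafifyme} the fibrant replacement is a level equivalence on the small site ${\cal C}$ underlying $\Wlog$, so on any object of $\Wlog$ the value of ${\cal G}$ still realizes the corresponding elliptic curve. The same procedure yields a fibrant presheaf ${\cal G}^{mult}$ on ${\cal D}$ extending ${\cal O}^{mult}$. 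Since Corollary~\ref{cor:cohomologysseq} is obtained from a Quillen equivalence $(i^*,i_*)$, the map ${\cal O} \to {\cal O}^{mult}$ on $\Wlog$ extends to a map ${\cal G} \to {\cal G}^{mult}$ on ${\cal D}$: concretely, one chooses a fibrant replacement ${\cal O}^{mult} \to ({\cal O}^{mult})_f$, composes with ${\cal O} \to {\cal O}^{mult} \to ({\cal O}^{mult})_f$, lifts this through a fibrant replacement ${\cal O} \to {\cal O}_f$, and then transports along $i_*$.

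The only subtle point — and I expect this to be the main obstacle — is the verification that the homotopy-group presheaves of ${\cal O}$ really are restrictions of quasicoherent sheaves in the sense required by Proposition~\ref{prop:sheafifyme}. This ultimately rests on the Artin-Rees identification of Proposition~\ref{prop:artinrees}, applied to the tensor powers $\omega^{\otimes t}$, to see that the homotopy pullbacks defining $\comp{{\cal O}}_p$ and ${\cal O}$ compute exactly the quasicoherent sheaf $\omega^{\otimes t}$ on each log-\'etale affine. Once that compatibility is in place, all remaining claims (homotopy descent on hypercovers, realization of the universal elliptic curve, and the naturality of the cusp map) are formal consequences of Propositions~\ref{prop:fibrantdescent}--\ref{prop:sheafifyme} and the constructions of \S\ref{sec:rational}--\S\ref{sec:cusps}.
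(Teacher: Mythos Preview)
Your proposal is correct and follows essentially the same approach as the paper: the paper's entire argument is the single sentence preceding the theorem, which invokes Proposition~\ref{prop:sheafifyme} to extend ${\cal O}$, ${\cal O}^{mult}$, and the map between them from $\Wlog$ to the log-\'etale site of $\Mlog$. You have simply unpacked the hypotheses of that proposition (quasicoherence of the $\pi_k$ via the identification with $\omega^{\otimes t}$, the site equivalence of Proposition~\ref{prop:siteequivalence}) and the mechanism by which the map survives fibrant replacement, none of which the paper spells out.
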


The descent property now allows us to take sections on any
Deligne-Mumford stack equipped with a logarithmic structure which is
log-\'etale over $\Mlog$.

\section{$\Tmf$ with level structure}
\label{sec:results}

Having constructed our derived structure presheaf ${\cal O}$ in the
previous section, we can now evaluate it on the modular curves
$\MMM(\Gamma)$ from \S\ref{sec:modular}.

\begin{thm}
\label{thm:main}
There exists a contravariant functor $\Tmf$ from the category ${\cal
L}$ of Definition~\ref{def:levelstruct} to elliptic commutative
$\tmf$-algebras, taking a pair $(N,\Gamma)$ to an object
$\Tmf(\Gamma)$ which is $\mb Z[1/N]$-local.  When $N=1$ (and hence
$\Gamma$ is trivial) this recovers the nonconnective, nonperiodic
spectrum $\Tmf$.

For any such $\Gamma$, there is a spectral sequence
\[
H^s(\MMM(\Gamma); \omega^{\otimes t/2}) \Rightarrow \pi_{t-s} \Tmf(\Gamma),
\]
and for any $K \lhd \Gamma < \GL_2(\mb Z/N)$ the natural map
\[
\Tmf(\Gamma) \to \Tmf(K)^{h\Gamma/K}
\]
is an equivalence.

If $p\co \GL_2(\mb Z/NM) \to \GL_2(\mb Z/N)$ is the natural
projection, the map $\Tmf(\Gamma) \to \Tmf({p^{-1} \Gamma})$ is a
localization formed by inverting $M$.
\end{thm}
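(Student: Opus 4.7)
The plan is to define $\Tmf(\Gamma) = {\cal O}(\MMM(\Gamma))$ where ${\cal O}$ is the fibrant presheaf of elliptic commutative $\tmf$-algebras on the small log-\'etale site of $\Mlog$ provided by Theorem~\ref{thm:realmain}. Proposition~\ref{prop:modulartower} gives a weak $2$-functor $(N,\Gamma) \mapsto \MMM(\Gamma)$ from ${\cal L}$ to Deligne-Mumford stacks, and by Proposition~\ref{prop:cechnerve} this takes values in log-\'etale objects over $\Mlog$; contravariance of ${\cal O}$ then supplies the required functor on ${\cal L}$. The $\mb Z[1/N]$-locality is automatic, since $\MMM(\Gamma)$ is a stack over $\Spec(\mb Z[1/N])$ and hence $\pi_0\Tmf(\Gamma)$ is a $\mb Z[1/N]$-algebra. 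When $N=1$ the group $\GL_2(\mb Z/1)$ is trivial, so $\MMM(\Gamma) = \Mell$ and $\Tmf(\Gamma) = {\cal O}(\Mell) = \Tmf$ by construction.

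The descent spectral sequence is an instance of the hypercohomology spectral sequence of Corollary~\ref{cor:cohomologysseq} (or Proposition~\ref{prop:fibrantdescent}) applied to ${\cal O}$. Since ${\cal O}$ realizes the universal elliptic curve by a weakly even-periodic presheaf, its homotopy presheaves are the quasicoherent sheaves $\pi_{2t} {\cal O} = \omega^{\otimes t}$ with $\pi_{2t+1}{\cal O} = 0$, which yields exactly the claimed $E_2$-page. The homotopy fixed-point statement for $K \lhd \Gamma < \GL_2(\mb Z/N)$ follows by combining Proposition~\ref{prop:cechnerve}, which identifies the \v Cech nerve of the log-\'etale cover $\MMM(K) \to \MMM(\Gamma)$ with the simplicial bar construction $(\Gamma/K)^\bullet \times \MMM(K)$, with the homotopy descent of ${\cal O}$: the value ${\cal O}(\MMM(\Gamma))$ is the homotopy limit of ${\cal O}$ evaluated on this cosimplicial diagram, which by definition is $\Tmf(K)^{h\Gamma/K}$. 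This is also the content of Proposition~\ref{prop:galoissseq}.

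The localization statement is the step requiring the most care. The first task is to identify $\MMM(p^{-1}\Gamma)$ with the base change $\MMM(\Gamma) \times_{\Spec(\mb Z[1/N])}\Spec(\mb Z[1/NM])$: the modular interpretation shows that a curve with $p^{-1}\Gamma$-structure over a $\mb Z[1/NM]$-algebra is, after quotienting by the kernel of $p$, the same as a curve with $\Gamma$-structure, since the extra level-$M$ information is killed. Under this identification, the map $\MMM(p^{-1}\Gamma) \to \MMM(\Gamma)$ is the restriction along an open immersion where $M$ becomes invertible. The main obstacle is to check that evaluating the fibrant presheaf ${\cal O}$ on this open restriction agrees with the homotopical localization $\Tmf(\Gamma) \mapsto M^{-1}\Tmf(\Gamma)$. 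This is carried out by comparing the descent spectral sequences of the previous paragraph: since Zariski cohomology of the quasicoherent sheaves $\omega^{\otimes t}$ commutes with the flat localization $(-)\otimes_{\mb Z[1/N]} \mb Z[1/NM]$, the canonical map $M^{-1}\Tmf(\Gamma) \to \Tmf(p^{-1}\Gamma)$ induces an isomorphism of $E_2$-pages; since both sides have strongly convergent spectral sequences, the map is a weak equivalence.
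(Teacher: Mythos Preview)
Your proof is correct and largely parallels the paper's: you define $\Tmf(\Gamma) = {\cal O}(\MMM(\Gamma))$, obtain functoriality from Proposition~\ref{prop:modulartower} and Proposition~\ref{prop:cechnerve}, derive the spectral sequence from Corollary~\ref{cor:cohomologysseq}, and deduce the homotopy fixed-point statement from Proposition~\ref{prop:galoissseq} exactly as the paper does.

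The one genuine point of departure is the localization statement. The paper argues that the global section functor commutes with the homotopy colimit defining $M^{-1}(-)$ by invoking that the map $\MMM(\Gamma) \to \mfg$ is \emph{tame} in the sense of Mathew--Meier \cite[4.12]{mathew-meier-affine}; this is a general structural fact which immediately gives that sections over the open locus $\MMM(p^{-1}\Gamma) \cong \MMM(\Gamma)[1/M]$ are the localization of sections over $\MMM(\Gamma)$. Your argument instead identifies $\MMM(p^{-1}\Gamma)$ with the base change and compares the two descent spectral sequences directly, using flat base change for the cohomology of $\omega^{\otimes t}$ and strong convergence (which holds because the relevant Deligne-Mumford stacks have bounded quasicoherent cohomology). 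Your route is more elementary and self-contained, avoiding the external input from Mathew--Meier; the paper's route is more conceptual and would apply uniformly to any smashing localization, not just inverting an integer. Both are valid.
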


\begin{proof}
The functoriality of the modular curves $\MMM(\Gamma)$ as log-\'etale
objects over $\Mlog$ was discussed in
Proposition~\ref{prop:modulartower}.  Therefore, we may apply ${\cal
  O}$ to obtain a functorial diagram of commutative $\tmf$-algebras.  By
definition, the value on the terminal object is $\Tmf$.  The
statement about localizations is true due to the global section
functor commuting with homotopy colimits, since the map $M(\Gamma) \to
\mfg$ is tame \cite[4.12]{mathew-meier-affine}; the ring of
sections over a localization is the localization of the ring of
sections.

The spectral sequence for the cohomology of $\Tmf(\Gamma)$ is
Corollary~\ref{cor:cohomologysseq}, while the equivalence from
$\MMM(K)$ to the homotopy fixed-point object
$\MMM(\Gamma)^{h\Gamma/K}$ is Proposition~\ref{prop:galoissseq}.
\end{proof}

We can also evaluate at the cusps.

\begin{thm}
\label{thm:thmcusp}
Let $K(\Gamma)$ be the natural form of $K$-theory associated to the
cusp substack of $\MMM(\Gamma)$.  There is a natural transformation of
commutative $\tmf$-algebras
\[
\Tmf(\Gamma) \to K(\Gamma).
\]
\end{thm}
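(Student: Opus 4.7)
The plan is to assemble this theorem directly from the machinery already in place, particularly from Theorem~\ref{thm:realmain} and the discussion of modular curves in \S\ref{sec:modular}. The main conceptual ingredient is that $\Tmf(\Gamma)$ and $K(\Gamma)$ are, by definition, the global sections of ${\cal O}$ and ${\cal O}^{mult}$ respectively on compatible objects, so the natural map already supplied globally by Proposition~\ref{prop:eval-at-cusp} (and extended to the log-\'etale site in Theorem~\ref{thm:realmain}) will deliver the required transformation.

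More precisely, I would proceed as follows. First, by Proposition~\ref{prop:modulartower} and Proposition~\ref{prop:cechnerve}, each $\MMM(\Gamma)$ is a log stack equipped with a log-\'etale map to $\Mlog$, and its cusp substack $\MMM(\Gamma)^c$ is \'etale over $\multmod$ (as noted at the end of \S\ref{sec:modular}). Second, evaluating the presheaves of Theorem~\ref{thm:realmain} on $\MMM(\Gamma)$ gives $\Tmf(\Gamma) = {\cal O}(\MMM(\Gamma))$ on the one hand, while the composite of ${\cal O}^{mult}$ with the cusp-restriction functor evaluates to ${\cal O}^{mult}(\MMM(\Gamma)^c) = K(\Gamma)$ on the other. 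Third, the natural transformation ${\cal O} \to {\cal O}^{mult}$ supplied by Theorem~\ref{thm:realmain} induces a map on global sections, which is the desired map $\Tmf(\Gamma) \to K(\Gamma)$ of commutative $\tmf$-algebras. Functoriality in $\Gamma$ then follows from the functoriality of $\MMM(\Gamma)$ and $\MMM(\Gamma)^c$ on the category ${\cal L}$.

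The only step requiring mild care is ensuring that the cusp-restriction operation commutes with the log-\'etale pullbacks defining $\MMM(\Gamma)$ from $\Mlog$, so that restricting the natural transformation ${\cal O} \to {\cal O}^{mult}$ to the small log-\'etale site over $\MMM(\Gamma)$ and taking sections does in fact produce the form of $K$-theory $K(\Gamma)$ associated to $\MMM(\Gamma)^c$. But this is guaranteed by the local construction: on any chart $(R,M) \in \Wlog$ classifying a log-\'etale map into $\Mlog$, the cusp ring $\overline R = R/(\tilde\Delta)$ is precisely the ring of the cusp subscheme, so after homotopy descent (Proposition~\ref{prop:sheafifyme} and Corollary~\ref{cor:cohomologysseq}) this identification extends compatibly to global log-\'etale objects like $\MMM(\Gamma)$. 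No obstruction-theoretic input is needed beyond what was used to construct ${\cal O}$ and the evaluation-at-cusp map in the first place; the content of the theorem is just the evaluation of a previously constructed natural transformation on a specific object.
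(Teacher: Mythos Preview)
Your proposal is correct and matches the paper's approach exactly: the paper states Theorem~\ref{thm:thmcusp} without a separate proof, treating it as an immediate consequence of the natural transformation ${\cal O} \to {\cal O}^{mult}$ already supplied by Theorem~\ref{thm:realmain} (via Proposition~\ref{prop:eval-at-cusp}), evaluated on the log-\'etale objects $\MMM(\Gamma)$. Your write-up simply makes explicit what the paper leaves implicit.
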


In particular, we can apply Theorem~\ref{thm:main} to the specific
cover $\MMM_1(3) \to \MMM_0(3)$, obtaining in particular an (almost)
integral lift of the work in \cite{tmforientation}.

\begin{thm}
\label{thm:tmfexists}
There exists a commutative $\tmf$-algebra $\tmf_0(3)$ (with $3$
inverted) whose homotopy groups form the ``positive'' portion of the
homotopy groups of $\TMF_0(3)$ described in \cite[\S
7]{mahowald-rezk-level3}.  This fits into a commutative diagram of
commutative $\tmf$-algebras
\[
\xymatrix{
\tmf_0(3) \ar[r] \ar[d] & ko[1/3] \ar[d] \\
\tmf_1(3) \ar[r] & ku[1/3].
}
\]
\end{thm}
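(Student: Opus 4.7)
The plan is to combine Theorems~\ref{thm:main} and \ref{thm:thmcusp} applied to the subgroups $\Gamma_1(3) \lhd \Gamma_0(3) < \GL_2(\mb Z/3)$, and then define the connective versions as homotopy pullbacks along the cusp-evaluation maps.

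First I would apply Theorem~\ref{thm:main} with $N=3$ to obtain commutative $\tmf$-algebras $\Tmf_1(3)$ and $\Tmf_0(3)$, both $\mb Z[1/3]$-local, with a natural map $\Tmf_0(3) \to \Tmf_1(3)$ realizing $\Tmf_0(3) \simeq \Tmf_1(3)^{h(\mb Z/3)^\times}$. Since $\Gamma_1(3)$ contains no elliptic points, $\MMM_1(3)[1/3]$ is a regular affine scheme carrying the universal Weierstrass curve $y^2 + a_1 xy + a_3 y = x^3$, and correspondingly $\Tmf_1(3)$ is complex orientable. Next I would apply Theorem~\ref{thm:thmcusp} to obtain cusp maps $\Tmf_1(3) \to K(\Gamma_1(3))$ and $\Tmf_0(3) \to K(\Gamma_0(3))$. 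The cusp substack of $\MMM_1(3)[1/3]$ is (after identifying components) a disjoint union of copies of $\Spec(\mb Z[1/3])$, while that of $\MMM_0(3)[1/3]$ is a union of quotients $[\Spec(\mb Z[1/3])/\!/\mb Z/2]$ reflecting the action of $(\mb Z/3)^\times$ on $\mu_3$; since $3$ is inverted, the presheaf ${\cal O}^{mult}$ evaluates on the $\infty$-cusp to give $KU[1/3]$ and $KO[1/3]$ respectively, compatibly.

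I would then define $\tmf_1(3)$ and $\tmf_0(3)$ as the homotopy pullbacks in commutative $\tmf$-algebras
\[
\xymatrix{
\tmf_1(3) \ar[d] \ar[r] & ku[1/3] \ar[d] \\
\Tmf_1(3) \ar[r] & KU[1/3]
}\qquad
\xymatrix{
\tmf_0(3) \ar[d] \ar[r] & ko[1/3] \ar[d] \\
\Tmf_0(3) \ar[r] & KO[1/3],
}
\]
where the right-hand vertical maps are the standard connective cover inclusions. Since $ku[1/3]$ and $ko[1/3]$ are connective and the cusp maps realize $q$-expansion, the resulting objects are connective $E_\infty$ $\tmf$-algebras. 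The commutative square in the statement is then the comparison between the two pullback squares: the vertical maps on the left come from the inclusion $\Gamma_1(3) < \Gamma_0(3)$ and from complexification $ko \to ku$, and compatibility of the two sides is exactly the naturality of cusp evaluation together with the fact that $KO[1/3] \to KU[1/3]$ is obtained from the residual $(\mb Z/3)^\times = \mb Z/2$ action.

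The main obstacle is verifying that the resulting homotopy groups agree with the ``positive portion'' of $\pi_* \TMF_0(3)$ computed in \cite{mahowald-rezk-level3}. For this I would analyze the Mayer-Vietoris long exact sequence associated to each pullback: in the case of $\tmf_1(3)$ the descent spectral sequence of Theorem~\ref{thm:main} identifies $\pi_* \Tmf_1(3)$ with modular forms of level $\Gamma_1(3)$ (of all weights), and the cusp map corresponds to the $q$-expansion into $\pi_* KU[1/3] = \mb Z[1/3][u^{\pm 1}]$; the pullback with $ku[1/3] = \mb Z[1/3][u] \subset KU[1/3]$ cuts out precisely the forms of nonnegative weight, which is Mahowald-Rezk's positive portion. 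The case of $\tmf_0(3)$ follows by taking $(\mb Z/3)^\times$-fixed points compatibly on both sides of the pullback.
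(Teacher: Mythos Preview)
Your pullback construction does not produce connective spectra, so the objects you define are not the desired $\tmf_1(3)$ and $\tmf_0(3)$. The problem is visible already in the $\tmf_1(3)$ square. The fiber of $ku[1/3] \to KU[1/3]$ has $\pi_n \cong \mb Z[1/3]$ for $n \in \{-3,-5,-7,\ldots\}$ and is zero otherwise; this is also the fiber of your pullback over $\Tmf_1(3)$. Since $\MMM_1(3)$ is proper, $H^0(\omega^{t}) = 0$ for $t<0$, and the paper records that $H^1(\omega^t)$ is nonzero only for $t\leq -4$; hence $\pi_n\Tmf_1(3)=0$ for $-8\leq n\leq -1$. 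The Mayer--Vietoris sequence then gives $\pi_{-3}$, $\pi_{-5}$, $\pi_{-7}$ of your pullback equal to $\mb Z[1/3]$. Your claim that the pullback ``cuts out precisely the forms of nonnegative weight'' rests on an incorrect model of $\pi_*\Tmf_1(3)$ as modular forms of all weights: in fact $\pi_{2t}\Tmf_1(3)$ is already the polynomial ring $\mb Z[1/3][a_1,a_3]$ in nonnegative degrees and zero in negative even degrees, so the pullback changes nothing for $n\geq 0$ and only injects spurious classes below. The analogous failure occurs for your $\tmf_0(3)$ square with $ko\to KO$.

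The paper avoids this entirely. After projecting off the second factor in the cusp target $KU[1/3]\times KU^\tau[1/3]$, one has a commutative square
\[
\xymatrix{
\Tmf_0(3) \ar[r] \ar[d] & KO[1/3] \ar[d]\\
\Tmf_1(3) \ar[r] & KU[1/3],
}
\]
and the paper simply applies the connective cover functor $\tau_{\geq 0}$ to every corner. Since $\tau_{\geq 0}$ is functorial on commutative ring spectra, this yields the required square of $\tmf$-algebras; the identification of $\pi_{\geq 0}\Tmf_0(3)$ with Mahowald--Rezk's ``positive'' portion is then read off from the homotopy fixed-point spectral sequence for $\Tmf_0(3)\simeq \Tmf_1(3)^{h(\mb Z/3)^\times}$, restricted to the region $s\leq t-s$. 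If you want to salvage your approach, taking connective covers of your pullbacks would work, but then the pullback step is redundant.
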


\begin{proof}
The cohomology of the moduli stack $\MMM_1(3)$ was essentially
determined in \cite{mahowald-rezk-level3}; see also
\cite{tmforientation}.  The cohomology groups $H^0(\MMM_1(3);
\omega^{\otimes t})$ form the graded ring $\mb Z[1/3,a_1,a_3]$,
which come from the universal cubic curve
\[
y^2 + a_1 xy + a_3 y = x^3
\]
carrying a triple intersection with the line $y=0$.  The other
cohomology groups $H^s(\MMM_1(3); \omega^{\otimes t})$ are
concentrated in $s = 1$, $t \leq -4$, and so the positive-degree
homotopy groups of $\Tmf_1(3)$ form the graded ring $\mb
Z[1/3,a_1,a_3]$.  Moreover, the open subscheme ${\cal M}_1(3) \subset
\MMM_1(3)$ induces a map of commutative $\tmf$-algebras $\Tmf_1(3) \to
\TMF_1(3)$ which, on homotopy groups, inverts the elliptic
discriminant $\Delta = a_3^3(a_1^3 - 27a_3)$.

The homotopy fixed-point spectral sequence for the homotopy groups of
$\Tmf_0(3)$, in positive degrees, consists of the part of the
computation carried out by Mahowald-Rezk which involves no negative
powers of $\Delta$.  The portion of the spectral sequence with $s >
t-s \geq 0$ consists of spurious classes which are annihilated by
differentials, and so the portion of the spectral sequence with $s
\leq t-s$ converges to the homotopy groups of the connective cover
$\tmf_0(3)$, which is the ``positive'' portion of the computation
described in \cite[\S7]{mahowald-rezk-level3}.

Evaluating at the cusps, we obtain a diagram of commutative
$\tmf$-algebras
\[
\xymatrix{
\Tmf_0(3) \ar[r] \ar[d] & KO[1/3] \ar[d] \\
\Tmf_1(3) \ar[r] & KU[1/3] \times KU^\tau[1/3],
}
\]
a global version of the one described in \cite{tmforientation}, with
$KU^\tau$ a form of $K$-theory. Projecting away from the factor
$KU^\tau[1/3]$ and taking connective covers, we obtain a diagram of
commutative $\tmf$-algebras giving the desired connective lifts.
\end{proof}

\begin{rmk}
For sufficiently small subgroups $\Gamma < \GL_2(\mb Z/N)$, the
modular curve $\MMM(\Gamma)$ is genuinely a scheme, and the
cohomology of $\MMM(\Gamma)$ is concentrated in degrees $0$ and $1$.
The spectral sequence degenerates to isomorphisms, where the
cohomology in the following is implicitly cohomology of
$\MMM(\Gamma)$:
\begin{align*}
\pi_{2t} \Tmf(\Gamma) &\cong H^0(\omega^{\otimes t})\\
\pi_{2t-1} \Tmf(\Gamma) &\cong H^1(\omega^{\otimes t})
\end{align*}
The even-degree homotopy groups form the ring of modular forms for
$\Gamma$ over $\mb Z[1/N]$, and are concentrated in nonnegative
degrees.  Duality for $H^1$ (specifically, Grothendieck-Serre duality
\cite{hartshorne-residuesduality}) takes the form of an
exact sequence
\[
0 \to \Ext(H^1(\kappa \otimes \omega^{\otimes(-t)}),\mb Z[1/N]) \to
H^1(\omega^{\otimes t}) \to \Hom(H^0(\kappa \otimes \omega^{\otimes
  (-t)}), \mb Z[1/N]) \to 0.
\]
In these cases we have an isomorphism of $\omega^{\otimes 2}$ with the
logarithmic cotangent complex, which is the twist $\kappa(D)$ of the
canonical bundle by the cusp divisor. This allows us to recast $H^1$
as coming from an exact sequence
\[
0 \to \Ext(H^1(\omega^{\otimes(2-t)}(-D)),\mb Z[1/N]) \to
H^1(\omega^{\otimes t}) \to \Hom(H^0(\omega^{\otimes
  (2-t)}(-D)), \mb Z[1/N]) \to 0.
\]

Any $p$-torsion elements in $H^1$ arise due to forms of weight $k$
mod-$p$ that do not lift to integral forms, and outside weight $1$
this does not occur as a consequence of the Riemann-Roch formula.

Degree considerations imply that the only possible nonzero homotopy
group in positive, odd degree is $\pi_1(\Tmf(\Gamma))$.  This has
possible contributions from both the dual of the space
$H^0(\omega(-D))$, parametrizing cuspforms of weight $1$ and level
$\Gamma$, and the Pontrjagin dual of its torsion.

Using a Postnikov tower, we can eliminate some of this torsion from
the homotopy groups of $\Tmf(\Gamma)$.  However, it is not clear if
there is a conceptually correct way to do so.  The torsion of
$H^1(\omega)$ and $H^1(\omega(-D))$ are Pontrjagin dual, and measure
the failure of weight-$1$ forms with level $\Gamma$ to lift.  The
story of these non-liftable forms of weight one seems to be just
beginning to emerge~\cite{buzzard-weightone,schaeffer-thesis}.
\end{rmk}

\appendix
\section{Appendix: The Witten genus}
\label{sec:witten}

The goal of this section is to construct a map of commutative ring
spectra
\[
\tmf \to KO\pow{q}
\]
which, on homotopy groups, factors the Witten genus $MSpin_* \to \mb
Z\pow{q}$.  Here the power series notation $KO\pow{q}$ is shorthand
for the homotopy limit of the monoid algebras
\[
\holim_r KO \smsh{} \{1, q, \cdots, q^{r-1}\}_+
\]
where $q^{r}$ is identified with the basepoint (as in \S
\ref{sec:cusps}).  The main result (Theorem~\ref{thm:wittenhalf}) is
well-known and featured prominently in earlier, unpublished,
constructions of $\tmf$, but to the knowledge of the authors it does
not appear in the literature.  The relation of the Tate curve to power
operations has been extensively explored, especially in this context
by Baker \cite{baker-hecke}, Ando \cite{ando-ellipticpowerops},
Ando-Hopkins-Strickland \cite{ando-hopkins-strickland-witten}, and
Ganter \cite{ganter-orbifoldtate}.

\begin{defn}
For a chosen prime $p$, the $p$-adic $K$-theory of a spectrum $X$ is
\[
K^\vee_*(X) = \pi_* L_{K(1)}(K \wedge X) = \pi_* \holim_k (K/p^k \wedge
X).
\]
In particular, the coefficient ring $K^\vee_*$ is the graded ring $\mb
Z_p[\beta^{\pm 1}]$.
\end{defn}

Here $K/p^k$ is the mapping cone of the multiplication-by-$p^k$
endomorphism of $K$, having a long exact sequence
\begin{equation}
\label{eq:longexact}
\cdots \to K^\vee_*(X) \overto^{p^k}
K^\vee_*(X) \to
\pi_* (K/p^k \smsh{} X) \to \cdots
\end{equation}
which is natural in $X$.

\begin{rmk}
As $K$-modules and $KO$-modules are automatically $E(1)$-local,
$K(1)$-localizations and $p$-completions are equivalent on them.
\end{rmk}

We recall the following result, which was classically used as a
definition of $K(1)$-local $\tmf$ at the prime $2$.

\begin{prop}[{\cite{hopkins-k1-local,laures-k1-local-tmf}}]
\label{prop:tmfpushout}
At $p=2$, there are homotopy pushout diagrams
\[
\xymatrix{
L_{K(1)}\mb P(S^{-1}) \ar[r]^-0 \ar[d]_\zeta &
L_{K(1)}\mb S \ar[d] &
L_{K(1)}\mb P(S^0) \ar[r]^-0 \ar[d]_{\theta(f) - h(f)} &
L_{K(1)}\mb S \ar[d] \\
L_{K(1)}\mb S \ar[r] &
T_\zeta &
T_\zeta \ar[r] &
L_{K(1)}\tmf
}
\]
in the category of $K(1)$-local commutative ring spectra. Here $\zeta$
is a topological generator of $\pi_{-1} L_{K(1)}\mb S \cong \mb Z_2$;
$f$ is an element in $\pi_0 T_\zeta$; and $h(x)$ is a $2$-adically
convergent power series such that for any $K(1)$-local elliptic
commutative ring spectrum $E$, any map of commutative ring spectra
$T_\zeta \to E$ automatically sends $\theta(f)$ and $h(f)$ to the same
element.
\end{prop}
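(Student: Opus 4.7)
The plan is to follow Hopkins--Laures and deploy the Goerss--Hopkins obstruction theory for $K(1)$-local commutative ring spectra at $p=2$. This theory identifies $K(1)$-local $E_\infty$ ring spectra (up to homotopy) with their \emph{$\theta$-algebras}: the ring $K^\vee_0$ equipped with a continuous $\mb Z_2^\times$-action by Adams operations and an operator $\theta$ satisfying $\psi^2(x) = x^2 + 2\theta(x)$ as a lift of Frobenius. At the level of ring spectra, $\theta$ becomes a power operation on $\pi_0$ of every $K(1)$-local commutative ring. Under this correspondence, $L_{K(1)}\mb P(S^n)$ represents a free object, and suitable pushouts translate into presentations by generators and relations for $\theta$-algebras.

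First I would analyze the first pushout. The element $\zeta \in \pi_{-1} L_{K(1)}\mb S \cong \mb Z_2$ is the Greek-letter class distinguishing $L_{K(1)}\mb S$ from its initial $\theta$-algebra $\mb Z_2$, and the pushout cones it off. Using the explicit $K^\vee_*$-computation of $L_{K(1)}\mb P(S^{-1})$ (as a free $\theta$-algebra on a class in degree $-1$) together with the obstruction theory, one verifies that $T_\zeta$ is the initial $K(1)$-local commutative ring spectrum equipped with a nullhomotopy of $\zeta$, and that $\pi_0 T_\zeta$ admits a distinguished parameter $f$ over which the second pushout may be set up.

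For the second pushout, Proposition~\ref{prop:k1tmf} identifies $K^\vee_0 L_{K(1)}\tmf$ at $p=2$ with Katz's ring $V$ of $2$-complete generalized modular functions, classifying elliptic curves with a trivialization $\widehat{\mb G}_m \cong \widehat{\cal E}$ of the formal group. As a $\theta$-algebra over $K^\vee_0 T_\zeta$, this $V$ is presented by the single generator $f$ subject to one relation that encodes the Frobenius lift $\psi^2$. The required power series $h(x)$ is computed geometrically: on $V$, $\psi^2$ realizes the canonical degree-$2$ isogeny of the Tate curve $T \to (\psi^2)^* T$ with kernel $\mu_2$ (\S\ref{sec:tate-curve}), and expanding $\psi^2(f)$ as a $2$-adically convergent power series $F(f)$ yields $\theta(f) = (F(f) - f^2)/2 =: h(f)$. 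The second pushout then precisely enforces the relation $\theta(f) = h(f)$ on $\pi_0$, so the resulting $\theta$-algebra matches $V$, and obstruction theory upgrades this identification to one of $K(1)$-local commutative ring spectra.

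The universality statement is then automatic: for any $K(1)$-local elliptic commutative ring $E$, a map $T_\zeta \to E$ classifies the image $f_E \in \pi_0 E$, and the very same Tate-curve computation used to extract $h$ forces $\theta(f_E) = h(f_E)$ in $\pi_0 E$, because $E$'s formal group is locally $\widehat{\mb G}_m$ and its Frobenius lift agrees with the universal one. The main obstacle is carefully deploying $\theta$-algebra obstruction theory at $p=2$, where the theory is subtler than at odd primes because of the extra generator $-1 \in \mb Z_2^\times$; a secondary technical difficulty is the explicit power-series computation of $h(x)$ from the $q$-expansion of the chosen modular parameter. Both issues are handled in the work of Hopkins and Laures cited above.
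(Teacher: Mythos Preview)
The paper does not supply its own proof of this proposition: it is stated with attribution to Hopkins and Laures and then used as a black box (notably in the proof of Proposition~\ref{prop:evencusp}). So there is no argument in the paper to compare your sketch against.

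Your outline is broadly in the spirit of the cited references, but a few points are imprecise enough to be worth flagging. First, the assertion that Goerss--Hopkins obstruction theory ``identifies $K(1)$-local $E_\infty$ ring spectra (up to homotopy) with their $\theta$-algebras'' is an overstatement: the obstruction theory gives vanishing criteria for existence and uniqueness, not a general equivalence, and part of the content of the Hopkins--Laures argument is checking that the relevant obstruction groups vanish for these particular free constructions. Second, the element $f \in \pi_0 T_\zeta$ is not just ``a distinguished parameter'' that $T_\zeta$ happens to admit; it arises specifically from the chosen nullhomotopy of $\zeta$ via the power operation $\theta$, and pinning this down is what makes the second pushout meaningful. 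Third, your justification of the universality clause (``because $E$'s formal group is locally $\widehat{\mb G}_m$ and its Frobenius lift agrees with the universal one'') is too quick: what is actually used is that $h$ is engineered so that the relation $\theta(f) = h(f)$ already holds in Katz's ring $V$, and any $K(1)$-local elliptic $E$ receives a $\theta$-algebra map from $V$ on $K^\vee_0$, so the relation is inherited. Since you explicitly defer these details to the same citations the paper invokes, there is no real conflict---but the sketch as written would not stand on its own.
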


We first need to identify the $p$-adic $K$-theory of $KO\pow{q}$.

\begin{prop}
\label{prop:tatektheory}
For any prime $p$, the $p$-adic $K$-theory of $KO\pow{q}$ is the ring
\[
K^\vee_* (KO\pow{q}) \cong 
\Map_c(\mb Z_p^\times, K^\vee_*\pow{q})^{\{\pm 1\}}.
\]
Here the group $\{\pm 1\}\subset \mb Z_p^\times$ acts by conjugation
on the group of continuous homomorphisms, and the ring
$K^\vee_*\pow{q}$ is given the $p$-adic topology.

This is the universal $p$-complete $\mb Z\pow{q}$-algebra with an
isomorphism class of pairs of an invariant $1$-form on the Tate curve
$T$ and an identification $\widehat{\mb G}_m \stackrel{\sim}{\overto}
\widehat{T}$ between the formal multiplicative group and the formal
group of the Tate curve. The map $V \to K^\vee_0(KO\pow{q})$
determined by this is a map of $\psth$-algebras.
\end{prop}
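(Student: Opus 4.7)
The plan is to establish each clause in turn: the computation, the moduli interpretation, and the $\psi$-$\theta$-compatibility.

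For the computation, I would first write $KO\pow{q}$ as the homotopy limit $\holim_r (KO \smsh{} \{1,q,\ldots,q^{r-1}\}_+)$. Each term in this tower is a finite wedge of copies of $KO$ as a $KO$-module, so $p$-completion commutes with the smash and gives $K^\vee_*(KO \smsh{} \{1,q,\ldots,q^{r-1}\}_+) \cong K^\vee_*(KO)[q]/(q^r)$. The bonding maps in the resulting tower are surjective, so the $\lim^1$ in the Milnor short exact sequence vanishes, and I obtain $K^\vee_*(KO\pow{q}) \cong K^\vee_*(KO)\pow{q}$. At this point I invoke the classical Adams--Baird identification $K^\vee_*(KO) \cong \Map_c(\mb Z_p^\times, K^\vee_*)^{\{\pm 1\}}$, which comes from the Galois descent $KO \simeq K^{hC_2}$ together with $K^\vee_*(K) \cong \Map_c(\mb Z_p^\times,K^\vee_*)$; substituting $K^\vee_*\pow{q}$ for the coefficients of $K$ yields the first claim.

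For the moduli interpretation, I would use the standard Hopf algebroid description of $(K^\vee_*, K^\vee_*(K))$: the ring $\Map_c(\mb Z_p^\times, K^\vee_0)$ classifies continuous homomorphisms $\mb Z_p^\times \to R^\times$, and via the identification $\mb Z_p^\times = \Aut(\widehat{\mb G}_m)$ this parametrizes strict isomorphisms of $\widehat{\mb G}_m$ with itself over $p$-complete rings. Over the base $\mb Z\pow{q}$, the Tate curve comes equipped with a canonical isomorphism $\widehat{\mb G}_m \cong \widehat{T}$ and a canonical invariant differential (\S\ref{sec:tate-curve}), so composing with this canonical data converts strict automorphisms into isomorphisms $\widehat{\mb G}_m \to \widehat T$ together with a compatible invariant $1$-form. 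Taking $\{\pm 1\}$-invariants corresponds to passing to isomorphism classes modulo the simultaneous negation of $\omega$ and $\phi$, exactly the data classified by the $\{\pm 1\}$-action on the Tate curve by inversion.

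For the $\psi$-$\theta$-algebra structure, I would note that the Adams operations on $K^\vee_0(KO\pow{q})$ fix $q$ (as $q$ comes from the base) and act by the usual $\psi^a$ on $K^\vee_0(KO)$; under the identification with $\Map_c(\mb Z_p^\times, \mb Z_p\pow{q})^{\{\pm 1\}}$ they correspond to precomposition with multiplication by $a \in \mb Z_p^\times$. The map $V \to K^\vee_0(KO\pow{q})$ is the one induced by the universal property of $V$ from the Tate curve over $KO\pow{q}$, and is therefore automatically $\mb Z_p^\times$-equivariant because the $\mb Z_p^\times$-action on both sides is the natural one on isomorphisms of formal groups with $\widehat{\mb G}_m$. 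Since both $V$ and $K^\vee_0(KO\pow{q})$ are torsion-free and $p$-complete, the operation $\theta$ on each is uniquely determined by $\theta(x) = (\psi^p(x) - x^p)/p$, so $\mb Z_p^\times$-equivariance forces $\theta$-equivariance.

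The main obstacle will be the middle step: pinning down what it means to classify ``pairs of an invariant $1$-form and an identification $\widehat{\mb G}_m \to \widehat T$'' as a moduli functor and checking that the natural pairing with the Hopf algebroid structure of $(K^\vee_*, K^\vee_*(K))$ correctly identifies the ring operations on both sides, including the behavior of the $\{\pm 1\}$-invariants, since the negation action simultaneously twists both the isomorphism and the chosen differential and is not literally diagonal on the tensor decomposition $\Map_c(\mb Z_p^\times,\mb Z_p) \otimes \mb Z_p\pow{q}$.
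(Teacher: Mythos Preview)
Your computation of $K^\vee_*(KO\pow{q})$ and your moduli interpretation are essentially correct and parallel the paper's argument; the paper proceeds via the K\"unneth formula for the flat $KO$-module $KO\pow{q}$ rather than a Milnor sequence in the tower, but either route works.

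The genuine gap is in your last paragraph. You correctly observe that $\theta$-compatibility is equivalent to $\psi^p$-compatibility on torsion-free $p$-complete rings, but you then assert that ``$\mb Z_p^\times$-equivariance forces $\theta$-equivariance.'' This is false: the operation $\psi^p$ is \emph{not} part of the $\mb Z_p^\times$-action (since $p \notin \mb Z_p^\times$), and commuting with all $\psi^a$ for $a \in \mb Z_p^\times$ places no constraint on the behavior under $\psi^p$. Indeed, you yourself noted that the Adams operations $\psi^a$ fix $q$, whereas one needs $\psi^p(q) = q^p$; these are visibly different phenomena.

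What the paper supplies here, and what your argument is missing, is an independent identification of $\psi^p$ on $K^\vee_0(KO\pow{q})$ together with a geometric reason it matches $\psi^p$ on $V$. The point is that $q$ arises from the commutative monoid $\mb N$ inside the monoid algebra defining $KO\pow{q}$, and the $E_\infty$ power operation on a monoid algebra sends the generator to its $p$-th power, so $\psi^p(q) = q^p$. On the other hand, the map $q \mapsto q^p$ on $\mb Z\pow{q}$ classifies the quotient of the Tate curve by its canonical subgroup $\mu_p \subset \widehat T$ (see \S\ref{sec:tate-curve}). Since $\psi^p$ on $V$ is defined precisely by quotienting the universal curve by its canonical subgroup, the map $V \to K^\vee_0(KO\pow{q})$ induced by the universal property intertwines the two $\psi^p$ operations. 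Only then does torsion-freeness let you conclude $\theta$-compatibility.
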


\begin{proof}
We recall from \cite{hopkins-k1-local} or
\cite[9.2]{ando-hopkins-rezk-kotheory} that the map of
$\psth$-algebras 
\[
K^\vee_* KO \to K^\vee_* K
\]
is the inclusion
\[
\Map_c(\mb Z_p^\times, K^\vee_*)^{\{\pm 1\}} \hookrightarrow
\Map_c(\mb Z_p^\times, K^\vee_*)
\]
of sets of continuous maps. The long exact sequence
(\ref{eq:longexact}) gives an identification
\begin{align*}
\pi_* (K/p^k \wedge KO)
&\cong \Map_c(\mb Z_p^\times, (K_*)/p^k)^{\{\pm 1\}}\\
&= \colim_{m} \Map((\mb Z/p^m)^\times, (K_*)/p^k)^{\{\pm 1\}}.
\end{align*}

The graded $\pi_* KO$-module $\pi_* (KO\pow{q}) \cong \pi_* KO
\otimes_{\mb Z} \mb Z\pow{q}$ is flat, and so the isomorphism
\[
K/p^k \wedge KO\pow{q} \cong (K/p^k \wedge KO) \smsh{KO} KO\pow{q}
\]
can be re-expressed as an isomorphism
\[
\pi_*(K/p^k \wedge KO\pow{q}) \cong \Map_c(\mb Z_p^\times,
K_*\pow{q}/p^k)^{\{\pm 1\}}
\]
by the K\"unneth formula. Taking limits gives the desired formula for
the $p$-adic $K$-theory.

The action of the group $\mb Z_p^\times$ on $\Map_c(\mb Z_p^\times,
K^\vee_*)^{\{\pm 1\}}$, by premultiplication on the source, is
compatible with this isomorphism, and therefore determines the action
of $\mb Z_p^\times$ on $K^\vee_*(KO\pow{q})$: it is coinduced from the
action of the subgroup $\{\pm 1\}$ on $K^\vee_*\pow{q}$. The ring
$\Map_c(\mb Z_p^\times, K^\vee_*\pow{q})$ is the universal ring
classifying isomorphisms $\widehat {\mb G}_m \to \widehat T$ together
with a choice of invariant $1$-form, as any such isomorphism differs
from the canonical one by a locally constant function to $\mb
Z_p^\times$. As a $\{\pm 1\}$-equivariant algebra over the ring of
invariants $K^\vee_* KO\pow{q}$ it is isomorphic to $K^\vee_*
KO\pow{q} \times K^\vee_* KO\pow{q}$, and so the ring of invariants
classifies the quotient by $Aut(T) \cong \{\pm 1\}$. There is a map $V
\to K^\vee_0 KO\pow{q}$ determined by the universal property of $V$.

The element $q \in K^\vee_0 KO\pow{q}$, since it lifts to an element
in $\pi_0 KO\pow{q}$, is acted on trivially by $\mb
Z_p^\times$. Moreover, the extended power operation $\psi^p$ lifts to
the corresponding power operation on the discrete monoid $\mb N$,
which sends $q$ to $q^p$. The resulting map on $\mb Z\pow{q}$
classifies the quotient of the Tate curve by the canonical subgroup
$\mu_p$ of its formal group (\S\ref{sec:tate-curve}), and thus the map
$V \to K^\vee_0(KO\pow{q})$ preserves the operation $\psi^p$ (and
hence $\theta$).
\end{proof}

\begin{prop}
\label{prop:evencusp}
At the prime $2$, there exists a map of $K(1)$-local commutative ring
spectra
\[
L_{K(1)} \tmf \to L_{K(1)} KO\pow{q}
\]
which, on $2$-adic $K$-homology, induces the map
\[
V \to K^\vee_0(KO\pow{q})
\]
from Proposition~\ref{prop:tatektheory}.
\end{prop}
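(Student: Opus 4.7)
The plan is to apply the iterated $E_\infty$ pushout presentation of $L_{K(1)}\tmf$ from Proposition~\ref{prop:tmfpushout}, building a map out in two stages and then identifying its effect on $K$-homology. The first pushout reduces a map $T_\zeta \to L_{K(1)} KO\pow{q}$ of $K(1)$-local $E_\infty$-rings to a single datum: an $E_\infty$ nullhomotopy of the image of $\zeta$ in $\pi_{-1} L_{K(1)} KO\pow{q}$. Since $\pi_n KO = 0$ for $n \in \{-1,-2\}$ and $\pi_* KO\pow{q} \cong (\pi_* KO)\pow{q}$ as graded abelian groups (with surjective structure maps, so the Milnor $\lim^1$ vanishes), the mod-$2^k$ cofiber sequences give $\pi_{-1}(KO\pow{q}/2^k) = 0$, and the Milnor sequence for $2$-completion then yields $\pi_{-1} L_{K(1)} KO\pow{q} = 0$. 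A nullhomotopy therefore exists; I fix a choice and obtain a map $T_\zeta \to L_{K(1)} KO\pow{q}$.

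The second pushout extends this to the desired map $L_{K(1)}\tmf \to L_{K(1)} KO\pow{q}$ provided the two composites $L_{K(1)}\mb P(S^0) \rightrightarrows T_\zeta \to L_{K(1)} KO\pow{q}$ induced by $\theta(f)$ and $h(f)$ are homotopic. By the last clause of Proposition~\ref{prop:tmfpushout}, this holds whenever the target is a $K(1)$-local elliptic commutative ring spectrum, and the Tate curve supplies the required structure: the formal group of $L_{K(1)} KO\pow{q}$ is $\widehat{\mb G}_m$, and the canonical isomorphism $\widehat{\mb G}_m \cong \widehat T$ from \S\ref{sec:tate-curve} identifies it with the formal group of the Tate curve over $\mb Z\pow{q}$.

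To identify the effect on $K$-homology, note that the resulting map $V = K^\vee_0 \tmf \to K^\vee_0 KO\pow{q}$ is automatically a morphism of $\psth$-algebras, as it is induced by an $E_\infty$ map in the $K(1)$-local category. By Proposition~\ref{prop:k1tmf}, $V$ is the universal $2$-complete ring equipped with an elliptic curve ${\cal E}$ together with an isomorphism $\widehat{\mb G}_m \to \widehat{\cal E}$, and by Proposition~\ref{prop:tatektheory} the ring $K^\vee_0 KO\pow{q}$ carries exactly such a pair determined by the Tate curve. Our map classifies the pullback of the universal pair, which by the choice of elliptic structure in the previous stage is precisely the Tate pair; it therefore coincides with the map of Proposition~\ref{prop:tatektheory}.

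The main obstacle is justifying the elliptic-spectrum hypothesis used above: $KO\pow{q}$ is not weakly even-periodic in the strict sense of \S\ref{sec:lifting} because $\pi_1 KO \neq 0$, so one must verify that the identity $\theta(f) = h(f)$ in Proposition~\ref{prop:tmfpushout} depends only on the $K(1)$-local formal-group-plus-$\theta$-algebra data rather than on strict even-periodicity; this is the case in the Hopkins-Laures framework underlying Proposition~\ref{prop:tmfpushout}, where the relevant input is precisely such a $\theta$-algebra lifting the formal group of an elliptic curve.
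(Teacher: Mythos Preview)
Your overall strategy---use the two-stage pushout presentation of $L_{K(1)}\tmf$ from Proposition~\ref{prop:tmfpushout}, kill $\zeta$ using $\pi_{-1}=0$, then verify $\theta(f)=h(f)$---is exactly the paper's. The one point of divergence is how you handle the obstacle you yourself flag at the end: $KO\pow{q}$ is not weakly even-periodic, so Proposition~\ref{prop:tmfpushout} does not literally apply to it, and you are forced into an external appeal to the Hopkins--Laures framework to justify the identity.

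The paper avoids this entirely by a one-line trick: instead of arguing that $KO\pow{q}$ is ``elliptic enough,'' it composes with the map $\comp{KO\pow{q}}_2 \to \comp{K\pow{q}}_2$. The target $K\pow{q}$ \emph{is} even-periodic and carries the Tate curve, so it is a $K(1)$-local elliptic commutative ring spectrum in the precise sense of Proposition~\ref{prop:tmfpushout}; hence $\theta(f)-h(f)$ maps to zero in $\pi_0 \comp{K\pow{q}}_2$. Since the map $\pi_0 \comp{KO\pow{q}}_2 \to \pi_0 \comp{K\pow{q}}_2$ is injective (both sides are $\mb Z_2\pow{q}$), the element already vanishes in $\pi_0 \comp{KO\pow{q}}_2$, and the extension over the second pushout exists. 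This stays strictly within the hypotheses as stated and sidesteps any need to broaden the notion of elliptic spectrum.
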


\begin{proof}
We will use the description of $K(1)$-local $\tmf$ from
Proposition~\ref{prop:tmfpushout} to construct this map.

As $\comp{KO\pow{q}}_2$ is the $K(1)$-localization of $KO\pow{q}$ and
has trivial $\pi_{-1}$, we have a map of commutative ring spectra
$T_\zeta \to \comp{KO\pow{q}}_2$.  The composite map $T_\zeta \to
\comp{K\pow{q}}_2$ detects the effect on $\pi_0$, and is a map to an
elliptic cohomology theory, where the latter carries the Tate curve
over the power series ring $\comp{\mb Z\pow{q}}_2$.  Therefore, the
element $\theta(f) - h(f)$ automatically maps to zero, and we obtain
an extension $L_{K(1)}\tmf \to L_{K(1)}KO\pow{q}$.
\end{proof}

\begin{prop}
\label{prop:oddcusp}
At any odd prime $p$, there exists a map of $K(1)$-local commutative
ring spectra
\[
L_{K(1)} \tmf \to L_{K(1)} KO\pow{q}
\]
which, on $p$-adic $K$-theory, induces the map
\[
V \to K^\vee_0(KO\pow{q})
\]
from Proposition~\ref{prop:tatektheory}.
\end{prop}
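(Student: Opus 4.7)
The plan is to apply Proposition~\ref{prop:obstructiontheory} directly to the Tate curve and then descend along the $C_2$-action by negation. At any odd prime $p$, the classifying map $\Spf \comp{\mb Z\pow{q}}_p \to \comp{(\Mell)}_p$ is an \'etale formal affine open onto the formal neighborhood of the cusp, and the Tate curve has no supersingular fibers (its $j$-invariant $q^{-1}+744+\cdots$ is a unit, hence distinct from the finitely many supersingular $j$-invariants at $p$). Hence Proposition~\ref{prop:obstructiontheory} provides a $K(1)$-local elliptic commutative $\tmf$-algebra $\widetilde{E}$ realizing $T/\comp{\mb Z\pow{q}}_p$, with the additional information that mapping spaces out of $\widetilde{E}$ into elliptic commutative $\tmf$-algebras are homotopically discrete and computed by algebraic maps.

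Next I would identify $\widetilde{E} \simeq L_{K(1)} K\pow{q}$ as commutative ring spectra. Both are $K(1)$-local even-periodic ring spectra realizing $T/\comp{\mb Z\pow{q}}_p$, and by Proposition~\ref{prop:tatektheory} their $p$-adic $K$-theories agree as $\psth$-algebras with the universal $\psth$-algebra classifying trivializations $\widehat{\mb G}_m \to \widehat{T}$; the uniqueness clause of Proposition~\ref{prop:obstructiontheory} then forces an equivalence of commutative ring spectra. Moreover, the negation automorphism of $T$ lifts uniquely to a $C_2$-action on $\widetilde{E}$ by $\tmf$-algebra automorphisms; by comparison on $p$-adic $K$-theory this action is identified with the conjugation action on $L_{K(1)} K\pow{q}$, so that $\widetilde{E}^{hC_2} \simeq L_{K(1)} KO\pow{q}$.

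Finally, the unit $\tmf \to \widetilde{E}$ is automatically $C_2$-equivariant for the trivial action on the source: the two composites $\tmf \to \widetilde{E} \rightrightarrows \widetilde{E}$ obtained from the identity and from the $C_2$-action are both $\tmf$-algebra maps out of $\tmf$, hence both equal the unit. The unit therefore descends to a map $\tmf \to \widetilde{E}^{hC_2} \simeq L_{K(1)} KO\pow{q}$, and extends to $L_{K(1)}\tmf \to L_{K(1)} KO\pow{q}$ by $K(1)$-locality of the target. On $p$-adic $K$-theory the resulting map factors as $V \to K^\vee_0 \widetilde{E} \to (K^\vee_0 \widetilde{E})^{C_2} = K^\vee_0(KO\pow{q})$, which by the universal property of $V$ (Proposition~\ref{prop:k1tmf}) coincides with the map of Proposition~\ref{prop:tatektheory}.

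The main obstacle is the matching of $C_2$-actions in the second paragraph: the Goerss-Hopkins--produced action on $\widetilde{E}$ and the conjugation action on $L_{K(1)}K\pow{q}$ must be shown to agree. This reduces to a check at the level of $\psth$-algebras, since by the discreteness of the mapping spaces any $\tmf$-algebra automorphism of $\widetilde{E}$ is determined by its effect on $K^\vee_0$. Concretely, one verifies that under the canonical identification $\widehat{T} \cong \widehat{\mb G}_m$ the negation of $T$ corresponds to the inversion automorphism of $\widehat{\mb G}_m$, which is precisely the definition of the $C_2$-action giving rise to $KO$ from $K$.
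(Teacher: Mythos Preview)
Your proposal has a circularity in the second paragraph. Proposition~\ref{prop:obstructiontheory} computes the mapping space $\Map_{comm/\tmf}(E, E')$ of $K(1)$-local commutative \emph{$\tmf$-algebra} maps, so invoking its uniqueness clause to produce an equivalence $\widetilde{E} \simeq L_{K(1)}K\pow{q}$ requires $L_{K(1)}K\pow{q}$ to already carry the structure of an elliptic commutative $\tmf$-algebra realizing the Tate curve. But equipping $K\pow{q}$ (or $KO\pow{q}$) with a $\tmf$-algebra structure is exactly the content of the proposition you are proving. Without that input the uniqueness clause simply does not apply, and there is no way to upgrade it to a statement about plain commutative ring spectra without carrying out a separate obstruction-theoretic calculation.

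The paper's proof sidesteps this by working directly in commutative ring spectra rather than in $\tmf$-algebras. It computes $\Map_{comm}(L_{K(1)}\tmf, L_{K(1)}K\pow{q})$ via the Goerss-Hopkins spectral sequence, using that $K^\vee_0\tmf = V$ is formally smooth over $\mb Z_p$ to kill the obstruction groups $H^s_{\psth\alg/K^\vee_*}(K^\vee_*\tmf, \Omega^t K^\vee_* K\pow{q})$ for $s>1$; it then passes to $\{\pm 1\}$-homotopy fixed points, which is painless because $p$ is odd and the homotopy groups of the mapping space are $p$-complete. Your instinct to descend along the $C_2$-action is correct and matches the paper's, but the obstruction-theoretic input must be this direct calculation in commutative ring spectra, not Proposition~\ref{prop:obstructiontheory}.
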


\begin{proof}
As $KO\pow{q}$ is the homotopy fixed-point spectrum of the action of
$\{\pm 1\}$ on $K\pow{q}$, we have an equivalence
\[
\Map_{comm}(L_{K(1)} \tmf, L_{K(1)} KO\pow{q}) \simeq
\Map_{comm}(L_{K(1)} \tmf,
L_{K(1)} K\pow{q})^{h\{\pm 1\}}.
\]

The Goerss-Hopkins obstruction theory computing this space of maps of
$K(1)$-local commutative ring spectra produces obstructions in
Andr\'e-Quillen cohomology groups. There is a fringed spectral
sequence with $E_2$-term given by
\[
E_2^{s,t} =
\begin{cases} 
\Hom_{\psth\alg/K^\vee_*} (K^\vee_* \tmf, K^\vee_* K\pow{q})&\text{if }(s,t)=(0,0),\\
H^s_{\psth\alg/K^\vee_*} (K^\vee_* \tmf, \Omega^t K^\vee_* K\pow{q})&\text{otherwise}.
\end{cases}
\]
This spectral sequence converges to $\pi_{t-s} \Map_{comm}(L_{K(1)}
\tmf, L_{K(1)} K\pow{q})$. By \cite[7.5]{mark-construct}, the fact
that $V$ is formally smooth over $\mb Z_p$ implies that the
obstruction groups
\[
H^s_{\psth\alg/K^\vee_*} (K^\vee_* \tmf, \Omega^t
K^\vee_* K\pow{q})
\]
are trivial for $s > 1$ or $t = 0$.

In particular, the homotopy groups $\pi_{t} \Map_{comm}(L_{K(1)} \tmf,
L_{K(1)} K\pow{q})$ are $p$-adically complete abelian groups for any
choice of basepoint, and so the homotopy fixed-point spectral sequence
for the action of the group $\{\pm 1\}$ degenerates. We find that the
set of path components is
\[
\pi_0 \Map_{comm}(L_{K(1)} \tmf,
L_{K(1)} K\pow{q}) \cong \Hom_{\psth\alg/K^\vee_*}(V, K_*\pow{q})^{\{\pm 1\}},
\]
and so the map of Proposition~\ref{prop:tatektheory} has a lift which
is unique up to homotopy.
\end{proof}

\begin{prop}
\label{prop:rationalcusp}
There exists a map of rational commutative ring spectra 
\[
\tmf_{\mb Q} \to (KO\pow{q})_{\mb Q}
\]
which, on homotopy groups, is given by a map
\[
\mb Q[c_4, c_6] \to \mb Q \otimes \mb Z\pow{q}[\beta^{\pm 2}]
\]
sending $c_4$ and $c_6$ to their $q$-expansions.  The two maps
\[
\tmf \to (\prod_p \comp{KO\pow{q}}_p)_{\mb Q},
\]
induced by this map and the maps constructed in
Propositions~\ref{prop:evencusp} and \ref{prop:oddcusp}, are homotopic
as maps of commutative ring spectra.
\end{prop}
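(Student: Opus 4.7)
The plan is to exploit the formality of $\tmf_{\mb Q}$ established in Proposition~\ref{prop:k0tmf}, which gives an equivalence of rational commutative ring spectra $\tmf_{\mb Q} \simeq \mb P(S^8 \vee S^{12})_{\mb Q}$ in which the sphere classes represent the modular forms $c_4$ and $c_6$. By the universal property of the free commutative ring spectrum, for any rational commutative ring spectrum $R$ the set $\pi_0 \Map_{comm}(\tmf_{\mb Q}, R)$ is in natural bijection with $\pi_8 R \times \pi_{12} R$.

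For the first assertion, I apply this with $R = (KO\pow{q})_{\mb Q}$, choosing as images the elements of $\pi_8 R$ and $\pi_{12} R$ given by the $q$-expansions $c_4(q), c_6(q) \in \mb Z\pow{q}$ multiplied by appropriate powers of the Bott element of rational $KO$. This produces a rational commutative ring map $\tmf_{\mb Q} \to (KO\pow{q})_{\mb Q}$ whose effect on $\pi_*$ is the claimed $q$-expansion formula.

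For the comparison statement, both maps $\tmf \to (\prod_p \comp{KO\pow{q}}_p)_{\mb Q}$ factor through the rationalization $\tmf \to \tmf_{\mb Q}$ because their common target is $\mb Q$-local, and so by the preceding paragraph it suffices to check that they induce equal elements in $\pi_8$ and $\pi_{12}$ of the target. The map coming from the rational construction sends $c_4, c_6$ to the diagonal images of their rational $q$-expansions, by design. The map assembled from the $K(1)$-local constructions of Propositions~\ref{prop:evencusp} and \ref{prop:oddcusp} sends $c_4, c_6$ via each factor $\tmf \to \comp{KO\pow{q}}_p$ to $c_4(q), c_6(q) \in \comp{\mb Z\pow{q}}_p \subset \pi_* \comp{KO\pow{q}}_p$, because those maps realize the Tate curve with its canonical invariant differential and hence must evaluate universal modular forms on that curve. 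Assembling over primes and rationalizing shows the two induced elements on $\pi_8, \pi_{12}$ agree, so the two commutative ring maps are homotopic.

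The main subtlety is the assertion that the maps of Propositions~\ref{prop:evencusp} and \ref{prop:oddcusp} induce the $q$-expansion formula on ordinary $\pi_*$. Those constructions were pinned down at the level of $p$-adic $K$-homology via the classifying map $V \to K^\vee_0 KO\pow{q}$ for the Tate curve with its canonical trivialization of the formal group. One must verify that this specification descends to the expected realization of the Tate curve as an elliptic spectrum at the level of ordinary homotopy groups, whereupon the images of $c_4, c_6 \in \pi_* \tmf$ are forced by the universal-modular-form characterization to equal $c_4(q), c_6(q)$ in $\pi_* \comp{KO\pow{q}}_p$. Tracking this through the identification $\pi_* \tmf_{\mb Q} \cong \mb Q[c_4, c_6]$ and the fact that $\pi_* \tmf \to \pi_* \comp{KO\pow{q}}_p$ factors through this rational quotient in the required range completes the verification.
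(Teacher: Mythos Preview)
Your proposal is correct and follows essentially the same approach as the paper's proof: both use the equivalence $\tmf_{\mb Q} \simeq \mb P(S^8 \vee S^{12})_{\mb Q}$ to reduce the construction and the comparison to specifying the images of $c_4$ and $c_6$, and then observe that both the rational and the $K(1)$-local maps realize the Tate curve and hence send these classes to their $q$-expansions. Your final paragraph makes explicit a point the paper treats in a single clause (``both obtained by $q$-expansion in a neighborhood of the Tate curve''), but the argument is the same.
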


\begin{proof}
The elements $c_4$ and $c_6$ can be realized as maps $S^8 \to
\tmf_{\mb Q}$ and $S^{12} \to \tmf_{\mb Q}$ respectively.  The induced
map of commutative ring spectra $\mb P_{\mb Q}(S^8 \vee S^{12}) \to
\tmf_{\mb Q}$ is a weak equivalence, and so homotopy classes of
commutative ring spectrum maps $\tmf_{\mb Q} \to (KO\pow{q})_{\mb Q}$
are defined uniquely, up to homotopy, by specifying the images of
$c_4$ and $c_6$.

Homotopy classes of maps of commutative ring spectra $\tmf \to
(\prod_p \comp{KO\pow{q}}_p)_{\mb Q}$ are the same as maps $\tmf_{\mb
  Q} \to (\prod_p \comp{KO\pow{q}}_p)_{\mb Q}$, and are similarly
determined by the images of $c_4$ and $c_6$.  Therefore, as the
$K(1)$-local and rational constructions are both obtained by
$q$-expansion in a neighborhood of the Tate curve, the resulting pair
of maps are homotopic as maps of commutative ring spectra.
\end{proof}

\begin{thm}
\label{thm:wittenhalf}
There exists a map of commutative ring spectra
\[
\tmf \to KO\pow{q}
\]
compatible with the $K(1)$-local and rational maps 
constructed in Propositions~\ref{prop:evencusp}, \ref{prop:oddcusp},
and \ref{prop:rationalcusp}.
\end{thm}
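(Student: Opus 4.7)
The plan is to assemble the prior $K(1)$-local and rational constructions via an arithmetic fracture square in commutative ring spectra. Because $\pi_* KO\pow{q} = \pi_* KO \otimes \mb Z\pow{q}$ is torsion-free, the square
\[
\xymatrix{
KO\pow{q} \ar[r] \ar[d] & \prod_p \comp{KO\pow{q}}_p \ar[d] \\
(KO\pow{q})_{\mb Q} \ar[r] & (\prod_p \comp{KO\pow{q}}_p)_{\mb Q}
}
\]
is a homotopy pullback of spectra, and hence also a homotopy pullback of commutative ring spectra, since the forgetful functor is a right Quillen functor by Theorem~\ref{thm:simplicialmodel}.

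First I would package the per-prime constructions. Because $KO\pow{q}$ is an $E(1)$-local $KO$-module, $L_{K(1)} KO\pow{q} \simeq \comp{KO\pow{q}}_p$, and Propositions~\ref{prop:evencusp} and~\ref{prop:oddcusp} together produce, at every prime, a commutative ring map $L_{K(1)}\tmf \to \comp{KO\pow{q}}_p$. Pre-composing with $\tmf \to L_{K(1)}\tmf$ and taking the product yields a single commutative ring map
\[
\alpha\co \tmf \to \prod_p \comp{KO\pow{q}}_p.
\]
Proposition~\ref{prop:rationalcusp} supplies a companion map $\beta\co \tmf \to (KO\pow{q})_{\mb Q}$ together with (by its second assertion) a homotopy in $\Map_{comm}(\tmf,\,(\prod_p \comp{KO\pow{q}}_p)_{\mb Q})$ between the rationalization of $\alpha$ and the composite of $\beta$ with the canonical map $(KO\pow{q})_{\mb Q} \to (\prod_p \comp{KO\pow{q}}_p)_{\mb Q}$.

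Applying the right Quillen functor $\Map_{comm}(\tmf,-)$ to the fracture square converts it into a homotopy pullback of simplicial sets. The triple $(\alpha,\beta,\text{homotopy})$ then distinguishes a point of the homotopy fiber product, namely a homotopy class of commutative ring map $\tmf \to KO\pow{q}$. Composing this map with the canonical projections $KO\pow{q} \to \comp{KO\pow{q}}_p$ and $KO\pow{q} \to (KO\pow{q})_{\mb Q}$ recovers, respectively, the $K(1)$-local map at each $p$ and the rational map $\beta$, which is the required compatibility.

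The principal obstacle is bookkeeping: one must verify that the homotopy of commutative ring maps asserted by Proposition~\ref{prop:rationalcusp} genuinely yields a path in the simplicial mapping space $\Map_{comm}$ rather than merely agreement in the homotopy category, so that it can be fed into the fracture-square recipe. This is guaranteed by the combinatorial simplicial model structure of Theorem~\ref{thm:simplicialmodel} together with the discrete-mapping-space rectification of Corollary~\ref{cor:discreterectification}, which convert the obstruction-theoretic identifications used in the proofs of Propositions~\ref{prop:oddcusp} and~\ref{prop:rationalcusp} into honest homotopies in $\Map_{comm}$.
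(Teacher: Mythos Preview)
Your proof is correct and follows essentially the same route as the paper: both use the arithmetic fracture square for $KO\pow{q}$ and feed in the per-prime and rational maps from Propositions~\ref{prop:evencusp}, \ref{prop:oddcusp}, and~\ref{prop:rationalcusp} together with the compatibility homotopy. Your final paragraph is slightly overcautious: once $\tmf$ is replaced by a cofibrant model, ``homotopic as maps of commutative ring spectra'' already means ``same path component of $\Map_{comm}$'' in the simplicial model structure, so no appeal to Corollary~\ref{cor:discreterectification} is needed; the paper instead observes in the subsequent remark that the relevant path components of $\Map_{comm}\bigl(\tmf,(\prod_p \comp{KO\pow{q}}_p)_{\mb Q}\bigr)$ are simply connected, which shows the resulting map is even unique.
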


\begin{proof}
We can express the spectrum $KO\pow{q}$ as a homotopy pullback in the
following arithmetic square of commutative ring spectra:
\[
\xymatrix{
KO\pow{q} \ar[r] \ar[d] &
\prod_p \comp{KO\pow{q}}_p \ar[d] \\
(KO\pow{q})_{\mb Q} \ar[r] &
(\prod_p \comp{KO\pow{q}}_p)_{\mb Q}
}
\]
However, from Propositions~\ref{prop:evencusp}, \ref{prop:oddcusp},
and \ref{prop:rationalcusp} we obtain maps from $\tmf$ to the rational
and $p$-completed entries which are homotopic, and therefore a map
from $\tmf$ to the homotopy pullback.
\end{proof}

\begin{rmk}
As the spectrum $(\prod_p \comp{KO\pow{q}}_p)_{\mb Q}$ has trivial
homotopy groups in degrees $9$ and $13$, the path components of the
mapping space
\[
\Map_{}\left(\tmf,(\prod_p \comp{KO\pow{q}}_p)_{\mb Q}\right)
\]
are all simply connected.  The Mayer-Vietoris square of mapping spaces
shows that there is a unique homotopy class of map of commutative ring
spectra from $\tmf$ to the pullback.
\end{rmk}

\bibliography{masterbib}

\end{document}